\theoremstyle{plain}
\newtheorem{thm}{Theorem}[section]
\newtheorem{prop}[thm]{Proposition}
\newtheorem{lem}{Lemma}
\newtheorem{cor}[thm]{Corollary}
\newtheorem{rmk}{Remark}
\newcommand{\mcl}{\mathcal}
\newcommand{\mrm}{\mathrm}
\newcommand{\mbb}{\mathbb}
\newcommand{\bs}{\boldsymbol}
\newcommand{\ind}{\bs 1}
\newcommand{\dd}{\mrm d}
\newcommand{\pd}{\partial}
\newcommand{\ee}{\mrm e}
\newcommand{\EE}{\mbb E}
\newcommand{\PP}{\mathbf{P}}
\newcommand{\VV}{\mrm{Var}}
\newcommand{\cov}{\mrm{Cov}}
\newcommand{\OO}{\mathcal{O}}
\newcommand{\oo}{{\scriptsize\text{$\mathcal O$}}}
\newcommand{\FF}{\mathscr F}
\newcommand{\pto}{\stackrel{\PP}{\to}}
\newcommand{\dto}{\stackrel{d}{\to}}
\newcommand{\lqv}{\langle \!\langle}
\newcommand{\rqv}{\rangle \!\rangle}
\title{\textbf{Small diffusivity asymptotics for a linear parabolic SPDE 
in two space dimensions 
}}
\date{}
\author{\textbf{Yozo Tonaki}\thanks{Graduate School of Engineering Science, Osaka University} 
\thanks{Center for Mathematical Modeling and Data Science (MMDS), Osaka University}
\thanks{e-mail: \texttt{y.tonaki.es@osaka-u.ac.jp}}
\and \textbf{Yusuke Kaino}\thanks{Graduate School of Maritime Sciences, Kobe University}
\and \textbf{Masayuki Uchida}$^{* \dag}$\thanks{CREST, Japan Science and Technology Agency}
}
\begin{document}
\maketitle

\begin{abstract}
We consider parameter estimation of the reaction term 
for a second order linear parabolic
stochastic partial differential equation 
in two space dimensions driven by a $Q$-Wiener process under small diffusivity. 
We first construct an estimator of the reaction parameter 
based on continuous spatio-temporal data, 
and then derive an estimator of the reaction parameter
based on high frequency spatio-temporal data
by discretizing the estimator based on the continuous data.
We show that the estimators have consistency and asymptotic normality.
Furthermore, we give simulation results of 
the estimator based on high frequency data.

\begin{center}
\textbf{Keywords and phrases}
\end{center}
High frequency data,
linear parabolic stochastic partial differential equations,
parametric estimation,
reaction parameter,
small diffusive parameter.
\end{abstract}

\section{Introduction}
Stochastic partial differential equations (SPDEs) combine 
partial differential equations 
with spatio-temporal noise to enable mathematical modeling of spatio-temporal phenomena
and are used widely in natural sciences, medicine and economics.
Second order parabolic SPDEs, which cover stochastic heat equations 
and stochastic reaction-diffusion equations, 
are important models and are applied in many fields such as 
geophysical fluid dynamics, mathematical finance, neurobiology and population genetics, 
see
Piterbarg and Ostrovskii \cite{Piterbarg_Ostrovskii1997}, 
Kusuoka \cite{Kusuoka2000}, 
Tuckwell \cite{Tuckwell2013},
Altmeyer et al.\,\cite{Altmeyer_etal2022},
and Dawson \cite{Dawson1975}.
Since SPDEs are thus used in various fields and can describe complex phenomena, 
statistical inference for SPDEs has recently been studied as well as theories of SPDEs.
Refer to Cialenco \cite{Cialenco2018} for existing theories 
of statistical inference for SPDE models.

Let $\mbb T = [0,1]$ and $D = (0,1)^2$.
We treat the following linear parabolic SPDE in two space dimensions
\begin{align}
\dd X_t(y,z) &= 
\biggl\{ \theta_2 \biggl(\frac{\pd^2}{\pd y^2} + \frac{\pd^2}{\pd z^2} \biggr)
+ \theta_1 \frac{\pd}{\pd y} + \eta_1 \frac{\pd}{\pd z}
+\theta_0 \biggr\} X_t(y,z) \dd t 
\nonumber
\\
&\qquad +\sigma \dd W_t^Q(y,z), 
\quad (t,y,z) \in \mbb T \times D
\label{spde0}
\end{align}
with an initial value $X_0$ and
the Dirichlet boundary condition $X_t(y,z) = 0$, 
$(t,y,z) \in \mbb T \times \pd D$, where 
$W_t^Q$ is a $Q$-Wiener process in a Sobolev space on $D$,
$X_0$ is an $L^2(D)$-valued random variable and independent of $W_t^Q$ and 
$(\theta_0, \theta_1, \eta_1, \theta_2, \sigma) \in \mbb R^3 \times (0,\infty)^2$.
We consider parametric estimation of $\theta_0$ in SPDE \eqref{spde0} 
under small diffusivity, which means that
$\theta_2 \to 0$, $\theta_1 \to 0$ and $\eta_1 \to 0$.
Thus, we reparameterize the coefficients of SPDE \eqref{spde0} as follows.
\begin{equation*}
\nu = \theta_2, 
\quad
\kappa = \theta_1/\theta_2, 
\quad
\eta = \eta_1/\theta_2.
\end{equation*}
Let $\nu \in (0,1)$ and $(\kappa, \eta) \in \mbb R^2$ be known parameters, 
and assume that $\nu \to 0$ and $(\kappa,\eta)$ is fixed.
We then study estimation of the reaction parameter $\theta_0$ 
in the parabolic SPDE 
\begin{equation}\label{spde}
\dd X_t(y,z) = (-\nu \mcl A +\theta_0) X_t(y,z) \dd t +\sigma \dd W_t^Q(y,z), 
\quad (t,y,z) \in \mbb T \times D
\end{equation}
under $\nu \to 0$, where the differential operator $\mcl A$ is given by
\begin{equation*}
-\mcl A = \frac{\pd^2}{\pd y^2} + \frac{\pd^2}{\pd z^2}
+ \kappa \frac{\pd}{\pd y} + \eta \frac{\pd}{\pd z}, 
\end{equation*}
$(\theta_0, \sigma) \in \mbb R \times (0,\infty)$ is 
an unknown parameter whose parameter space is a compact convex subset of $\mbb R \times (0,\infty)$,
and $(\theta_0^*, \sigma^*)$ denotes the true value of $(\theta_0, \sigma)$
and belongs to the interior of the parameter space.

Statistical inference for SPDEs has been studied by many researchers, see for example, 
H{\"u}bner et al.\,\cite{Hubner_etal1993},
H{\"u}bner and Rozovskii \cite{Hubner_Rozovskii1995},
Lototsky \cite{Lototsky2003}
and Cialenco and Glatt-Holtz \cite{Cialenco_Glatt-Holtz2011}.
As for discrete observations, 
see Markussen \cite{Markussen2003},
Bibinger and Trabs \cite{Bibinger_Trabs2020},
Cialenco et al.\,\cite{Cialenco_etal2020},
Chong  \cite{Chong2020},
Hildebrandt and Trabs \cite{Hildebrandt_Trabs2021, Hildebrandt_Trabs2023},
Tonaki et al.\,\cite{TKU2023a, TKU2023arXiv},
Bibinger and Bossert \cite{Bibinger_Bossert2023},
Bossert \cite{Bossert2023arXiv}
and references therein.
In the case of SPDE \eqref{spde0}, which has a linear reaction term,
the Fourier coefficients with respect to the eigenfunctions 
of its differential operator are Ornstein-Uhlenbeck dynamics.
Using statistical inference for diffusion processes based on discrete observations,
Tonaki et al.\ \cite{TKU2023a, TKU2023arXiv}
proposed an estimator of $\theta_0$ of SPDE  \eqref{spde0} 
when $W_t^Q$ is a driving process with $\theta_0$.
Tonaki et al.\,\cite{TKU2024} proposed 
an estimator of $\theta_0$ of SPDE  \eqref{spde0} with small noise,
which means that $\sigma$ is known and $\sigma \to 0$.
For parametric estimation for the linear reaction term 
of the second order parabolic SPDE with one space dimension 
driven by a cylindrical Brownian motion,
see Kaino and Uchida \cite{Kaino_Uchida2021a, Kaino_Uchida2021b}. 
For statistical inference for diffusion processes 
based on discrete observations, 
see Kessler \cite{Kessler1997},
Yoshida \cite{Yoshida2011}, 
Uchida and Yoshida \cite{Uchida_Yoshida2012},
S{\o}rensen and Uchida \cite{Sorensen_Uchida2003}
and Gloter and S{\o}rensen \cite{Gloter_Sorensen2009}.

Recently, Gaudlitz and Reiss \cite{Gaudlitz_Reiss2023} 
treated estimation for the reaction term of SPDEs with $d$-space dimensions
under small diffusivity
in the case that the driving process is a cylindrical Brownian motion and the reaction term is nonlinear. 
However, their model does not encompass SPDE \eqref{spde}.
Note that the $L^2$-estimate of the second order parabolic SPDE with two space dimensions 
whose driving process is a cylindrical Brownian motion 
is divergent,  see Walsh \cite{Walsh1986}.
For this reason, we study parametric estimation for the reaction term 
of SPDE \eqref{spde} driven by a $Q$-Wiener process $W_t^Q$
in line with the approach of \cite{Gaudlitz_Reiss2023}.
The main purpose of this paper is to show that the estimator of $\theta_0$ 
with the weight parameter $\beta$ based on high frequency spatio-temporal data
is bounded in probability or asymptotically normal with the rate 
$\mcl R_{\beta,\nu}^{-1}$ given in \eqref{rate-R} below under $\nu \to 0$. 
We also show that the function $\beta \mapsto \mcl R_{\beta,\nu}$ is decreasing.

This paper is organized as follows.
We state main results in Section \ref{sec2}.
We first propose an estimator of $\theta_0$ 
with the weight parameter $\beta$ using continuous spatio-temporal data.
We then derive an estimator of $\theta_0$ 
based on high frequency spatio-temporal data 
by discretizing the estimator based on continuous spatio-temporal data.
We discuss the optimal choice of the weight parameter $\beta$, 
and also address the estimation of the volatility parameter $\sigma^2$.
Section \ref{sec3} gives the simulation results of the estimator 
based on high frequency spatio-temporal data.
Section \ref{sec4} provides the proofs of our results.

\section{Main results}\label{sec2}
\subsection{Setting and notation}
Let $(\Omega, \FF, \{ \FF_t \}_{t \ge 0}, \PP)$
be a stochastic basis with usual conditions,
and let $\{ w_{l_1,l_2} \}_{l_1,l_2 \in \mbb N}$ be independent $\mbb R$-valued 
standard Brownian motions on this basis. 

Let $\{ \lambda_{l_1,l_2}, e_{l_1,l_2} \}_{l_1,l_2 \in \mbb N}$ 
be the eigenpairs of the differential operator $\mcl A$, 
where the eigenfunctions $e_{l_1,l_2}$ and 
the corresponding eigenvalues $\lambda_{l_1,l_2}$ are given by
\begin{equation*}
\lambda_{l_1,l_2} = \pi^2(l_1^2+l_2^2) + \frac{\kappa^2+\eta^2}{4},
\quad
e_{l_1,l_2}(y,z) = 2\sin(\pi l_1 y) \sin(\pi l_2 z)\ee^{-(\kappa y +\eta z)/2}
\end{equation*}
for $l_1,l_2 \in \mbb N$ and $(y,z) \in \overline D$.
Let $\mcl H = L^2(D)$ with the inner product
\begin{equation*}
\langle u,v \rangle = \langle u,v \rangle_{\mcl H} =
\iint_D u(x,y) v(x,y) \ee^{\kappa x + \eta y} \dd x \dd y,
\quad \| u \| = \| u \|_{\mcl H} = \sqrt{\langle u,u \rangle_\mcl H}.
\end{equation*}
Note that $\{ e_{l_1,l_2} \}_{l_1,l_2 \in \mbb N}$ is 
the complete orthonormal system of $\mcl H$ 
with the inner product $\langle \cdot , \cdot \rangle$.

Fix $\alpha >0$, and define a trace class linear operator $Q$ 
on a Hilbert space $\mcl U \supset \mcl H$ with 
a norm $\| \cdot \|_{\mcl U}$
and the complete orthonormal system $\{ v_{l_1,l_2} \}_{l_1,l_2 \in \mbb N}$, 
$v_{l_1,l_2} = \| e_{l_1,l_2} \|_{\mcl U}^{-1} e_{l_1,l_2}$ such that
$Q v_{l_1, l_2} = \mu_{l_1,l_2}^{-\alpha} \| e_{l_1,l_2} \|_{\mcl U}^2 v_{l_1,l_2}$,
where $\mu_{l_1,l_2} = \pi^2(l_1^2+l_2^2)$. 
We consider the $Q$-Wiener process given by 
\begin{equation*}
W_t^Q = \sum_{l_1,l_2 \ge1} \mu_{l_1,l_2}^{-\alpha/2} \| e_{l_1,l_2} \|_{\mcl U} 
w_{l_1,l_2}(t) v_{l_1,l_2}
= \sum_{l_1,l_2\ge1} \mu_{l_1,l_2}^{-\alpha/2} w_{l_1,l_2}(t) e_{l_1,l_2}.
\end{equation*}
In this paper, we assume that $\alpha >0$ is known. 
See \cite{Bossert2023arXiv} for the estimation of $\alpha$.
Let $\{ S_t \}_{t \ge 0}$ be the semigroup generated 
by the differential operator $- \mcl A$, 
that is, $S_{t} = \ee^{-t \mcl A}$, where
\begin{equation*}
\ee^{-t \mcl A} u = \sum_{l_1,l_2 \ge 1} \ee^{-\lambda_{l_1,l_2} t} 
\langle u, e_{l_1,l_2} \rangle e_{l_1,l_2}, 
\quad u \in \mcl H.
\end{equation*}
Notice that
\begin{equation*}
G_t(\bs x,\bs y) = \sum_{l_1,l_2 \ge 1} \ee^{-\lambda_{l_1,l_2} t} 
e_{l_1,l_2}(\bs x) e_{l_1,l_2}(\bs y), 
\quad \bs x, \bs y \in D
\end{equation*}
is the kernel of the semigroup $S_t$ and 
$S_t u(\bs x) = \langle G_t(\bs x,\cdot), u \rangle$, $u \in \mcl H$.
Let $\widetilde Q$ be a linear operator such that
$\widetilde Q e_{l_1,l_2} = \mu_{l_1,l_2}^{-\alpha} e_{l_1,l_2}$, 
and consider the fractional powers $\widetilde Q^\delta$ for $\delta \in \mbb R$:
\begin{equation}\label{frac-Q}
\widetilde Q^\delta u = \sum_{l_1,l_2 \ge 1} \mu_{l_1,l_2}^{-\alpha \delta} 
u_{l_1,l_2} e_{l_1,l_2}
\end{equation}
for $u = \sum_{l_1,l_2 \ge 1} u_{l_1,l_2} e_{l_1,l_2}$, $u_{l_1,l_2} \in \mbb R$
such that $\widetilde Q^\delta u \in \mcl H$.
Note that $\widetilde Q^\delta u \in \mcl H$ for $u \in \mcl H$ if $\delta \ge 0$.
Define $\mcl U_0 
= \{ \widetilde Q^{1/2} v | v \in \mcl H \}$ 
with the induced norm $\| \widetilde Q^{-1/2} u \|$, $u \in \mcl U_0$. 
For a separable Hilbert space $\mcl K$, 
$\mrm{HS}(\mcl K)$ denotes the space of all 
Hilbert-Schmidt operators from $\mcl K$ to $\mcl H$.
Since it holds from Proposition \ref{propA1} that 
$S_t \in \mrm{HS}(\mcl U_0)$ for $t >0$
and $\int_0^1 \| S_{\nu t} \|_{\mrm{HS}(\mcl U_0)}^2 \dd t < \infty$ 
for $\nu >0$,
there exists a unique mild solution $\{ X_t \}_{t \in \mbb T}$ 
of SPDE \eqref{spde} such that
the variation of constants formula
\begin{equation}\label{vcf}
X_t = S_{\nu t} X_0 + \theta_0 \int_0^t S_{\nu(t-s)} X_s \dd s
+\sigma \int_0^t S_{\nu(t-s)} \dd W_s^Q,
\quad t \in \mbb T
\end{equation}
on $L^2(\Omega;\mcl H)$, and it follows from \eqref{eq-lemA5-1} that 
$t \mapsto X_t$ is continuous $\PP$-a.s., 
see Theorem 7.5 in \cite{DaPrato_Zabczyk2014}.
$\PP_{\theta_0,\sigma}$ denotes the law of the solution 
$\{ X_t \}_{t \in \mbb T}$ of SPDE \eqref{spde} on $C(\mbb T;\mcl H)$.

For $\beta \in \mbb R$, let 
$\mcl L_{\beta}^2 
= \{ u = \sum_{l_1,l_2 \ge 1} u_{l_1,l_2} e_{l_1,l_2} | 
\| u \|_{\mcl L_{\beta}^2} < \infty \}$
with the weighted inner product 
\begin{equation*}
\langle u,v \rangle_{\mcl L_{\beta}^2} 
= \langle \widetilde Q^{\beta/2} u, \widetilde Q^{\beta/2} v \rangle,
\quad \| u \|_{\mcl L_{\beta}^2} 
= \sqrt{\langle u,u \rangle_{\mcl L_{\beta}^2}}.
\end{equation*}
Note that 
$\mcl L_{\beta_1}^2 \subset \mcl L_{\beta_2}^2$ for $\beta_1 \le \beta_2$ and 
$\mcl L_{\beta_{-}}^2 \subset \mcl H \subset \mcl L_{\beta_{+}}^2$ 
for $\beta_{-} \le 0 \le \beta_{+}$.

For families $\{ a_\lambda \}_{\lambda}, \{ b_\lambda \}_{\lambda} \subset \mbb R$,
we write $a_\lambda \lesssim b_\lambda$
if $|a_\lambda| \leq C |b_\lambda|$
for some universal constant $C >0$ and any $\lambda$,
and we write $a_\lambda \sim b_\lambda$ 
if $a_\lambda \lesssim b_\lambda$ and $b_\lambda \lesssim a_\lambda$.
For a family $\{ a_\lambda \}_{\lambda} \subset \mbb R$ and $a \in \mbb R$, 
we write $a_\lambda \equiv a$ if $a_\lambda = a$ for all $\lambda$.

\subsection{Estimation of $\theta_0$ based on continuous observations}
Gaudlitz and Reiss \cite{Gaudlitz_Reiss2023} 
proposed the minimum likelihood estimator of the reaction term 
under small diffusive level.
Since $\sup_{t \in \mbb T} \EE[\| X_t \|_{\mcl L_{-1}^2}^2] \gtrsim 
\sum_{l_1,l_2 \ge 1} (l_1^2 +l_2^2)^{-1} = \infty$ 
and the Novikov condition (2.3) 
in Moleriu \cite{Moleriu2009} for 
$f(t) = \widetilde Q^{-1} X_t$ is not satisfied in our model,
we cannot construct a maximum likelihood estimator of $\theta_0$ using 
the Girsanov theorem (Theorem 2.3 in \cite{Moleriu2009}). 
Therefore, we propose the minimum contrast function based on 
the likelihood of \cite{Gaudlitz_Reiss2023} and provide the minimum contrast estimator
of $\theta_0$. 
Let $\mbb X^{(\mrm{cont})} 
= \{ X_{t}(y,z) \}_{t \in \mbb T, (y,z) \in \overline D}$
be continuous observations. 
Because it holds from Lemma \ref{lemB3} below that 
$\sup_{t \in \mbb T}\EE[ \| X_t \|_{\mcl L_{\beta}^2}^2 ] < \infty$ 
for $\beta > -1$, we set the contrast function 
\begin{equation}\label{contrast}
U_\beta (\theta_0 : \mbb X^{(\mrm{cont})}) = 
\theta_0 \int_0^1 \langle X_t, \dd X_t +\nu \mcl A X_t \dd t \rangle_{\mcl L_{\beta}^2}
+ \frac{\theta_0^2}{2} \int_0^1 \| X_t \|_{\mcl L_{\beta}^2}^2 \dd t,
\end{equation}
where $\beta > -1$ can be chosen arbitrarily.
Define the estimator $\hat \theta_{0,\beta}^{(\mrm{cont})}$
as $\theta_0$ which minimizes \eqref{contrast}. 
That is, 
\begin{equation}\label{est_cont}
\hat \theta_{0,\beta}^{(\mrm{cont})} = 
\hat \theta_{0,\beta,\nu}^{(\mrm{cont})} =
\frac{\displaystyle 
\int_0^1 \langle X_t, \dd X_t + \nu \mcl A X_t \dd t \rangle_{\mcl L_{\beta}^2} 
}{\displaystyle \int_0^1 \| X_t \|_{\mcl L_{\beta}^2}^2 \dd t}.
\end{equation}
The numerator of \eqref{est_cont} 
is decomposed as follows:
\begin{align*}
&\int_0^1 \langle X_t, \dd X_t + \nu \mcl A X_t \dd t \rangle_{\mcl L_{\beta}^2}
\\
&= \sum_{l_1,l_2 \ge 1}
\biggl\{
\int_0^1 
\mu_{l_1,l_2}^{-\alpha \beta}
\langle X_t, e_{l_1,l_2} \rangle \dd \langle X_t, e_{l_1,l_2} \rangle
+ \nu \int_0^1 \lambda_{l_1,l_2} \mu_{l_1,l_2}^{-\alpha \beta} 
\langle X_t, e_{l_1,l_2} \rangle^2 \dd t
\biggr\}.
\end{align*}
\begin{rmk}\label{rmk1}
The estimator of $\theta_0$ proposed by Gaudlitz and Reiss \cite{Gaudlitz_Reiss2023} 
in the SPDE driven by a cylindrical Brownian motion 
corresponds to our proposed estimator \eqref{est_cont} for $\beta = 0$
in the SPDE driven by the $Q$-Wiener process. 
If we choose $\beta$ independent of the value of the damping parameter $\alpha$ and 
spatio-temporal observation data,
it may not be possible to construct a consistent estimator of $\theta_0$
(see Theorem \ref{th1}, \ref{th2} or Corollary \ref{cor1} below). 
In fact, we observe that in our setting of Section \ref{sec3}, 
the consistent estimator of $\theta_0$ based on discrete observations
cannot be constructed unless $\beta > 0.2$.
In order to get a consistent estimator of $\theta_0$, 
we choose a suitable $\beta$ depending on the value of $\alpha$ and spatio-temporal observation data
and propose the estimator with the parameter $\beta$.
\end{rmk}
For $\beta \ge -1$, 
we make the following initial condition.
\begin{description}
\item[\textbf{[A1]$_{\beta,2}$}]
$\EE[\| X_0 \|_{\mcl L_{\beta}^2}^2] < \infty$. 
\end{description}
For $\beta >-1$, define
\begin{equation}\label{rate-R}
\mcl R_{\beta,\nu} =
\frac{\displaystyle
\int_0^1 \EE \Bigl[ \| X_t \|_{\mcl L_{\beta}^2}^2 \Bigr] \dd t}
{\displaystyle 
\sqrt{\int_0^1 \EE \Bigl[ \| X_t \|_{\mcl L_{2\beta +1}^2}^2 \Bigr] \dd t}}.
\end{equation}
We then obtain the following theorem.
\begin{thm}\label{th1}
Let $\alpha >0$ and $\beta > -1$. 
Assume that [A1]$_{\beta,2}$ holds. 
\begin{enumerate}
\item[(1)]
It follows that as $\nu \to 0$, 
\begin{equation}\label{th1-rate-R}
\mcl R_{\beta,\nu} \sim
\begin{cases}
\nu^{-1/2}, & -1 < \beta < \frac{1}{2\alpha} -1,
\\
(-\nu \log \nu)^{-1/2}, & \beta = \frac{1}{2\alpha} -1,
\\
\nu^{\alpha(\beta +1) -1}, & \frac{1}{2\alpha} -1 < \beta < \frac{1}{\alpha} -1,
\\
-\log \nu, & \beta = \frac{1}{\alpha} -1,
\\
1, & \frac{1}{\alpha} -1 < \beta.
\end{cases}
\end{equation}

\item[(2)]
If $-1 < \beta \le \frac{1}{2\alpha} -1$, then as $\nu \to 0$,
\begin{equation*}
\mcl R_{\beta,\nu} (\hat \theta_{0,\beta}^{(\mrm{cont})} -\theta_0^*) 
\dto N(0,(\sigma^*)^2).
\end{equation*}

\item[(3)]
If $\frac{1}{2\alpha} -1 < \beta \le \frac{1}{\alpha} -1$, then as $\nu \to 0$,
\begin{equation*}
\mcl R_{\beta,\nu} (\hat \theta_{0,\beta}^{(\mrm{cont})} -\theta_0^*) = \OO_\PP(1).
\end{equation*}
\end{enumerate}
\end{thm}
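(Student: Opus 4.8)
\emph{Proof strategy.}
The plan is to linearise the estimator and then reduce everything to the asymptotics of two explicit quadratic functionals of the Ornstein--Uhlenbeck modes $x_{l_1,l_2}(t):=\langle X_t,e_{l_1,l_2}\rangle$. Each $x_{l_1,l_2}$ solves $\dd x_{l_1,l_2}(t)=(\theta_0^*-\nu\lambda_{l_1,l_2})x_{l_1,l_2}(t)\,\dd t+\sigma^*\mu_{l_1,l_2}^{-\alpha/2}\,\dd w_{l_1,l_2}(t)$, the Brownian motions $w_{l_1,l_2}$ being independent over $(l_1,l_2)$; substituting these dynamics into the displayed decomposition of the numerator of \eqref{est_cont}, the $\nu\lambda_{l_1,l_2}$-terms cancel and one is left with the exact identity
\begin{equation*}
\hat\theta_{0,\beta}^{(\mrm{cont})}-\theta_0^*=\sigma^*\,\frac{\mcl M_\nu}{\mcl V_\nu},
\qquad
\mcl M_\nu:=\int_0^1\langle X_t,\dd W_t^Q\rangle_{\mcl L_\beta^2},
\qquad
\mcl V_\nu:=\int_0^1\|X_t\|_{\mcl L_\beta^2}^2\,\dd t .
\end{equation*}
Since $\mcl M_\nu=\sum_{l_1,l_2\ge1}\mu_{l_1,l_2}^{-\alpha(\beta+1/2)}\int_0^1 x_{l_1,l_2}(t)\,\dd w_{l_1,l_2}(t)$, Lemma \ref{lemB3} (applied with index $2\beta+1>-1$) gives $D_\nu<\infty$, whence $\mcl M_\nu$ is the terminal value of a continuous square-integrable martingale with quadratic variation $\mcl I_\nu:=\int_0^1\|X_t\|_{\mcl L_{2\beta+1}^2}^2\,\dd t$; in particular $\EE[\mcl M_\nu^2]=D_\nu$ and $\EE[\mcl V_\nu]=N_\nu$, where $N_\nu$ and $D_\nu$ are the numerator and the square of the denominator of $\mcl R_{\beta,\nu}$ in \eqref{rate-R}, so that $\mcl R_{\beta,\nu}=N_\nu/\sqrt{D_\nu}$.

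For \textbf{(1)} I would insert the exact second moment of $x_{l_1,l_2}(t)$ and integrate over $t\in[0,1]$; by [A1]$_{\beta,2}$ (and $\|\cdot\|_{\mcl L_{2\beta+1}^2}\le\|\cdot\|_{\mcl L_\beta^2}$, so the same assumption controls both) the initial-value contribution is $\OO(1)$ uniformly in $\nu$, and $N_\nu,D_\nu$ reduce, up to two-sided constants, to lattice sums $\sum_{l_1,l_2\ge1}\mu_{l_1,l_2}^{-\alpha(\gamma+1)}(1+\nu\mu_{l_1,l_2})^{-1}$ with $\gamma=\beta$ and $\gamma=2\beta+1$ respectively. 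Comparing such a sum with $\int_1^\infty n^{1-2\alpha(\gamma+1)}(1+\nu n^2)^{-1}\,\dd n$ and substituting $u=\sqrt\nu\,n$ gives $\nu^{\alpha(\gamma+1)-1}$ for $\gamma<\tfrac1\alpha-1$, $-\log\nu$ for $\gamma=\tfrac1\alpha-1$, and a positive constant for $\gamma>\tfrac1\alpha-1$. Using $2\beta+1>\tfrac1\alpha-1\iff\beta>\tfrac1{2\alpha}-1$ and matching the two asymptotics in $\mcl R_{\beta,\nu}=N_\nu/\sqrt{D_\nu}$ yields precisely the five regimes of \eqref{th1-rate-R}.

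For \textbf{(2)} and \textbf{(3)} I would write
\begin{equation*}
\mcl R_{\beta,\nu}\bigl(\hat\theta_{0,\beta}^{(\mrm{cont})}-\theta_0^*\bigr)=\sigma^*\,\frac{\mcl M_\nu/\sqrt{D_\nu}}{\mcl V_\nu/N_\nu}
\end{equation*}
and handle the two factors separately. Denominator: in both ranges $\beta\le\tfrac1\alpha-1$, hence $N_\nu\to\infty$ by (1); splitting $x_{l_1,l_2}$ into its initial-value part and its purely stochastic part, the stochastic parts are independent over $(l_1,l_2)$, and the Gaussian identity $\VV\!\bigl(\int_0^1 x^2\,\dd t\bigr)=2\iint_{[0,1]^2}\cov(x_s,x_t)^2\,\dd s\,\dd t$ together with the Ornstein--Uhlenbeck covariance gives $\VV(\mcl V_\nu)\lesssim\sum_{l_1,l_2\ge1}\mu_{l_1,l_2}^{-2\alpha(\beta+1)}(1+\nu\mu_{l_1,l_2})^{-3}=o(N_\nu^2)$; the initial-value part is $\OO_\PP(1)$ by [A1]$_{\beta,2}$, so Chebyshev's inequality gives $\mcl V_\nu/N_\nu\pto1$. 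Numerator: $\EE[(\mcl M_\nu/\sqrt{D_\nu})^2]=1$, so in case (3) it is $\OO_\PP(1)$ and the claim $\mcl R_{\beta,\nu}(\hat\theta_{0,\beta}^{(\mrm{cont})}-\theta_0^*)=\OO_\PP(1)$ follows. In case (2) one has in addition $2\beta+1\le\tfrac1\alpha-1$, so the same variance estimate with $\gamma=2\beta+1$ gives $\mcl I_\nu/D_\nu\pto1$; as $\mcl M_\nu$ comes from a continuous martingale, the Dambis--Dubins--Schwarz time change (equivalently the martingale central limit theorem) then gives $\mcl M_\nu/\sqrt{D_\nu}\dto N(0,1)$, and Slutsky's theorem yields $\mcl R_{\beta,\nu}(\hat\theta_{0,\beta}^{(\mrm{cont})}-\theta_0^*)\dto N(0,(\sigma^*)^2)$.

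The Ornstein--Uhlenbeck moment computations, the lattice-sum asymptotics, and the check that the initial value (controlled by [A1]$_{\beta,2}$) never disturbs the leading order are routine. The real obstacle is the variance bound $\VV(\mcl V_\nu)=o(N_\nu^2)$, resp.\ $\VV(\mcl I_\nu)=o(D_\nu^2)$: it degrades exactly at the exponent $\gamma=\tfrac1\alpha-1$, because for larger $\gamma$ the functional $\int_0^1\|X_t\|_{\mcl L_\gamma^2}^2\,\dd t$ is dominated by finitely many Ornstein--Uhlenbeck coordinates, so there is no self-averaging and neither the law of large numbers nor the central limit theorem survives --- this is precisely what forces the thresholds $\beta\le\tfrac1{2\alpha}-1$ (asymptotic normality) and $\beta\le\tfrac1\alpha-1$ (boundedness in probability). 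Making the variance estimate hold uniformly over all regimes of (1), with the $\nu$-dependent cutoff $n\gtrsim\nu^{-1/2}$ in the lattice sums treated correctly, is where the bulk of the work lies.
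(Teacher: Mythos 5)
Your argument is correct and reaches the same normalization and reduction as the paper --- the identity $\hat\theta_{0,\beta}^{(\mrm{cont})}-\theta_0^*=\sigma^* M_{\beta,\nu,1}/J_{\beta,\nu}$ with $\lqv M_{\beta,\nu}\rqv_1=J_{2\beta+1,\nu}$, the martingale CLT for the numerator, Chebyshev for part (3), and Slutsky --- but you handle the two key technical inputs by a genuinely different route. For the rate (1) you compute $\EE[J_{\gamma,\nu}]$ exactly through the Ornstein--Uhlenbeck modes and lattice-sum asymptotics; the paper instead proves $\EE[J_{\beta,\nu}]\sim\varphi_\beta(\nu)$ via Hilbert--Schmidt norms of the semigroup together with a Gronwall upper bound and a separate lower-bound argument (Proposition \ref{propA1}), although your exact formula is essentially the one the paper itself derives later in Proposition \ref{prop1}, so this part is fully consistent. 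The more substantial divergence is the law of large numbers $J_{\beta,\nu}/\EE[J_{\beta,\nu}]\pto 1$ (and $J_{2\beta+1,\nu}/\EE[J_{2\beta+1,\nu}]\pto1$): the paper proves it by Malliavin calculus, bounding $\VV[J_{\beta,\nu}]$ through the Poincar\'e inequality and a Picard-iteration control of the Malliavin derivative (Proposition \ref{propA2} and Lemmas \ref{lemA1}--\ref{lemA3}), whereas you exploit the fact that, $X_0$ being independent of $W^Q$, the solution is conditionally Gaussian with independent OU coordinates, so $\VV$ of the purely stochastic part is computed from $\cov(x_s,x_t)^2$ and shown to be $o(\EE[J_{\gamma,\nu}]^2)$ exactly on the ranges $\gamma\le\frac1\alpha-1$ needed for (2) and (3). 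Your route is more elementary and makes the thresholds transparent; the paper's Malliavin route, inherited from Gaudlitz and Reiss, is heavier but does not rely on the conditional Gaussian structure and so would survive nonlinear reaction terms. Two small points to tighten in your write-up: in the LLN decomposition you treat only the squared initial-value part as $\OO_\PP(1)$, but you should also dispose of the cross term between the initial-value and stochastic-convolution parts (Cauchy--Schwarz gives $\OO_\PP(\sqrt{\EE[J_{\beta,\nu}]})=\oo_\PP(\EE[J_{\beta,\nu}])$ since $\EE[J_{\beta,\nu}]\to\infty$ for $\beta\le\frac1\alpha-1$); and when invoking Dambis--Dubins--Schwarz note that the time-change Brownian motion varies with $\nu$, so one should cite the standard CLT for continuous martingales with convergent normalized quadratic variation (the paper uses Theorem A.1 of \cite{Altmeyer_etal2022arXiv}) rather than a literal fixed-Brownian-motion time change. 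Neither point affects the validity of the strategy.
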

\begin{rmk}
When one estimates $\theta_0$ based on continuous observations 
under a regular initial condition, 
it is sufficient to use the result of Theorem \ref{th1}-(2)
since one can choose $\beta < \frac{1}{2\alpha} -1$. 
As mentioned in Remark \ref{rmk1}, when one constructs the estimator of $\theta_0$ 
based on discrete observations, one needs the result of Theorem \ref{th1}-(3) 
since one chooses $\beta$ depending on $\alpha$ and 
the number of spatio-temporal observations, see 
Theorem \ref{th2}-(3), Corollary \ref{cor1} or Section \ref{sec3}.
\end{rmk}
The properties of $\mcl R_{\beta,\nu}$ 
and the optimal choice of the weight parameter $\beta$ will be addressed 
in Subsection \ref{sec2.4}.

\subsection{Estimation of $\theta_0$ based on discrete observations}
Our main aim is to estimate $\theta_0$ based on 
high frequency spatio-temporal data.
To this end, we use discrete data and construct an estimator of $\theta_0$ 
by discretizing the estimator 
given in \eqref{est_cont}.

Let $\mbb X_{N,M} 
= \{ X_{t_i}(y_j,z_k)\}_{0 \le i \le N,, 0 \le j \le M_1, 0 \le k \le M_2}$
be discrete observations, where $t_i = i/N$, $y_j = j/M_1$ and $z_k = k/M_2$. 
We define the estimator of $\theta_0$ based on $\mbb X_{N,M}$ by
\begin{align}
\hat \theta_{0,\beta} =
\hat \theta_{0,\beta,\nu,N,M,L} & = 
\sum_{i=1}^N \sum_{l_1=1}^L \sum_{l_2=1}^L \mu_{l_1, l_2}^{-\alpha \beta}
[ X_{t_{i-1}} ]_{M,l_1,l_2}
\Bigl(
[ X_{t_i} ]_{M,l_1,l_2} - \ee^{-\nu \lambda_{l_1,l_2}/N} [ X_{t_{i-1}} ]_{M,l_1,l_2}
\Bigr)
\nonumber
\\
&\qquad \times
\Biggl\{\frac{1}{N}
\sum_{i=1}^N \sum_{l_1=1}^L \sum_{l_2=1}^L \mu_{l_1, l_2}^{-\alpha \beta}
[ X_{t_{i-1}} ]_{M,l_1,l_2}^2
\Biggr\}^{-1},
\label{est}
\end{align}
where
\begin{align*}
h_l(x:a) &= 
\frac{\sqrt{2}\ee^{ax/2}}{(a/2)^2+(\pi l)^2}
\biggl( \frac{a}{2} \sin(\pi l x) - \pi l \cos(\pi l x) \biggr),
\quad a, x \in \mbb R, \ l \in \mbb N,
\\
\delta_j^{[y]} h_l(a) &= h_l(y_{j}:a) - h_l(y_{j-1}:a),
\quad
\delta_k^{[z]} h_l(a) = h_l(z_{k}:a) - h_l(z_{k-1}:a), 
\\
[ X_t ]_{M,l_1,l_2} &=
\sum_{j=1}^{M_1} \sum_{k=1}^{M_2}
X_t(y_{j-1},z_{k-1}) 
\delta_j^{[y]} h_{l_1}(\kappa) \delta_k^{[z]} h_{l_2}(\eta),
\quad t \in \mbb T.
\end{align*}
For simplicity, we assume that 
there exist $b_M > 1/2$ and $b_L > (\alpha -1)/4$ such that
$M_1 \land M_2 = N^{b_M}$, $L = N^{b_L}$.
The derivation of this estimator can be found in Subsection \ref{sec4.2}.

For $\beta \in \mbb R$ and $q >2$, define
$\mcl L_{\beta}^q 
= \{ u \in \mcl L_{\beta}^2 | \| u \|_{\mcl L_{\beta}^q} < \infty \}$ with
\begin{equation*}
\| u \|_{\mcl L_{\beta}^q}
= \biggl( \iint_D |\widetilde Q^{\beta/2} u(x,y)|^q \ee^{\kappa x + \eta y} \dd x \dd y
\biggr)^{1/q}.
\end{equation*}
For $\beta \in \mbb R$ and $q \ge 2$, let $\mcl L^q = \mcl L_{0}^q$ and
$\mcl L_{\beta}^{\infty} = \bigcap_{q \ge 2} \mcl L_{\beta}^q$.
For $\alpha >0$, $x \in (0,1]$ and $p > 4/x$, we set
\begin{equation*}
\rho_{x}^{(\mrm{time})} = 
\begin{cases}
2x, & x \in (0,1/2],
\\
\frac{1}{2(1 -x)}, & x \in (1/2,1),
\\
\infty, & x = 1,
\end{cases}
\end{equation*}
\begin{equation*}
\tau_{1,x,p}^{(\mrm{space})} = 1 -\frac{2}{p x -2},
\quad
\tau_{2,x,p}^{(\mrm{space})} = 
\begin{cases}
1 -\frac{1}{p x -1}, & x \in (0,1/2],
\\
1 -\frac{2}{p}, & x \in (1/2,1],
\end{cases}
\end{equation*}
\begin{equation*}
\rho_{1,x,p}^{(\mrm{space})} = 1 -\frac{p +4}{2p x +p -4},
\quad
\rho_{2,x,p}^{(\mrm{space})} = 
\begin{cases}
1 -\frac{p +2}{2p x +p -2}, & x \in (0,1/2],
\\
\frac{1}{2} -\frac{1}{p}, & x \in (1/2,1],
\end{cases}
\end{equation*}
\begin{equation*}
\tau_{x,p}^{(\mrm{trunc})} = 2\biggl(1 -\frac{2}{p x -2} \biggr),
\quad
\rho_{x,p}^{(\mrm{trunc})} = 2\biggl(1 -\frac{p +2}{2p x +p -4}\biggr),
\end{equation*}
\begin{equation*}
\phi_{1,x,p}^{(\mrm{space})} = \frac{p -1}{p x -2}, 
\quad
\phi_{2,x,p}^{(\mrm{space})} = 
\begin{cases}
\frac{p}{p x -1}, & x \in (0,1/2],
\\
3 -2x +\frac{2}{p}, & x \in (1/2,1],
\end{cases}
\end{equation*}
\begin{equation*}
\psi_{1,x,p}^{(\mrm{space})} = \frac{2(p -1)}{2p x +p -4}, 
\quad
\psi_{2,x,p}^{(\mrm{space})} = 
\begin{cases}
\frac{2p}{2p x +p -2}, & x \in (0,1/2],
\\
\frac{3}{2} -x +\frac{1}{p}, & x \in (1/2,1],
\end{cases}
\end{equation*}
\begin{equation*}
\phi_{x,\alpha,p}^{(\mrm{trunc})}
= \frac{(\alpha-1)p x +2(1 -2\alpha +p)}{2(p x -2)},
\quad
\psi_{x,\alpha,p}^{(\mrm{trunc})} 
= \frac{(\alpha-1)p x +2(1 -2\alpha +p)}{2p x +p -4},
\end{equation*}
and introduce the following conditions.

\begin{description}
\item[\textbf{[A1]$_{\beta,p}$}]
$\EE[\| X_0 \|_{\mcl L_{\beta}^p}^2] < \infty$.

\item[\textbf{[B1]$_{\beta,p}$}]
$\nu (M_1 \land M_2)^{2r_1} N^{-\phi_{1,\alpha(\beta +1),p}^{(\mrm{space})}} 
\to \infty$ and
$\nu (M_1 \land M_2)^{2r_2} N^{-\phi_{2,\alpha(\beta +1),p}^{(\mrm{space})}} 
\to \infty$ 
as $\nu \to 0$, $N \to \infty$ and $M_1 \land M_2 \to \infty$ 
for some $r_1 < \tau_{1,\alpha(\beta+1),p}^{(\mrm{space})}$ and
$r_2 < \tau_{2,\alpha(\beta+1),p}^{(\mrm{space})}$.

\item[\textbf{[B2]$_{\beta,p}$}]
$\nu L^s N^{-\phi_{\alpha(\beta +1),\alpha,p}^{(\mrm{trunc})}} \to \infty$ 
as $\nu \to 0$, $N \to \infty$ and $L \to \infty$ 
for some $s < \tau_{\alpha(\beta +1),p}^{(\mrm{trunc})}$.

\item[\textbf{[C1]$_{\beta}$}]
$\nu N^q \to \infty$ as $\nu \to 0$ and $N \to \infty$ 
for some $q < \rho_{\alpha(\beta +1)}^{(\mrm{time})}$.

\item[\textbf{[C2]$_{\beta,p}$}]
$\nu (M_1 \land M_2)^{2r_1} N^{-\psi_{1,\alpha(\beta +1),p}^{(\mrm{space})}} 
\to \infty$ and
$\nu (M_1 \land M_2)^{2r_2} N^{-\psi_{2,\alpha(\beta +1),p}^{(\mrm{space})}} 
\to \infty$ 
as $\nu \to 0$, $N \to \infty$ and $M_1 \land M_2 \to \infty$ 
for some $r_1 < \rho_{1,\alpha(\beta+1),p}^{(\mrm{space})}$ and
$r_2 < \rho_{2,\alpha(\beta+1),p}^{(\mrm{space})}$.

\item[\textbf{[C3]$_{\beta,p}$}]
$\nu L^s N^{-\psi_{\alpha(\beta +1),\alpha,p}^{(\mrm{trunc})}} \to \infty$ 
as $\nu \to 0$, $N \to \infty$ and $L \to \infty$ 
for some $s < \rho_{\alpha(\beta +1),p}^{(\mrm{trunc})}$.
\end{description}

\begin{rmk}\label{rmk3}
\begin{enumerate}
\item[(1)]
For $\beta \ge -1$ and $p > 2$, 
[A1]$_{-1,p}$ is a stricter condition than [A1]$_{\beta,2}$.

\item[(2)]
Since $\| u \|_{\mcl L_{-1}^p} \sim \| \mcl A^{\alpha/2} u \|_{\mcl L^p}$ 
and $\mcl A$ is the second order elliptic operator,
for example, if $X_0$ is non-random and belongs to $C^{\lceil \alpha \rceil}(D)$, 
then [A1]$_{-1,p}$ and especially [A1]$_{-1,\infty}$ are satisfied.

\item[(3)]
For $\beta > -1$ and $p > 4$, 
[C2]$_{\beta,p}$ and [C3]$_{\beta,p}$ are stricter conditions 
than [B1]$_{\beta,p}$ and [B2]$_{\beta,p}$, respectively.
\end{enumerate}
\end{rmk}

Propositions \ref{propA1}-\ref{propB1} below show that 
the estimators $\hat \theta_{0,\beta}^{(\mrm{cont})}$ and $\hat \theta_{0,\beta}$
are asymptotically equivalent as $\nu \to 0$,
$N \to \infty$, $M_1 \land M_2 \to \infty$ and $L \to \infty$.
We obtain the following main theorem.

\begin{thm}\label{th2}
Let $\alpha >0$, $-1 < \beta \le \frac{1}{\alpha} -1$ 
and $p > \frac{4}{\alpha(\beta +1)}$. Assume that [A1]$_{-1,p}$ holds.
\begin{enumerate}
\item[(1)]

Under [B1]$_{\beta,p}$ and [B2]$_{\beta,p}$, it holds that 
as $\nu \to 0$, $N \to \infty$, $M_1 \land M_2 \to \infty$ and $L \to \infty$,
\begin{equation*}
\hat \theta_{0,\beta} \pto \theta_0^*.
\end{equation*}

\item[(2)]
If $-1 < \beta \le \frac{1}{2\alpha}-1$, then it holds that 
under [C1]$_\beta$, [C2]$_{\beta,p}$ and [C3]$_{\beta,p}$, 
\begin{equation}\label{th2_eq1}
\mcl R_{\beta,\nu} (\hat \theta_{0,\beta} - \theta_0^*) \dto N(0,(\sigma^*)^2)
\end{equation}
as $\nu \to 0$, $N \to \infty$, $M_1 \land M_2 \to \infty$ and $L \to \infty$.

\item[(3)]
If $\frac{1}{2\alpha}-1 < \beta \le \frac{1}{\alpha}-1$, then
it holds that under [C1]$_\beta$, [C2]$_{\beta,p}$ and [C3]$_{\beta,p}$, 
\begin{equation}\label{th2_eq2}
\mcl R_{\beta,\nu} (\hat \theta_{0,\beta} - \theta_0^*) = \OO_\PP(1)
\end{equation}
as $\nu \to 0$, $N \to \infty$, $M_1 \land M_2 \to \infty$ and $L \to \infty$.
\end{enumerate}
\end{thm}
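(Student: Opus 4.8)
The plan is to deduce Theorem \ref{th2} from Theorem \ref{th1} by proving that the discrete estimator $\hat\theta_{0,\beta}$ of \eqref{est} and the continuous estimator $\hat\theta_{0,\beta}^{(\mrm{cont})}$ of \eqref{est_cont} are asymptotically equivalent. Writing
\[
\mcl R_{\beta,\nu}(\hat\theta_{0,\beta} -\theta_0^*)
= \mcl R_{\beta,\nu}(\hat\theta_{0,\beta}^{(\mrm{cont})} -\theta_0^*)
+ \mcl R_{\beta,\nu}(\hat\theta_{0,\beta} -\hat\theta_{0,\beta}^{(\mrm{cont})})
\]
and recalling from \eqref{th1-rate-R} that $\mcl R_{\beta,\nu} \to \infty$ for every $-1 < \beta \le \frac{1}{\alpha}-1$, Theorem \ref{th1}-(2),(3) combined with $\mcl R_{\beta,\nu}\to\infty$ give $\hat\theta_{0,\beta}^{(\mrm{cont})} \pto \theta_0^*$, and these two parts also describe the limit of $\mcl R_{\beta,\nu}(\hat\theta_{0,\beta}^{(\mrm{cont})} -\theta_0^*)$ in the regimes of parts (2) and (3). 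Everything therefore reduces to the asymptotic negligibility of the remainder: the milder conditions [B1]$_{\beta,p}$, [B2]$_{\beta,p}$ should yield $\hat\theta_{0,\beta} -\hat\theta_{0,\beta}^{(\mrm{cont})} \pto 0$, which suffices for part (1), while the stronger conditions [C1]$_\beta$, [C2]$_{\beta,p}$, [C3]$_{\beta,p}$ should yield $\mcl R_{\beta,\nu}(\hat\theta_{0,\beta} -\hat\theta_{0,\beta}^{(\mrm{cont})}) \pto 0$, which gives parts (2) and (3).

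Both estimators are ratios, say $\hat\theta_{0,\beta} = \mcl N^{(\mrm{disc})}/\mcl D^{(\mrm{disc})}$ and $\hat\theta_{0,\beta}^{(\mrm{cont})} = \mcl N^{(\mrm{cont})}/\mcl D^{(\mrm{cont})}$. Expanding in the orthonormal basis $\{ e_{l_1,l_2} \}$ and using that the coordinate $x_{l_1,l_2}(t) := \langle X_t, e_{l_1,l_2} \rangle$ is, by \eqref{vcf} and \eqref{spde}, the Ornstein--Uhlenbeck process with $\dd x_{l_1,l_2}(t) = (-\nu\lambda_{l_1,l_2}+\theta_0^*) x_{l_1,l_2}(t)\,\dd t + \sigma^*\mu_{l_1,l_2}^{-\alpha/2}\,\dd w_{l_1,l_2}(t)$, one obtains
\[
\mcl D^{(\mrm{cont})} = \int_0^1 \sum_{l_1,l_2\ge1} \mu_{l_1,l_2}^{-\alpha\beta} x_{l_1,l_2}(t)^2\,\dd t,
\qquad
\mcl N^{(\mrm{cont})} -\theta_0^* \mcl D^{(\mrm{cont})} = \sigma^* \sum_{l_1,l_2\ge1} \mu_{l_1,l_2}^{-\alpha(\beta +1/2)} \int_0^1 x_{l_1,l_2}(t)\,\dd w_{l_1,l_2}(t),
\]
whereas $\mcl D^{(\mrm{disc})}$ and $\mcl N^{(\mrm{disc})}$ are the same expressions with $x_{l_1,l_2}(t)$ replaced by the grid functional $[X_t]_{M,l_1,l_2}$, with $\int_0^1(\cdot)\,\dd t$ replaced by the Riemann sum over $\{ t_i \}$, with the series truncated to $l_1,l_2\le L$, and with $x_{l_1,l_2}(t_i)-\ee^{-\nu\lambda_{l_1,l_2}/N}x_{l_1,l_2}(t_{i-1})$ in place of $\int_{t_{i-1}}^{t_i}(\dd x_{l_1,l_2}+\nu\lambda_{l_1,l_2}x_{l_1,l_2}\,\dd t)$; the exact damping factor $\ee^{-\nu\lambda_{l_1,l_2}/N}$ is chosen because $\ee^{(-\nu\lambda_{l_1,l_2}+\theta_0^*)/N}-\ee^{-\nu\lambda_{l_1,l_2}/N}=\ee^{-\nu\lambda_{l_1,l_2}/N}(\ee^{\theta_0^*/N}-1)$, so this increment isolates $\theta_0^*/N$ times $x_{l_1,l_2}(t_{i-1})$ plus an $\FF_{t_i}$-martingale increment of conditional variance of order $(\sigma^*)^2\mu_{l_1,l_2}^{-\alpha}/N$, matching the continuous quadratic characteristic $(\sigma^*)^2\int_0^1\|X_t\|_{\mcl L_{2\beta+1}^2}^2\,\dd t$. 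Using the identity $\hat\theta_{0,\beta}-\hat\theta_{0,\beta}^{(\mrm{cont})}=(\mcl N^{(\mrm{disc})}-\mcl N^{(\mrm{cont})})/\mcl D^{(\mrm{disc})}+\mcl N^{(\mrm{cont})}(\mcl D^{(\mrm{cont})}-\mcl D^{(\mrm{disc})})/(\mcl D^{(\mrm{disc})}\mcl D^{(\mrm{cont})})$ together with the lower bound $\mcl D^{(\mrm{cont})}\gtrsim \int_0^1\EE[\|X_t\|_{\mcl L_\beta^2}^2]\,\dd t$ in probability (a law-of-large-numbers estimate as in the proof of Theorem \ref{th1}, with Lemma \ref{lemB3}), it remains to show that $\mcl R_{\beta,\nu}|\mcl D^{(\mrm{disc})}-\mcl D^{(\mrm{cont})}|$ and $\mcl R_{\beta,\nu}|\mcl N^{(\mrm{disc})}-\mcl N^{(\mrm{cont})}|$ are $o_\PP(\int_0^1\EE[\|X_t\|_{\mcl L_\beta^2}^2]\,\dd t)$, and the same without the factor $\mcl R_{\beta,\nu}$ for part (1).

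For each of these bounds I would split the error into three contributions and estimate them separately, using the a priori $\mcl L_\beta^p$-, $\mcl L^p$- and $\mcl L_\beta^\infty$-moment estimates for $X_t$ available under [A1]$_{-1,p}$ and Lemma \ref{lemB3}: (i) the \emph{time-discretization} error, i.e.\ replacing $\int_0^1(\cdot)\,\dd t$ by the Riemann sum over $\{ t_i \}$ and $\int_{t_{i-1}}^{t_i}x_{l_1,l_2}(t)(\dd x_{l_1,l_2}(t)+\nu\lambda_{l_1,l_2}x_{l_1,l_2}(t)\,\dd t)$ by $x_{l_1,l_2}(t_{i-1})(x_{l_1,l_2}(t_i)-\ee^{-\nu\lambda_{l_1,l_2}/N}x_{l_1,l_2}(t_{i-1}))$, controlled by moment estimates on Ornstein--Uhlenbeck increments and by [C1]$_\beta$ through $\rho_x^{(\mrm{time})}$ at $x=\alpha(\beta+1)$; (ii) the \emph{spatial-quadrature} error, i.e.\ replacing $\langle X_t,e_{l_1,l_2}\rangle$ by the grid functional $[X_t]_{M,l_1,l_2}$ built from $h_{l_1},h_{l_2}$, controlled by spatial-regularity estimates for $X_t$ and by [B1]$_{\beta,p}$, [C2]$_{\beta,p}$ through $\tau_{j,x,p}^{(\mrm{space})}$, $\rho_{j,x,p}^{(\mrm{space})}$, $\phi_{j,x,p}^{(\mrm{space})}$, $\psi_{j,x,p}^{(\mrm{space})}$ at $x=\alpha(\beta+1)$; and (iii) the \emph{spectral-truncation} error, i.e.\ the tail $\sum_{l_1\lor l_2>L}$, controlled by the smoothing of $\widetilde Q$ and of the semigroup $S_{\nu t}$ in \eqref{vcf}, by the constraint $b_L>(\alpha-1)/4$, and by [B2]$_{\beta,p}$, [C3]$_{\beta,p}$ through $\tau_{x,p}^{(\mrm{trunc})}$, $\rho_{x,p}^{(\mrm{trunc})}$, $\phi_{x,\alpha,p}^{(\mrm{trunc})}$, $\psi_{x,\alpha,p}^{(\mrm{trunc})}$ at $x=\alpha(\beta+1)$. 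Each of these three estimates is exactly the content of one of Propositions \ref{propA1}--\ref{propB1}; the proof is completed by collecting the three rates, combining with Theorem \ref{th1} and the lower bound on $\mcl D^{(\mrm{cont})}$, and checking the resulting requirements against the hypotheses of (1), (2) and (3).

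I expect step (ii), the spatial-quadrature bound, to be the main obstacle. It requires sharp, $p$-dependent control of how accurately the discrete functional $[X_t]_{M,l_1,l_2}$ reproduces the Fourier coefficient $\langle X_t,e_{l_1,l_2}\rangle$ \emph{uniformly} in $l_1,l_2\le L=N^{b_L}$ and in $t\in\mbb T$, for a $Q$-Wiener-driven solution whose spatial smoothness degrades as $\alpha\downarrow0$; this is precisely what forces the two-regime ($x\le1/2$ versus $x>1/2$) structure of the exponents $\rho_x^{(\mrm{time})}$, $\tau_{j,x,p}^{(\mrm{space})}$, $\rho_{j,x,p}^{(\mrm{space})}$ and the rest, and the compatibility assumptions $M_1\land M_2=N^{b_M}$ with $b_M>1/2$ and $L=N^{b_L}$ with $b_L>(\alpha-1)/4$, and it is what makes the joint control of $\nu$, $M_1\land M_2$ and $N$ delicate. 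The time-discretization bound, the tail bound, and the final assembly are comparatively routine once the moment and regularity estimates are in hand.
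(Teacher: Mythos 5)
Your proposal follows essentially the same route as the paper: reduce Theorem \ref{th2} to Theorem \ref{th1} by showing asymptotic equivalence of $\hat\theta_{0,\beta}$ and $\hat\theta_{0,\beta}^{(\mrm{cont})}$, splitting the discrepancy into temporal-discretization, spatial-quadrature and spectral-truncation errors in the numerator and denominator, bounding each at the $\varphi_\beta(\nu)$ scale (for consistency) and at the $\varphi_{2\beta+1}(\nu)^{1/2}$ scale (for the $\mcl R_{\beta,\nu}$-rated statements) under [C1]$_\beta$, [B1]$_{\beta,p}$/[C2]$_{\beta,p}$ and [B2]$_{\beta,p}$/[C3]$_{\beta,p}$ respectively, exactly as the paper does via the chain $\hat\theta_{0,\beta}^{(\mrm{cont})} \to \hat\theta_{0,\beta}^{(\mrm{disc}\text{-}\mrm{cont})} \to \hat\theta_{0,\beta}^{(\mrm{disc})} \to \hat\theta_{0,\beta}$ in Propositions \ref{propB1}--\ref{propB3} together with the moment and regularity bounds of Lemmas \ref{lemB3}--\ref{lemB6}. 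The outline and the identification of where each hypothesis enters are correct; it is an accurate blueprint of the paper's own argument.
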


See Subsection \ref{sec2.4} for sufficient conditions for consistency 
and the asymptotic properties \eqref{th2_eq1}, \eqref{th2_eq2} 
of the estimator $\hat \theta_{0,\beta}$ 
under the given discrete data $\mbb X_{N,M}$.

Finally, we give the sufficient conditions when $p=\infty$. 
Notice that
\begin{equation*}
\rho_{2,x,p}^{(\mrm{space})} \le \rho_{1,x,p}^{(\mrm{space})},
\quad
\phi_{1,x,p}^{(\mrm{space})} \le \phi_{2,x,p}^{(\mrm{space})},
\quad
\psi_{2,x,p}^{(\mrm{space})} \le \psi_{1,x,p}^{(\mrm{space})}
\end{equation*}
for sufficiently large $p$.
For $\alpha >0$ and $x \in (0,1]$, define
\begin{equation*}
\rho_{x}^{(\mrm{space})} = 1 -\frac{1}{2 x +1},
\quad
\rho_{x}^{(\mrm{trunc})} = 2\biggl(1 -\frac{1}{2 x +1}\biggr),
\end{equation*}
\begin{equation*}
\phi_{x}^{(\mrm{space})} = \frac{1}{x} \lor (3 -2x),
\quad
\psi_{1,x}^{(\mrm{space})} = \frac{2}{2 x +1},
\quad
\psi_{2,x}^{(\mrm{space})} = \frac{3}{2} -x, 
\end{equation*}
\begin{equation*}
\phi_{x,\alpha}^{(\mrm{trunc})} = \frac{(\alpha-1)x +2}{2x},
\quad
\psi_{x,\alpha}^{(\mrm{trunc})} = \frac{(\alpha-1) x +2}{2x +1}.
\end{equation*}
If $p = \infty$, 
the conditions [B1]$_{\beta,p}$, [B2]$_{\beta,p}$, [C2]$_{\beta,p}$ and 
[C3]$_{\beta,p}$ are respectively replaced by
\begin{description}
\item[\textbf{[B1]$_{\beta, \infty}$}]
$\nu (M_1 \land M_2)^{2r} N^{-\phi_{\alpha(\beta +1)}^{(\mrm{space})}} 
\to \infty$ 
as $\nu \to 0$, $N \to \infty$ and $M_1 \land M_2 \to \infty$ for some $r < 1$,

\item[\textbf{[B2]$_{\beta,\infty}$}]
$\nu L^s N^{-\phi_{\alpha(\beta +1),\alpha}^{(\mrm{trunc})}} \to \infty$ 
as $\nu \to 0$, $N \to \infty$ and $L \to \infty$ for some $s < 2$,

\item[\textbf{[C2]$_{\beta,\infty}$}]
$\nu (M_1 \land M_2)^{2r_1} N^{-\psi_{1,\alpha(\beta +1)}^{(\mrm{space})}} \to \infty$ 
and $\nu (M_1 \land M_2)^{2r_2} N^{-\psi_{2,\alpha(\beta +1)}^{(\mrm{space})}} 
\to \infty$ as $\nu \to 0$, $N \to \infty$ and $M_1 \land M_2 \to \infty$ 
for some $r_1 < \rho_{\alpha(\beta+1)}^{(\mrm{space})}$ and $r_2 < 1/2$,

\item[\textbf{[C3]$_{\beta,\infty}$}]
$\nu L^s N^{-\psi_{\alpha(\beta +1),\alpha}^{(\mrm{trunc})}} \to \infty$ 
as $\nu \to 0$, $N \to \infty$ and $L \to \infty$ 
for some $s < \rho_{\alpha(\beta +1)}^{(\mrm{trunc})}$.
\end{description}
We now obtain the sufficient conditions required 
for consistency or the asymptotic properties \eqref{th2_eq1}, \eqref{th2_eq2}
of the estimator $\hat \theta_{0,\beta}$ when [A1]$_{-1,\infty}$ is satisfied.

\subsection{Optimal choice of $\beta$}\label{sec2.4}
In this subsection, we discuss the optimal choice of $\beta$ for $\alpha$.
More precisely, we fix $\alpha >0$ and investigate the superiority of 
the estimators $\hat \theta_{0,\beta_1}$ and $\hat \theta_{0,\beta_2}$ 
for $-1 <\beta_1 < \beta_2 \le \frac{1}{\alpha} -1$.

For fixed $\alpha >0$, let $\frac{1}{2\alpha} -1 < \beta' < \frac{1}{\alpha} -1$. 
Because of \eqref{th1-rate-R} and the relation 
\begin{equation*}
1 \lesssim -\log \nu \lesssim \nu^{\alpha(\beta' +1)-1}
\lesssim (-\nu \log \nu)^{-1/2} \lesssim \nu^{-1/2},
\quad \nu \to 0,
\end{equation*}
the convergence rate $\mcl R_{\beta,\nu}^{-1}$ 
of the estimator $\hat \theta_{0,\beta}^{(\mrm{cont})}$
is faster as $\beta$ gets smaller.
However, the convergence rates of the estimators 
$\hat \theta_{0,\beta_1}^{(\mrm{cont})}$ 
and $\hat \theta_{0,\beta_2}^{(\mrm{cont})}$
are both $\nu^{1/2}$ for $-1 < \beta_1 < \beta_2 < \frac{1}{2\alpha} -1$, 
and we cannot determine the superiority of 
the estimators by only using \eqref{th1-rate-R}.
For this reason, we need to examine the properties 
of $\mcl R_{\beta,\nu}$ in more detail. 

The following proposition shows the superiority of 
the estimators $\hat \theta_{0,\beta_1}^{(\mrm{cont})}$ and 
$\hat \theta_{0,\beta_2}^{(\mrm{cont})}$ 
for $-1 < \beta_1 < \beta_2 < \frac{1}{2\alpha} -1$.
\begin{prop}\label{prop1}
Assume that [A1]$_{-1,2}$ holds. 
Then, for $\alpha >0$ and  $\nu \in (0,1)$, 
\begin{equation*}
\mcl R_{\beta,\nu} = 
\frac{\displaystyle\sum_{l_1,l_2 \ge 1} f_{l_1,l_2}(2(\nu \lambda_{l_1,l_2} -\theta_0): 
\alpha, \sigma^2, X_0) \mu_{l_1,l_2}^{-\alpha(\beta +1)}}
{\sqrt{\displaystyle\sum_{l_1,l_2 \ge 1} f_{l_1,l_2}(2(\nu \lambda_{l_1,l_2} -\theta_0): 
\alpha, \sigma^2, X_0) \mu_{l_1,l_2}^{-2\alpha(\beta +1)}}},
\end{equation*}
where $f_{l_1,l_2}(x:\alpha, \sigma^2, X_0) = \mu_{l_1,l_2}^\alpha 
\EE[\langle X_0, e_{l_1,l_2} \rangle^2] f_1(x)+ \sigma^2 f_2(x)$,
\begin{equation*}
f_1(x) = 
\begin{cases}
\frac{1 -\ee^{-x}}{x}, & x \neq 0,
\\
1, & x = 0, 
\end{cases}
\quad
f_2(x) = 
\begin{cases}
\frac{x -1 +\ee^{-x}}{x^2}, & x \neq 0,
\\
\frac{1}{2}, & x = 0
\end{cases}
\end{equation*}
and the function $\beta \mapsto \mcl R_{\beta,\nu}$ is decreasing 
on $(-1,\frac{1}{\alpha} -1]$. 
\end{prop}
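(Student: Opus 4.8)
I would prove the two assertions in sequence: first the closed form for $\mcl R_{\beta,\nu}$, obtained by diagonalising SPDE \eqref{spde}; then the monotonicity in $\beta$, which reduces to log-convexity of a Dirichlet-type series.

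\textbf{The closed form.} I would project the mild solution \eqref{vcf} onto the eigenbasis $\{ e_{l_1,l_2} \}$. The coefficients $x_{l_1,l_2}(t) = \langle X_t, e_{l_1,l_2} \rangle$ solve the scalar Ornstein--Uhlenbeck equations $\dd x_{l_1,l_2}(t) = (\theta_0^* - \nu\lambda_{l_1,l_2}) x_{l_1,l_2}(t)\,\dd t + \sigma^*\mu_{l_1,l_2}^{-\alpha/2}\,\dd w_{l_1,l_2}(t)$. Solving explicitly, using the independence of $X_0$ and $\{ w_{l_1,l_2} \}$ and the It\^o isometry to discard the cross term, and then integrating $t$ over $[0,1]$, the elementary identities $\int_0^1 \ee^{-xt}\,\dd t = f_1(x)$ and $\int_0^1 x^{-1}(1 - \ee^{-xt})\,\dd t = f_2(x)$ (extended continuously at $x=0$, evaluated at $x = 2(\nu\lambda_{l_1,l_2} - \theta_0^*)$) give
\[
\int_0^1 \EE\bigl[ x_{l_1,l_2}(t)^2 \bigr]\,\dd t = \mu_{l_1,l_2}^{-\alpha}\, f_{l_1,l_2}\bigl( 2(\nu\lambda_{l_1,l_2} - \theta_0^*) : \alpha, (\sigma^*)^2, X_0 \bigr).
\]
Since $\| X_t \|_{\mcl L_{\gamma}^2}^2 = \sum_{l_1,l_2} \mu_{l_1,l_2}^{-\alpha\gamma}\, x_{l_1,l_2}(t)^2$, Tonelli's theorem together with the finiteness supplied by Lemma \ref{lemB3} yield $\int_0^1 \EE[\| X_t \|_{\mcl L_{\gamma}^2}^2]\,\dd t = \sum_{l_1,l_2} f_{l_1,l_2}( 2(\nu\lambda_{l_1,l_2} - \theta_0^*) )\, \mu_{l_1,l_2}^{-\alpha(\gamma+1)}$. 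Specialising to $\gamma = \beta$ in the numerator of \eqref{rate-R} and to $\gamma = 2\beta + 1$ (so that $\gamma + 1 = 2(\beta+1)$) in the denominator produces the claimed formula.

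\textbf{Monotonicity.} For the second statement, put $\gamma = \alpha(\beta+1)$, $c_{l_1,l_2} = f_{l_1,l_2}( 2(\nu\lambda_{l_1,l_2} - \theta_0^*) ) > 0$ (positive because $(\sigma^*)^2 f_2 > 0$), $m_{l_1,l_2} = \mu_{l_1,l_2} > 1$, and $S(t) = \sum_{l_1,l_2} c_{l_1,l_2}\, m_{l_1,l_2}^{-t}$, so that $\mcl R_{\beta,\nu} = S(\gamma)/\sqrt{S(2\gamma)}$. The first step is to check that $S(t) < \infty$ for every $t > 0$: from $\lambda_{l_1,l_2} \sim \mu_{l_1,l_2} \to \infty$ and the bounds $f_1(x), f_2(x) \lesssim x^{-1}$ for large $x$ one obtains $c_{l_1,l_2} \lesssim \mu_{l_1,l_2}^{\alpha-1}\EE[\langle X_0, e_{l_1,l_2} \rangle^2] + \mu_{l_1,l_2}^{-1}$; the first term is summable against $m_{l_1,l_2}^{-t}$ by [A1]$_{-1,2}$ (since $m_{l_1,l_2}^{-t} \le (2\pi^2)^{-t}$) and the second because $\sum_{l_1,l_2} \mu_{l_1,l_2}^{-1-t} < \infty$ exactly when $t > 0$. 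Absorbing extra factors $(\log m_{l_1,l_2})^k$ into a power $m_{l_1,l_2}^{-\varepsilon/2}$ gives locally uniform convergence of the differentiated series on $(0,\infty)$, whence $S \in C^\infty((0,\infty))$ with $S^{(k)}(t) = (-1)^k \sum_{l_1,l_2} c_{l_1,l_2}\, m_{l_1,l_2}^{-t} (\log m_{l_1,l_2})^k$. Cauchy--Schwarz then gives $S''(t) S(t) \ge S'(t)^2$, i.e.\ $\log S$ is convex, so $(\log S)'$ is increasing, and strictly so because $\log m_{l_1,l_2}$ is not constant in $(l_1,l_2)$. Since $\dd\gamma/\dd\beta = \alpha > 0$ and $2\gamma > \gamma$,
\[
\frac{\dd}{\dd\beta}\log \mcl R_{\beta,\nu} = \alpha\bigl( (\log S)'(\gamma) - (\log S)'(2\gamma) \bigr) < 0 ,
\]
and because $\gamma = \alpha(\beta+1) > 0$ for every $\beta > -1$, this holds throughout $(-1, \tfrac1\alpha - 1]$, proving the assertion.

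\textbf{Main difficulty.} The only delicate point is the convergence bookkeeping in the monotonicity step — establishing that $S$ is finite and $C^\infty$ precisely on $t > 0$ and justifying the term-by-term differentiation; once $\log S$ is known to be convex, the monotonicity follows at once from $2\gamma > \gamma$.
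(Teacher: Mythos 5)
Your proof is correct, and the first half is essentially the paper's own argument: the paper likewise writes $X_t = T_{\nu,t}X_0 + \sigma\int_0^t T_{\nu,t-s}\,\dd W_s^Q$ with $T_{\nu,t}=\ee^{-t(\nu\mcl A-\theta_0)}$, uses independence of $X_0$ and $W^Q$, and integrates the eigenmode variances to get $\EE[J_{\gamma,\nu}]=\sum_{l_1,l_2} f_{l_1,l_2}(2\lambda_{l_1,l_2}^{(\theta_0)})\mu_{l_1,l_2}^{-\alpha(\gamma+1)}$, exactly your OU computation with $\int_0^1\ee^{-xt}\dd t=f_1(x)$ and the iterated integral giving $f_2(x)$; plugging in $\gamma=\beta$ and $\gamma=2\beta+1$ gives the formula. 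For the monotonicity the mechanism differs slightly. You prove strict log-convexity of $S(t)=\sum c_{l_1,l_2}\mu_{l_1,l_2}^{-t}$ via Cauchy--Schwarz ($S''S\ge (S')^2$, strict since $\log\mu_{l_1,l_2}$ is nonconstant and all $c_{l_1,l_2}>0$), then compare $(\log S)'$ at $\gamma$ and $2\gamma$. The paper instead sets $F(x)=S(x)$, $G(x)=F(x)^2/F(2x)$ and shows directly that $F'(x)F(2x)-F(x)F'(2x)<0$ by antisymmetrizing the double sum over pairs $(l_1,l_2)$, $(l_3,l_4)$ with $\mu_{l_1,l_2}<\mu_{l_3,l_4}$, where the sign of $(\mu_{l_3,l_4}^x-\mu_{l_1,l_2}^x)\log(\mu_{l_1,l_2}/\mu_{l_3,l_4})$ is manifest. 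Both routes establish the same inequality $(\log F)'(x)<(\log F)'(2x)$ at $x=\alpha(\beta+1)$; yours is the more conceptual "Dirichlet series is log-convex" statement and cleanly isolates why $2\gamma>\gamma$ is what matters, at the cost of justifying two term-by-term differentiations, while the paper's symmetrization needs only one derivative and no convexity language. Your convergence bookkeeping (finiteness of $S(t)$ for $t>0$ from $f_1,f_2\lesssim 1/x$, [A1]$_{-1,2}$ for the $X_0$ part, and $\sum\mu_{l_1,l_2}^{-1-t}<\infty$) matches the paper's brief remark that $f_{l_1,l_2}(x)\lesssim 1/x$ under [A1]$_{-1,2}$, so there is no gap.
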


Proposition \ref{prop1} shows that for $-1 <\beta_1 < \beta_2 < \frac{1}{2\alpha} -1$, 
$\hat \theta_{0,\beta_1}^{(\mrm{cont})}$ is superior to 
$\hat \theta_{0,\beta_2}^{(\mrm{cont})}$ in the sense that 
the asymptotic variance is smaller.
Indeed, by setting
$\mcl I_\beta = \lim_{\nu \to 0} (\sigma^*)^{-2} \nu \mcl R_{\beta,\nu}^2$
for $-1 < \beta < \frac{1}{2\alpha} -1$,
we see from Theorem \ref{th1} and Proposition \ref{prop1} that
$\nu^{-1/2}(\hat \theta_{0,\beta}^{(\mrm{cont})} -\theta_0^*)
\dto N(0,\mcl I_{\beta}^{-1})$
and $\mcl I_{\beta_1}^{-1} < \mcl I_{\beta_2}^{-1}$.

Theorem \ref{th2} indicates that the choice of $\beta$ in
the estimator $\hat \theta_{0,\beta}$ depends on 
the high frequency spatio-temporal data $\mbb X_{N,M}$.
We thus consider how to choose $\beta$ from the given data $\mbb X_{N,M}$.

Because $x \mapsto \rho_{x}^{(\mrm{time})}$, 
$x \mapsto \tau_{j,x,p}^{(\mrm{space})}$
and $x \mapsto \rho_{j,x,p}^{(\mrm{space})}$ are increasing and
$x \mapsto \phi_{j,x,p}^{(\mrm{space})}$
and $x \mapsto \psi_{j,x,p}^{(\mrm{space})}$ are decreasing for $p \ge 2$ and $j=1,2$,
we see that the conditions required for consistency or 
the asymptotic properties \eqref{th2_eq1}, \eqref{th2_eq2}
of the estimator $\hat \theta_{0,\beta}$ get more relaxed as $\beta$ increases
under a fixed $\alpha$. Therefore, we show the lower bound of $\beta$ 
for the given data $\mbb X_{N,M}$.
For simplicity, let $p = \infty$.
For $\alpha,{\rm n}, {\rm m} >0$, define
\begin{equation*}
\beta_{\alpha,{\rm n},{\rm m}}^{(\mrm{cons})} =
\begin{cases}
\frac{{\rm n}}{\alpha({\rm m} -1)} -1, & {\rm m} \ge 2 {\rm n} +1,
\\
\frac{1}{2\alpha}(3 -\frac{{\rm m}-1}{{\rm n}}) -1, & {\rm m} < 2 {\rm n} +1,
\end{cases}
\end{equation*}
and $\beta_{\alpha,{\rm n},{\rm m}}^{(\mrm{asym})} 
= \beta_{\alpha,{\rm n}}^{(\mrm{as}\text{-}\mrm{t})} 
\lor \beta_{\alpha,{\rm n},{\rm m}}^{(\mrm{as}\text{-}\mrm{sp})}$, 
where
\begin{equation*}
\beta_{\alpha,{\rm n}}^{(\mrm{as}\text{-}\mrm{t})} =
\begin{cases}
\frac{1}{2 \alpha {\rm n}} -1, & {\rm n} \ge 1,
\\
\frac{1}{\alpha}(1 -\frac{{\rm n}}{2}) -1, & {\rm n} < 1,
\end{cases}
\quad
\beta_{\alpha,{\rm n},{\rm m}}^{(\mrm{as}\text{-}\mrm{sp})} =
\frac{1}{2\alpha}
\biggl( \frac{1+ 2{\rm n}}{{\rm m} -1} 
\lor \frac{3{\rm n} -{\rm m} +2}{{\rm n}} \biggr) -1, \quad {\rm m} >1.
\end{equation*}

We then have the following corollary of Theorem \ref{th2} with $p = \infty$.
Note that as mentioned in Remark \ref{rmk3}-(2), 
the condition [A1]$_{-1,\infty}$ is satisfied 
if the initial value $X_0$ is deterministic and smooth.
\begin{cor}\label{cor1}
Let $\alpha >0$ and $-1 < \beta \le \frac{1}{\alpha} -1$. 
Assume that [A1]$_{-1,\infty}$ holds.
For ${\rm m} > {\rm n} >0$ and $\ell > \frac{{\rm n}(\alpha -1)}{4} \lor 0$, let
$N = \nu^{-\rm{n}}$, $(M_1 \land M_2)^2 = \nu^{-\rm{m}}$ and $L = \nu^{-\ell}$. 
\begin{itemize}
\item[(1)]
The estimator $\hat \theta_{0,\beta}$ is consistent as $\nu \to 0$
if $\ell > \frac{1 +{\rm n}\phi_{\alpha(\beta +1),\alpha}^{(\mrm{trunc})}}{2}$
and $\beta > \beta_{\alpha,{\rm n},{\rm m}}^{(\mrm{cons})}$ 
for ${\rm m} > {\rm n}+1$.

\item[(2)]
The estimator $\hat \theta_{0,\beta}$ has the asymptotic property 
\eqref{th2_eq1} or \eqref{th2_eq2} as $\nu \to 0$ if 
$\ell > \frac{1 +{\rm n}\psi_{\alpha(\beta +1),\alpha}^{(\mrm{trunc})}}
{\rho_{\alpha(\beta +1)}^{(\mrm{trunc})}}$
and $\beta > \beta_{\alpha,{\rm n},{\rm m}}^{(\mrm{asym})}$ 
for ${\rm m} > {\rm n}+2$.
\end{itemize}
\end{cor}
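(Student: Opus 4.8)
The plan is to derive Corollary \ref{cor1} by simply substituting the polynomial scalings $N = \nu^{-\rm n}$, $(M_1 \land M_2)^2 = \nu^{-\rm m}$, $L = \nu^{-\ell}$ into the abstract conditions [B2]$_{\beta,\infty}$, [C1]$_\beta$, [C2]$_{\beta,\infty}$, [C3]$_{\beta,\infty}$ of Theorem \ref{th2} (with $p=\infty$), and then reading off the exponent inequalities that make each divergence requirement hold. First I would observe that under these scalings every quantity appearing in the conditions is a monomial in $\nu$, so each condition ``$\nu\cdot(\text{monomial})\to\infty$ for some exponent below a threshold'' becomes a strict linear inequality among $\rm n$, $\rm m$, $\ell$, $\alpha$, $\beta$; the ``for some $r<\tau$'' / ``for some $s<\rho$'' clauses mean we only need the inequality with the \emph{supremal} exponent to hold strictly, i.e.\ we replace $r$ (resp.\ $s$) by the corresponding $\tau$ (resp.\ $\rho$) and require a \emph{strict} inequality. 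I would then treat part (1) (consistency, via [B2] and the $M$-condition [B1]$_{\beta,\infty}$) and part (2) (asymptotic normality/boundedness, via [C1], [C2]$_{\beta,\infty}$, [C3]$_{\beta,\infty}$) separately.

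For part (1): the truncation condition [B2]$_{\beta,\infty}$ with $L=\nu^{-\ell}$ reads $\nu^{1 - \ell s}N^{-\phi_{\alpha(\beta+1),\alpha}^{(\mrm{trunc})}}\to\infty$ for some $s<2$; writing $N^{-\phi}=\nu^{{\rm n}\phi}$, this holds iff $1 + {\rm n}\phi_{\alpha(\beta+1),\alpha}^{(\mrm{trunc})} - 2\ell < 0$, i.e.\ $\ell > \tfrac12\bigl(1 + {\rm n}\phi_{\alpha(\beta+1),\alpha}^{(\mrm{trunc})}\bigr)$, which is exactly the hypothesis. The spatial condition [B1]$_{\beta,\infty}$ with $(M_1\land M_2)^2 = \nu^{-\rm m}$ becomes $\nu^{1 - {\rm m}r + {\rm n}\phi_{\alpha(\beta+1)}^{(\mrm{space})}}\to\infty$ for some $r<1$, which holds iff $1 - {\rm m} + {\rm n}\phi_{\alpha(\beta+1)}^{(\mrm{space})} < 0$; using $\phi_{x}^{(\mrm{space})} = \tfrac1x \lor (3-2x)$ with $x = \alpha(\beta+1)$, solving this inequality for $\beta$ (handling the two branches of the max according to whether $\alpha(\beta+1)\lessgtr 1$, equivalently $\beta \lessgtr \tfrac1\alpha-1$, and noting $\beta\le\tfrac1\alpha-1$ puts us on the $1/x$ branch except at the endpoint) yields precisely $\beta > \beta_{\alpha,{\rm n},{\rm m}}^{(\mrm{cons})}$, together with the requirement ${\rm m}>{\rm n}+1$ so that the regime is non-vacuous. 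The remaining hypothesis [A1]$_{-1,\infty}$ is assumed, and the bound $\ell > \tfrac{{\rm n}(\alpha-1)}{4}\lor 0$ is just the standing assumption $b_L > (\alpha-1)/4$ translated through $L = N^{b_L}$.

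For part (2): the same bookkeeping applies. The time condition [C1]$_\beta$, $\nu N^q \to\infty$ for some $q < \rho_{\alpha(\beta+1)}^{(\mrm{time})}$, becomes $1 - {\rm n}\rho_{\alpha(\beta+1)}^{(\mrm{time})} < 0$, and solving for $\beta$ using the piecewise formula for $\rho_x^{(\mrm{time})}$ (the $2x$ branch for $x\le 1/2$ and the $1/(2(1-x))$ branch for $x\in(1/2,1)$) gives the bound $\beta > \beta_{\alpha,{\rm n}}^{(\mrm{as}\text{-}\mrm{t})}$ after splitting on whether ${\rm n}\gtrless 1$. The spatial conditions [C2]$_{\beta,\infty}$ produce two inequalities $1 - {\rm m}r_j + {\rm n}\psi_{j,\alpha(\beta+1)}^{(\mrm{space})} < 0$ with $r_1<\rho_{\alpha(\beta+1)}^{(\mrm{space})}$ and $r_2<1/2$; plugging $\psi_{1,x}^{(\mrm{space})} = \tfrac{2}{2x+1}$, $\psi_{2,x}^{(\mrm{space})} = \tfrac32-x$, $\rho_x^{(\mrm{space})} = 1-\tfrac{1}{2x+1}$ and solving the two resulting inequalities for $\beta$ gives the two terms inside the $\lor$ in $\beta_{\alpha,{\rm n},{\rm m}}^{(\mrm{as}\text{-}\mrm{sp})}$, valid for ${\rm m}>1$; and [C3]$_{\beta,\infty}$ with threshold $s<\rho_{\alpha(\beta+1)}^{(\mrm{trunc})} = 2\rho_{\alpha(\beta+1)}^{(\mrm{space})}$ gives exactly $\ell > \tfrac{1 + {\rm n}\psi_{\alpha(\beta+1),\alpha}^{(\mrm{trunc})}}{\rho_{\alpha(\beta+1)}^{(\mrm{trunc})}}$. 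Taking the maximum $\beta_{\alpha,{\rm n},{\rm m}}^{(\mrm{asym})} = \beta_{\alpha,{\rm n}}^{(\mrm{as}\text{-}\mrm{t})}\lor\beta_{\alpha,{\rm n},{\rm m}}^{(\mrm{as}\text{-}\mrm{sp})}$ ensures all of [C1], [C2]$_{\beta,\infty}$ hold simultaneously, and the condition ${\rm m}>{\rm n}+2$ keeps the regime nonempty; then the conclusion follows from Theorem \ref{th2}-(2) if $\beta\le\tfrac{1}{2\alpha}-1$ and from Theorem \ref{th2}-(3) if $\tfrac{1}{2\alpha}-1<\beta\le\tfrac1\alpha-1$, which is why the statement packages the two cases together as ``\eqref{th2_eq1} or \eqref{th2_eq2}''.

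The main obstacle I anticipate is purely algebraic rather than conceptual: the piecewise definitions of $\rho_x^{(\mrm{time})}$, $\psi_{j,x}^{(\mrm{space})}$, $\phi_x^{(\mrm{space})}$ all switch branches at $x = 1/2$ and at $x = 1$, i.e.\ at $\beta = \tfrac{1}{2\alpha}-1$ and $\beta = \tfrac1\alpha-1$, so inverting each inequality ``$\beta > (\text{something})$'' requires carefully matching which branch is active to which sub-case, and then checking that the resulting piecewise lower bounds $\beta_{\alpha,{\rm n},{\rm m}}^{(\mrm{cons})}$ and $\beta_{\alpha,{\rm n},{\rm m}}^{(\mrm{asym})}$ are internally consistent (e.g.\ continuous across the breakpoints, and actually lying in $(-1,\tfrac1\alpha-1]$ where the hypotheses of Theorem \ref{th2} apply). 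I would organize the computation as a short lemma that each abstract condition is equivalent, under the monomial scalings, to the stated inequality on $(\alpha,{\rm n},{\rm m},\ell,\beta)$, and then assemble Corollary \ref{cor1} by taking conjunctions; the verification that the case-splits on ${\rm m}\gtrless 2{\rm n}+1$ and ${\rm n}\gtrless 1$ in the definitions of $\beta_{\alpha,{\rm n},{\rm m}}^{(\mrm{cons})}$, $\beta_{\alpha,{\rm n}}^{(\mrm{as}\text{-}\mrm{t})}$ are forced by which branch of the corresponding $\lor$ or piecewise function dominates is the one place where care is genuinely needed.
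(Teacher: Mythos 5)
Your plan is essentially the paper's own proof: under the monomial scalings $N=\nu^{-\rm n}$, $(M_1\land M_2)^2=\nu^{-\rm m}$, $L=\nu^{-\ell}$ each of [B1]$_{\beta,\infty}$, [B2]$_{\beta,\infty}$, [C1]$_\beta$, [C2]$_{\beta,\infty}$, [C3]$_{\beta,\infty}$ reduces (by taking $r,s$ up to their supremal thresholds) to a strict exponent inequality, and solving these for $\beta$ and $\ell$ yields exactly $\beta_{\alpha,{\rm n},{\rm m}}^{(\mrm{cons})}$, $\beta_{\alpha,{\rm n},{\rm m}}^{(\mrm{asym})}$ and the stated bounds on $\ell$, after which Theorem \ref{th2} applies; this is precisely how the paper argues. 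One small correction to your parenthetical: the branch of $\phi_x^{(\mrm{space})}=\frac1x\lor(3-2x)$ switches at $x=\frac12$ (not only at $x=1$), and for $x\in(\frac12,1)$ the $3-2x$ branch dominates, which is exactly what produces the ${\rm m}<2{\rm n}+1$ case of $\beta_{\alpha,{\rm n},{\rm m}}^{(\mrm{cons})}$ — consistent with your later remark that the case-splits come from whichever term of the $\lor$ dominates, so the slip does not affect the argument provided both branches are kept.
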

\begin{rmk}
\begin{enumerate}
\item[(1)]
We find from Corollary \ref{cor1} that
the lower bounds of $\beta$ required for consistency and 
the asymptotic properties \eqref{th2_eq1}, \eqref{th2_eq2}
are given by $\beta_{\alpha,{\rm n},{\rm m}}^{(\mrm{cons})}$ and
$\beta_{\alpha,{\rm n},{\rm m}}^{(\mrm{asym})}$, respectively, 
and from Proposition \ref{prop1} that 
$\beta$ close to the lower bound gives a better estimator 
if $\nu$ is sufficiently small.

\item[(2)]
Corollary \ref{cor1} shows that 
we need to increase the number of spatial observations 
relatively more than the number of temporal observations  
in order to obtain the estimator $\hat \theta_{0,\beta}$ with 
consistency or the asymptotic properties \eqref{th2_eq1}, \eqref{th2_eq2} 
as well as Tonaki et al.\,\cite{TKU2024} does.

\item[(3)]
The lower bounds $\beta_{\alpha,{\rm n},{\rm m}}^{(\mrm{cons})}$ and
$\beta_{\alpha,{\rm n},{\rm m}}^{(\mrm{asym})}$
are both decreasing for ${\rm m}$.
This fact means that the lower bounds get small 
as the number of spatial observations increases, 
and we can obtain a better estimator.

\end{enumerate}
\end{rmk}

\subsection{Estimation of $\sigma^2$}\label{sec2.5}
In this subsection, we give an estimator of the volatility parameter $\sigma^2$ 
under $\nu \to 0$ based on high frequency spatio-temporal data.
We consider parametric estimation of $\sigma^2$ in line with the approach of 
Tonaki et al.\,\cite{TKU2023arXiv}. 

For a fixed $b \in (0,1/2)$, let
$\mbb X_{N,m}^{(\mrm{space})} = \{ X_{t_i}(\widetilde y_j, \widetilde z_k) \}_{
0 \le i \le N, 0 \le j \le m_1, 0 \le k \le m_2}$ 
be spatially thinned data of $\mbb X_{N,M}$ such that
$\widetilde y_0, \widetilde z_0 \ge b$, 
$\widetilde y_{m_1}, \widetilde z_{m_2} \le 1-b$,
$\widetilde y_j = \widetilde y_0 +j \delta$ and 
$\widetilde z_k = \widetilde z_0 +k \delta$,
where $\delta = \frac{\widetilde y_{m_1} -\widetilde y_0}{m_1} 
= \frac{\widetilde z_{m_2} -\widetilde z_0}{m_2}$, 
$m_1 m_2 = \OO(\nu^{-1} N)$ and $N = \OO(\nu m_1 m_2)$.
For the thinned data $\mbb X_{N,m}^{(\mrm{space})}$, define the triple increment
\begin{align*}
T_{i,j,k} X &= 
X_{t_i}(\widetilde y_j, \widetilde z_k) 
-X_{t_i}(\widetilde y_{j-1}, \widetilde z_k)
-X_{t_i}(\widetilde y_j, \widetilde z_{k-1})
+X_{t_i}(\widetilde y_{j-1}, \widetilde z_{k-1})
\\
&\qquad -X_{t_{i-1}}(\widetilde y_j, \widetilde z_k) 
+X_{t_{i-1}}(\widetilde y_{j-1}, \widetilde z_k)
+X_{t_{i-1}}(\widetilde y_j, \widetilde z_{k-1})
-X_{t_{i-1}}(\widetilde y_{j-1}, \widetilde z_{k-1}).
\end{align*}
We denote the Bessel function of the first kind of order $0$ by
\begin{equation*}
J_0(x) = 1 +\sum_{k \ge 1} \frac{(-1)^k}{(k!)^2} \Bigl(\frac{x}{2}\Bigr)^{2k}.
\end{equation*}
We assume $\theta_0 \le 0$ so that 
the operator $\mcl A_{\theta_0,\nu} = \nu \mcl A -\theta_0$ is 
positive definite and self-adjoint.
For simplicity, we assume the following initial condition as in \cite{TKU2023arXiv}.
\begin{description}
\item[\textbf{[A2]}]
The initial value $X_0$ is non-random and 
$\| \mcl A^{(1 +\alpha)/2} X_0 \| <\infty$.
\end{description}
Note that we can consider the general initial condition as in \cite{TKU2023a}.

Let $\Delta = 1/N$.
For $\alpha \in (0,2)$ and $\delta/\sqrt{\nu \Delta} \equiv r \in(0,\infty)$, define
\begin{equation*}
\psi_{r,\alpha} 
= \frac{2}{\pi} \int_0^\infty \frac{1 -\ee^{-x^2}}{x^{1 +2\alpha}}
\bigl( J_0(\sqrt{2} r x) -2J_0(r x) +1 \bigr) \dd x.
\end{equation*}

In the same way as Proposition 2.1 in \cite{TKU2023arXiv}, 
we obtain the following result.
\begin{prop}\label{prop3}
Let $\alpha \in (0,2)$ and $\delta/\sqrt{\nu \Delta} \equiv r \in(0,\infty)$. 
Under [A2], it follows that
\begin{equation*}
\EE[(T_{i,j,k}X)^2]
=\nu^{\alpha -1} \Delta^\alpha \sigma^2
\ee^{-\kappa(\widetilde y_{j-1}+\widetilde y_j)/2}
\ee^{-\eta (\widetilde z_{k-1}+\widetilde z_k)/2}
\psi_{r,\alpha}
+ R_{i,j,k} + \OO(\nu^{\alpha -1} \Delta^{1+\alpha}),
\end{equation*}
where $\sum_{i=1}^N R_{i,j,k} = \OO(\nu^{\alpha -1} \Delta^{\alpha})$
uniformly in $j,k$. 
\end{prop}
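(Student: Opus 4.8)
The plan is to diagonalise in the orthonormal system $\{e_{l_1,l_2}\}$ and reduce to an elementary second-moment computation for the Fourier coefficients $x_{l_1,l_2}(t):=\langle X_t,e_{l_1,l_2}\rangle$, which under \eqref{spde} and [A2] are Ornstein--Uhlenbeck processes solving $\dd x_{l_1,l_2}(t)=-\gamma_{l_1,l_2}\,x_{l_1,l_2}(t)\,\dd t+\sigma\mu_{l_1,l_2}^{-\alpha/2}\,\dd w_{l_1,l_2}(t)$ with $\gamma_{l_1,l_2}:=\nu\lambda_{l_1,l_2}-\theta_0>0$ (positivity from $\theta_0\le0$) and deterministic $x_{l_1,l_2}(0)=\langle X_0,e_{l_1,l_2}\rangle$. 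First I would observe that the triple increment is the mixed second spatial difference of the time increment, $T_{i,j,k}X=\sum_{l_1,l_2\ge1}\bigl(x_{l_1,l_2}(t_i)-x_{l_1,l_2}(t_{i-1})\bigr)\,\delta_j^{[y]}\delta_k^{[z]}e_{l_1,l_2}$, where $\delta_j^{[y]}\delta_k^{[z]}f:=f(\widetilde y_j,\widetilde z_k)-f(\widetilde y_{j-1},\widetilde z_k)-f(\widetilde y_j,\widetilde z_{k-1})+f(\widetilde y_{j-1},\widetilde z_{k-1})$. Since the $w_{l_1,l_2}$ are independent and $x_{l_1,l_2}(0)$ deterministic, the cross terms between distinct modes in $\EE[(T_{i,j,k}X)^2]$ factorise, and a direct evaluation of $\VV\bigl(x_{l_1,l_2}(t_i)-x_{l_1,l_2}(t_{i-1})\bigr)$ yields
\begin{align*}
\EE[(T_{i,j,k}X)^2]
&= \Bigl(\sum_{l_1,l_2\ge1}\bigl(\ee^{-\gamma_{l_1,l_2}t_i}-\ee^{-\gamma_{l_1,l_2}t_{i-1}}\bigr)
\langle X_0,e_{l_1,l_2}\rangle\,\delta_j^{[y]}\delta_k^{[z]}e_{l_1,l_2}\Bigr)^2
\\
&\quad+\sigma^2\sum_{l_1,l_2\ge1}\mu_{l_1,l_2}^{-\alpha}\,
\frac{2(1-\ee^{-\gamma_{l_1,l_2}\Delta})-\ee^{-2\gamma_{l_1,l_2}t_{i-1}}(1-\ee^{-\gamma_{l_1,l_2}\Delta})^2}{2\gamma_{l_1,l_2}}\,
(\delta_j^{[y]}\delta_k^{[z]}e_{l_1,l_2})^2.
\end{align*}

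The leading term is the contribution $\sigma^2\sum_{l_1,l_2}\mu_{l_1,l_2}^{-\alpha}\gamma_{l_1,l_2}^{-1}(1-\ee^{-\gamma_{l_1,l_2}\Delta})(\delta_j^{[y]}\delta_k^{[z]}e_{l_1,l_2})^2$; the $\ee^{-2\gamma_{l_1,l_2}t_{i-1}}$--part and the first (deterministic) square will be absorbed into $R_{i,j,k}$. To evaluate it I would use $e_{l_1,l_2}(y,z)=2\sin(\pi l_1y)\sin(\pi l_2z)\ee^{-(\kappa y+\eta z)/2}$: freeze the slowly varying exponential over the cell $[\widetilde y_{j-1},\widetilde y_j]\times[\widetilde z_{k-1},\widetilde z_k]$, producing $\ee^{-\kappa(\widetilde y_{j-1}+\widetilde y_j)/2}\ee^{-\eta(\widetilde z_{k-1}+\widetilde z_k)/2}$ up to relative error $\OO(\delta)$ (this is where $\widetilde y_0,\widetilde z_0\ge b$ is used); write $\sin(\pi l\widetilde y_j)-\sin(\pi l\widetilde y_{j-1})=2\cos(\pi l(\widetilde y_{j-1}+\widetilde y_j)/2)\sin(\pi l\delta/2)$; and replace $\cos^2$ by its mean $1/2$ (justified by summation by parts, the defect oscillating on an $\OO(1)$ scale in $l$ while the other factors vary on the scale $l\sim(\nu\Delta)^{-1/2}$). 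The rescaling $u_i=\pi l_i\sqrt{\nu\Delta}$ --- under which $\gamma_{l_1,l_2}\Delta=u_1^2+u_2^2+\OO(\Delta)$, $\mu_{l_1,l_2}^{-\alpha}=(\nu\Delta)^\alpha(u_1^2+u_2^2)^{-\alpha}$, $\sin^2(\pi l_i\delta/2)=\sin^2(ru_i/2)$ and the Riemann-sum Jacobian is $(\pi^2\nu\Delta)^{-1}$ --- then turns the leading term into $\nu^{\alpha-1}\Delta^\alpha\sigma^2\ee^{-\kappa(\widetilde y_{j-1}+\widetilde y_j)/2}\ee^{-\eta(\widetilde z_{k-1}+\widetilde z_k)/2}$ times $\frac{16}{\pi^2}\iint_{(0,\infty)^2}\frac{(1-\ee^{-(u_1^2+u_2^2)})\sin^2(ru_1/2)\sin^2(ru_2/2)}{(u_1^2+u_2^2)^{\alpha+1}}\,\dd u_1\dd u_2$. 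Passing to polar coordinates and using $J_0(x)=\frac2\pi\int_0^{\pi/2}\cos(x\cos\phi)\,\dd\phi$ (whence $\int_0^{\pi/2}\sin^2(\tfrac{r\rho\cos\phi}2)\sin^2(\tfrac{r\rho\sin\phi}2)\,\dd\phi=\tfrac\pi8(1-2J_0(r\rho)+J_0(\sqrt2r\rho))$) shows that this double integral equals $\tfrac{\pi^2}{16}\psi_{r,\alpha}$, giving the asserted leading term.

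It remains to handle the errors. The discretisation errors in the previous step --- freezing the exponential, the $\cos^2$--averaging defect, the $\OO(\Delta)$ correction to $\gamma_{l_1,l_2}\Delta$, and the Riemann-sum error --- are each of relative order $\Delta$ once their oscillatory parts are treated by a further summation by parts, hence contribute $\OO(\nu^{\alpha-1}\Delta^{1+\alpha})$. For $R_{i,j,k}$, namely the deterministic square together with $-\sigma^2\sum_{l_1,l_2}\mu_{l_1,l_2}^{-\alpha}\gamma_{l_1,l_2}^{-1}\ee^{-2\gamma_{l_1,l_2}t_{i-1}}(1-\ee^{-\gamma_{l_1,l_2}\Delta})^2(\delta_j^{[y]}\delta_k^{[z]}e_{l_1,l_2})^2$, I would sum over $i$ first and use $\sum_{i=1}^N\ee^{-2\gamma_{l_1,l_2}t_{i-1}}\lesssim(\gamma_{l_1,l_2}\Delta\land1)^{-1}$, $|1-\ee^{-\gamma_{l_1,l_2}\Delta}|\lesssim\gamma_{l_1,l_2}\Delta\land1$, [A2] in the form $\sum_{l_1,l_2}\mu_{l_1,l_2}^{1+\alpha}\langle X_0,e_{l_1,l_2}\rangle^2<\infty$, and the sharp double-increment bound $|\delta_j^{[y]}\delta_k^{[z]}e_{l_1,l_2}|\lesssim\min(1,l_1\delta)\min(1,l_2\delta)$ together with $\delta^2\sim\nu\Delta$; splitting the modes according to $\nu\mu_{l_1,l_2}\Delta\lesssim1$ versus $\gtrsim1$ then yields $\sum_{i=1}^NR_{i,j,k}=\OO((\nu\Delta)^\alpha)=\OO(\nu^{\alpha-1}\Delta^\alpha)$ uniformly in $j,k$ (using $\nu<1$).

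The hard part will be making the mode-sum-to-integral passage quantitative enough --- controlling the $\cos^2$--averaging defect and the Riemann-sum error uniformly so that they land in $\OO(\nu^{\alpha-1}\Delta^{1+\alpha})$ --- and, in the same spirit, the uniform-in-$(j,k)$ estimate $\sum_iR_{i,j,k}=\OO(\nu^{\alpha-1}\Delta^\alpha)$; both rely on the delicate bounds for $\delta_j^{[y]}\delta_k^{[z]}e_{l_1,l_2}$ and on summation by parts exploiting that the thinned grid stays in the interior of $D$. Everything else --- the independence and It\^o-isometry reduction and the closed-form evaluation of the Bessel integral --- is routine, and the argument runs parallel to that of Proposition 2.1 in \cite{TKU2023arXiv}, now carrying the $\nu$--dependence throughout.
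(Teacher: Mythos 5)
Your proposal is correct and follows essentially the same route as the paper, which itself defers the details to Proposition 2.1 of Tonaki et al.\,(2023, arXiv:2304.09441): the same decomposition of $\EE[(T_{i,j,k}X)^2]$ into the stationary mode sum $\sigma^2 F_{j,k}^{j,k}$ plus the transient/initial-condition remainder $R_{i,j,k}$, the same rescaling $u_i=\pi l_i\sqrt{\nu\Delta}$ leading to the Bessel-type integral $\psi_{r,\alpha}$, and the same summation-by-parts control of the oscillatory defect and of $\sum_i R_{i,j,k}$. Your direct manipulation of $\sin^2(\pi l\delta/2)$ and the $\cos^2\mapsto 1/2$ averaging is exactly the computation encoded in the paper's mixed fourth difference $D_{1,\delta}^2D_{2,\delta}^2 F_\alpha$ in Lemma 8, so the two write-ups coincide in substance.
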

The estimator of $\sigma^2$ is defined as 
\begin{align*}
\hat \sigma^2 =
\hat \sigma_{\nu,N,m}^2 &=
\frac{1}{N \nu^{\alpha -1} \Delta^\alpha} 
\sum_{k=1}^{m_2} \sum_{j=1}^{m_1} \sum_{i=1}^N
(T_{i,j,k} X)^2 
\ee^{(-\kappa(\widetilde y_{j-1} +\widetilde y_j)
-\eta(\widetilde z_{k-1} +\widetilde z_k))/2}
\\
&\qquad \times
\Biggl(
\psi_{r,\alpha}
\sum_{k=1}^{m_2} \sum_{j=1}^{m_1}
\ee^{-\kappa(\widetilde y_{j-1} +\widetilde y_j)
- \eta(\widetilde z_{k-1} +\widetilde z_k)} \Biggr)^{-1}.
\end{align*}
Then, we have the following asymptotic property of the estimator $\hat \sigma^2$.
\begin{prop}\label{prop4}
Under [A2], 
as $N \to \infty$, $m_1 \land m_2 \to \infty$ and $\nu \to 0$,
\begin{equation*}
N (\hat \sigma^2 -(\sigma^*)^2) = \OO_\PP(1).
\end{equation*}
\end{prop}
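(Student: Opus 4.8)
The plan is to follow the strategy indicated in the paper, namely to mimic the proof of the analogous result (``Proposition 2.2'') in Tonaki et al.\,\cite{TKU2023arXiv}, adapting it to the presence of the diffusivity $\nu$. First I would write the estimator's error as
\begin{equation*}
N(\hat\sigma^2 - (\sigma^*)^2)
= \frac{N}{N\nu^{\alpha-1}\Delta^\alpha \psi_{r,\alpha} A_m}
\sum_{i,j,k}\Bigl( (T_{i,j,k}X)^2 \ee^{w_{j,k}} - \nu^{\alpha-1}\Delta^\alpha(\sigma^*)^2 \psi_{r,\alpha}\ee^{-w_{j,k}}\Bigr),
\end{equation*}
where $w_{j,k} = \kappa(\widetilde y_{j-1}+\widetilde y_j)/2 + \eta(\widetilde z_{k-1}+\widetilde z_k)/2$ and $A_m = \sum_{j,k}\ee^{-2w_{j,k}}$. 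Using Proposition \ref{prop3}, the deterministic mismatch between $\EE[(T_{i,j,k}X)^2]\ee^{w_{j,k}}$ and the target contributes $\sum_i R_{i,j,k}\ee^{w_{j,k}} = \OO(\nu^{\alpha-1}\Delta^\alpha)$ uniformly in $j,k$ plus an $\OO(\nu^{\alpha-1}\Delta^{1+\alpha})$ term; after dividing by $\nu^{\alpha-1}\Delta^\alpha A_m$ and multiplying by $N$, since $A_m \sim m_1 m_2$ and $N = \OO(\nu m_1 m_2)$, these become $\OO_\PP(1)$. So the bias is of the right order and the real work is the fluctuation term.

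Next I would control the centered sum $\sum_{i,j,k}\bigl((T_{i,j,k}X)^2 - \EE[(T_{i,j,k}X)^2]\bigr)\ee^{w_{j,k}}$. Decompose $X_t$ via the variation-of-constants formula \eqref{vcf} into the ``dominant'' stochastic-convolution part and the remainder coming from $S_{\nu t}X_0$ and the $\theta_0\int_0^t S_{\nu(t-s)}X_s\,\dd s$ term; the increments $T_{i,j,k}$ of the latter are of smaller order under [A2] (smoothness of $X_0$ and the regularizing effect of the semigroup), exactly as in \cite{TKU2023arXiv}, and can be absorbed. For the dominant part, expand in the eigenbasis $\{e_{l_1,l_2}\}$: $T_{i,j,k}X$ becomes a linear combination of the Gaussian variables $\int_{t_{i-1}}^{t_i}\ee^{-\nu\lambda_{l_1,l_2}(t_i-s)}\,\dd w_{l_1,l_2}(s)$ (plus a cross term from $[t_{i-1}]$), weighted by second differences of the eigenfunctions. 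The squared triple increments are then quadratic forms in jointly Gaussian variables; computing their variances and covariances and summing gives, after the scaling by $N/(\nu^{\alpha-1}\Delta^\alpha A_m)$, a bound of order $N^2 \cdot \VV(\text{single term})\cdot(\text{number of terms})/(\nu^{\alpha-1}\Delta^\alpha A_m)^2 = \OO(1)$; the $r = \delta/\sqrt{\nu\Delta}$ fixed scaling is precisely what makes the Bessel-function asymptotics in $\psi_{r,\alpha}$ consistent across this count. A Chebyshev/Markov argument then yields the $\OO_\PP(1)$ claim.

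The main obstacle, I expect, is the covariance bookkeeping for the triple increments $T_{i,j,k}X$: unlike the one-dimensional time increments of the Fourier modes, these increments are correlated in $i$ (through the $X_{t_{i-1}}$ appearing in both $T_{i,j,k}$ and $T_{i+1,j,k}$), in the spatial indices $(j,k)$ (neighbouring rectangles share corners), and across modes because the second spatial differences $\delta_j^{[y]}$ of $\sin(\pi l_1 y)$ do not diagonalize nicely at finite $M$. One must show these correlations decay fast enough — using $\ee^{-\nu\lambda_{l_1,l_2}\Delta}$ decay in the mode sums, the oscillation/cancellation in the discrete spatial differences, and the spatial thinning $\delta \gg 1/M$ — so that the variance of the whole sum is $\OO((\nu^{\alpha-1}\Delta^\alpha)^2 A_m / N^2)$ rather than something larger. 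This is where the conditions $m_1 m_2 = \OO(\nu^{-1}N)$ and $N = \OO(\nu m_1 m_2)$ enter decisively: they force $\nu\Delta \sim \delta^2$ up to constants, keeping $r$ bounded away from $0$ and $\infty$, which in turn keeps every moment computation in the regime where $\psi_{r,\alpha}$ is the correct normalization. Once the variance bound is in hand, the rest is routine.
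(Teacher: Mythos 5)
Your proposal follows essentially the same route as the paper: the bias is handled by Proposition \ref{prop3}, the fluctuation by variance/covariance bounds on the squared triple increments $(T_{i,j,k}X)^2$ (the content of the paper's Lemma \ref{lemD1}, whose bound $\OO(\nu^{2\alpha-2} m_1 m_2 \sqrt{m_1 m_2 N}\Delta^{2\alpha})$ combined with $m_1m_2 \sim \nu^{-1}N$ yields exactly the $\OO_\PP(1)$ — in fact $\oo_\PP(1)$ — fluctuation you anticipate), and Chebyshev; the paper merely packages the final step as a Taylor expansion of $K_{\nu,N,m}'$ about $(\sigma^*)^2$, which for this quadratic contrast is identical to your explicit ratio decomposition. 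Apart from minor sign/normalization slips in the weights $\ee^{\pm w_{j,k}}$ and a stray factor of $N$ in your heuristic variance count (the $1/N$ inside the estimator cancels the outer $N$), the argument and its delegation of the covariance bookkeeping match the paper's proof.
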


\begin{rmk}
\begin{enumerate}
\item[(1)]
The estimator $\hat \sigma^2$ is defined as $\sigma^2$ 
which minimizes the contrast function
\begin{equation*}
K_{\nu,N,m}(\sigma^2) =
\sum_{k=1}^{m_2} \sum_{j=1}^{m_1} 
\Biggl\{
\frac{1}{N \nu^{\alpha -1} \Delta^\alpha} \sum_{i=1}^N (T_{i,j,k} X)^2 
-\sigma^2 \psi_{r,\alpha}
\ee^{(-\kappa(\widetilde y_{j-1} +\widetilde y_j)
- \eta(\widetilde z_{k-1} +\widetilde z_k))/2}
\Biggr\}^2.
\end{equation*}
This is because $\sigma^2$ can be identified 
without using two rates $ r = \delta/\sqrt{\nu \Delta}$ and 
$r' = \delta/\sqrt{2\nu \Delta}$ 
if the diffusive parameter $\nu$ is known, see \cite{TKU2023arXiv}. 

\item[(2)]
The estimator $\hat \sigma^2$ has the same rate 
as that in \cite{TKU2023arXiv}, even under $\nu \to 0$.
This is due to the fact that 
the main term of $\EE[(T_{i,j,k}X)^2]$ is independent of $\theta_0$,
see the proof of Lemma 4.1 in \cite{TKU2023arXiv}.
\end{enumerate}
\end{rmk}

\section{Simulations}\label{sec3}
The numerical solution of SPDE \eqref{spde} is generated by
\begin{equation*}
\tilde X_{t_{i}}(y_j, z_k)
= \sum_{l_1=1}^{K_1} \sum_{l_2=1}^{K_2} 
x_{l_1,l_2}(t_{i}) e_{l_1,l_2}(y_j, z_k), 
\quad i = 1,\ldots, N, j = 1,\ldots, M_1, k = 1,\ldots, M_2
\end{equation*}
with
\begin{equation*}
\dd x_{l_1,l_2}(t) = -\lambda_{l_1,l_2}^{(\theta_0)} x_{l_1,l_2}(t) \dd t
+\sigma \mu_{l_1,l_2}^{-\alpha/2} \dd w_{l_1,l_2}(t), 
\quad
x_{l_1,l_2}(0) = \langle X_0, e_{l_1,l_2} \rangle,
\end{equation*}
where $\lambda_{l_1,l_2}^{(\theta_0)} = \nu \lambda_{l_1,l_2} -\theta_0$.
In this simulation, the true values of parameters
$(\theta_0^*, \kappa, \eta, \sigma^*) = (0,1,1,1)$.
We set $N = 10^2$, $M_1 = M_2 = 200$, $K_1 = K_2 = 10^4$, 
$X_0 = 0$, $\alpha = 0.5$, $\nu =0.1$.
We used R language to compute the estimators of Theorem \ref{th2} and Proposition \ref{prop4}.
The number of iterations is $200$.

First, we estimated $\theta_0$.
Table \ref{tab1} is the simulation results of 
the mean and the standard deviation (s.d.) of $\hat \theta_{0,\beta}$.
We set that  $(N, M_1,M_2,\alpha,\nu,\beta,L) = (10^2, 200,200,0.5,0.1,0.6,32)$.
In this case, ${\rm n}=2$, ${\rm m} \approx 4.6$ and $\ell \approx 1.505$
in Corollary \ref{cor1}, thus
$\ell > \frac{1 +{\rm n}\phi_{\alpha(\beta +1),\alpha}^{(\mrm{trunc})}}{2} = 1.5$
and $\beta > \beta_{\alpha,{\rm n},{\rm m}}^{(\mrm{cons})} \approx 0.20 $. 
\begin{table}[h]
\caption{Simulation results of $\hat{\theta}_{0,\beta}$ \label{tab1}}
\begin{center}
\begin{tabular}{c|c} \hline
		&$\hat{\theta}_{0,\beta}$
\\ \hline
true value & 0
\\ \hline
mean & -0.029
 \\
s.d. & (0.274)
 \\   \hline
\end{tabular}
\end{center}
\end{table}
It seems from Table \ref{tab1} that the bias of $\hat{\theta}_{0,\beta}$ is very small
and
$\hat{\theta}_{0,\beta}$ has a good performance.

Next, we estimated $\sigma^2$.
Table \ref{tab2} is the simulation results of 
the mean and the standard deviation (s.d.) of $\hat{\sigma}^2$
with $(N, m_1,m_2,\alpha,\nu)$ 
$ = (10^2, 30,30, 0.5,0.1)$.
\begin{table}[h]
\caption{Simulation results of $\hat{\sigma}^2$ \label{tab2}}
\begin{center}
\begin{tabular}{c|c} \hline
		&$\hat{\sigma}^2$
\\ \hline
true value &1  
\\ \hline
mean & 0.987
 \\
s.d. & (0.008)
 \\   \hline
\end{tabular}
\end{center}
\end{table}
We see from Table \ref{tab2} that 
the bias of $\hat{\sigma}^2$ is very small.

\section{Proofs}\label{sec4}
We set the following notation.
\begin{enumerate}
\item 
$\| \cdot \|_\infty$ denotes the uniform norm on $D$.

\item 
For a continuous square integrable martingale $\{ M_t \}_{t \ge 0}$ 
with $M_0=0$, $\lqv M \rqv_t$ stands for the quadratic variation of 
$\{ M_t \}_{t \ge 0}$.

\item 
For $\gamma \in [-1, \infty)$, define 
$\mcl U_\gamma = \{ \widetilde Q^{(\gamma+1)/2} v | v \in \mcl H \}$ 
with the induced norm $\| \widetilde Q^{-(\gamma +1)/2} u \|$, $u \in \mcl U_\gamma$.

\item
\textit{Hilbert-Schmidt space}.
For separable Hilbert spaces $\mcl K_1$ and $\mcl K_2$, 
let $\mrm{HS}(\mcl K_1; \mcl K_2) = \{ L:\mcl K_1 \to \mcl K_2 | 
L \text{ is bounded linear, } 
\| L \|_{\mrm{HS}(\mcl K_1; \mcl K_2)} < \infty \}$, where
\begin{equation*}
\| L \|_{\mrm{HS}(\mcl K_1; \mcl K_2)} = 
\sqrt{\sum_{j \ge 1} \| L \phi_j \|_{\mcl K_2}^2}
\end{equation*}
and $\{ \phi_j \}_{j \in \mbb N}$ is a complete orthonormal system of $\mcl K_1$.
We write $\mrm{HS}(\mcl K_1; \mcl K_2)$ as $\mrm{HS}(\mcl K_1)$ if $\mcl K_2 = \mcl H$.

\item \textit{H\"{o}lder space}.
For $\gamma \in (0,1]$, define
$\mcl C^\gamma = \{ u:D \to \mbb R | \| u \|_{\mcl C^\gamma} < \infty \}$,
where
\begin{equation*}
\| u \|_{\mcl C^\gamma} = \| u \|_\infty 
+ \sup_{\bs x \neq \bs y \in D} 
\frac{|u(\bs x) -u(\bs y)|}{|\bs x - \bs y|^\gamma}.
\end{equation*}

\item \textit{Sobolev space}.
For $\gamma \in (0,1)$ and $q \ge 2$, let
$\mcl W^{\gamma,q} 
= \{ u \in \mcl L^q | \| u \|_{\mcl W^{\gamma,q}} < \infty \}$, where
\begin{equation*}
\| u \|_{\mcl W^{\gamma,q}} = 
\Biggl(
\| u \|_{\mcl L^q}^q
+ \int_{D} \int_{D} 
\frac{|u(\bs x) -u(\bs y)|^q}
{|\bs x - \bs y|^{2+\gamma q}} \dd \bs x \dd \bs y
\Biggr)^{1/q}.
\end{equation*}

\item 
Let $\bar v(y,z) = \ee^{\kappa y + \eta z}$ and $v(y,z) = 1/\bar v(y,z)$.
Note that $\bar v$ is the weight function in the inner product 
$\langle \cdot,\cdot \rangle$.
\end{enumerate}

\subsection{Proof of Theorem \ref{th1}}
For $\beta > -1$ and $t >0$, define
\begin{equation*}
J_{\beta,\nu} = \int_0^1 \| X_t \|_{\mcl L_{\beta}^2}^2 \dd t,
\quad
K_{\beta,\nu} = 
\int_0^1 \langle X_t, \dd X_t + \nu \mcl A X_t \dd t \rangle_{\mcl L_{\beta}^2},
\end{equation*}
\begin{equation*}
M_{\beta,\nu,t} = 
\int_0^t \langle X_s, \dd W_s^Q \rangle_{\mcl L_{\beta}^2},
\quad
\mcl M_{\beta,\nu,t} = \frac{M_{\beta,\nu,t}}{\sqrt{\EE[\lqv M_{\beta,\nu} \rqv_t]}}.
\end{equation*}

From $\mcl R_{\beta,\nu} = \EE[J_{\beta,\nu}]/\sqrt{\EE[J_{2\beta+1,\nu}]}$
and Proposition \ref{propA1} below, 
one can easily show the first claim of Theorem \ref{th1}.

We next show (2) of Theorem \ref{th1}. Note that 
$\{ M_{\beta,\nu,t} \}_{t \in \mbb T}$ is continuous square integrable martingale,
$\hat \theta_{0,\beta}^{(\mrm{cont})}= K_{\beta,\nu}/J_{\beta,\nu}$ and 
\begin{equation*}
K_{\beta,\nu} = 
\int_0^1 \langle X_t, 
\theta_0^* X_t \dd t + \sigma^* \dd W_t^Q \rangle_{\mcl L_{\beta}^2}
= \theta_0^* J_{\beta,\nu} +\sigma^* M_{\beta,\nu,1}
\end{equation*}
under $\PP_{\theta_0^*,\sigma^*}$. 
Since $\lqv M_{\beta,\nu} \rqv_1 = J_{2\beta +1,\nu}$ and 
\begin{equation*}
\hat \theta_{0,\beta}^{(\mrm{cont})} - \theta_0^* = 
\frac{\sigma^* M_{\beta,\nu,1}}{J_{\beta,\nu}}=
\sigma^* \times \frac{\EE[J_{\beta,\nu}]}{J_{\beta,\nu}}
\times \frac{\sqrt{\EE[\lqv M_{\beta,\nu} \rqv_1]}}{\EE[J_{\beta,\nu}]}
\times \mcl M_{\beta,\nu,1},
\end{equation*}
it holds that
\begin{equation}\label{th2-CLT}
\mcl R_{\beta,\nu}(\hat \theta_{0,\beta}^{(\mrm{cont})} - \theta_0^*) 
= \sigma^* \times \frac{\EE[J_{\beta,\nu}]}{J_{\beta,\nu}}
\times \mcl M_{\beta,\nu,1}.
\end{equation}
Since it follows from Proposition \ref{propA2} below that 
for $-1 < 2\beta +1 \le \frac{1}{\alpha} -1$, 
\begin{equation*}
\lqv \mcl M_{\beta,\nu} \rqv_1 = 
\frac{\lqv M_{\beta,\nu} \rqv_1}{\EE[\lqv M_{\beta,\nu} \rqv_1]}
= \frac{J_{2\beta +1, \nu}}{\EE[J_{2\beta+1,\nu}]} \pto 1
\end{equation*}
as $\nu \to 0$, we find from Theorem A.1 in \cite{Altmeyer_etal2022arXiv} that
$\mcl M_{\beta,\nu,1} \dto N(0,1)$
as $\nu \to 0$ for $-1 < \beta \le \frac{1}{2\alpha} -1$.
With Slutsky's theorem and Proposition \ref{propA2}, we complete the proof of (2).

Finally, we verify (3) of Theorem \ref{th1}. 
Using Proposition \ref{propA1}, we have 
$\EE[M_{\beta,\nu,1}^2] = \EE[\lqv M_{\beta,\nu} \rqv_1] 
= \EE[J_{2\beta+1,\nu}]< \infty$ 
for any $\nu >0$ and $-1 < \beta \le \frac{1}{\alpha} -1$, 
and we obtain $\EE[ \mcl M_{\beta,\nu,1}^2] = 1$ 
for $\nu >0$ and $-1 < \beta \le \frac{1}{\alpha} -1$.
For any $\epsilon >0$, we can choose a constant $C > \epsilon^{-1/2}$ such that
\begin{equation*}
\sup_{\nu >0} \PP(|\mcl M_{\beta,\nu,1}| >C)
\le C^{-2} \sup_{\nu >0} \EE[\mcl M_{\beta,\nu,1}^2] = C^{-2} < \epsilon,
\end{equation*}
which implies $\mcl M_{\beta,\nu,1} = \OO_\PP(1)$ for 
$-1 < \beta \le \frac{1}{\alpha} -1$.
By \eqref{th2-CLT} and Proposition \ref{propA2}, we get the desired result.


For $\beta > -1$ and $0 \le u \le t \le 1$, let
\begin{equation*}
\overline X_{t,u} = \sigma^* \int_u^t S_{\nu(t-s)} \dd W_s^Q,
\quad 
\overline X_t = \overline X_{t,0},
\quad 
\widetilde X_t = X_t - \overline X_t,
\end{equation*}
\begin{equation*}
I_{\beta,\nu,t} 
= \int_0^t \| \overline X_s \|_{\mcl L_{\beta}^2}^2 \dd s,
\quad
I_{\beta,\nu} = I_{\beta,\nu,1},
\end{equation*}
\begin{equation*}
\varphi_\beta(\nu,t) 
= \int_0^t \| S_{\nu s} \|_{\mrm{HS}(\mcl U_\beta)}^2 \dd s,
\quad 
\varphi_\beta(\nu) = \varphi_\beta(\nu,1).
\end{equation*}
\begin{prop}\label{propA1}
Let $\alpha >0$ and $\beta >-1$. Then, 
$S_t \in \mrm{HS}(\mcl U_\beta)$ for $t >0$ and
\begin{equation}\label{prop5-varphi}
\varphi_\beta(\nu) \sim
\begin{cases}
\nu^{\alpha(\beta+1)-1}, & \alpha(\beta +1) < 1,
\\
-\log \nu, & \alpha(\beta +1) = 1,
\\
1, & \alpha(\beta +1) > 1
\end{cases}
\end{equation}
as $\nu \to 0$. Furthermore, if $-1 < \beta \le \frac{1}{\alpha} -1$, 
then under [A1]$_{\beta,2}$, 
\begin{equation*}
\EE[J_{\beta,\nu}] \sim  \EE[I_{\beta,\nu}] \sim \varphi_\beta(\nu)
\end{equation*}
as $\nu \to 0$.
\end{prop}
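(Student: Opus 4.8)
Here is how I would proceed.

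The plan is to reduce both claims to explicit series over the eigenmodes $(l_1,l_2)$ and then estimate those series by comparison with two-dimensional integrals. First I would identify the Hilbert--Schmidt norm: since $\{\mu_{l_1,l_2}^{-\alpha(\beta+1)/2}e_{l_1,l_2}\}_{l_1,l_2\ge1}$ is a complete orthonormal system of $\mcl U_\beta=\{\widetilde Q^{(\beta+1)/2}v\,|\,v\in\mcl H\}$ and $S_te_{l_1,l_2}=\ee^{-\lambda_{l_1,l_2}t}e_{l_1,l_2}$ with $\|e_{l_1,l_2}\|=1$, one gets
\[
\|S_t\|_{\mrm{HS}(\mcl U_\beta)}^2=\sum_{l_1,l_2\ge1}\mu_{l_1,l_2}^{-\alpha(\beta+1)}\ee^{-2\lambda_{l_1,l_2}t}.
\]
Because $\lambda_{l_1,l_2}\ge\pi^2(l_1^2+l_2^2)$ and $\mu_{l_1,l_2}^{-\alpha(\beta+1)}$ is bounded (as $\beta>-1$, $\alpha>0$), this series is dominated by $C\bigl(\sum_{l\ge1}\ee^{-2\pi^2l^2t}\bigr)^2<\infty$ for each $t>0$, which gives $S_t\in\mrm{HS}(\mcl U_\beta)$. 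Then by Tonelli's theorem,
\[
\varphi_\beta(\nu)=\int_0^1\|S_{\nu s}\|_{\mrm{HS}(\mcl U_\beta)}^2\dd s=\sum_{l_1,l_2\ge1}\mu_{l_1,l_2}^{-\alpha(\beta+1)}\,\frac{1-\ee^{-2\lambda_{l_1,l_2}\nu}}{2\lambda_{l_1,l_2}\nu}.
\]

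Next I would extract the asymptotics of this series. Writing $a:=\alpha(\beta+1)>0$ and using $\mu_{l_1,l_2}\sim\lambda_{l_1,l_2}\sim l_1^2+l_2^2$ uniformly together with $\tfrac{1-\ee^{-x}}{x}\sim\min(1,x^{-1})$ for $x>0$, one has $\varphi_\beta(\nu)\sim\sum_{l_1,l_2\ge1}(l_1^2+l_2^2)^{-a}\min\!\bigl(1,\tfrac{1}{(l_1^2+l_2^2)\nu}\bigr)$. The summand is decreasing in $l_1^2+l_2^2$ and bounded at $l_1^2+l_2^2=2$ (for $\nu\in(0,1)$), so, using that $\#\{(l_1,l_2)\in\mbb N^2:l_1^2+l_2^2\le R\}\sim R$, the sum is comparable to $\int_1^\infty r^{1-2a}\min(1,(r^2\nu)^{-1})\dd r$. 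Splitting at $r=\nu^{-1/2}$, the low-frequency part $\int_1^{\nu^{-1/2}}r^{1-2a}\dd r$ is $\sim\nu^{a-1}$ if $a<1$, $\sim-\log\nu$ if $a=1$, and $\sim1$ if $a>1$, while the high-frequency part $\nu^{-1}\int_{\nu^{-1/2}}^\infty r^{-1-2a}\dd r$ is $\sim\nu^{a-1}$ for every $a>0$; since $\nu^{a-1}\lesssim-\log\nu$ when $a=1$ and $\nu^{a-1}\lesssim1$ when $a>1$, adding the two parts gives exactly \eqref{prop5-varphi}. In particular $\varphi_\beta(\nu)\gtrsim1$ for $\nu\in(0,1)$, and $\varphi_\beta(\nu)\to\infty$ when $a\le1$.

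For the second part I would work mode by mode: the $(l_1,l_2)$-th Fourier coefficient of the mild solution is the Ornstein--Uhlenbeck process with drift $\theta_0^*-\nu\lambda_{l_1,l_2}$ driven by $\sigma^*\mu_{l_1,l_2}^{-\alpha/2}w_{l_1,l_2}$, i.e. $X_t=\ee^{\theta_0^*t}S_{\nu t}X_0+\sigma^*\int_0^t\ee^{\theta_0^*(t-s)}S_{\nu(t-s)}\dd W_s^Q$. Since $X_0$ is independent of $W^Q$ the cross term vanishes, and the It\^o isometry gives
\[
\EE[\|X_t\|_{\mcl L_\beta^2}^2]=\ee^{2\theta_0^*t}\sum_{l_1,l_2}\mu_{l_1,l_2}^{-\alpha\beta}\ee^{-2\nu\lambda_{l_1,l_2}t}\EE[\langle X_0,e_{l_1,l_2}\rangle^2]+(\sigma^*)^2\sum_{l_1,l_2}\mu_{l_1,l_2}^{-\alpha(\beta+1)}\int_0^t\ee^{2(\theta_0^*-\nu\lambda_{l_1,l_2})(t-s)}\dd s,
\]
and likewise $\EE[\|\overline X_t\|_{\mcl L_\beta^2}^2]=(\sigma^*)^2\sum_{l_1,l_2}\mu_{l_1,l_2}^{-\alpha(\beta+1)}\int_0^t\ee^{-2\nu\lambda_{l_1,l_2}(t-s)}\dd s$. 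Using $\ee^{2\theta_0^*\tau}\sim1$ on $\tau\in[0,1]$ and $\ee^{-2\nu\lambda_{l_1,l_2}\tau}\le1$: the $X_0$-sum is $\lesssim\EE[\|X_0\|_{\mcl L_\beta^2}^2]\lesssim1$ by [A1]$_{\beta,2}$, the second sum is $\sim(\sigma^*)^2\sum_{l_1,l_2}\mu_{l_1,l_2}^{-\alpha(\beta+1)}\int_0^t\ee^{-2\nu\lambda_{l_1,l_2}(t-s)}\dd s$, and averaging this in $t\in[0,1]$ yields a series handled exactly by the estimate of the previous paragraph (the extra time average only changes bounded factors), hence $\sim\varphi_\beta(\nu)$. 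This gives $\EE[I_{\beta,\nu}]\sim\varphi_\beta(\nu)$; dropping the nonnegative $X_0$-sum gives $\EE[J_{\beta,\nu}]\gtrsim\EE[I_{\beta,\nu}]\gtrsim\varphi_\beta(\nu)$, and the upper bound $\EE[J_{\beta,\nu}]\lesssim1+\varphi_\beta(\nu)\sim\varphi_\beta(\nu)$ (using $\varphi_\beta(\nu)\gtrsim1$, which the hypothesis $\beta\le\tfrac1\alpha-1$ even upgrades to $\varphi_\beta(\nu)\to\infty$) completes $\EE[J_{\beta,\nu}]\sim\varphi_\beta(\nu)$.

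The only genuinely delicate point is the lattice-sum-to-integral comparison: controlling $\sum_{l_1,l_2\ge1}(l_1^2+l_2^2)^{-a}\min(1,((l_1^2+l_2^2)\nu)^{-1})$ with constants uniform in $\nu$, especially across the transition region $l_1^2+l_2^2\sim\nu^{-1}$ and in the borderline case $a=1$ where the logarithm appears. I would handle this by monotonicity of the summand in $l_1^2+l_2^2$ together with the lattice-point count $\#\{(l_1,l_2)\in\mbb N^2:l_1^2+l_2^2\le R\}\sim R$, sandwiching the sum between two integrals and splitting at $r=\nu^{-1/2}$; everything else is bookkeeping with the It\^o isometry and the elementary inequalities $\ee^{2\theta_0^*\tau}\sim1$ and $\tfrac{1-\ee^{-x}}{x}\sim\min(1,x^{-1})$.
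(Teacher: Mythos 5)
Your argument is correct, and for the second half it takes a genuinely different route from the paper. The first part (the Hilbert--Schmidt property and the asymptotics \eqref{prop5-varphi}) is essentially the paper's Step 1: the same eigenvalue series, the same use of $\frac{1-\ee^{-x}}{x}\sim 1\land x^{-1}$, and the same split of the lattice sum at $l_1^2+l_2^2\sim(\nu)^{-1}$. Where you diverge is in proving $\EE[J_{\beta,\nu}]\sim\EE[I_{\beta,\nu}]\sim\varphi_\beta(\nu)$: you exploit the fact that the reaction term is linear and write the solution exactly as $X_t=\ee^{\theta_0^*t}S_{\nu t}X_0+\sigma^*\int_0^t\ee^{\theta_0^*(t-s)}S_{\nu(t-s)}\dd W_s^Q$, so that by independence of $X_0$ and $W^Q$ the cross term in $\EE[\|X_t\|_{\mcl L_\beta^2}^2]$ vanishes and the initial-condition and noise contributions are exactly additive. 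Both the upper bound ($\lesssim 1+\varphi_\beta(\nu,t)$, absorbing the constant by $\varphi_\beta(\nu)\gtrsim1$) and the lower bound ($\gtrsim\ee^{-2|\theta_0^*|}\EE[\|\overline X_t\|_{\mcl L_\beta^2}^2]$, uniformly in $\nu$) then drop out of the explicit formula, together with the uniform comparison $\int_0^1\frac{1-\ee^{-2\nu\lambda t}}{2\nu\lambda}\dd t\sim\frac{1-\ee^{-2\nu\lambda}}{2\nu\lambda}$, which plays the role of the paper's monotonicity estimate \eqref{Pf_prop5-03}. The paper instead treats the reaction term perturbatively via the variation-of-constants form \eqref{vcf}: Gronwall's inequality gives the upper bound (Step 3), and the lower bound (Step 4) requires restricting to a small time interval $[0,\tau]$ with $C_1(\tau)>0$ and invoking $\varphi_\beta(\nu)\to\infty$ (hence the hypothesis $\beta\le\frac1\alpha-1$) to absorb the constant $C_2(\tau)$. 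Your route is shorter, avoids Gronwall and the smallness-of-$\tau$ argument, gives $\nu$-uniform constants (and in fact works for every $\beta>-1$), and is in the spirit of what the paper itself does later in the proof of Proposition \ref{prop1}; the paper's perturbative argument is the one that would survive a non-explicit (e.g.\ Lipschitz) reaction term. Two small points you should make explicit in a written version: the identification of your closed-form representation with the mild solution \eqref{vcf} (mode-by-mode OU dynamics, as you indicate, or uniqueness of the mild solution), and the justification of the It\^o isometry in the weighted norm $\mcl L_\beta^2$ (the interchange of $\widetilde Q^{\beta/2}$ with the stochastic integral, handled in the paper by Lemma \ref{lemA5}), though the mode-wise computation with Tonelli sidesteps the latter.
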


\begin{prop}\label{propA2}
Let $\alpha >0$. If $-1 < \beta \le \frac{1}{\alpha} -1$, then under [A1]$_{\beta,2}$,
\begin{equation*}
\frac{\EE[J_{\beta,\nu}]}{J_{\beta,\nu}} \pto 1
\end{equation*}
as $\nu \to 0$.
\end{prop}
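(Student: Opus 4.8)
\textbf{Proof proposal for Proposition \ref{propA2}.}
Since $x\mapsto 1/x$ is continuous at every positive point, it suffices to prove $J_{\beta,\nu}/\EE[J_{\beta,\nu}]\pto 1$, and by Proposition \ref{propA1} ($\EE[J_{\beta,\nu}]\sim\varphi_\beta(\nu)$, with $\varphi_\beta(\nu)\to\infty$ because $\alpha(\beta+1)\le 1$) this amounts to $J_{\beta,\nu}/\varphi_\beta(\nu)\pto 1$. The plan is to condition on $X_0$. Under $\PP_{\theta_0^*,\sigma^*}$ the equation \eqref{spde} is linear, so conditionally on $X_0$ the mild solution is Gaussian and the Fourier coordinates $x_{l_1,l_2}(t)=\langle X_t,e_{l_1,l_2}\rangle$ are independent (over $(l_1,l_2)$) Ornstein--Uhlenbeck processes solving $\dd x_{l_1,l_2}=-(\nu\lambda_{l_1,l_2}-\theta_0^*)x_{l_1,l_2}\dd t+\sigma^*\mu_{l_1,l_2}^{-\alpha/2}\dd w_{l_1,l_2}$ with $x_{l_1,l_2}(0)=\langle X_0,e_{l_1,l_2}\rangle$. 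Hence $J_{\beta,\nu}=\sum_{l_1,l_2\ge1}\mu_{l_1,l_2}^{-\alpha\beta}\int_0^1 x_{l_1,l_2}(t)^2\,\dd t$ is, given $X_0$, a sum of independent terms, and I will establish (i) $\EE[J_{\beta,\nu}\mid X_0]=\varphi_\beta(\nu)(1+o_\PP(1))$ and (ii) $\VV(J_{\beta,\nu}\mid X_0)=o_\PP(\varphi_\beta(\nu)^2)$; then a conditional Chebyshev argument together with (i) closes the proof.

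For (i), write $m_{l_1,l_2}(t)=\ee^{-(\nu\lambda_{l_1,l_2}-\theta_0^*)t}\langle X_0,e_{l_1,l_2}\rangle$ and let $v_{l_1,l_2}(t)$ be the (deterministic) conditional variance of $x_{l_1,l_2}(t)$, so that $\EE[J_{\beta,\nu}\mid X_0]=\sum_{l_1,l_2}\mu_{l_1,l_2}^{-\alpha\beta}\int_0^1 m_{l_1,l_2}(t)^2\,\dd t+\sum_{l_1,l_2}\mu_{l_1,l_2}^{-\alpha\beta}\int_0^1 v_{l_1,l_2}(t)\,\dd t$. The second sum is the value of $\EE[J_{\beta,\nu}]$ in the case $X_0\equiv 0$, hence is $\sim\varphi_\beta(\nu)$ by the $X_0$-free case of Proposition \ref{propA1}; the first sum is bounded by $\sup_{l_1,l_2}\bigl(\mu_{l_1,l_2}^{-\alpha(\beta+1)}\int_0^1\ee^{-2(\nu\lambda_{l_1,l_2}-\theta_0^*)t}\,\dd t\bigr)\|X_0\|_{\mcl L_\beta^2}^2$, and since $\beta+1>0$ we have $\mu_{l_1,l_2}^{-\alpha(\beta+1)}\le\mu_{1,1}^{-\alpha(\beta+1)}$ while $\int_0^1\ee^{-2(\nu\lambda_{l_1,l_2}-\theta_0^*)t}\,\dd t$ stays bounded uniformly in $l_1,l_2$ and $\nu\in(0,1)$ by compactness of the parameter space; thus the first sum is $\OO_\PP(1)$ under [A1]$_{\beta,2}$. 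Dividing by $\EE[J_{\beta,\nu}]\sim\varphi_\beta(\nu)\to\infty$ gives (i).

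For (ii), conditional independence yields $\VV(J_{\beta,\nu}\mid X_0)=\sum_{l_1,l_2}\mu_{l_1,l_2}^{-2\alpha\beta}\VV\bigl(\int_0^1 x_{l_1,l_2}(t)^2\,\dd t\mid X_0\bigr)$. For a Gaussian process with conditional mean $m_t$ and variance $v_t$ one has $\|x(t)^2-\EE[x(t)^2\mid X_0]\|_{L^2}\le 2|m_t|\sqrt{v_t}+\sqrt2\,v_t$, so by Minkowski's inequality and Cauchy--Schwarz $\VV\bigl(\int_0^1 x^2\,\dd t\mid X_0\bigr)\lesssim\bigl(\int_0^1 m_t^2\,\dd t\bigr)\bigl(\int_0^1 v_t\,\dd t\bigr)+\bigl(\int_0^1 v_t\,\dd t\bigr)^2$. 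Using $\int_0^1 m_{l_1,l_2}(t)^2\,\dd t\lesssim\langle X_0,e_{l_1,l_2}\rangle^2$ and $\int_0^1 v_{l_1,l_2}(t)\,\dd t\lesssim\mu_{l_1,l_2}^{-\alpha}\bigl(1\wedge(\nu\lambda_{l_1,l_2})^{-1}\bigr)$, both uniform in $\nu\in(0,1)$, I obtain $\VV(J_{\beta,\nu}\mid X_0)\lesssim\sum_{l_1,l_2}\mu_{l_1,l_2}^{-\alpha(2\beta+1)}\langle X_0,e_{l_1,l_2}\rangle^2\bigl(1\wedge(\nu\lambda_{l_1,l_2})^{-1}\bigr)+\sum_{l_1,l_2}\mu_{l_1,l_2}^{-2\alpha(\beta+1)}\bigl(1\wedge(\nu\lambda_{l_1,l_2})^{-1}\bigr)^2$. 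The first sum is $\le\mu_{1,1}^{-\alpha(\beta+1)}\|X_0\|_{\mcl L_\beta^2}^2=\OO_\PP(1)$ (again using $\beta+1>0$, after factoring $\mu^{-\alpha(2\beta+1)}=\mu^{-\alpha(\beta+1)}\mu^{-\alpha\beta}$), hence $o_\PP(\varphi_\beta(\nu)^2)$; the second sum is $\le\sum_{l_1,l_2}\mu_{l_1,l_2}^{-2\alpha(\beta+1)}\bigl(1\wedge(\nu\lambda_{l_1,l_2})^{-1}\bigr)\sim\varphi_{2\beta+1}(\nu)$, and from the asymptotics \eqref{prop5-varphi} for $\varphi_{2\beta+1}$ and $\varphi_\beta$ a short check over the five regimes gives $\varphi_{2\beta+1}(\nu)=o(\varphi_\beta(\nu)^2)$. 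This yields (ii). Finally, conditional Chebyshev gives $\PP\bigl(|J_{\beta,\nu}-\EE[J_{\beta,\nu}\mid X_0]|>\epsilon\varphi_\beta(\nu)\mid X_0\bigr)\le 1\wedge\bigl(\VV(J_{\beta,\nu}\mid X_0)/(\epsilon^2\varphi_\beta(\nu)^2)\bigr)$, which is bounded by $1$ and tends to $0$ in probability, so by bounded convergence the unconditional probability tends to $0$; combined with (i) this gives $J_{\beta,\nu}/\varphi_\beta(\nu)\pto 1$, hence the claim.

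The main obstacle is the spectral-sum comparison $\varphi_{2\beta+1}(\nu)=o(\varphi_\beta(\nu)^2)$ together with keeping all lattice-sum and Ornstein--Uhlenbeck bounds uniform in $\nu\in(0,1)$ and in $\theta_0^*$; the delicate case is the borderline $\alpha(\beta+1)=1$, where $\varphi_\beta(\nu)^2\sim(\log\nu)^2$ while $\varphi_{2\beta+1}(\nu)=\OO(1)$ since then $2\alpha(\beta+1)=2>1$. Everything else is routine once Proposition \ref{propA1} is granted.
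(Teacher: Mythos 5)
Your route is genuinely different from the paper's. The paper never works mode by mode: it bounds $\VV[J_{\beta,\nu}]$ through the Poincar\'e inequality on Wiener space, which requires controlling $\int_0^1\|\mcl D_{\beta,\tau}J_{\beta,\nu}\|_{\mcl L_{\beta+1}^2}^2\dd\tau$ via Picard-iteration estimates on the Malliavin derivative of $X_{\beta,t}$ (Lemmas \ref{lemA1}--\ref{lemA3}), arriving at $\VV[J_{\beta,\nu}]\lesssim\EE[J_{\beta,\nu}]$ and concluding by Chebyshev. You instead condition on $X_0$, exploit that the Fourier coefficients are independent Ornstein--Uhlenbeck processes, and compute the conditional variance explicitly, getting $\VV(J_{\beta,\nu}\mid X_0)\lesssim \OO_\PP(1)+\varphi_{2\beta+1}(\nu)$ and then checking $\varphi_{2\beta+1}(\nu)=o(\varphi_\beta(\nu)^2)$ in every regime of \eqref{prop5-varphi}, including the borderline $\alpha(\beta+1)=1$, which you treat correctly. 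For the linear equation \eqref{spde} this is more elementary (no Malliavin calculus) and even yields a sharper variance bound, since $\varphi_{2\beta+1}(\nu)=o(\varphi_\beta(\nu))$; the paper's route has the advantage of not leaning on the exact Gaussian, diagonal structure. Your individual estimates (the $2|m_t|\sqrt{v_t}+\sqrt2\,v_t$ bound, Minkowski plus Cauchy--Schwarz, $\int_0^1 v_t\,\dd t\lesssim\mu_{l_1,l_2}^{-\alpha}(1\wedge(\nu\lambda_{l_1,l_2})^{-1})$ uniformly in $\nu$ and in the compact parameter set, and the $\OO_\PP(1)$ bound for the $X_0$-contribution under [A1]$_{\beta,2}$) are all sound.

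One repair is needed in the normalization. In this paper $a\sim b$ means only two-sided boundedness of the ratio, so Proposition \ref{propA1} gives $\EE[J_{\beta,\nu}]\asymp\varphi_\beta(\nu)$, not $\EE[J_{\beta,\nu}]/\varphi_\beta(\nu)\to1$; likewise the noise part of $\EE[J_{\beta,\nu}\mid X_0]$ is only of the order of $\varphi_\beta(\nu)$ (it carries $(\sigma^*)^2$, the $e^{2\theta_0^*(t-s)}$ factors and lattice-sum constants). Hence your claim (i), $\EE[J_{\beta,\nu}\mid X_0]=\varphi_\beta(\nu)(1+o_\PP(1))$, and the final assertion $J_{\beta,\nu}/\varphi_\beta(\nu)\pto1$ are not literally correct, and the proposition does not follow from the latter without ratio-one asymptotics for $\EE[J_{\beta,\nu}]/\varphi_\beta(\nu)$. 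The fix uses exactly what you proved: write $J_{\beta,\nu}-\EE[J_{\beta,\nu}]=(J_{\beta,\nu}-\EE[J_{\beta,\nu}\mid X_0])+(\EE[J_{\beta,\nu}\mid X_0]-\EE[J_{\beta,\nu}])$; the second term is the centered $X_0$-contribution, hence $\OO_\PP(1)$, the first is $o_\PP(\varphi_\beta(\nu))$ by your conditional Chebyshev step, and dividing by $\EE[J_{\beta,\nu}]\gtrsim\varphi_\beta(\nu)\to\infty$ gives $J_{\beta,\nu}/\EE[J_{\beta,\nu}]\pto1$, which is the statement. (A minor slip in (i): the bound should read $\sup_{l_1,l_2}\int_0^1 e^{-2(\nu\lambda_{l_1,l_2}-\theta_0^*)t}\dd t\cdot\|X_0\|_{\mcl L_\beta^2}^2$, without the extra factor $\mu_{l_1,l_2}^{-\alpha(\beta+1)}$ inside the supremum; the $\OO_\PP(1)$ conclusion is unaffected.)
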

The rest of this subsection is devoted to the proofs of these two propositions.

\subsubsection{Proof of Proposition \ref{propA1}}
The proof consists of four steps.
Specifically, we will show $S_t \in \mrm{HS}(\mcl U_\beta)$ and 
\eqref{prop5-varphi} in Step 1, 
prove $\EE[I_{\beta,\nu}] \sim \varphi_\beta(\nu)$ in Step 2, 
verify $\EE[J_{\beta,\nu}] \lesssim \varphi_\beta(\nu)$ in Step 3 
and show $\EE[J_{\beta,\nu}] \gtrsim \EE[I_{\beta,\nu}]$ under $\nu \to 0$ in Step 4.

\textbf{Step 1.}
We can check immediately that $S_t \in \mrm{HS}(\mcl U_\beta)$ for $t >0$
since $\| S_{t} \|_{\mrm{HS}(\mcl U_\beta)} 
= \| S_{t} \widetilde Q^{(\beta+1)/2} \|_{\mrm{HS}(\mcl H)}$ and 
\begin{align*}
\| S_{t} \widetilde Q^\gamma \|_{\mrm{HS}(\mcl H)}^2
= \sum_{l_1,l_2 \ge 1} 
\ee^{-2t \lambda_{l_1,l_2}} \mu_{l_1,l_2}^{-2\alpha \gamma}
\le \sum_{l_1,l_2 \ge 1} 
\frac{1}{2t \lambda_{l_1,l_2} \mu_{l_1,l_2}^{2\alpha \gamma}}
< \infty
\end{align*}
for $\gamma > 0$ and $t >0$. We next show \eqref{prop5-varphi}.
Note that $\nu \in (0,1)$.
It follows from $\frac{1-\ee^{-ax}}{x} \sim \frac{1}{x} \land a$ ($a >0$) that 
for $t \in (0,1]$ and $\gamma >0$,
\begin{align*}
\int_0^t 
\| S_{\nu s} \widetilde Q^{\gamma} \|_{\mrm{HS}(\mcl H)}^2 \dd s
&= \int_0^t \sum_{l_1,l_2 \ge 1} 
\ee^{-2\nu s \lambda_{l_1,l_2}} \mu_{l_1,l_2}^{-2\alpha \gamma} \dd s
\\
&= \sum_{l_1,l_2 \ge 1} 
\frac{1-\ee^{-2\nu t \lambda_{l_1,l_2}}}{2 \nu \lambda_{l_1,l_2}} 
\mu_{l_1,l_2}^{-2\alpha \gamma}
\\
&\sim \sum_{l_1,l_2 \ge 1} 
\biggl(
\frac{1}{\nu(l_1^2+l_2^2)} \land t
\biggr)
\frac{1}{(l_1^2+l_2^2)^{2\alpha \gamma}}
\\
&= \sum_{l_1^2+l_2^2 > 1/\nu t} 
\frac{1}{\nu(l_1^2+l_2^2)^{1+2\alpha \gamma}}
+ \sum_{1 < l_1^2+l_2^2 \le 1/\nu t} 
\frac{t}{(l_1^2+l_2^2)^{2\alpha \gamma}}
\\
&\sim
\nu^{2\alpha \gamma-1} t^{2\alpha \gamma} + 
\begin{cases}
\nu^{2\alpha \gamma-1} t^{2\alpha \gamma}, & 2\alpha \gamma < 1,
\\
-t \log (\nu t), &  2\alpha \gamma = 1,
\\
t, & 2\alpha \gamma > 1
\end{cases}
\\
&\sim
\begin{cases}
\nu^{2\alpha \gamma-1} t^{2\alpha \gamma}, & 2\alpha \gamma < 1,
\\
-t \log (\nu t), &  2\alpha \gamma = 1,
\\
t, & 2\alpha \gamma > 1.
\end{cases}
\end{align*}
We therefore get the desired result. 

\textbf{Step 2.}
We verify that 
\begin{equation}\label{Pf_prop5-01}
\EE[I_{\beta,\nu}] \sim \varphi_\beta(\nu),
\end{equation}
and that for $\tau \in (0,1]$, 
\begin{equation}\label{Pf_prop5-02}
\EE[I_{\beta,\nu}] \lesssim \int_0^\tau \varphi_\beta(\nu,s) \dd s.
\end{equation}
It is shown that 
\begin{equation}\label{Pf_prop5-03}
\varphi_\beta(\nu,t_1) \le \varphi_\beta(\nu,t_2) 
\le \frac{t_2}{t_1}\varphi_\beta(\nu,t_1),
\quad 0< t_1 \le t_2 \le 1
\end{equation}
by using the facts that (i) for $\gamma >0$, 
the function 
$t \mapsto \| S_t \widetilde Q^\gamma \|_{\mrm{HS}(\mcl H)}$ 
is decreasing in $t \ge 0$,
and (ii) for a non-negative decreasing function $f:[0,\infty) \to \mbb R$,
\begin{equation*}
\frac{1}{t_2} \int_0^{t_2} f(s) \dd s \le \frac{1}{t_1} \int_0^{t_1} f(s) \dd s,
\quad 0 < t_1 \le t_2.
\end{equation*}
Since it follows from the It\^{o} isometry \eqref{stoch-conv-isometry} that
\begin{equation}\label{Pf_prop5-04}
\EE\Bigl[ \| \overline X_t \|_{\mcl L_{\beta}^2}^2 \Bigr]
= (\sigma^*)^2 \int_0^t 
\| S_{\nu(t-s)} \|_{\mrm{HS}(\mcl U_\beta)}^2 \dd s
= (\sigma^*)^2 \varphi_\beta(\nu,t),
\end{equation}
and from \eqref{Pf_prop5-03} that for $\tau \in (0,1]$,
\begin{equation*}
\frac{\tau \varphi_\beta(\nu,\tau)}{2}
\le
\int_0^\tau \varphi_\beta(\nu,s) \dd s
\le 
\tau \varphi_\beta(\nu,\tau),
\end{equation*}
we find from \eqref{Pf_prop5-03} again that
\begin{equation*}
\EE[I_{\beta,\nu,\tau}]
= \int_0^\tau \EE\Bigl[ \| \overline X_s \|_{\mcl L_{\beta}^2}^2 \Bigr] \dd s
\sim \tau \varphi_\beta(\nu,\tau),
\quad
\int_0^\tau \varphi_\beta(\nu,s) \dd s
\ge \frac{\tau^2 \varphi_\beta(\nu,1)}{2}
\sim \tau^2 \EE[I_{\beta,\nu}],
\end{equation*}
which imply that \eqref{Pf_prop5-01} and \eqref{Pf_prop5-02} hold.

\textbf{Step 3.}
We show that $\EE[J_{\beta,\nu}] \lesssim \varphi_\beta(\nu)$.
Since
\begin{equation*}
\widetilde X_t 
= S_{\nu t}X_0 + \theta_0^* \int_0^t S_{\nu(t-s)}(\widetilde X_s + \overline X_s) \dd s,
\end{equation*}
it holds that for $t \in \mbb T$,
\begin{align*}
\EE\Bigl[ \| \widetilde X_t \|_{\mcl L_{\beta}^2}^2 \Bigr] 
&\le 3 
\Biggl\{
\EE\Bigl[\| S_{\nu t}X_0 \|_{\mcl L_{\beta}^2}^2\Bigr] 
+ (\theta_0^*)^2 
\biggl(
\int_0^t \EE\Bigl[\| S_{\nu(t-s)} \widetilde X_s \|_{\mcl L_{\beta}^2}^2\Bigr] \dd s
+\int_0^t \EE\Bigl[\| S_{\nu(t-s)} \overline X_s \|_{\mcl L_{\beta}^2}^2\Bigr] \dd s
\biggr)
\Biggr\}
\\
&\le 3 
\Biggl\{
\EE\Bigl[\| X_0 \|_{\mcl L_{\beta}^2}^2\Bigr] 
+ (\theta_0^*)^2 
\biggl(
\int_0^t \EE\Bigl[\| \widetilde X_s \|_{\mcl L_{\beta}^2}^2\Bigr] \dd s
+\int_0^t \EE\Bigr[\| \overline X_s \|_{\mcl L_{\beta}^2}^2\Bigr] \dd s
\biggr)
\Biggr\}.
\end{align*}
Using Gronwall's inequality, we see from [A1]$_{\beta,2}$, \eqref{Pf_prop5-03}
and \eqref{Pf_prop5-04} that
\begin{align}
\EE\Bigl[\| \widetilde X_t \|_{\mcl L_{\beta}^2}^2 \Bigr] 
&\le
3 \ee^{3 (\theta_0^*)^2 t}
\biggl(
\EE\Bigl[\| X_0 \|_{\mcl L_{\beta}^2}^2 \Bigr] 
+ (\theta_0^*)^2 
\int_0^t \EE\Bigl[\| \overline X_s\|_{\mcl L_{\beta}^2}^2 \Bigr] \dd s 
\biggr)
\nonumber
\\
&\lesssim
1 + \varphi_\beta(\nu,t)
\lesssim
\varphi_\beta(\nu,t).
\label{Pf_prop5-05}
\end{align}
Therefore, we find from \eqref{Pf_prop5-03}-\eqref{Pf_prop5-05} that
\begin{align*}
\EE[J_{\beta,\nu}]
&= \int_0^1 \EE\Bigl[\| X_t \|_{\mcl L_{\beta}^2}^2\Bigr] \dd t
\lesssim
\int_0^1 \EE\Bigl[\| \overline X_t \|_{\mcl L_{\beta}^2}^2\Bigr] \dd t
+ \int_0^1 \EE\Bigl[\| \widetilde X_t \|_{\mcl L_{\beta}^2}^2\Bigr] \dd t
\\
&\lesssim \int_0^1 \varphi_\beta(\nu,t) \dd t
\le \varphi_\beta(\nu).
\end{align*}

\textbf{Step 4.}
We prove that $\EE[J_{\beta,\nu}] \gtrsim \EE[I_{\beta,\nu}]$.
For any $t \in \mbb T$, it follows from \eqref{Pf_prop5-03}-\eqref{Pf_prop5-05} that
\begin{align*}
\EE\Bigl[\| X_t \|_{\mcl L_{\beta}^2}^2 \Bigr] 
&\ge \frac{\EE\Bigl[\| \overline X_t \|_{\mcl L_{\beta}^2}^2\Bigr]}{2} 
- \EE\Bigl[\| \widetilde X_t \|_{\mcl L_{\beta}^2}^2\Bigr]  
\\
&\ge
\frac{\EE\Bigl[\| \overline X_t \|_{\mcl L_{\beta}^2}^2\Bigr]}{2}
-3 \ee^{3 (\theta_0^*)^2 t}
\biggl(
\EE\Bigl[\| X_0 \|_{\mcl L_{\beta}^2}^2\Bigr] 
+ (\theta_0^*)^2 \int_0^t \EE\Bigl[\| \overline X_s \|_{\mcl L_{\beta}^2}^2\Bigr] \dd s
\biggr)
\\
&=
\frac{(\sigma^*)^2 \varphi_\beta(\nu,t)}{2}
-3 \ee^{3 (\theta_0^*)^2 t}
\biggl(
\EE\Bigl[\| X_0 \|_{\mcl L_{\beta}^2}^2\Bigr] 
+ (\theta_0^* \sigma^*)^2 \int_0^t \varphi_\beta(\nu,s) \dd s
\biggr)
\\
&\ge
\varphi_\beta(\nu,t)
\biggl( \frac{(\sigma^*)^2}{2}
-3 \ee^{3 (\theta_0^*)^2 t} (\theta_0^* \sigma^*)^2 t
\biggr)
-3\ee^{3 (\theta_0^*)^2 t}\EE\Bigl[\| X_0 \|_{\mcl L_{\beta}^2}^2\Bigr]
\\
&=:
\varphi_\beta(\nu,t) C_1(t) -C_2(t).
\end{align*}
Fix $\tau \in (0,1]$ such that $C_1(\tau) > 0$. 
Thanks to \eqref{prop5-varphi}, we can choose a sufficiently small $\nu$ to fulfill
\begin{equation*}
\frac{C_1(\tau)}{2} \int_0^\tau \varphi_{\beta}(\nu,t) \dd t \ge \tau C_2(\tau)
\end{equation*}
for $-1 < \beta \le \frac{1}{\alpha} -1$. 
Since $C_1(t)$ (resp. $C_2(t)$) is positive decreasing (resp. increasing) 
on $[0,\tau]$, we find from \eqref{Pf_prop5-02} that
\begin{align*} 
\EE[J_{\beta,\nu}] 
&\ge \int_0^\tau \EE\Bigl[\| X_t \|_{\mcl L_{\beta}^2}^2\Bigr] \dd t
\ge 
\int_0^\tau \bigl( \varphi_\beta(\nu,t) C_1(t) - C_2(t) \bigr) \dd t
\\
&\ge 
C_1(\tau) \int_0^\tau \varphi_\beta(\nu,t) \dd t - \tau C_2(\tau)
\\
&\ge 
\frac{C_1(\tau)}{2} \int_0^\tau \varphi_\beta(\nu,t) \dd t
\gtrsim \EE[I_{\beta,\nu}].
\end{align*}
This completes the proof.

\subsubsection{Proof of Proposition \ref{propA2}}
We use basic properties of the Malliavin calculus to obtain 
Proposition \ref{propA2}. To this end, we shall first give some properties 
of the Malliavin calculus. 

For $\beta > -1$, we define $X_{\beta,t} = \widetilde Q^{\beta/2} X_t$.
By operating $\widetilde Q^{\beta/2}$ to the both sides of \eqref{vcf},  
it holds from Lemmas \ref{lemA4}, \ref{lemA5} and \ref{lemB3} that 
under [A1]$_{\beta,2}$,
\begin{equation}\label{QX1}
X_{\beta,t} = S_{\nu t} X_{\beta,0} 
+ \theta_0 \int_0^t S_{\nu(t-s)} X_{\beta,s} \dd s
+\sigma \int_0^t \widetilde Q^{\beta/2} S_{\nu(t-s)} \dd W_s^Q,
\quad
t \in \mbb T
\end{equation}
on $L^2(\Omega; \mcl H)$.
Let $\widehat Q_\beta = Q \widetilde Q^{\beta}$.
Since $\widehat Q_\beta$ is a trace class operator 
on a Hilbert space and the $\widehat Q_\beta$-Wiener process $W_t^{\widehat Q_\beta}$ 
has the representation
\begin{equation*}
W_t^{\widehat Q_\beta} = 
\sum_{l_1,l_2\ge1} \mu_{l_1,l_2}^{-\alpha(\beta +1)/2} w_{l_1,l_2}(t) e_{l_1,l_2},
\end{equation*}
we can interpret \eqref{QX1} as  
\begin{equation}\label{QX2}
X_{\beta,t} = S_{\nu t} X_{\beta,0} 
+ \theta_0 \int_0^t S_{\nu(t-s)} X_{\beta,s} \dd s
+\sigma \int_0^t S_{\nu(t-s)} \dd W_s^{\widehat Q_\beta},
\quad t \in \mbb T
\end{equation}
by \eqref{eq-Pf-lem***}. For any $(t,\bs y) \in \mbb T \times D$,
the Malliavin derivative $\mcl D_{\beta} X_{\beta,t}(\bs y)$ 
of $X_{\beta,t}(\bs y)$ given in \eqref{QX2} based on $W_t^{\widehat Q_\beta}$ 
belongs to $L^2(\Omega; L^2(\mbb T; \mcl L_{\beta+1}^2))$ and satisfies
\begin{equation}\label{Mall-Der}
\mcl D_{\beta,\tau} X_{\beta,t}(\bs y) = 
\sigma G_{\nu(t-\tau)}(\bs y,\cdot) 
+ \theta_0 \int_0^t \int_D G_{\nu(t-s)}(\bs y,\bs z) 
\mcl D_{\beta,\tau} X_{\beta,s}(\bs z)
\dd \bs z \dd s
\end{equation}
for $\tau \in [0,t)$ and
$\mcl D_{\beta,\tau} X_{\beta,t}(\bs y) = 0$ for $\tau \in [t,1]$
(see Theorem 7.1 in \cite{Sanz-Solo2005}).
Then, we get
\begin{equation}\label{eq-S-S}
\sup_{(t,\bs y) \in \mbb T \times D} 
\EE \Biggl[ \biggl( 
\int_0^1 
\| \mcl D_{\beta,\tau} X_{\beta,t}(\bs y) \|_{\mcl L_{\beta+1}^2}^2 \dd \tau 
\biggr)^2 \Biggr]
< \infty.
\end{equation}

The proof of Proposition \ref{propA2} is complete 
if we can show that for $-1 < \beta \le \frac{1}{\alpha} -1$,
\begin{equation}\label{Pf_prop6-01}
\EE \Biggl[ \biggl( 
\int_0^1 \| \mcl D_{\beta,\tau} J_{\beta,\nu} 
\|_{\mcl L_{\beta +1}^2}^2 \dd \tau 
\biggr)^2 \Biggr]
\lesssim \sqrt{\EE[J_{\beta,\nu}^2]}.
\end{equation}
Indeed, 
it follows from the Poincar\'{e} inequality 
(Proposition 3.1 in \cite{Nourdin_etal2009}) that
\begin{equation*}
\VV[J_{\beta,\nu}] \le 
\EE \Biggl[ \biggl( 
\int_0^1 \| \mcl D_{\beta,\tau} J_{\beta,\nu} 
\|_{\mcl L_{\beta +1}^2}^2 \dd \tau 
\biggr)^2 \Biggr],
\end{equation*}
which together with \eqref{Pf_prop6-01} and
$\sqrt{\EE [J_{\beta,\nu}^2]} \le \sqrt{\VV[J_{\beta,\nu}]} + \EE[J_{\beta,\nu}]$
yields
\begin{equation*}
\VV[J_{\beta,\nu}] \lesssim \sqrt{\VV[J_{\beta,\nu}]} + \EE[J_{\beta,\nu}].
\end{equation*}
Solving this quadratic inequality, one has
\begin{equation*}
\VV[J_{\beta,\nu}] \lesssim 1+ \EE[J_{\beta,\nu}]
\lesssim \EE[J_{\beta,\nu}].
\end{equation*}
Therefore, we see from the Chebyshev inequality and Proposition \ref{propA1} that
for any $\epsilon > 0$, 
\begin{equation*}
\varlimsup_{\nu \to 0}
\PP \biggl( \biggl| \frac{J_{\beta,\nu}}{\EE[J_{\beta,\nu}]}-1 \biggr| 
> \epsilon \biggr)
\le \epsilon^{-2} \varlimsup_{\nu \to 0}
\VV \Biggl[\frac{J_{\beta,\nu}}{\EE[J_{\beta,\nu}]} \Biggr]
\lesssim 
\epsilon^{-2} \varlimsup_{\nu \to 0}
\frac{1}{\EE[J_{\beta,\nu}]}
= 0
\end{equation*}
if $\alpha >0$ and $-1 < \beta \le \frac{1}{\alpha} -1$.

We prove \eqref{Pf_prop6-01}.
Note that the chain rule of Malliavin calculus 
(Proposition 1.2.3 in \cite{Nualart2006}) yields  
\begin{equation*}
\mcl D_{\beta,\tau} J_{\beta,\nu} 
= 2\int_0^1 \int_D X_{\beta,t}(\bs y)
\mcl D_{\beta,\tau} X_{\beta,t} (\bs y) \bar v(\bs y) \dd \bs y \dd t.
\end{equation*}
By setting
\begin{equation*}
\xi_1(X_{\beta,t}) = \| X_{\beta,t} \bar v \| \ (>0),
\quad
\xi_2(X_{\beta,t},\tau) = 
\Biggl\| \int_D  
\frac{X_{\beta,t}(\bs y) \bar v(\bs y)}{\| X_{\beta,t} \bar v \|}
\mcl D_{\beta,\tau} X_{\beta,t}(\bs y) \dd \bs y
\Biggr\|_{\mcl L_{\beta+1}^2},
\end{equation*}
it holds from the Schwarz inequality and \eqref{Bochner-ineq} that
\begin{align*}
&\EE \Biggl[ \biggl( 
\int_0^1 \| \mcl D_{\beta,\tau} J_{\beta,\nu} 
\|_{\mcl L_{\beta +1}^2}^2 \dd \tau 
\biggr)^2 \Biggr]
\\
&\le 4 \EE\Biggl[
\int_0^1 \biggl( \int_\tau^1 
\biggl\| \int_D  
X_{\beta,t}(\bs y)
\mcl D_{\beta,\tau} X_{\beta,t}(\bs y) \bar v(\bs y) \dd \bs y 
\biggr\|_{\mcl L_{\beta+1}^2} \dd t \biggr)^2 \dd \tau
\Biggr]
\\
&\le 4 \EE\Biggl[ \biggl(\int_0^1 \xi_1(X_{\beta,t})^2 \dd t \biggr)^2 \Biggr]^{1/2}
\EE\Biggl[
\biggl(
\int_0^1 \int_\tau^1 \xi_2(X_{\beta,t},\tau)^2 \dd t \dd \tau
\biggr)^2
\Biggr]^{1/2}
\\
&\le 4 
\| \bar v \|_{\infty}^2 \EE[J_{\beta,\nu}^2]^{1/2}
\sup_{t \in \mbb T}
\EE\Biggl[ 
\biggl( \int_0^1 \xi_2(X_{\beta,t},\tau)^2 \dd \tau \biggr)^2 \Biggr]^{1/2}.
\end{align*}
If $\varphi(\bs y)=\frac{X_{\beta,t}(\bs y) \bar v(\bs y)}{\| X_{\beta,t} \bar v \|}$,
then it follows from Lemma \ref{lemA1} below that
\begin{align*}
\sup_{t \in \mbb T}
\EE\Biggl[ \biggl( \int_0^1 \xi_2(X_{\beta,t},\tau)^2 \dd \tau \biggr)^2 \Biggr]
&\le 
\sup_{t \in \mbb T, \| \varphi \| = 1}
\EE\Biggl[ \biggl( \int_0^1
\biggl\| \int_D  
\varphi(\bs y) \mcl D_{\beta,\tau} X_{\beta,t}(\bs y) 
\dd \bs y
\biggr\|_{\mcl L_{\beta+1}^2}^2
\dd \tau \biggr)^2 \Biggr]
\\
&\lesssim
(\sigma^* \ee^{|\kappa|+|\eta|} \lor 1)^4
\exp \Bigl(
4|\theta_0^*| \ee^{|\kappa|+|\eta|} (\sigma^* \ee^{|\kappa|+|\eta|} \lor 1)
\Bigr),
\end{align*}
which yields the desired result.

\subsubsection{Auxiliary results on Malliavin derivative}
Here, we provide auxiliary results on Malliavin derivative 
$\mcl D_{\beta,\tau} X_{\beta,t}$ in order to show Proposition \ref{propA2}.

\begin{lem}\label{lemA1}
For any $\varphi \in \mcl H$ with $\| \varphi \| = 1$ and $t, \tau \in \mbb T$, 
it holds that
\begin{align*}
&\EE \Biggl[
\biggl( \int_0^1
\biggl\| 
\int_D  \varphi(\bs y) 
\mcl D_{\beta,\tau} X_{\beta,t}(\bs y) \dd \bs y
\biggr\|_{\mcl L_{\beta +1}^2}^2
\dd \tau \biggr)^2
\Biggr]
\lesssim 
(\sigma^* \ee^{|\kappa|+|\eta|} \lor 1)^4
\exp \Bigl(
4|\theta_0^*| \ee^{|\kappa|+|\eta|} (\sigma^* \ee^{|\kappa|+|\eta|} \lor 1)
\Bigr).
\end{align*}
\end{lem}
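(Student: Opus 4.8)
The plan is to exploit that, by \eqref{Mall-Der}, the Malliavin derivative $\mcl D_{\beta,\tau}X_{\beta,t}$ is in fact \emph{deterministic}: since $X_0$ is independent of $W^Q$, the derivative of $S_{\nu t}X_{\beta,0}$ vanishes, and the Wiener-integral term contributes only the deterministic kernel $\sigma^* G_{\nu(t-\tau)}(\bs y,\cdot)$, so $(\tau,t,\bs y)\mapsto\mcl D_{\beta,\tau}X_{\beta,t}(\bs y)\in\mcl L_{\beta+1}^2$ solves a deterministic linear integral equation. Hence the expectation in the statement is redundant, and it suffices to bound $\int_0^1\|\int_D\varphi(\bs y)\mcl D_{\beta,\tau}X_{\beta,t}(\bs y)\dd\bs y\|_{\mcl L_{\beta+1}^2}^2\dd\tau$ by $(\sigma^*\ee^{|\kappa|+|\eta|}\lor1)^2\exp(2|\theta_0^*|\ee^{|\kappa|+|\eta|}(\sigma^*\ee^{|\kappa|+|\eta|}\lor1))$ up to a universal constant, uniformly in $t\in\mbb T$ and in $\varphi$ with $\|\varphi\|=1$; squaring then yields the lemma. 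The a priori finiteness of the quantities below, needed to invoke Gronwall's inequality, follows from \eqref{eq-S-S} (alternatively one runs the estimate on the Picard iterates of \eqref{Mall-Der} and passes to the limit).

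\emph{Kernel estimates.} Put $v=1/\bar v$, so $\|v\|_\infty\lor\|\bar v\|_\infty\le\ee^{|\kappa|+|\eta|}$. For $\beta>-1$ one has $\|u\|_{\mcl L_{\beta+1}^2}\lesssim\|u\|$ because $\mu_{l_1,l_2}^{-\alpha(\beta+1)}\le\mu_{1,1}^{-\alpha(\beta+1)}$, while $\|u\|\le\|\bar v\|_\infty^{1/2}\|u\|_{L^2(D,\dd\bs y)}$ and $\|u\|_{L^2(D,\dd\bs y)}\le\|v\|_\infty^{1/2}\|u\|$. The symmetry of $G_r$ together with $S_ru(\bs x)=\int_D G_r(\bs x,\bs y)u(\bs y)\bar v(\bs y)\dd\bs y$ gives, for any $\psi$,
\[
\int_D\psi(\bs y)\,G_r(\bs y,\cdot)\,\dd\bs y=S_r(\psi v),
\]
and since $S_r$ is a contraction on $\mcl H$ and $\|\psi v\|^2=\int_D\psi^2 v\,\dd\bs y\le\|v\|_\infty\|\psi\|_{L^2(D,\dd\bs y)}^2$, this yields both $\|\int_D\psi(\bs y)G_r(\bs y,\cdot)\dd\bs y\|_{\mcl L_{\beta+1}^2}\lesssim\ee^{(|\kappa|+|\eta|)/2}\|\psi\|_{L^2(D,\dd\bs y)}$ and $\|\int_D\psi(\bs y)G_r(\bs y,\cdot)\dd\bs y\|_{L^2(D,\dd\bs y)}\le\ee^{|\kappa|+|\eta|}\|\psi\|_{L^2(D,\dd\bs y)}$.

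\emph{Gronwall.} For $0\le\tau\le t\le1$ set $g(\tau,t)=\sup\{\|\int_D\psi(\bs y)\mcl D_{\beta,\tau}X_{\beta,t}(\bs y)\dd\bs y\|_{\mcl L_{\beta+1}^2}:\|\psi\|_{L^2(D,\dd\bs y)}\le1\}$, which vanishes for $t\le\tau$ and is finite by the above. Integrating \eqref{Mall-Der} against $\psi(\bs y)\dd\bs y$, using $\mcl D_{\beta,\tau}X_{\beta,s}=0$ for $s\le\tau$, and writing $\psi^{(s)}(\bs z)=\int_D\psi(\bs y)G_{\nu(t-s)}(\bs y,\bs z)\dd\bs y$, the convolution term is $\theta_0^*\int_\tau^t\int_D\psi^{(s)}(\bs z)\mcl D_{\beta,\tau}X_{\beta,s}(\bs z)\dd\bs z\,\dd s$; by homogeneity in $\psi^{(s)}$ its $\mcl L_{\beta+1}^2$-norm is at most $|\theta_0^*|\int_\tau^t\|\psi^{(s)}\|_{L^2(D,\dd\bs y)}g(\tau,s)\dd s$, and $\|\psi^{(s)}\|_{L^2(D,\dd\bs y)}\le\ee^{|\kappa|+|\eta|}\|\psi\|_{L^2(D,\dd\bs y)}$ by the second kernel estimate. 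Bounding the source term by the first kernel estimate and taking the supremum over $\|\psi\|_{L^2(D,\dd\bs y)}\le1$ gives
\[
g(\tau,t)\lesssim\sigma^*\ee^{(|\kappa|+|\eta|)/2}+|\theta_0^*|\ee^{|\kappa|+|\eta|}\int_\tau^t g(\tau,s)\,\dd s,
\]
so Gronwall's inequality (with $t-\tau\le1$) yields $g(\tau,t)\lesssim\sigma^*\ee^{(|\kappa|+|\eta|)/2}\exp(|\theta_0^*|\ee^{|\kappa|+|\eta|})$. Finally $\|\varphi\|=1$ forces $\|\varphi\|_{L^2(D,\dd\bs y)}\le\|v\|_\infty^{1/2}\le\ee^{(|\kappa|+|\eta|)/2}$, hence $\|\int_D\varphi(\bs y)\mcl D_{\beta,\tau}X_{\beta,t}(\bs y)\dd\bs y\|_{\mcl L_{\beta+1}^2}\le\ee^{(|\kappa|+|\eta|)/2}g(\tau,t)\lesssim\sigma^*\ee^{|\kappa|+|\eta|}\exp(|\theta_0^*|\ee^{|\kappa|+|\eta|})$; integrating the square over $\tau\in(0,1)$, squaring, and using $\sigma^*\ee^{|\kappa|+|\eta|}\le\sigma^*\ee^{|\kappa|+|\eta|}\lor1$ and $\exp(|\theta_0^*|\ee^{|\kappa|+|\eta|})\le\exp(|\theta_0^*|\ee^{|\kappa|+|\eta|}(\sigma^*\ee^{|\kappa|+|\eta|}\lor1))$ gives the assertion.

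\emph{Main obstacle.} Once one notices that the Malliavin derivative is deterministic, the argument reduces to a single-variable Gronwall estimate; the only point requiring care is the systematic bookkeeping of the weights $\bar v$ and $v=1/\bar v$ when passing between $L^2(D,\dd\bs y)$, $\mcl H$ and $\mcl L_{\beta+1}^2$ — which is precisely where the factors $\ee^{|\kappa|+|\eta|}$ enter — together with the a priori integrability of $s\mapsto g(\tau,s)$ that licenses Gronwall's inequality, which is read off from \eqref{eq-S-S}.
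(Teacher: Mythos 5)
Your proof is correct, and it rests on exactly the same two kernel facts as the paper's argument (the identity $\int_D \psi(\bs y) G_r(\bs y,\cdot)\,\dd\bs y = S_r(\psi v)$ together with the contractivity of $S_r$ on $\mcl H$ and the bookkeeping of the weights $v,\bar v$, which is where the factors $\ee^{|\kappa|+|\eta|}$ come from), but it is organized differently. The paper never uses that $\mcl D_{\beta,\tau}X_{\beta,t}$ is deterministic: it proves the uniform bound on the deterministic Picard iterates $u^{(n)}_{t,\tau}$ by induction (Lemma \ref{lemA2}, whose exponential series is just the closed form of your Gronwall bound) and then spends Lemma \ref{lemA3} on a separate Gronwall argument showing the iterates converge to $\mcl D_{\beta,\tau}X_{\beta,t}$ in the fourth-moment sense, which is what lets it transfer the bound. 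Your observation that \eqref{Mall-Der} is a linear Volterra equation with deterministic kernel and deterministic source, so that its unique solution in $L^2(\Omega;L^2(\mbb T;\mcl L_{\beta+1}^2))$ is deterministic and the outer expectation is vacuous, legitimately collapses Lemma \ref{lemA3} and yields a slightly sharper constant. What the paper's route buys is robustness: it does not rely on determinism and so would survive a state-dependent reaction term.

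One technical caveat: the a priori integrability needed to run Gronwall directly on $t\mapsto g(\tau,t)$ for fixed $\tau$ is integrability in the $t$-variable, whereas \eqref{eq-S-S} controls $\int_0^1\|\mcl D_{\beta,\tau}X_{\beta,t}(\bs y)\|_{\mcl L_{\beta+1}^2}^2\,\dd\tau$ for fixed $t$, i.e.\ the other variable. This is repairable (Tonelli gives $\int_0^1 g(\tau,t)^2\,\dd t<\infty$ for a.e.\ $\tau$, which suffices since the final quantity is a $\dd\tau$-integral), or one simply falls back on running the estimate on the Picard iterates and passing to the limit, as you note parenthetically --- which is precisely the paper's route.
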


We approximate the Malliavin derivative $\mcl D_{\beta,\tau} X_{\beta,t}(\bs y)$ 
by using the Picard iteration and show Lemma \ref{lemA1}.
Let $\tau \in \mbb T$. For $t \in (\tau,1]$ and $\bs y \in D$, we define
\begin{equation}\label{seq-u}
\begin{cases}
u_{t,\tau}^{(0)}(\bs y) = \sigma^* G_{\nu(t-\tau)}(\bs y,\cdot),
\\
\displaystyle
u_{t,\tau}^{(n)}(\bs y) = u_{t,\tau}^{(0)}(\bs y) 
+ \theta_0^* \int_\tau^t 
\int_D G_{\nu(t-s)}(\bs y,\bs z)
u_{s,\tau}^{(n-1)}(\bs z) \dd \bs z \dd s,
\quad n \in \mbb N.
\end{cases}
\end{equation}
Let $u_{t,\tau}^{(n)}(\bs y) \equiv 0$
for $t \in [0,\tau]$, $\bs y \in D$ and $n \in \mbb N \cup \{0\}$.

It is easily shown that Lemma \ref{lemA1} holds from the following two lemmas.
\begin{lem}\label{lemA2}
For $\varphi \in \mcl H$ with $\| \varphi \|=1$, 
$\tau \in \mbb T$, $t \in (\tau,1]$, $\beta >-1$
and $\{u_{t,\tau}^{(n)}\}_{n \in \mbb N}$ given in \eqref{seq-u}, it follows that
\begin{equation}\label{lem2-01}
\biggl\|
\int_D \varphi(\bs y) u_{t,\tau}^{(n)}(\bs y) \dd \bs y
\biggr\|_{\mcl L_{\beta +1}^2} 
\le \sigma^* \| v \|_\infty 
+ \sum_{k=1}^n \frac{(|\theta_0^*| \| v \|_\infty (\sigma^* \| v \|_\infty \lor 1))^k}
{k!} (t-\tau)^k.
\end{equation}
In particular, 
\begin{equation*}
\sup_{\substack{t, \tau \in \mbb T, \\ \| \varphi \| = 1, n \in \mbb N}}
\biggl\|
\int_D \varphi(\bs y) u_{t,\tau}^{(n)}(\bs y) \dd \bs y
\biggr\|_{\mcl L_{\beta +1}^2}
\le (\sigma^* \ee^{|\kappa|+|\eta|} \lor 1) 
\exp \Bigl(
{|\theta_0^*| \ee^{|\kappa|+|\eta|} (\sigma^* \ee^{|\kappa|+|\eta|} \lor 1)}
\Bigr).
\end{equation*}
\end{lem}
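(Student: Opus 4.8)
The plan is to prove \eqref{lem2-01} by induction on $n \ge 0$ (the sum being empty for $n = 0$). It is convenient to prove the equivalent statement for an arbitrary $\psi \in \mcl H$ with the right-hand side carrying an extra factor $\| \psi \|$; the asserted case is then $\| \psi \| = 1$, and applying the induction hypothesis in the inductive step to non-normalized functions becomes painless. The guiding observation is that, although the kernels $u_{t,\tau}^{(n)}(\bs y)$ may be singular in $\bs y$ as $t \downarrow \tau$, integrating them against a fixed element of $\mcl H$ regularizes everything through the semigroup $S_t$, so no integrability issue arises.

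Two elementary facts carry the argument. First, since $G_t$ is symmetric and $S_t u(\bs x) = \int_D G_t(\bs x, \bs y) u(\bs y) \bar v(\bs y) \dd \bs y$, writing $h = (h v) \bar v$ gives the identity $\int_D h(\bs y) G_t(\bs y, \cdot) \dd \bs y = S_t(h v)$ for any $h \in \mcl H$. Second, because $\mu_{l_1,l_2} = \pi^2(l_1^2 + l_2^2) \ge 2\pi^2 > 1$ and $\alpha(\beta + 1) \ge 0$, one has $\| \cdot \|_{\mcl L_{\beta +1}^2} \le \| \cdot \|$ on $\mcl H$, and $S_t$ is a contraction on $\mcl H$, so $\| S_t h \|_{\mcl L_{\beta +1}^2} \le \| h \|$ for every $t \ge 0$; together with $\| h v \| \le \| v \|_\infty \| h \|$ this yields $\| \int_D h(\bs y) G_t(\bs y,\cdot) \dd \bs y \|_{\mcl L_{\beta +1}^2} \le \| v \|_\infty \| h \|$. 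Taking $h = \sigma^* \psi$ and $t = \nu(t-\tau)$ establishes the base case $n = 0$.

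For the inductive step, integrate the recursion \eqref{seq-u} against $\psi$, interchange the order of integration (legitimate once the inductive bound is known to be finite), and use the first identity above to get
\begin{equation*}
\int_D \psi(\bs y) u_{t,\tau}^{(n)}(\bs y) \dd \bs y = \int_D \psi(\bs y) u_{t,\tau}^{(0)}(\bs y) \dd \bs y + \theta_0^* \int_\tau^t \int_D u_{s,\tau}^{(n-1)}(\bs z)\, S_{\nu(t-s)}(\psi v)(\bs z) \dd \bs z \dd s.
\end{equation*}
The first term is bounded by $\sigma^* \| v \|_\infty \| \psi \|$; to the inner $\bs z$-integral apply the induction hypothesis with the function $S_{\nu(t-s)}(\psi v)$, whose $\mcl H$-norm is at most $\| v \|_\infty \| \psi \|$, in the role of $\psi$. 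After the substitution $r = s - \tau$ this produces a bound of the form $\| \psi \| \bigl( \sigma^* \| v \|_\infty + |\theta_0^*| \| v \|_\infty \int_0^{t-\tau} B_{n-1}(r) \dd r \bigr)$, where $B_{n-1}(r)$ denotes the right-hand side of \eqref{lem2-01} at level $n-1$ with $t - \tau$ replaced by $r$. Integrating term by term and using $\sigma^* \| v \|_\infty \le \sigma^* \| v \|_\infty \lor 1$ together with $\sigma^* \| v \|_\infty \lor 1 \ge 1$ to absorb each power of $\sigma^* \| v \|_\infty$ into a power of $\sigma^* \| v \|_\infty \lor 1$ reproduces \eqref{lem2-01} at level $n$. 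Finally, the ``in particular'' estimate follows by taking $t - \tau \le 1$, completing the finite sum to $\ee^{C} - 1$ with $C = |\theta_0^*| \| v \|_\infty (\sigma^* \| v \|_\infty \lor 1)$, using $\sigma^* \| v \|_\infty - 1 + \ee^{C} \le (\sigma^* \| v \|_\infty \lor 1) \ee^{C}$ (which reduces to $(\sigma^* \| v \|_\infty \lor 1 - 1)(\ee^C - 1) \ge 0$), and bounding $\| v \|_\infty = \sup_{(y,z) \in D} \ee^{-(\kappa y + \eta z)} \le \ee^{|\kappa| + |\eta|}$, noting that the resulting bound is nondecreasing in $\| v \|_\infty$. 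The only step requiring genuine care is the constant bookkeeping just described; beyond the two elementary facts above there is no analytic obstacle.
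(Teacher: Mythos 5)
Your proof is correct and follows essentially the same route as the paper's: induction on $n$, the identity $\int_D \varphi(\bs y) G_t(\bs y,\cdot)\,\dd\bs y = S_t(\varphi v)$ combined with the contraction bound $\| S_t h \|_{\mcl L_{\beta+1}^2} \le \| h \| \le \| v \|_\infty \|\varphi\|$, and term-by-term integration of the exponential series. Your device of proving the bound for general $\psi$ with an extra factor $\|\psi\|$ is just a restatement of the paper's renormalization $\widehat\varphi_{s,t} = \widetilde\varphi_{s,t}/\|\widetilde\varphi_{s,t}\|$, so the two arguments coincide.
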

\begin{lem}\label{lemA3}
For any $t \in \mbb T$, it holds that 
\begin{equation*}
\lim_{n \to \infty}
\sup_{\| \varphi \|=1}
\EE\Biggl[
\biggl(
\int_0^1
\biggl\|
\int_D  \varphi(\bs y) 
\mcl D_{\beta,\tau} X_{\beta,t}(\bs y) \dd \bs y
-\int_D  \varphi(\bs y) u_{t,\tau}^{(n)}(\bs y) \dd \bs y
\biggr\|_{\mcl L_{\beta+1}^2}^2
\dd \tau
\biggr)^2
\Biggr]
=0.
\end{equation*}
\end{lem}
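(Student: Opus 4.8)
The plan is to run a Picard-type iteration: at each step, pairing the kernel identity against a unit test function replaces that test function by a new one obtained by applying the (contraction) semigroup $S$, and each step adds one time integration over a shrinking simplex. This produces a gain of $1/n!$ after $n$ steps, which forces the difference between $\mcl D_{\beta,\tau}X_{\beta,t}$ and $u_{t,\tau}^{(n)}$ to $0$.

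For $\tau\in\mbb T$, $t\in(\tau,1]$, $\bs y\in D$ and $n\ge0$, set
\[
\Delta_{t,\tau}^{(n)}(\bs y)=\mcl D_{\beta,\tau}X_{\beta,t}(\bs y)-u_{t,\tau}^{(n)}(\bs y)\in\mcl L_{\beta+1}^2,\qquad\Delta_{t,\tau}^{(n)}(\bs y)\equiv0\ \text{for}\ t\le\tau,
\]
and, for $\varphi\in\mcl H$ with $\|\varphi\|=1$, write $F_n(\varphi;t,\tau)=\int_D\varphi(\bs y)\Delta_{t,\tau}^{(n)}(\bs y)\,\dd\bs y$, so that the quantity in the lemma is exactly
\[
a_n(t):=\sup_{\|\varphi\|=1}\EE\biggl[\Bigl(\int_0^1\bigl\|F_n(\varphi;t,\tau)\bigr\|_{\mcl L_{\beta+1}^2}^2\,\dd\tau\Bigr)^2\biggr].
\]
Subtracting \eqref{seq-u} from \eqref{Mall-Der}, and using $\mcl D_{\beta,\tau}X_{\beta,s}\equiv u_{s,\tau}^{(n-1)}\equiv0$ for $s\le\tau$, gives the identity in $\mcl L_{\beta+1}^2$
\[
\Delta_{t,\tau}^{(n)}(\bs y)=\theta_0^*\int_\tau^t \int_D G_{\nu(t-s)}(\bs y,\bs z)\,\Delta_{s,\tau}^{(n-1)}(\bs z)\,\dd\bs z\,\dd s .
\]

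I would pair this with $\varphi$, interchange the $\bs y$-integration with the remaining integrations (Fubini), and use the symmetry of $G$ together with $S_t u(\bs z)=\langle G_t(\bs z,\cdot),u\rangle$ to rewrite $\int_D\varphi(\bs y)G_{\nu(t-s)}(\bs y,\bs z)\,\dd\bs y=S_{\nu(t-s)}(v\varphi)(\bs z)$. Since $S_t$ is a contraction on $\mcl H$ and multiplication by $v$ is bounded on $\mcl H$ with $\|S_{\nu(t-s)}(v\varphi)\|_{\mcl H}\le\|v\|_\infty\|\varphi\|_{\mcl H}=\|v\|_\infty$, writing $\widehat\varphi_s:=S_{\nu(t-s)}(v\varphi)/\|S_{\nu(t-s)}(v\varphi)\|_{\mcl H}$ (a unit vector of $\mcl H$) and applying \eqref{Bochner-ineq} yields
\[
\bigl\|F_n(\varphi;t,\tau)\bigr\|_{\mcl L_{\beta+1}^2}\le|\theta_0^*|\,\|v\|_\infty\int_\tau^t\Bigl\|\int_D\widehat\varphi_s(\bs z)\Delta_{s,\tau}^{(n-1)}(\bs z)\,\dd\bs z\Bigr\|_{\mcl L_{\beta+1}^2}\,\dd s .
\]
Squaring, applying Cauchy--Schwarz in $s$ (using $t-\tau\le1$), integrating in $\tau$ over $(0,t)$, exchanging the $s$- and $\tau$-integrals, applying Cauchy--Schwarz once more in $s$, taking expectations, and finally taking the supremum over $\|\varphi\|=1$ — so that the unit vector $\widehat\varphi_s$ gets absorbed into the definition of $a_{n-1}(s)$ — should give
\[
a_n(t)\le\bigl((\theta_0^*)^2\|v\|_\infty^2\bigr)^2\int_0^t a_{n-1}(s)\,\dd s,\qquad t\in\mbb T,\ n\in\mbb N .
\]

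For the base case one checks $\sup_{t\in\mbb T}a_0(t)<\infty$: since $\Delta_{t,\tau}^{(0)}(\bs y)=\mcl D_{\beta,\tau}X_{\beta,t}(\bs y)-\sigma^*G_{\nu(t-\tau)}(\bs y,\cdot)$, the Malliavin-derivative part is controlled uniformly by \eqref{eq-S-S} after estimating $\int_D\varphi\,\mcl D_{\beta,\tau}X_{\beta,t}\,\dd\bs y$ with the Cauchy--Schwarz inequality in $\bs y$ together with $\int_D\varphi^2\,\dd\bs y\le\|v\|_\infty\|\varphi\|_{\mcl H}^2$, while the kernel part is bounded by $\sigma^*\|v\|_\infty$ by the $n=0$ case of \eqref{lem2-01} (equivalently, by computing $\|G_{\nu(t-\tau)}(\bs y,\cdot)\|_{\mcl L_{\beta+1}^2}^2=\sum_{l_1,l_2\ge1}\ee^{-2\lambda_{l_1,l_2}\nu(t-\tau)}\mu_{l_1,l_2}^{-\alpha(\beta+1)}e_{l_1,l_2}(\bs y)^2$, integrating in $\tau$, and using $\beta>-1$ to sum $\sum_{l_1,l_2\ge1}(l_1^2+l_2^2)^{-1-\alpha(\beta+1)}<\infty$; here $\nu$ is fixed, so the resulting $\nu^{-1}$ is harmless). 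An immediate induction then gives $a_n(t)<\infty$ for all $n$, and iterating the recursion $n$ times yields
\[
a_n(t)\le(\theta_0^*)^{4n}\|v\|_\infty^{4n}\,\frac{t^n}{n!}\,\sup_{r\in\mbb T}a_0(r)\longrightarrow0\ (n\to\infty)
\]
for every $t\in\mbb T$, which is the assertion. The routine parts are the factorial iteration and the base-case series estimate; the point deserving the most care is closing the recursion — after testing the kernel identity against a fixed unit $\varphi$, the natural new test function $S_{\nu(t-s)}(v\varphi)$ is neither fixed nor normalized, so one must renormalize it and use the supremum over all unit vectors to fold it back into $a_{n-1}$, while keeping all constants ($\|v\|_\infty$, the contraction constant of $S$, $|\theta_0^*|$) uniform in $n,t,\tau$ and $\nu$, and handling the $\mcl L_{\beta+1}^2$-valued (Bochner) integrals carefully.
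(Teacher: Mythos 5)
Your proposal is correct and follows essentially the same route as the paper: subtract the Picard iterate from the Malliavin-derivative equation, test against a unit $\varphi$, rewrite the kernel pairing as $S_{\nu(t-s)}(v\varphi)$, renormalize this into a new unit test function so the supremum over $\|\varphi\|=1$ closes the recursion, and then iterate the resulting Gronwall-type inequality with the base case controlled by \eqref{eq-S-S} and the bound $\sigma^*\|v\|_\infty$ on the kernel term. The only cosmetic difference is that you track the (correct) fourth power of the constant after squaring and you should, as the paper does, set the renormalized test function to $0$ when $\|S_{\nu(t-s)}(v\varphi)\|=0$; neither point affects the argument.
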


\begin{proof}[\bf{Proof of Lemma \ref{lemA2}}]
Let
\begin{equation*}
F_n(t,\tau,\varphi) = 
\biggl\|
\int_D \varphi(\bs y) u_{t,\tau}^{(n)}(\bs y) \dd \bs y
\biggr\|_{\mcl L_{\beta +1}^2}.
\end{equation*}
We prove \eqref{lem2-01} by induction.

(i) For $n=0$, we have
\begin{equation*}
F_0(t,\tau,\varphi) = 
\sigma^*
\biggl\|
\int_D \varphi(\bs y)
G_{\nu(t-\tau)}(\bs y,\cdot) \dd \bs y
\biggr\|_{\mcl L_{\beta+1}^2}
= \sigma^* \| S_{\nu(t-\tau)} \varphi v \|_{\mcl L_{\beta+1}^2}
\le \sigma^* \| v \|_\infty.
\end{equation*}

(ii) Let \eqref{lem2-01} be true for $n$. Then, we see from \eqref{Bochner-ineq} that
\begin{align*}
F_{n+1}(t,\tau,\varphi) 
&\le
\biggl\|
\int_D  \varphi(\bs y) u_{t,\tau}^{(0)}(\bs y) \dd \bs y
\biggr\|_{\mcl L_{\beta+1}^2}
\nonumber
\\
&\qquad
+ |\theta_0^*| \biggl\| \int_D  \varphi(\bs y)
\int_\tau^t 
\int_D  G_{\nu(t-s)}(\bs y,\bs z)u_{s,\tau}^{(n)}(\bs z) 
\dd \bs z \dd s \dd \bs y
\biggr\|_{\mcl L_{\beta+1}^2}
\nonumber
\\
&\le
\sigma^* \| v \|_\infty
+ |\theta_0^*| \int_\tau^t 
\biggl\|
\int_D u_{s,\tau}^{(n)}(\bs z) 
\int_D G_{\nu(t-s)}(\bs y,\bs z) \varphi(\bs y) 
\dd \bs y \dd \bs z
\biggr\|_{\mcl L_{\beta+1}^2} \dd s
\nonumber
\\
&= \sigma^* \| v \|_\infty
+ |\theta_0^*| \int_\tau^t 
\biggl\|
\int_D u_{s,\tau}^{(n)}(\bs z) 
\widetilde \varphi_{s,t}(\bs z) \dd \bs z  
\biggr\|_{\mcl L_{\beta+1}^2} \dd s
\nonumber
\\
&= \sigma^* \| v \|_\infty
+ |\theta_0^*| \int_\tau^t 
\| \widetilde \varphi_{s,t} \|
\biggl\|
\int_D u_{s,\tau}^{(n)}(\bs z) 
\widehat \varphi_{s,t}(\bs z) \dd \bs z  
\biggr\|_{\mcl L_{\beta+1}^2} \dd s,
\end{align*}
where
\begin{equation*}
\widetilde \varphi_{s,t}(\cdot) = 
\int_D  G_{\nu(t-s)}(\bs y,\cdot) \varphi(\bs y) \dd \bs y
= S_{\nu(t-s)} \varphi v(\cdot),
\quad
\widehat \varphi_{s,t} = 
\begin{cases}
\widetilde \varphi_{s,t} /\| \widetilde \varphi_{s,t} \|, 
& \| \widetilde \varphi_{s,t} \| \neq 0,
\\
0, & \| \widetilde \varphi_{s,t} \| = 0.
\end{cases}
\end{equation*}
Since $\| \widetilde \varphi_{s,t} \| \le \| v \|_\infty$ 
and $\| \widehat \varphi_{s,t} \| =1$ if $\| \widetilde \varphi_{s,t} \| \neq 0$,  
it follows from the induction hypothesis that 
\begin{align*}
F_{n+1}(t,\tau,\varphi) 
&\le
\sigma^* \| v \|_\infty
+ |\theta_0^*| \| v \|_\infty
\int_\tau^t 
\biggl\{
\sigma^* \| v \|_\infty 
+ \sum_{k=1}^n \frac{(|\theta_0| \| v \|_\infty (\sigma^* \| v \|_\infty \lor 1))^k}
{k!} (s-\tau)^k
\biggr\} \dd s
\\
&\le
\sigma^* \| v \|_\infty
+ |\theta_0^*| \| v \|_\infty (\sigma^* \| v \|_\infty \lor 1) (t-\tau) 
+ \sum_{k=1}^n \frac{(|\theta_0^*| \| v \|_\infty 
(\sigma^* \| v \|_\infty \lor 1))^{k+1}}
{(k+1)!} (t-\tau)^{k+1}
\\
&=
\sigma^* \| v \|_\infty
+ \sum_{k=1}^{n+1} 
\frac{(|\theta_0^*| \| v \|_\infty (\sigma^* \| v \|_\infty \lor 1))^k}{k!} (t-\tau)^k.
\end{align*}
Therefore, \eqref{lem2-01} is true for $n+1$.

The last claim follows immediately 
from \eqref{lem2-01} and $\| v \|_\infty \le \ee^{|\kappa|+|\eta|}$.
\end{proof}

\begin{proof}[\bf{Proof of Lemma \ref{lemA3}}]
For $\varphi \in \mcl H$ with $\| \varphi \|=1$, let
\begin{equation*}
\widetilde F_n(t,\tau,\varphi) =
\biggl\|
\int_D  \varphi(\bs y) 
\mcl D_{\beta,\tau} X_{\beta,t}(\bs y) \dd \bs y
-\int_D  \varphi(\bs y) u_{t,\tau}^{(n)}(\bs y) \dd \bs y
\biggr\|_{\mcl L_{\beta+1}^2}^2.
\end{equation*}
It follows from \eqref{Mall-Der}, \eqref{seq-u} and \eqref{Bochner-ineq} that
\begin{align*}
\widetilde F_n(t,\tau,\varphi) &=
(\theta_0^*)^2
\biggl\|
\int_D  \varphi(\bs y) 
\int_\tau^t \int_D  G_{\nu(t-s)}(\bs y,\bs z)
(\mcl D_{\beta,\tau} X_{\beta,s}(\bs z) 
- u_{s,\tau}^{(n-1)}(\bs z)) \dd \bs z 
\dd s \dd \bs y
\biggr\|_{\mcl L_{\beta+1}^2}^2
\nonumber
\\
&\le
(\theta_0^*)^2
\Biggl(
\int_\tau^t 
\biggl\|
\int_D 
(\mcl D_{\beta,\tau} X_{\beta,s}(\bs z)
-u_{s,\tau}^{(n-1)}(\bs z)) 
\widetilde \varphi_{s,t}(\bs z) \dd \bs z 
\biggr\|_{\mcl L_{\beta+1}^2} \dd s
\Biggr)^2
\nonumber
\\
&\le
(\theta_0^*)^2 \| v \|_\infty^2 (t-\tau)
\int_\tau^t 
\biggl\|
\int_D (\mcl D_{\beta,\tau} X_{\beta,s}(\bs z) 
-u_{s,\tau}^{(n-1)}(\bs z)) \widehat \varphi_{s,t}(\bs z) \dd \bs z
\biggr\|_{\mcl L_{\beta+1}^2}^2 \dd s
\nonumber
\\
&\le (\theta_0^*)^2 \| v \|_\infty^2
\int_\tau^t \widetilde F_{n-1}(s, \tau, \widehat \varphi_{s,t}) \dd s.
\end{align*}
When $\| \widetilde \varphi_{s,t} \| =0$, 
it is obvious that Lemma \ref{lemA3} holds 
because $\widetilde F_n(s,\tau,\widehat \varphi_{s,t}) =0$. 
Hereafter, let $\| \widetilde \varphi_{s,t} \| \neq 0$,
that is, $\| \widehat \varphi_{s,t} \| = 1$. 
Let 
\begin{equation*}
\widehat{F}_n(t) = 
\sup_{\| \varphi \| = 1}
\EE\Biggl[ \biggl( \int_0^1 \widetilde F_n(t,\tau,\varphi) \dd \tau \biggr)^2 \Biggr].
\end{equation*}
In order to get the desired result, we prove that 
for any $t \in \mbb T$, $\widehat{F}_n(t) \to 0$ as $n \to \infty$. Since 
\begin{align*}
\widehat{F}_n(t) &= 
\sup_{\| \varphi \| = 1}
\EE\Biggl[ \biggl( \int_0^t \widetilde F_n(t,\tau,\varphi) \dd \tau \biggr)^2 \Biggr]
\\
&\le
(\theta_0^*)^2 \| v \|_\infty^2
\sup_{\| \varphi \| = 1}
\EE\Biggl[ \biggl( \int_0^t 
\int_\tau^t \widetilde F_{n-1}(s, \tau, \varphi) \dd s
\dd \tau \biggr)^2 \Biggr]
\\
&=
(\theta_0^*)^2 \| v \|_\infty^2
\sup_{\| \varphi \| = 1}
\EE\Biggl[ \biggl( 
\int_0^t \int_0^s \widetilde F_{n-1}(s, \tau, \varphi) \dd \tau \dd s  
\biggr)^2 \Biggr]
\\
&\le
(\theta_0^*)^2 \| v \|_\infty^2
\int_0^t 
\sup_{\| \varphi \| = 1}
\EE\Biggl[ \biggl( 
\int_0^s \widetilde F_{n-1}(s, \tau, \varphi) \dd \tau 
\biggr)^2 \Biggr]\dd s  
\\
&=
(\theta_0^*)^2 \| v \|_\infty^2
\int_0^t \widehat F_{n-1}(s) \dd s, 
\end{align*}
it follows from Gronwall's lemma (Lemma 6.2 in \cite{Sanz-Solo2005}) that
there exists a non-negative sequence $\{ a_n \}_{n \in \mbb N}$ 
such that $a_n \to 0$ and 
$\widehat{F}_n(t) \le a_n \sup_{t \in \mbb T} \widehat{F}_0(t)$. 
Noting that
\begin{align*}
\widehat{F}_0(t) 
&\lesssim
\sup_{\| \varphi \| = 1}
\EE \Biggl[
\biggl( \int_0^t
\biggl\|
\int_D \varphi(\bs y) 
\mcl D_{\beta,\tau} X_{\beta,t}(\bs y) \dd \bs y
\biggr\|_{\mcl L_{\beta+1}^2}^2 \dd \tau \biggr)^2
\Biggr]
\nonumber
\\
&\qquad+
\sup_{\| \varphi \| = 1}
\biggl(
\int_0^t
\biggl\|
\int_D  \varphi(\bs y) G_{\nu(t-\tau)}(\bs y,\cdot) \dd \bs y
\biggr\|_{\mcl L_{\beta+1}^2}^2
\dd \tau
\biggr)^2
\nonumber
\\
&\le
\sup_{\| \varphi \| = 1}
\EE\Biggl[
\biggl(
\int_0^t
\biggl(
\int_D | \varphi(\bs y) | \| \mcl D_{\beta,\tau} X_{\beta,t}(\bs y) 
\|_{\mcl L_{\beta+1}^2} \dd \bs y
\biggr)^2
\dd \tau
\biggr)^2
\Biggr]
+(\sigma^* \| v \|_\infty)^4
\nonumber
\\
&\le
\sup_{\| \varphi \| = 1} 
\biggl( \int_D  \varphi(\bs y)^2 \dd \bs y \biggr)^2
\EE \Biggl[
\biggl(
\int_0^t \int_D  \| \mcl D_{\beta,\tau} X_{\beta,t}(\bs y) 
\|_{\mcl L_{\beta+1}^2}^2 
\dd \bs y \dd \tau
\biggr)^2
\Biggr]
+(\sigma^* \| v \|_\infty)^4
\nonumber
\\
&\le \| v \|_\infty^2 
\int_D 
\EE \Biggl[ 
\biggl(
\int_0^t \| \mcl D_{\beta,\tau} X_{\beta,t}(\bs y) 
\|_{\mcl L_{\beta+1}^2}^2 \dd \tau
\biggr)^2
\Biggr] \dd \bs y +(\sigma^* \| v \|_\infty)^4
\nonumber
\\
&\le
\| v \|_\infty^2 
\sup_{\bs y \in D}
\EE \Biggl[ 
\biggl(
\int_0^t \| \mcl D_{\beta,\tau} X_{\beta,t}(\bs y) 
\|_{\mcl L_{\beta+1}^2}^2 \dd \tau
\biggr)^2
\Biggr] +(\sigma^* \| v \|_\infty)^4,
\end{align*}
we find from \eqref{eq-S-S} that
\begin{equation*}
\sup_{t \in \mbb T} \widehat{F}_0(t) 
\lesssim 
\| v \|_\infty^2 
\sup_{(t,\bs y) \in \mbb T \times D} 
\EE \Biggl[ 
\biggl( 
\int_0^1 \| \mcl D_{\beta,\tau} X_{\beta,t}(\bs y) 
\|_{\mcl L_{\beta+1}^2}^2 \dd \tau 
\biggr)^2
\Biggr] + (\sigma^* \| v \|_\infty)^4
< \infty,
\end{equation*}
which yields $\widehat{F}_n(t) \to 0$ as $n \to \infty$. 
This concludes the proof.
\end{proof}

\subsubsection{Basic properties of closed linear operators}
Here, we present some useful properties of closed operators.
Note that the operator $\widetilde Q^\delta$, $\delta \in \mbb R$
given in \eqref{frac-Q} is a closed linear operator.

Let $(\mcl X, \mathscr A, \mu)$ be a measure space,
and $\mcl B_1$, $\mcl B_2$ be Banach spaces.
For a linear operator $L$, $\mathscr D(L)$ denotes the domain of $L$.
\begin{lem}\label{lemA4}
Let $u :\mcl X \to \mcl B_1$ be a $\mcl B_1$-valued Bochner 
$\mu$-integrable function.
\begin{enumerate}
\item[(1)]
If $L : \mcl B_1 \to \mcl B_2$ is a bounded linear operator, 
then $L u$ is $\mcl B_2$-valued Bochner $\mu$-integrable function 
on $\mcl X$, and
\begin{equation}\label{closed-integral}
L \int_{\mcl X} u(x) \dd \mu(x) = \int_{\mcl X} L u(x) \dd \mu(x).
\end{equation}

\item[(2)]
If $L: \mcl B_1 \supset \mathscr D(L) \to \mcl B_2$ is a closed linear operator 
and $L u$ is $\mcl B_2$-valued Bochner $\mu$-integrable function on $\mcl X$,
then $\int_{\mcl X} u(x) \dd \mu(x) \in \mathscr D(L)$ and
\eqref{closed-integral} holds.
\end{enumerate}
\end{lem}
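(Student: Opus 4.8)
The plan is to prove the two parts separately: part (1) by reduction to simple functions and continuity of $L$, and part (2) by the graph/Hahn--Banach argument (Hille's theorem).

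For part (1), since $u$ is Bochner $\mu$-integrable there is a sequence of simple functions $u_n = \sum_i \ind_{A_{n,i}} b_{n,i}$ with $b_{n,i} \in \mcl B_1$ such that $u_n(x) \to u(x)$ for $\mu$-a.e.\ $x$ and $\int_{\mcl X} \| u_n - u \|_{\mcl B_1} \dd \mu \to 0$. For each simple $u_n$ the identity $L \int_{\mcl X} u_n \dd \mu = \int_{\mcl X} L u_n \dd \mu$ is immediate from linearity of $L$. Boundedness of $L$ gives $\| L u_n(x) - L u(x) \|_{\mcl B_2} \le \| L \| \| u_n(x) - u(x) \|_{\mcl B_1}$, so $L u$ is strongly $\mu$-measurable as an a.e.\ limit of the simple functions $L u_n$, and $\int_{\mcl X} \| L u \|_{\mcl B_2} \dd \mu \le \| L \| \int_{\mcl X} \| u \|_{\mcl B_1} \dd \mu < \infty$, hence $L u$ is Bochner integrable. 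Passing to the limit in $L \int_{\mcl X} u_n \dd \mu = \int_{\mcl X} L u_n \dd \mu$, the left side tends to $L \int_{\mcl X} u \dd \mu$ by continuity of $L$, and the right side tends to $\int_{\mcl X} L u \dd \mu$ since $\bigl\| \int_{\mcl X} L u_n \dd \mu - \int_{\mcl X} L u \dd \mu \bigr\|_{\mcl B_2} \le \| L \| \int_{\mcl X} \| u_n - u \|_{\mcl B_1} \dd \mu \to 0$. This yields \eqref{closed-integral}. In particular, taking $\mcl B_2 = \mbb R$, any bounded linear functional passes through the Bochner integral.

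For part (2), equip the product space $\mcl B_1 \times \mcl B_2$ with the norm $\| (x,y) \| = \| x \|_{\mcl B_1} + \| y \|_{\mcl B_2}$, and consider the graph $G(L) = \{ (x, L x) : x \in \mathscr D(L) \}$, which is a \emph{closed} linear subspace of $\mcl B_1 \times \mcl B_2$ precisely because $L$ is a closed operator. Define $v : \mcl X \to \mcl B_1 \times \mcl B_2$ by $v(x) = (u(x), L u(x))$; since $u$ and $L u$ are both Bochner $\mu$-integrable, $v$ is Bochner integrable, and applying part (1) to the two coordinate projections gives $\int_{\mcl X} v \dd \mu = \bigl( \int_{\mcl X} u \dd \mu, \int_{\mcl X} L u \dd \mu \bigr)$. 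Since $v(x) \in G(L)$ for every $x$, by the Hahn--Banach theorem $G(L) = \bigcap \{ \ker \Phi : \Phi \in (\mcl B_1 \times \mcl B_2)^*, \ \Phi|_{G(L)} = 0 \}$, and for each such $\Phi$ one has $\Phi \bigl( \int_{\mcl X} v \dd \mu \bigr) = \int_{\mcl X} \Phi(v(x)) \dd \mu(x) = 0$ by the last remark of part (1). Hence $\int_{\mcl X} v \dd \mu \in G(L)$, i.e.\ $\int_{\mcl X} u \dd \mu \in \mathscr D(L)$ and $L \int_{\mcl X} u \dd \mu = \int_{\mcl X} L u \dd \mu$.

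The only genuinely non-formal point, and thus the main (mild) obstacle, is the last step of part (2): that the Bochner integral of a function valued in a closed subspace again lies in that subspace. I expect to dispatch it exactly via the Hahn--Banach characterization of closed subspaces together with the scalar commutation property from part (1); no measure-theoretic subtleties arise, since strong measurability of $v$ is immediate from that of $u$ and $L u$.
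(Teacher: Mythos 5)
Your proof is correct. Note, however, that the paper does not prove this lemma at all: it simply refers the reader to Corollary V.5.2 in \cite{Yoshida1980} and Theorem 3.10.16 in \cite{Denkowski_etal2003}, which are exactly the two classical facts you establish (commutation of a bounded operator with the Bochner integral, and Hille's theorem for closed operators). Your part (1) is the standard simple-function approximation argument, and your part (2) is the standard closed-graph proof of Hille's theorem: you integrate $v=(u,Lu)$ in $\mcl B_1 \times \mcl B_2$, identify $\int_{\mcl X} v \, \dd\mu = \bigl(\int_{\mcl X} u \, \dd\mu, \int_{\mcl X} Lu \, \dd\mu\bigr)$ via the coordinate projections, and then use the Hahn--Banach characterization of the closed subspace $G(L)$ together with the scalar case of part (1) to conclude that the integral lies in the graph. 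This is essentially the textbook route (some presentations instead regard $G(L)$ as a Banach space in its own right and integrate $v$ there, which avoids the Hahn--Banach step but needs the same measurability remarks); either way, your argument is complete, and the only point worth making explicit is the one you already flag, namely that $v$ is strongly measurable because simple approximations of $u$ and of $Lu$ can be paired.
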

For the proof of this lemma, see Corollary V.5.2 in \cite{Yoshida1980} and
Theorem 3.10.16 in \cite{Denkowski_etal2003}.

Let $(\mcl X, \mathscr A, \mu)$ be a finite measure space and $p \ge 1$.
If $u :\mcl X \to \mcl B_1$ is a $\mcl B_1$-valued Bochner 
$\mu$-integrable function and $L$ is the linear operator of Lemma \ref{lemA4} such that
$\| L u(\cdot) \|_{\mcl B_2}^p$ is Lebesgue $\mu$-integrable, then 
\begin{equation}\label{Bochner-ineq}
\biggl \| L \int_{\mcl X} u(x) \dd \mu(x) \biggr \|_{\mcl B_2}^p
\lesssim \int_{\mcl X} \| L u(x) \|_{\mcl B_2}^p \dd \mu(x).
\end{equation}
Indeed, one can find from \eqref{closed-integral}, 
Corollary V.5.1 in \cite{Yoshida1980} and the H\"{o}lder inequality that
\begin{align*}
\biggl \| L \int_{\mcl X} u(x) \dd \mu(x) \biggr \|_{\mcl B_2}^p
&= \biggl \| \int_{\mcl X} L u(x) \dd \mu(x) \biggr \|_{\mcl B_2}^p
\\
&\le 
\biggl( \int_{\mcl X} \| L u(x) \|_{\mcl B_2} \dd \mu(x) \biggr)^p
\\
&\le
\mu(\mcl X)^{p-1} \int_{\mcl X} \| L u(x) \|_{\mcl B_2}^p \dd \mu(x).
\end{align*}

The interchange of an integral and a closed operator 
such as \eqref{closed-integral} also holds 
for a stochastic integral and a closed operator 
(see Proposition 4.30 in \cite{DaPrato_Zabczyk2014}), 
but we here consider the interchange of 
the stochastic integral $\int_0^t (t-s)^{-\delta} S_{\nu(t-s)} \dd W_s^Q$ and 
the operator $\widetilde Q^{\beta/2}$.
\begin{lem}\label{lemA5}
Let $t \in (0,1]$, $\nu \in (0,1)$, $\alpha >0$ and $\beta > -1$. 
If $\delta < \alpha(\beta+1)/2$, then
$\int_0^t (t-u)^{-\delta} S_{\nu(t-u)} \dd W_u^Q \in L^2(\Omega;\mcl L_\beta^2)$,
$\int_0^t (t-u)^{-\delta} \widetilde Q^{\beta/2} S_{\nu(t -u)} \dd W_u^Q
\in L^2(\Omega;\mcl H)$
and
\begin{equation}\label{closed-stoch-integral}
\widetilde Q^{\beta/2} \int_0^t (t-u)^{-\delta} S_{\nu(t -u)} \dd W_u^Q 
= \int_0^t (t-u)^{-\delta} \widetilde Q^{\beta/2} S_{\nu(t -u)} \dd W_u^Q
\quad \text{on } L^2(\Omega; \mcl H).
\end{equation}
In particular, we obtain the It\^{o} isometry
\begin{equation}\label{stoch-conv-isometry}
\EE \Biggl[ \biggl \| 
\int_0^t S_{\nu(t-s)} \dd W_s^Q 
\biggr \|_{\mcl L_{\beta}^2}^2 \Biggr] 
= \int_0^t \| S_{\nu s} \|_{\mrm{HS}(\mcl U_\beta)}^2
\dd s.
\end{equation}
\end{lem}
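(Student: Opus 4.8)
The plan is to establish the three assertions in order: that each of the two stochastic integrals is a well-defined element of the indicated $L^2$-space, then the interchange identity \eqref{closed-stoch-integral}, and finally the It\^o isometry \eqref{stoch-conv-isometry} as the special case $\delta = 0$. Throughout I would use that $S_{\nu s}$ and $\widetilde Q^{\beta/2}$ are simultaneously diagonalised by $\{ e_{l_1,l_2} \}$, which turns every operator identity into a scalar computation coordinate by coordinate.

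For well-definedness, recall that $\int_0^t \Phi(u)\, \dd W_u^Q$ exists in $L^2(\Omega;\mcl H)$ as soon as $\int_0^t \| \Phi(u) \|_{\mrm{HS}(\mcl U_0)}^2\, \dd u < \infty$, and that it takes values in $L^2(\Omega;\mcl L_{\beta}^2)$ once $\int_0^t \| \Phi(u) \|_{\mrm{HS}(\mcl U_\beta)}^2\, \dd u < \infty$. As in Step 1 of the proof of Proposition \ref{propA1},
\begin{equation*}
\| S_{\nu s} \|_{\mrm{HS}(\mcl U_\beta)}^2
= \| \widetilde Q^{\beta/2} S_{\nu s} \|_{\mrm{HS}(\mcl U_0)}^2
= \sum_{l_1,l_2 \ge 1} \ee^{-2\lambda_{l_1,l_2}\nu s} \mu_{l_1,l_2}^{-\alpha(\beta+1)},
\end{equation*}
so with the singular kernel $(t-u)^{-\delta}$, Tonelli's theorem, the substitution $u \mapsto 2\lambda_{l_1,l_2}\nu s$ and $\int_0^\infty u^{-2\delta}\ee^{-u}\, \dd u < \infty$ (valid for $2\delta < 1$) give $\int_0^t (t-u)^{-2\delta}\| S_{\nu(t-u)} \|_{\mrm{HS}(\mcl U_\beta)}^2\, \dd u \lesssim \nu^{2\delta-1}\sum_{l_1,l_2 \ge 1}\mu_{l_1,l_2}^{-\alpha(\beta+1)}\lambda_{l_1,l_2}^{2\delta-1}$; since $\lambda_{l_1,l_2}\sim\mu_{l_1,l_2}\sim l_1^2+l_2^2$, this two-dimensional series converges precisely when $\alpha(\beta+1)-2\delta+1>1$, i.e.\ when $\delta<\alpha(\beta+1)/2$ (the side condition $2\delta<1$ being automatic in the regime $\alpha(\beta+1)\le1$ relevant to this paper). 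Running the same estimate with $\widetilde Q^{\beta/2}S_{\nu\cdot}$ in place of $S_{\nu\cdot}$ gives $\int_0^t(t-u)^{-\delta}\widetilde Q^{\beta/2}S_{\nu(t-u)}\, \dd W_u^Q\in L^2(\Omega;\mcl H)$.

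For the interchange, write $X$ and $Y$ for the two stochastic integrals in \eqref{closed-stoch-integral}, and let $P_n$ be the orthogonal projection onto $\mrm{span}\{ e_{l_1,l_2}: l_1,l_2\le n \}$. Since $P_n$, $\widetilde Q^{\beta/2}$ and $S_{\nu s}$ commute and the finite-rank operator $\widetilde Q^{\beta/2}P_n$ can be pulled inside the stochastic integral (bounded operators commute with the stochastic integral, the analogue of Lemma \ref{lemA4}(1); see also Proposition 4.30 in \cite{DaPrato_Zabczyk2014}),
\begin{equation*}
\widetilde Q^{\beta/2}P_n X
= \int_0^t (t-u)^{-\delta} P_n \widetilde Q^{\beta/2} S_{\nu(t-u)}\, \dd W_u^Q
= P_n Y.
\end{equation*}
Letting $n\to\infty$, the It\^o isometry together with dominated convergence on the Hilbert-Schmidt sums yields $P_n X\to X$ in $L^2(\Omega;\mcl L_{\beta}^2)$ and $P_n Y\to Y$ in $L^2(\Omega;\mcl H)$; since $\| \widetilde Q^{\beta/2}u \|_{\mcl H}=\| u \|_{\mcl L_{\beta}^2}$, the left-hand side converges to $\widetilde Q^{\beta/2}X$ in $L^2(\Omega;\mcl H)$, and uniqueness of limits forces $\widetilde Q^{\beta/2}X=Y$, which is \eqref{closed-stoch-integral}. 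Taking $\delta=0$, the identity \eqref{stoch-conv-isometry} then follows from $\EE[ \| \int_0^t S_{\nu(t-s)}\, \dd W_s^Q \|_{\mcl L_{\beta}^2}^2 ]=\EE[ \| \int_0^t \widetilde Q^{\beta/2}S_{\nu(t-s)}\, \dd W_s^Q \|_{\mcl H}^2 ]=\int_0^t \| \widetilde Q^{\beta/2}S_{\nu(t-s)} \|_{\mrm{HS}(\mcl U_0)}^2\, \dd s=\int_0^t \| S_{\nu(t-s)} \|_{\mrm{HS}(\mcl U_\beta)}^2\, \dd s$ together with the change of variables $s\mapsto t-s$.

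I expect the interchange step to be the main obstacle: the operator $\widetilde Q^{\beta/2}$ is unbounded when $\beta<0$, so it must be applied only after the truncation $P_n$, and the two limits have to be taken in the correct topologies ($L^2(\Omega;\mcl L_{\beta}^2)$ for $X$ and $L^2(\Omega;\mcl H)$ for $Y$); everything else is the routine Hilbert-Schmidt bookkeeping of the first paragraph.
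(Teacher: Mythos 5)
Your proposal is correct and takes essentially the same route as the paper: the well-definedness is the same Hilbert--Schmidt computation (reducing to $\sum\lambda_{l_1,l_2}^{2\delta-1}\mu_{l_1,l_2}^{-\alpha(\beta+1)}<\infty$, which is exactly the condition $\delta<\alpha(\beta+1)/2$), and your projection argument $\widetilde Q^{\beta/2}P_nX=P_nY$ followed by passage to the limit is just the truncated form of the paper's explicit eigenfunction series representations, to which $\widetilde Q^{\beta/2}$ is applied termwise. The derivation of \eqref{stoch-conv-isometry} from \eqref{closed-stoch-integral} and the standard It\^{o} isometry is identical.
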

\begin{proof}
In the same way as the proof of Proposition \ref{propA1},
it follows that for $\beta >-1$ and $\delta < \alpha(\beta+1)/2$, 
\begin{align}
\int_0^1 \| u^{-\delta} S_u \|_{\mrm{HS}(\mcl U_\beta)}^2 \dd u
&= \sum_{l_1,l_2 \ge 1} \mu_{l_1,l_2}^{-\alpha (\beta +1)} 
\int_0^1 u^{-2\delta} \ee^{-2u \lambda_{l_1,l_2}} \dd u
\nonumber
\\
&\le \sum_{l_1,l_2 \ge 1} \mu_{l_1,l_2}^{-\alpha (\beta +1)} 
\int_0^\infty u^{-2\delta} \ee^{-2u \lambda_{l_1,l_2}} \dd u
\nonumber
\\
&= \frac{\Gamma(1-2\delta)}{2^{1-2\delta}} 
\sum_{l_1,l_2 \ge 1} \frac{1}{\lambda_{l_1,l_2}^{1-2\delta}
\mu_{l_1,l_2}^{\alpha (\beta +1)}}
< \infty.
\label{eq-lemA5-1}
\end{align}
Since it follows that for $\beta > -1$ and $\delta < \alpha(\beta +1)/2$,
\begin{align}
\| u^{-\delta} S_u \|_{\mrm{HS}(\mcl U_0; \mcl L_\beta^2)}^2
&= \sum_{l_1,l_2 \ge 1} 
\| u^{-\delta} S_u \widetilde Q^{1/2} e_{l_1,l_2} \|_{\mcl L_\beta^2}^2
\nonumber
\\
&= \sum_{l_1,l_2 \ge 1} 
\| \widetilde Q^{\beta/2} u^{-\delta} S_u \widetilde Q^{1/2} e_{l_1,l_2} \|^2
\ (= \| \widetilde Q^{\beta/2} u^{-\delta} S_u \|_{\mrm{HS}(\mcl U_0)}^2)
\nonumber
\\
&= \sum_{l_1,l_2 \ge 1} 
\| u^{-\delta} S_u \widetilde Q^{(\beta+1)/2} e_{l_1,l_2} \|^2
\nonumber
\\
&
= \| u^{-\delta} S_u \|_{\mrm{HS}(\mcl U_\beta)}^2,
\label{eq-lemA5-2}
\end{align} 
the stochastic integrals in \eqref{closed-stoch-integral} can be defined and 
\begin{equation*}
\int_0^t (t-u)^{-\delta} S_{\nu(t-u)} \dd W_u^Q \in L^2(\Omega;\mcl L_\beta^2),
\quad
\int_0^t (t-u)^{-\delta} \widetilde Q^{\beta/2} S_{\nu(t-u)} \dd W_u^Q 
\in L^2(\Omega; \mcl H).
\end{equation*}
Therefore, we have the following representations
\begin{equation*}
\int_0^t (t-u)^{-\delta} S_{\nu(t-u)} \dd W_u^Q 
= \sum_{l_1,l_2 \ge 1} \mu_{l_1,l_2}^{-\alpha/2} e_{l_1,l_2}
\int_0^t (t-u)^{-\delta} \ee^{-\nu(t-u) \lambda_{l_1,l_2}} \dd w_{l_1,l_2}(u),
\end{equation*}
\begin{equation}\label{eq-Pf-lem***}
\int_0^t (t-u)^{-\delta} \widetilde Q^{\beta/2} S_{\nu(t-u)} \dd W_u^Q 
= \sum_{l_1,l_2 \ge 1} \mu_{l_1,l_2}^{-\alpha(\beta +1)/2} e_{l_1,l_2}
\int_0^t (t-u)^{-\delta} \ee^{-\nu(t-u) \lambda_{l_1,l_2}} \dd w_{l_1,l_2}(u)
\end{equation}
for $t \in \mbb T$ and obtain
\begin{align*}
\widetilde Q^{\beta/2} \int_0^t (t-u)^{-\delta} S_{\nu(t-u)} \dd W_u^Q 
&= \sum_{l_1,l_2 \ge 1} \mu_{l_1,l_2}^{-\alpha(\beta +1)/2} e_{l_1,l_2}
\int_0^t (t-u)^{-\delta} \ee^{-\nu(t-u) \lambda_{l_1,l_2}} \dd w_{l_1,l_2}(u) 
\\
&= \int_0^t (t-u)^{-\delta} \widetilde Q^{\beta/2} S_{\nu(t-u)} \dd W_u^Q 
\end{align*}
on $L^2(\Omega; \mcl H)$. By using \eqref{closed-stoch-integral}, 
the It\^{o} isometry (4.30) in \cite{DaPrato_Zabczyk2014} and \eqref{eq-lemA5-2}, 
\begin{align*}
\EE \Biggl[ \biggl \| 
\int_0^t S_{\nu(t-s)} \dd W_s^Q 
\biggr \|_{\mcl L_{\beta}^2}^2 \Biggr] 
&= \EE \Biggl[ \biggl\|
\widetilde Q^{\beta/2} \int_0^t S_{\nu (t-s)} \dd W_s^Q
\biggr\|^2 \Biggr]
\\
&= \EE \Biggl[ \biggl\|
\int_0^t \widetilde Q^{\beta/2} S_{\nu (t-s)} \dd W_s^Q
\biggr\|^2 \Biggr]
\\
&= \int_0^t \| \widetilde Q^{\beta/2} S_{\nu s} \|_{\mrm{HS}(\mcl U_0)}^2 \dd s.
\\
&= \int_0^t \| S_{\nu s} \|_{\mrm{HS}(\mcl U_\beta)}^2 \dd s.
\end{align*}
\end{proof}

\subsection{Proof of Theorem \ref{th2}}\label{sec4.2}
In this subsection, we show in three steps that the estimators 
$\hat \theta_{0,\beta}$ and $\hat \theta_{0,\beta}^{(\mrm{cont})}$ 
are asymptotically equivalent as $\nu \to 0$, $N \to \infty$, 
$M_1 \land M_2 \to \infty$ and $L \to \infty$.
Auxiliary results on the discretization error can be found 
in the end of this subsection.

\subsubsection{Control of temporal discretization error}
Consider a temporal discretization 
of the estimator $\hat \theta_{0,\beta}^{(\mrm{cont})}$
given in \eqref{est_cont}. 
That is, we construct an estimator of $\theta_0$ 
using discrete temporal data $\mbb X_{N}^{(\mrm{disc}\text{-}\mrm{cont})} 
= \{X_{t_i}(y,z)\}_{0 \le i \le N, (y,z)\in \overline D}$.
We use the property
\begin{equation*}
-\mcl A u = \lim_{h \downarrow 0} \frac{S_h u -u}{h},
\quad u \in \mathscr D(\mcl A)
\end{equation*}
and discretize the terms $\dd X_t$ and $\nu \mcl A X_t \dd t$ 
in the numerator of the estimator $\hat \theta_{0,\beta}^{(\mrm{cont})}$
with $X_{t_i} -X_{t_{i-1}}$ and $X_{t_{i-1}} -S_{\nu (t_i -t_{i-1})}X_{t_{i-1}}$
for $t \in [t_{i-1}, t_i)$, respectively.
We then define the estimator of $\theta_0$ based on 
$\mbb X_{N}^{(\mrm{disc}\text{-}\mrm{cont})}$ by
\begin{equation}\label{est_disc_cont}
\hat \theta_{0,\beta}^{(\mrm{disc}\text{-}\mrm{cont})} = 
\hat \theta_{0,\beta,\nu,N}^{(\mrm{disc}\text{-}\mrm{cont})} = 
\frac{\displaystyle 
\sum_{i=1}^N \langle X_{t_{i-1}}, 
X_{t_i} - S_{\nu (t_i-t_{i-1})} X_{t_{i-1}} \rangle_{\mcl L_{\beta}^2} 
}{\displaystyle \frac{1}{N}
\sum_{i=1}^N \| X_{t_{i-1}} \|_{\mcl L_{\beta}^2}^2}.
\end{equation}
Let $Y_{t,s} = X_t -S_{\nu(t-s)}X_s$, $s \le t$ 
and $Y_i = Y_{t_i,t_{i-1}}$, $i=1,\ldots, N$.
For $\beta >-1$, define
\begin{align*}
\widehat J_{\beta,\nu} 
= \widehat J_{\beta,\nu,N} 
&= \frac{1}{N}\sum_{i=1}^N \| X_{t_{i-1}} \|_{\mcl L_{\beta}^2}^2,
\quad
\widehat K_{\beta,\nu} 
= \widehat K_{\beta,\nu,N} 
= \sum_{i=1}^N \langle X_{t_{i-1}}, Y_i \rangle_{\mcl L_{\beta}^2},
\\
\widehat M_{\beta,\nu} 
= \widehat M_{\beta,\nu,N}
&= \frac{\theta_0^*}{\sigma^*} \sum_{i=1}^N \int_{t_{i-1}}^{t_i}
\langle X_{t_{i-1}}, 
S_{\nu(t_i-s)} X_s - X_{t_{i-1}} \rangle_{\mcl L_{\beta}^2} \dd s
+ \sum_{i=1}^N \int_{t_{i-1}}^{t_i} 
\langle X_{t_{i-1}}, S_{\nu(t_i-s)}  \dd W_s^Q \rangle_{\mcl L_{\beta}^2}.
\end{align*}

The difference between the estimators $\hat \theta_{0,\beta}^{(\mrm{cont})}$ 
and $\hat \theta_{0,\beta}^{(\mrm{disc}\text{-}\mrm{cont})}$ 
is controlled as follows.
\begin{prop}\label{propB1}
Let $\alpha >0$ and $-1 < \beta \le \frac{1}{\alpha} -1$.
Assume that [A1]$_{-1,2}$ holds.

\begin{enumerate}
\item[(1)]
It holds that $\hat \theta_{0,\beta}^{(\mrm{cont})} 
- \hat \theta_{0,\beta}^{(\mrm{disc}\text{-}\mrm{cont})} = \oo_{\PP}(1)$
as $\nu \to 0$ and $N \to \infty$. 

\item[(2)]
Under [C1]$_\beta$, it holds that 
$\mcl R_{\beta,\nu} 
(\hat \theta_{0,\beta}^{(\mrm{cont})} 
- \hat \theta_{0,\beta}^{(\mrm{disc}\text{-}\mrm{cont})})
=\oo_{\PP}(1)$ as $\nu \to 0$ and $N \to \infty$.
\end{enumerate}
\end{prop}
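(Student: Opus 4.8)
The plan is the following. Applying the variation of constants formula \eqref{vcf} on each subinterval $[t_{i-1},t_i]$ shows that under $\PP_{\theta_0^*,\sigma^*}$, $Y_i = \theta_0^*\int_{t_{i-1}}^{t_i} S_{\nu(t_i-s)}X_s\,\dd s + \sigma^*\int_{t_{i-1}}^{t_i} S_{\nu(t_i-s)}\dd W_s^Q$, and hence, after adding and subtracting $\langle X_{t_{i-1}},X_{t_{i-1}}\rangle_{\mcl L_\beta^2}$ in the drift, $\widehat K_{\beta,\nu} = \theta_0^*\widehat J_{\beta,\nu} + \sigma^*\widehat M_{\beta,\nu}$. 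Combined with $K_{\beta,\nu} = \theta_0^* J_{\beta,\nu} + \sigma^* M_{\beta,\nu,1}$ (obtained in the proof of Theorem \ref{th1}), this gives
\begin{equation*}
\hat\theta_{0,\beta}^{(\mrm{cont})} - \hat\theta_{0,\beta}^{(\mrm{disc}\text{-}\mrm{cont})} = \sigma^*\Bigl( \frac{M_{\beta,\nu,1}}{J_{\beta,\nu}}\cdot\frac{\widehat J_{\beta,\nu}-J_{\beta,\nu}}{\widehat J_{\beta,\nu}} + \frac{M_{\beta,\nu,1}-\widehat M_{\beta,\nu}}{\widehat J_{\beta,\nu}} \Bigr).
\end{equation*}
By Theorem \ref{th1}, $\mcl R_{\beta,\nu} M_{\beta,\nu,1}/J_{\beta,\nu} = \mcl R_{\beta,\nu}(\hat\theta_{0,\beta}^{(\mrm{cont})}-\theta_0^*)/\sigma^* = \OO_\PP(1)$, and since $\mcl R_{\beta,\nu}\to\infty$ for $-1<\beta\le\frac{1}{\alpha}-1$ also $\hat\theta_{0,\beta}^{(\mrm{cont})}\pto\theta_0^*$; by Proposition \ref{propA2}, $J_{\beta,\nu}/\EE[J_{\beta,\nu}]\pto 1$; and $\mcl R_{\beta,\nu} = \EE[J_{\beta,\nu}]/\sqrt{\EE[J_{2\beta+1,\nu}]}$. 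Hence (2) reduces to the two estimates
\begin{equation}\label{pp-goal}
\widehat J_{\beta,\nu}-J_{\beta,\nu} = \oo_\PP(\EE[J_{\beta,\nu}]), \qquad M_{\beta,\nu,1}-\widehat M_{\beta,\nu} = \oo_\PP\bigl(\mcl R_{\beta,\nu}^{-1}\EE[J_{\beta,\nu}]\bigr) = \oo_\PP\bigl(\sqrt{\EE[J_{2\beta+1,\nu}]}\bigr),
\end{equation}
and (1) reduces to the weaker pair $\widehat J_{\beta,\nu}-J_{\beta,\nu} = \oo_\PP(\EE[J_{\beta,\nu}])$, $M_{\beta,\nu,1}-\widehat M_{\beta,\nu} = \oo_\PP(\EE[J_{\beta,\nu}])$.

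The common ingredient is a discretization estimate of the form
\begin{equation}\label{pp-block}
\EE\bigl[\|X_s-S_{\nu(t_i-s)}X_{t_{i-1}}\|_{\mcl L_\gamma^2}^2\bigr] + \EE\bigl[\|X_s-X_{t_{i-1}}\|_{\mcl L_\gamma^2}^2\bigr] + \EE\bigl[\|(S_{\nu(t_i-s)}-I)X_{t_{i-1}}\|_{\mcl L_\gamma^2}^2\bigr] \lesssim N^{-\alpha(\gamma+1)}\,\varphi_\gamma(\nu),
\end{equation}
uniformly in $i$ and $s\in[t_{i-1},t_i]$, for $\gamma\in\{\beta,\,2\beta+1\}$ (with logarithmic corrections when $\alpha(\gamma+1)$ is an integer). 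This would be supplied by the auxiliary discretization lemmas deferred to the end of this subsection: one applies \eqref{vcf} on $[t_{i-1},s]$, uses the semigroup bound $1-\ee^{-\nu\lambda_{l_1,l_2}(t_i-s)}\lesssim(\nu\lambda_{l_1,l_2}/N)\land1$ together with the mode-wise sums already carried out in Step~1 of the proof of Proposition \ref{propA1}, the uniform bound $\sup_{t\in\mbb T}\EE[\|X_t\|_{\mcl L_\gamma^2}^2]\lesssim\varphi_\gamma(\nu)$ from \eqref{Pf_prop5-03}--\eqref{Pf_prop5-05}, and [A1]$_{-1,2}$ to absorb the $X_0$-contribution near $t=0$ (since $\mcl L_{-1}^2\subset\mcl L_\gamma^2$ for $\gamma\ge-1$). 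Granting \eqref{pp-block}, the bound on $\widehat J_{\beta,\nu}-J_{\beta,\nu}$ follows from the identity $\|X_{t_{i-1}}\|_{\mcl L_\beta^2}^2-\|X_t\|_{\mcl L_\beta^2}^2 = \langle X_{t_{i-1}}-X_t,\,X_{t_{i-1}}+X_t\rangle_{\mcl L_\beta^2}$, the Schwarz inequality, \eqref{pp-block} with $\gamma=\beta$, $\sup_t\EE[\|X_t\|_{\mcl L_\beta^2}^2]\lesssim\varphi_\beta(\nu)$ and summation over $i$, which yields $\EE|\widehat J_{\beta,\nu}-J_{\beta,\nu}|\lesssim N^{-\alpha(\beta+1)/2}\varphi_\beta(\nu) = \oo(\EE[J_{\beta,\nu}])$; only $N\to\infty$ is needed.

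For $M_{\beta,\nu,1}-\widehat M_{\beta,\nu}$, I would split $\widehat M_{\beta,\nu} = \widehat M_{\beta,\nu}^{(\mrm{mart})}+\widehat M_{\beta,\nu}^{(\mrm{drift})}$ along the two terms of its definition. Using self-adjointness of $S_{\nu(t_i-s)}$ for $\langle\cdot,\cdot\rangle_{\mcl L_\beta^2}$, one has $M_{\beta,\nu,1}-\widehat M_{\beta,\nu}^{(\mrm{mart})} = \sum_{i=1}^N\int_{t_{i-1}}^{t_i}\langle X_s-S_{\nu(t_i-s)}X_{t_{i-1}},\,\dd W_s^Q\rangle_{\mcl L_\beta^2}$, whose variance, by the It\^o isometry computed as in the proof of Proposition \ref{propA1}, equals $\sum_{i=1}^N\int_{t_{i-1}}^{t_i}\EE[\|X_s-S_{\nu(t_i-s)}X_{t_{i-1}}\|_{\mcl L_{2\beta+1}^2}^2]\,\dd s\lesssim N^{-\alpha(2\beta+2)}\varphi_{2\beta+1}(\nu)$ by \eqref{pp-block} with $\gamma=2\beta+1$; divided by $\EE[J_{2\beta+1,\nu}]\sim\varphi_{2\beta+1}(\nu)$ this is $\lesssim N^{-\alpha(2\beta+2)}\to0$, again with no constraint on $\nu,N$. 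The drift part obeys $|\widehat M_{\beta,\nu}^{(\mrm{drift})}|\lesssim\sum_{i=1}^N\int_{t_{i-1}}^{t_i}\|X_{t_{i-1}}\|_{\mcl L_\beta^2}\,\|S_{\nu(t_i-s)}X_s-X_{t_{i-1}}\|_{\mcl L_\beta^2}\,\dd s$, and $\|S_{\nu(t_i-s)}X_s-X_{t_{i-1}}\|_{\mcl L_\beta^2}\le\|X_s-X_{t_{i-1}}\|_{\mcl L_\beta^2}+\|(S_{\nu(t_i-s)}-I)X_{t_{i-1}}\|_{\mcl L_\beta^2}$, so the Schwarz inequality together with $\sup_t\EE[\|X_t\|_{\mcl L_\beta^2}^2]\lesssim\varphi_\beta(\nu)$ and \eqref{pp-block} gives $\EE|\widehat M_{\beta,\nu}^{(\mrm{drift})}|\lesssim N^{-\alpha(\beta+1)/2}\varphi_\beta(\nu)$. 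For (1) this is already $\oo(\EE[J_{\beta,\nu}])$; for (2), dividing by $\sqrt{\EE[J_{2\beta+1,\nu}]}$ turns it into $\lesssim\mcl R_{\beta,\nu}\,N^{-\alpha(\beta+1)/2}$, and this tends to $0$ precisely under [C1]$_\beta$, because $\rho_{\alpha(\beta+1)}^{(\mrm{time})}$ is exactly the threshold beyond which $N$ grows fast enough relative to $\nu^{-1}$ to dominate $\mcl R_{\beta,\nu}$. Combining the three contributions establishes \eqref{pp-goal} and both parts.

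The hard part will be twofold: obtaining the sharp form of \eqref{pp-block} across all subregimes of $\alpha(\beta+1)\in(0,1]$ (including the logarithmic endpoints), with exponents matched to the thresholds $\rho_{\cdot}^{(\mrm{time})}$; and, more substantively, controlling the drift-discretization term $\widehat M_{\beta,\nu}^{(\mrm{drift})}$, which is the only piece carrying the blow-up factor $\mcl R_{\beta,\nu}\to\infty$ as $\nu\to0$, so that a genuine link between $N$ and $\nu^{-1}$ — namely [C1]$_\beta$ — is required there. By contrast, the martingale discretization error and the denominator correction $\widehat J_{\beta,\nu}-J_{\beta,\nu}$ are controlled by $N\to\infty$ alone, which is why part (1) needs no condition on $\nu,N$.
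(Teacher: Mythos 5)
Your reduction of both claims to $\widehat J_{\beta,\nu}-J_{\beta,\nu}=\oo_\PP(\EE[J_{\beta,\nu}])$ together with $M_{\beta,\nu,1}-\widehat M_{\beta,\nu}=\oo_\PP(\varphi_\beta(\nu))$ resp.\ $\oo_\PP(\varphi_{2\beta+1}(\nu)^{1/2})$, and your splitting of $\widehat M_{\beta,\nu}$ into drift and martingale parts, is exactly the paper's architecture, and part (1) does go through along your lines. But there is a genuine gap in part (2), in precisely the term you single out as decisive. Estimating the drift-discretization term by the plain Schwarz inequality in $\mcl L_\beta^2$ gives at best $\EE|\widehat M_{\beta,\nu}^{(\mrm{drift})}|\lesssim \varphi_\beta(\nu)^{1/2}\bigl(\varphi_\beta(\nu)N^{-\alpha(\beta+1)}\bigr)^{1/2}$, and after dividing by $\varphi_{2\beta+1}(\nu)^{1/2}$ this is of order $\nu^{-1/2}N^{-\alpha(\beta+1)/2}$ when $\alpha(\beta+1)\le 1/2$. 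That tends to $0$ only if $\nu N^{\alpha(\beta+1)}\to\infty$, whereas [C1]$_\beta$ only asserts $\nu N^{q}\to\infty$ for \emph{some} $q<2\alpha(\beta+1)$: for instance $\nu=N^{-\frac{3}{2}\alpha(\beta+1)}$ satisfies [C1]$_\beta$ but violates $\nu N^{\alpha(\beta+1)}\to\infty$. (The same mismatch occurs in the regime $1/2<\alpha(\beta+1)\le 1$, where your bound needs $q=\frac{\alpha(\beta+1)}{2(1-\alpha(\beta+1))}<\rho_{\alpha(\beta+1)}^{(\mrm{time})}$.) The missing idea is the shifted Schwarz inequality $|\langle u,v\rangle_{\mcl L_\beta^2}|\le\|u\|_{\mcl L_{\beta-\delta}^2}\|v\|_{\mcl L_{\beta+\delta}^2}$: loading the extra smoothness $\delta$ onto the small increment improves the temporal rate to $N^{-\alpha(\beta+\delta+1)/2}$ while leaving the $\nu$-power unchanged (since $\varphi_{\beta-\delta}(\nu)^{1/2}\varphi_{\beta+\delta}(\nu)^{1/2}\sim\varphi_\beta(\nu)$), and optimizing over $\delta<\beta+1$ (resp.\ $\delta>\frac1\alpha-(\beta+1)$ in the second regime) is exactly what produces the thresholds $2\alpha(\beta+1)$ and $\frac{1}{2(1-\alpha(\beta+1))}$ in $\rho^{(\mrm{time})}_{\alpha(\beta+1)}$. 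Without it, your argument proves (2) only under a strictly stronger condition than [C1]$_\beta$.

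Two secondary inaccuracies. First, your uniform increment bound of order $N^{-\alpha(\gamma+1)}\varphi_\gamma(\nu)$ is not what the auxiliary lemmas deliver: Lemma \ref{lemB4} gives the three-term bound $\frac{\nu^{\alpha(\gamma+1)-1}}{N^{2\gamma'\land 1}}+(\nu/N)^{2\gamma'}+\frac1N$ with $2\gamma'$ \emph{strictly} below $\alpha(\gamma+1)$, and for $\gamma=2\beta+1$ with $\alpha(\beta+1)>1/2$ the $1/N$ term alone already exceeds $N^{-\alpha(2\beta+2)}\varphi_{2\beta+1}(\nu)$; the choice of $\gamma'$ (and of $\delta$) must be adapted to the relation between $\nu$ and $N$, which is where the case analysis in the paper comes from. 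Second, your assertion that the martingale discretization error needs only $N\to\infty$ is not borne out by the paper's own estimate of $\widehat\Delta_{\beta,\nu}^{(2,2)}$, which produces a term $\nu^{-\alpha\beta}N^{-(2\gamma_1\land1)}$ requiring a $\nu$--$N$ link when $\beta>0$ (relevant whenever $\alpha<1$); your use of the correct quadratic-variation space $\mcl L_{2\beta+1}^2$ may well remove that obstruction, but this needs to be verified rather than asserted.
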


\begin{proof}
Since it follows that under $\PP_{\theta_0^*,\sigma^*}$,  
\begin{equation}\label{Pf_prop7-01}
X_t = S_{\nu(t-t_{i-1})}X_{t_{i-1}}
+ \theta_0^* \int_{t_{i-1}}^t S_{\nu(t-s)} X_s \dd s
+ \sigma^* \int_{t_{i-1}}^t S_{\nu(t-s)} \dd W_s^Q,
\end{equation}
we have  
\begin{equation*}
Y_i = \theta_0^* X_{t_{i-1}}(t_i-t_{i-1})
+ \theta_0^* \int_{t_{i-1}}^{t_i} (S_{\nu(t_i-s)} X_s - X_{t_{i-1}}) \dd s
+ \sigma^* \int_{t_{i-1}}^{t_i} S_{\nu(t_i-s)} \dd W_s^Q.
\end{equation*}
Therefore, we find that
$\widehat K_{\beta,\nu}
= \theta_0^* \widehat J_{\beta,\nu} + \sigma^* \widehat M_{\beta,\nu}$,
which together with $\hat \theta_{0,\beta}^{(\mrm{disc}\text{-}\mrm{cont})} 
= \widehat K_{\beta,\nu}/ \widehat J_{\beta,\nu}$ yields
\begin{equation*}
\hat \theta_{0,\beta}^{(\mrm{disc}\text{-}\mrm{cont})} - \theta_0^* 
= \sigma^* \frac{\widehat M_{\beta,\nu}}{\widehat J_{\beta,\nu}}.
\end{equation*}
Let
$\widehat \Delta_{\beta,\nu}^{(1)} = |\widehat J_{\beta,\nu} - J_{\beta,\nu}|$
and $\widehat \Delta_{\beta,\nu}^{(2)} = |\widehat M_{\beta,\nu} - M_{\beta,\nu}|$.
Notice that
\begin{align*}
|\hat \theta_{0,\beta}^{(\mrm{cont})} 
- \hat \theta_{0,\beta}^{(\mrm{disc}\text{-}\mrm{cont})}|
&= \sigma^*
\biggl| \frac{M_{\beta,\nu}}{J_{\beta,\nu}}
-\frac{\widehat M_{\beta,\nu}}{\widehat J_{\beta,\nu}} \biggr|
\le \sigma^* 
\Biggl(
\biggl| \frac{M_{\beta,\nu}}{J_{\beta,\nu}} \biggr|
\biggl|1-\frac{J_{\beta,\nu}}{\widehat J_{\beta,\nu}}\biggr|
+ \frac{\widehat \Delta_{\beta,\nu}^{(2)}}{\widehat J_{\beta,\nu}} 
\Biggr)
\\
&\le 
\frac{|\hat \theta_{0,\beta}^{(\mrm{cont})} - \theta_0^*| 
\widehat \Delta_{\beta,\nu}^{(1)} + \sigma^* \widehat \Delta_{\beta,\nu}^{(2)}
}{J_{\beta,\nu} - \widehat \Delta_{\beta,\nu}^{(1)}}.
\end{align*}
Since Theorem \ref{th1}, Propositions \ref{propA1} and \ref{propA2} show
\begin{equation*}
\hat \theta_{0,\beta}^{(\mrm{cont})} - \theta_0^* 
= \OO_{\PP}(\mcl R_{\beta,\nu}^{-1}),
\quad
\frac{\varphi_{\beta}(\nu)}{J_{\beta,\nu}} = \OO_\PP(1),
\end{equation*}
it is sufficient to verify
\begin{equation}\label{Pf_prop7-02}
\widehat \Delta_{\beta,\nu}^{(1)} = \oo_{\PP}(\varphi_{\beta}(\nu)),
\end{equation}
and show
\begin{equation}\label{Pf_prop7-03}
\widehat \Delta_{\beta,\nu}^{(2)} = \oo_{\PP}(\varphi_{\beta}(\nu))
\end{equation}
for the proof of (1) and
\begin{equation}\label{Pf_prop7-04}
\widehat \Delta_{\beta,\nu}^{(2)} = \oo_{\PP}(\varphi_{2\beta+1}(\nu)^{1/2})
\end{equation}
for (2), respectively.
Indeed, we can deduce from \eqref{Pf_prop7-02} that
\begin{equation*}
\frac{1}{J_{\beta,\nu} - \widehat \Delta_{\beta,\nu}^{(1)}} 
= \frac{\varphi_{\beta}(\nu)^{-1}}{J_{\beta,\nu}/\varphi_{\beta}(\nu) 
- \widehat \Delta_{\beta,\nu}^{(1)}/\varphi_{\beta}(\nu)}
= \OO_{\PP}(\varphi_{\beta}(\nu)^{-1}),
\end{equation*}
from \eqref{Pf_prop7-02} and \eqref{Pf_prop7-03} that
\begin{equation*}
|\hat \theta_{0,\beta}^{(\mrm{cont})} 
- \hat \theta_{0,\beta}^{(\mrm{disc}\text{-}\mrm{cont})}|
= \OO_{\PP}(\varphi_{\beta}(\nu)^{-1})
\Bigl( \OO_{\PP}(\mcl R_{\beta,\nu}^{-1})
\oo_{\PP}(\varphi_{\beta}(\nu))
+\oo_{\PP}(\varphi_{\beta}(\nu)) \Bigr)
= \oo_{\PP}(1),
\end{equation*}
and from \eqref{Pf_prop7-02}, \eqref{Pf_prop7-04} and 
$\mcl R_{\beta,\nu} \varphi_{\beta}(\nu)^{-1} \sim \varphi_{2\beta+1}(\nu)^{-1/2}$
that
\begin{align*}
\mcl R_{\beta,\nu} 
|\hat \theta_{0,\beta}^{(\mrm{cont})} 
- \hat \theta_{0,\beta}^{(\mrm{disc}\text{-}\mrm{cont})}|
&= \mcl R_{\beta,\nu} \OO_{\PP}(\varphi_{\beta}(\nu)^{-1})
\Bigl( \OO_{\PP}(\mcl R_{\beta,\nu}^{-1})
\oo_{\PP}(\varphi_{\beta}(\nu)) 
+\oo_{\PP}(\varphi_{2\beta +1}(\nu)^{1/2}) \Bigr)
\\
&= \oo_{\PP}(1).
\end{align*}

\textbf{Step 1:} We first show \eqref{Pf_prop7-02}.
For $0 < \gamma_{\beta} < \frac{\alpha(\beta+1)}{2}$, 
it follows from Lemmas \ref{lemB3} and \ref{lemB4} that
\begin{align*}
\EE\Bigl[\widehat \Delta_{\beta,\nu}^{(1)} \Bigr]
&\le
\sum_{i=1}^N \int_{t_{i-1}}^{t_i} 
\EE \Bigl[ \bigl| \| X_{t_{i-1}} \|_{\mcl L_{\beta}^2}^2 
- \| X_s \|_{\mcl L_{\beta}^2}^2 \bigr| \Bigr] \dd s
\\
&\le
\sum_{i=1}^N \int_{t_{i-1}}^{t_i}
\EE \Bigl[ 
\bigl( \| X_{t_{i-1}} \|_{\mcl L_{\beta}^2} 
+ \| X_s \|_{\mcl L_{\beta}^2} \bigr) 
\| X_{t_{i-1}} - X_s \|_{\mcl L_{\beta}^2}
\Bigr] \dd s
\\
&\lesssim
\sum_{i=1}^N \int_{t_{i-1}}^{t_i}
\EE \Bigl[ \| X_{t_{i-1}} \|_{\mcl L_{\beta}^2}^2 
+ \| X_s \|_{\mcl L_{\beta}^2}^2 \Bigr]^{1/2} 
\EE \Bigl[\| X_{t_{i-1}} - X_s \|_{\mcl L_{\beta}^2}^2 \Bigr]^{1/2} \dd s
\\
&\lesssim 
\nu^{(\alpha(\beta+1)-1)/2}
\biggl(
\frac{\nu^{\alpha(\beta+1)-1}}{N^{2\gamma_\beta \land 1}} + 
\Bigl(\frac{\nu}{N}\Bigr)^{2\gamma_\beta} + \frac{1}{N}
\biggr)^{1/2}
\\
&\lesssim 
\nu^{(\alpha(\beta+1)-1)/2}
\biggl(
\frac{\nu^{(\alpha(\beta+1)-1)/2}}{N^{\gamma_\beta \land 1/2}}
+ \frac{1}{\sqrt{N}}
\biggr).
\end{align*}
Since Proposition \ref{propA1} implies that for $0 < \alpha(\beta +1) \le 1$,
\begin{equation*}
\varphi_{\beta}(\nu)^{-1}
\nu^{(\alpha(\beta+1)-1)/2}
\biggl(
\frac{\nu^{(\alpha(\beta+1)-1)/2}}{N^{\gamma_\beta \land 1/2}}
+ \frac{1}{\sqrt{N}}
\biggr)
\lesssim 
\frac{1}{N^{\gamma_\beta \land 1/2}} +\frac{\nu^{(1 -\alpha(\beta+1))/2}}{\sqrt{N}}
\to 0,
\end{equation*}
we get $\EE[\widehat \Delta_{\beta,\nu}^{(1)}] 
= \oo(\varphi_\beta(\nu))$.

\textbf{Step 2:} We next estimate $\widehat \Delta_{\beta,\nu}^{(2)}$.
We have 
\begin{align*}
\widehat \Delta_{\beta,\nu}^{(2)}
&\lesssim 
\Biggl|
\sum_{i=1}^N \int_{t_{i-1}}^{t_i}
\langle X_{t_{i-1}}, 
S_{\nu(t_i-s)} X_s - X_{t_{i-1}} \rangle_{\mcl L_{\beta}^2} \dd s
\Biggr|
\\
&\qquad 
+ \Biggl|
\sum_{i=1}^N \int_{t_{i-1}}^{t_i} 
\langle S_{\nu(t_i-s)} X_{t_{i-1}} - X_s, \dd W_s^Q \rangle_{\mcl L_{\beta}^2} 
\Biggr|
\\
&=:
\widehat \Delta_{\beta,\nu}^{(2,1)} + \widehat \Delta_{\beta,\nu}^{(2,2)}.
\end{align*}
Note that the weighted Schwarz inequality: 
\begin{equation*}
| \langle u, v \rangle_{\mcl L_{\beta}^2} | 
\le \| u \|_{\mcl L_{\beta +\delta'}^2} \| v \|_{\mcl L_{\beta -\delta'}^2},
\quad
u \in \mcl L_{\beta +\delta'}^2 \cap \mcl L_{\beta}^2, 
\ v \in \mcl L_{\beta -\delta'}^2 \cap \mcl L_{\beta}^2
\ (\delta' \in \mbb R).
\end{equation*}
For any $0 \le \delta < \beta+1$ and 
$0< \gamma_\delta < \frac{\alpha(\beta+\delta+1)}{2}$,
it holds from \eqref{Pf_lem7-01}, Lemmas \ref{lemB3} and \ref{lemB4} that
\begin{align*}
\EE\Bigl[\bigl(\widehat \Delta_{\beta,\nu}^{(2,1)}\bigr)^2\Bigr]
&\le
\Biggl(
\sum_{i=1}^N \int_{t_{i-1}}^{t_i}
\EE\Bigl[ \| X_{t_{i-1}} \|_{\mcl L_{\beta-\delta}^2} 
\| S_{\nu(t_i-s)} X_s - X_{t_{i-1}} \|_{\mcl L_{\beta+\delta}^2} \Bigr] \dd s
\Biggr)^2
\\
&\lesssim 
\sum_{i=1}^N \int_{t_{i-1}}^{t_i}
\EE\Bigl[\| X_{t_{i-1}} \|_{\mcl L_{\beta-\delta}^2}^2\Bigr] \dd s
\\
&\qquad \times
\sum_{i=1}^N \int_{t_{i-1}}^{t_i}
\biggl(
\EE\Bigl[
\| S_{\nu(t_i-s)} X_s - S_{\nu(t_i-s)} X_{t_{i-1}} \|_{\mcl L_{\beta+\delta}^2}^2 
\Bigr] 
\\
&\qquad\qquad
+ \EE\Bigl[
\| S_{\nu(t_i-s)} X_{t_{i-1}} - X_{t_{i-1}} \|_{\mcl L_{\beta+\delta}^2}^2 
\Bigr]
\biggr) \dd s
\\
&\lesssim 
\sum_{i=1}^N \int_{t_{i-1}}^{t_i}
\EE\Bigl[\| X_{t_{i-1}} \|_{\mcl L_{\beta-\delta}^2}^2\Bigr] \dd s
\\
&\qquad \times
\sum_{i=1}^N \int_{t_{i-1}}^{t_i}
\biggl(\EE\Bigl[\| X_s - X_{t_{i-1}} \|_{\mcl L_{\beta+\delta}^2}^2 \Bigr] 
+ \Bigl( \frac{\nu}{N} \Bigr)^{2\gamma_\delta} 
\EE\Bigl[\| X_{t_{i-1}} \|_{\mcl L_{\beta+\delta-2\gamma_\delta/\alpha}^2}^2 \Bigr]
\biggr) \dd s
\\
&\lesssim 
\nu^{\alpha(\beta -\delta +1)-1}
\biggl(
\frac{\nu^{\alpha(\beta+\delta+1)-1}}{N^{2\gamma_\delta \land 1}} + 
\Bigl(\frac{\nu}{N}\Bigr)^{2\gamma_\delta} + \frac{1}{N}
+ \Bigl(\frac{\nu}{N}\Bigr)^{2\gamma_\delta}
(1+\nu^{\alpha(\beta+\delta+1)-2\gamma_\delta-1})
\biggr)
\\
&\lesssim 
\nu^{\alpha(\beta -\delta +1)-1}
\biggl(
\frac{\nu^{\alpha(\beta+\delta+1)-1}}{N^{2\gamma_\delta \land 1}} + 
\Bigl(\frac{\nu}{N}\Bigr)^{2\gamma_\delta} + \frac{1}{N}
\biggr)
=: \mcl S_{\beta,\delta,\gamma_\delta}(\nu,N).
\end{align*}
Similarly, it follows that for any $0 < \gamma_1 < \frac{\alpha(\beta+2)}{2}$,
\begin{align*}
\EE\Bigl[\bigl(\widehat \Delta_{\beta,\nu}^{(2,2)}\bigr)^2 \Bigr]
&=
\sum_{i=1}^N \int_{t_{i-1}}^{t_i}
\EE\Bigl[\| S_{\nu(t_i-s)} X_{t_{i-1}} - X_s \|_{\mcl L_{\beta+1}^2}^2\Bigr] \dd s
\\
&\lesssim 
\sum_{i=1}^N \int_{t_{i-1}}^{t_i}
\biggl( \EE\Bigl[
\| S_{\nu(t_i-s)} X_{t_{i-1}} - S_{\nu(t_i-s)} X_s \|_{\mcl L_{\beta+1}^2}^2
\Bigr] 
+ \EE\Bigl[\| S_{\nu(t_i-s)} X_s - X_s \|_{\mcl L_{\beta+1}^2}^2\Bigr]
\biggr)
\dd s
\\
&\lesssim 
\sum_{i=1}^N \int_{t_{i-1}}^{t_i}
\biggl(
\EE\Bigl[\| X_{t_{i-1}} - X_s \|_{\mcl L_{\beta+1}^2}^2\Bigr] 
+ \Bigl(\frac{\nu}{N}\Bigr)^{2\gamma_1} 
\EE\Bigl[\| X_s \|_{\mcl L_{\beta+1-2\gamma_1/\alpha}^2}^2\Bigr]
\biggr) \dd s
\\
&\lesssim 
\biggl(
\frac{\nu^{\alpha(\beta+2)-1}}{N^{2\gamma_1 \land 1}} + 
\Bigl(\frac{\nu}{N}\Bigr)^{2\gamma_1} + \frac{1}{N}
\biggr)
+ \Bigl(\frac{\nu}{N}\Bigr)^{2\gamma_1}
(1 +\nu^{\alpha(\beta+2)-2\gamma_1-1})
\\
&\lesssim 
\frac{\nu^{\alpha(\beta+2)-1}}{N^{2\gamma_1 \land 1}} + 
\Bigl(\frac{\nu}{N}\Bigr)^{2\gamma_1} + \frac{1}{N}
=: \mcl T_{\beta,\gamma_1}(\nu,N).
\end{align*}
Therefore, we see that
for any $0 \le \delta < \beta+1$, 
$0< \gamma_\delta < \frac{\alpha(\beta+\delta+1)}{2}$
and $0 < \gamma_1 < \frac{\alpha(\beta+2)}{2}$,
\begin{equation*}
\EE\Bigl[\bigl(\widehat \Delta_{\beta,\nu}^{(2)}\bigr)^2\Bigr] 
\lesssim \mcl S_{\beta,\delta,\gamma_\delta}(\nu,N) 
+\mcl T_{\beta,\gamma_1}(\nu,N).
\end{equation*}

\textbf{Step 3:} 
In order to show \eqref{Pf_prop7-03}, 
we prove that there exist $(\delta, \gamma_\delta, \gamma_1)$ such that
\begin{equation*}
\varphi_{\beta}(\nu)^{-2} 
(\mcl S_{\beta,\delta,\gamma_\delta}(\nu,N) 
+ \mcl T_{\beta,\gamma_1}(\nu,N))
\to 0.
\end{equation*}
By choosing $\delta = 0$, $\gamma_\delta > 0$ and $\gamma_1 >0$, 
it follows from Proposition \ref{propA1}, $1 -\alpha(\beta+1) \ge 0$ 
and $1-\alpha \beta >0$ that
\begin{align*}
\varphi_{\beta}(\nu)^{-2} \mcl S_{\beta,\delta,\gamma_\delta}(\nu,N)
&\lesssim
\frac{1}{N^{2\gamma_\delta \land 1}} 
+ \frac{\nu^{1-\alpha(\beta +1) +2\gamma_\delta}}
{N^{2\gamma_\delta}}
+ \frac{\nu^{1-\alpha(\beta +1)}}{N}
\to 0,
\\
\varphi_{\beta}(\nu)^{-2}
\mcl T_{\beta,\gamma_1}(\nu,N)
&\lesssim 
\frac{\nu^{1-\alpha \beta}}{N^{2\gamma_1 \land 1}} + 
\frac{\nu^{2(1-\alpha(\beta+1)+\gamma_1)}}{N^{2\gamma_1}} 
+ \frac{\nu^{2(1-\alpha(\beta+1))}}{N}
\to 0.
\end{align*}

\textbf{Step 4:} 
To complete the proof of \eqref{Pf_prop7-04}, we verify that under [C1]$_\beta$, 
\begin{equation*}
\varphi_{2\beta+1}(\nu)^{-1} 
(\mcl S_{\beta,\delta,\gamma_\delta}(\nu,N) 
+ \mcl T_{\beta,\gamma_1}(\nu,N)) \to 0 
\end{equation*}
for some $(\delta,\gamma_\delta,\gamma_1)$.

(i) Consider the case that $\alpha(\beta +1) \le 1/2$. 
By noting that $2\gamma_\delta < \alpha(\beta +\delta+1) < 1$ 
and $\gamma_1 > 0 \ge \alpha(\beta+1) -1/2$,
it follows from Proposition \ref{propA1} that
\begin{align*}
\varphi_{2\beta +1}(\nu)^{-1} \mcl S_{\beta,\delta,\gamma_\delta}(\nu,N)
&\lesssim
\nu^{-\alpha(\beta +\delta +1)}
\biggl(
\frac{\nu^{\alpha(\beta+\delta+1)-1}}{N^{2\gamma_\delta \land 1}} + 
\Bigl(\frac{\nu}{N}\Bigr)^{2\gamma_\delta} + \frac{1}{N}
\biggr)
\\
&\lesssim
\frac{1}{\nu N^{2\gamma_\delta}} + \frac{1}{\nu^{\alpha(\beta +\delta +1)} N},
\\
\varphi_{2\beta+1}(\nu)^{-1}
\mcl T_{\beta,\gamma_1}(\nu,N)
&\lesssim 
\frac{\nu^{-\alpha \beta}}{N^{2\gamma_1 \land 1}} + 
\frac{\nu^{1-2\alpha(\beta+1)+2\gamma_1}}{N^{2\gamma_1}} 
+ \frac{\nu^{1-2\alpha(\beta+1)}}{N}
\\
&=
\begin{cases}
\oo(1), & \beta \le 0,
\\
\dfrac{1}{\nu^{\alpha \beta}N^{2\gamma_1 \land 1}} + \oo(1), & \beta > 0.
\end{cases}
\end{align*}
Hence, we prove that
under $\nu N^q \to \infty$ for some 
$q < \rho_{\alpha(\beta +1)}^{(\mrm{time})} = 2\alpha(\beta+1)$, 
there exist $0 \le \delta < \beta +1$ and  
$0 < \gamma_\delta < \frac{\alpha(\beta +\delta +1)}{2}$ 
such that
\begin{equation}\label{Pf_prop7-05}
\nu N^{2\gamma_\delta \land \frac{1}{\alpha(\beta +\delta +1)}} \to \infty
\end{equation}
and additionally show that there exists
$0< \gamma_1 < \frac{\alpha(\beta +2)}{2}$ such that
\begin{equation}\label{Pf_prop7-06}
\nu N^{\frac{2\gamma_1 \land 1}{\alpha \beta}} \to \infty
\end{equation}
if $\beta > 0$.

Note that for a function $f: \mbb R^d \to \mbb R$ and 
a domain $E \subset \mbb R^d$,
if $\sup_{x \in E} f(x)$ exists 
and $\nu N^q \to \infty$ for some $q < \sup_{x \in E} f(x)$, 
then $\nu N^{f(x)} \to \infty$ for some $x \in E$.

Because  $2\gamma_\delta < \alpha(\beta +\delta +1) < 1$, 
$x \land \frac{1}{x} = x$ ($0 < x <1$) and $\delta < \beta +1$, we find that
\begin{align*}
\sup_{\delta} \sup_{\gamma_\delta}
\biggl( 2\gamma_\delta \land \frac{1}{\alpha(\beta +\delta +1)} \biggr)
&= \sup_{\delta} 
\biggl( \alpha(\beta +\delta +1) \land \frac{1}{\alpha(\beta +\delta +1)} \biggr)
\\
&= \sup_{\delta} \alpha(\beta +\delta +1) 
= 2 \alpha(\beta +1),
\end{align*}
\begin{equation*}
\sup_{\gamma_1} \frac{2 \gamma_1 \land 1}{\alpha \beta}
= \biggl(1 +\frac{2}{\beta} \biggr) \land \frac{1}{\alpha \beta} 
> 1 \ge 2\alpha(\beta +1),
\end{equation*}
which imply that there exist $(\delta,\gamma_\delta, \gamma_1)$ 
such that \eqref{Pf_prop7-05} and \eqref{Pf_prop7-06}
under $\nu N^q \to \infty$ for some $q < 2\alpha(\beta+1)$.

(ii) Consider the case that $1/2 < \alpha(\beta +1) \le 1$. 
By choosing $\delta > \frac{1}{\alpha} -(\beta +1)$ and
$\gamma_\delta < \frac{\alpha(\beta +\delta +1)}{2}$, it follows 
from Proposition \ref{propA1} and $2\gamma_\delta > 1$ that
\begin{align*}
\varphi_{2\beta +1}(\nu)^{-1} \mcl S_{\beta,\delta,\gamma_\delta}(\nu,N)
&\lesssim
\nu^{\alpha(\beta -\delta +1)-1}
\biggl(
\frac{\nu^{\alpha(\beta+\delta+1)-1}}{N^{2\gamma_\delta \land 1}} + 
\Bigl(\frac{\nu}{N}\Bigr)^{2\gamma_\delta} + \frac{1}{N}
\biggr)
\\
&\lesssim
\frac{1}{\nu^{1-\alpha(\beta -\delta +1)} N}.
\end{align*}
Furthermore, it holds that for $\gamma_1 > 1/2$, 
\begin{equation*}
\varphi_{2\beta +1}(\nu)^{-1}
\mcl T_{\beta,\gamma_1}(\nu,N)
\lesssim \frac{\nu^{\alpha(\beta+2)-1}}{N^{2\gamma_1 \land 1}} + 
\Bigl(\frac{\nu}{N}\Bigr)^{2\gamma_1} + \frac{1}{N}
= \frac{1}{\nu^{1-\alpha(\beta+2)} N} + \oo(1).
\end{equation*}
Therefore, we see from
\begin{equation*}
\sup_{\delta} \frac{1}{1-\alpha(\beta -\delta +1)} 
= \frac{1}{2(1-\alpha(\beta +1))},
\end{equation*}
\begin{equation*}
\frac{1}{1-\alpha(\beta +2)} -\frac{1}{2(1-\alpha(\beta +1))}
= \frac{1 -\alpha \beta}{2(1-\alpha(\beta +1))(1-\alpha(\beta +2))} > 0
\end{equation*}
that there exists 
$\delta > \frac{1}{\alpha} -(\beta +1)$ such that
\begin{equation*}
\nu N^{\frac{1}{1-\alpha(\beta -\delta +1)} \land \frac{1}{1-\alpha(\beta +2)}} 
\to \infty
\end{equation*}
under $\nu N^q \to \infty$ for some 
$q < \rho_{\alpha(\beta +1)}^{(\mrm{time})} = \frac{1}{2(1-\alpha(\beta +1))}$.
\end{proof}

\subsubsection{Control of spatial discretization error}
Consider a spatial discretization of the estimator
$\hat \theta_{0,\beta}^{(\mrm{disc}\text{-}\mrm{cont})}$ given in \eqref{est_disc_cont}.
We set the  estimator of $\theta_0$ 
based on the discrete spatio-temporal data $\mbb X_{N,M}$ as follows. 
\begin{equation}\label{est_disc}
\hat \theta_{0,\beta}^{(\mrm{disc})} = 
\hat \theta_{0,\beta,\nu,N,M}^{(\mrm{disc})} = 
\frac{\displaystyle 
\sum_{i=1}^N \langle \Psi_M X_{t_{i-1}}, 
\Psi_M X_{t_i} - S_{\nu (t_i-t_{i-1})} \Psi_M X_{t_{i-1}} \rangle_{\mcl L_{\beta}^2} 
}{\displaystyle 
\frac{1}{N}\sum_{i=1}^N \| \Psi_M X_{t_{i-1}} \|_{\mcl L_{\beta}^2}^2},
\end{equation}
where $\Psi_M$ is the operator such that
\begin{equation*}
\Psi_M f(y,z) = f(y_{j-1},z_{k-1}), 
\quad (y,z) \in [y_{j-1},y_j) \times [z_{k-1},z_k)
\end{equation*}
for $j=1,\ldots,M_1$, $k=1,\ldots,M_2$ and a continuous function $f$ on $D$.

For $\beta >-1$, define
$Y_i^{\Psi_M} = \Psi_M X_{t_i} - S_{\nu (t_i-t_{i-1})} \Psi_M X_{t_{i-1}}$,
\begin{equation*}
\widetilde J_{\beta,\nu}
= \widetilde J_{\beta,\nu,N,M}
= \frac{1}{N}\sum_{i=1}^N \| \Psi_M X_{t_{i-1}} \|_{\mcl L_{\beta}^2}^2,
\quad
\widetilde K_{\beta,\nu}
= \widetilde K_{\beta,\nu,N,M}
= \sum_{i=1}^N \langle \Psi_M X_{t_{i-1}}, Y_i^{\Psi_M} \rangle_{\mcl L_{\beta}^2}.
\end{equation*}
Let $M = (M_1 \land M_2)^2$.
The following proposition provides 
the asymptotic equivalences of 
the estimators $\hat \theta_{0,\beta}^{(\mrm{disc}\text{-}\mrm{cont})}$ and 
$\hat \theta_{0,\beta}^{(\mrm{disc})}$.
\begin{prop}\label{propB2}
Let $\alpha >0$, $-1 < \beta \le \frac{1}{\alpha} -1$ 
and $p > \frac{4}{\alpha(\beta +1)}$. 
Assume that [A1]$_{-1,p}$ holds.
\begin{enumerate}
\item[(1)]
Under [B1]$_{\beta,p}$, it holds that 
$\hat \theta_{0,\beta}^{(\mrm{disc}\text{-}\mrm{cont})} 
- \hat \theta_{0,\beta}^{(\mrm{disc})}  = \oo_{\PP}(1)$
as $\nu \to 0$, $N \to \infty$ and $M \to \infty$.

\item[(2)]
Under [C2]$_{\beta,p}$, it holds that
$\mcl R_{\beta,\nu} 
(\hat \theta_{0,\beta}^{(\mrm{disc}\text{-}\mrm{cont})} 
- \hat \theta_{0,\beta}^{(\mrm{disc})}) = \oo_{\PP}(1)$
as $\nu \to 0$, $N \to \infty$ and $M \to \infty$. 
\end{enumerate}
\end{prop}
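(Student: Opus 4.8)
The plan is to follow, almost verbatim, the scheme used in the proof of Proposition \ref{propB1}, now with the spatial grid playing the role that the temporal grid played there. First I would apply the operator $\Psi_M$ to the one-step representation \eqref{Pf_prop7-01} with $h = t_i - t_{i-1}$; since $\Psi_M$ does not commute with the semigroup, this produces, besides the terms analogous to those obtained in the proof of Proposition \ref{propB1}, an extra commutator contribution $(\Psi_M S_{\nu h} - S_{\nu h}\Psi_M)X_{t_{i-1}}$. Collecting terms (after adding and subtracting $\theta_0^*\Psi_M X_{t_{i-1}}h$) yields $\widetilde K_{\beta,\nu} = \theta_0^*\widetilde J_{\beta,\nu} + \sigma^*\widetilde M_{\beta,\nu}$ for a suitable $\widetilde M_{\beta,\nu}$, and hence $\hat\theta_{0,\beta}^{(\mrm{disc})} - \theta_0^* = \sigma^*\widetilde M_{\beta,\nu}/\widetilde J_{\beta,\nu}$. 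Setting $\widetilde\Delta_{\beta,\nu}^{(1)} = |\widetilde J_{\beta,\nu} - \widehat J_{\beta,\nu}|$ and $\widetilde\Delta_{\beta,\nu}^{(2)} = |\widetilde M_{\beta,\nu} - \widehat M_{\beta,\nu}|$, the same elementary estimate as in the proof of Proposition \ref{propB1} gives
\begin{equation*}
|\hat\theta_{0,\beta}^{(\mrm{disc}\text{-}\mrm{cont})} - \hat\theta_{0,\beta}^{(\mrm{disc})}|
\le
\frac{|\hat\theta_{0,\beta}^{(\mrm{disc}\text{-}\mrm{cont})} - \theta_0^*|\,\widetilde\Delta_{\beta,\nu}^{(1)} + \sigma^*\widetilde\Delta_{\beta,\nu}^{(2)}}{\widehat J_{\beta,\nu} - \widetilde\Delta_{\beta,\nu}^{(1)}}.
\end{equation*}

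Next, Propositions \ref{propA1}, \ref{propA2} and \ref{propB1}, together with Theorem \ref{th1}, already give that $\widehat J_{\beta,\nu}$ is of exact order $\varphi_\beta(\nu)$ in probability and that $\hat\theta_{0,\beta}^{(\mrm{disc}\text{-}\mrm{cont})} - \theta_0^* = \OO_\PP(\mcl R_{\beta,\nu}^{-1})$. Thus (1) reduces to proving $\widetilde\Delta_{\beta,\nu}^{(1)} = \oo_\PP(\varphi_\beta(\nu))$ and $\widetilde\Delta_{\beta,\nu}^{(2)} = \oo_\PP(\varphi_\beta(\nu))$, and (2) reduces to the same bound for $\widetilde\Delta_{\beta,\nu}^{(1)}$ together with $\widetilde\Delta_{\beta,\nu}^{(2)} = \oo_\PP(\varphi_{2\beta+1}(\nu)^{1/2})$, in complete parallel with Steps 3 and 4 of the proof of Proposition \ref{propB1}. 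Both reductions follow once one controls $\EE[\widetilde\Delta_{\beta,\nu}^{(1)}]$ and $\EE[(\widetilde\Delta_{\beta,\nu}^{(2)})^2]$.

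The core of the argument is the control of the spatial discretization operator $\Psi_M$. For $\widetilde\Delta_{\beta,\nu}^{(1)}$ I would write $\|\Psi_M X_{t_{i-1}}\|_{\mcl L_{\beta}^2}^2 - \|X_{t_{i-1}}\|_{\mcl L_{\beta}^2}^2 = \langle(\Psi_M - I)X_{t_{i-1}}, (\Psi_M + I)X_{t_{i-1}}\rangle_{\mcl L_{\beta}^2}$ and apply the Schwarz inequality; for $\widetilde\Delta_{\beta,\nu}^{(2)}$ I would expand $\widetilde M_{\beta,\nu} - \widehat M_{\beta,\nu}$ into the commutator term, the difference of the drift terms, and the difference of the stochastic-integral terms, using the bilinear identity $\langle\Psi_M u, \Psi_M v\rangle_{\mcl L_{\beta}^2} - \langle u, v\rangle_{\mcl L_{\beta}^2} = \langle(\Psi_M - I)u, \Psi_M v\rangle_{\mcl L_{\beta}^2} + \langle u, (\Psi_M - I)v\rangle_{\mcl L_{\beta}^2}$ and the weighted Schwarz inequality $|\langle u, v\rangle_{\mcl L_{\beta}^2}| \le \|u\|_{\mcl L_{\beta+\delta'}^2}\|v\|_{\mcl L_{\beta-\delta'}^2}$ to shift weights, exactly as in the proof of Proposition \ref{propB1}. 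Every resulting term involves the spatial approximation error $\|(\Psi_M - I)u\|_{\mcl L_{\gamma}^2}$ for $u = X_t$ (and, for the commutator, also $u = S_{\nu h}X_{t_{i-1}}$) with $\gamma$ up to $\beta + 1$, plus, for the stochastic term, the It\^o isometry \eqref{stoch-conv-isometry}. These are bounded by the auxiliary discretization-error lemmas: $\|(\Psi_M - I)u\|_{\mcl L_{\gamma}^2}$ is dominated by $(M_1\wedge M_2)^{-r}$ times a H\"older/Sobolev norm of $u$, whose moments are controlled through Lemmas \ref{lemB3} and \ref{lemB4} and the regularity hypothesis [A1]$_{-1,p}$; the exponent $r$ one can afford grows with $p$, which is precisely the reason for requiring $p > \frac{4}{\alpha(\beta+1)}$. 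The commutator term is genuinely lower order because $S_{\nu h}$ gains regularity at the price of a negative power of $\nu h$, while $(\Psi_M - I)$ then spends a negative power of $M_1\wedge M_2$ (and symmetrically with the order reversed). Assembling the pieces gives $\EE[\widetilde\Delta_{\beta,\nu}^{(1)}]$ and $\EE[(\widetilde\Delta_{\beta,\nu}^{(2)})^2]$ as explicit products of powers of $\nu$, $N$ and $M_1\wedge M_2$.

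Finally I would divide these bounds by $\varphi_\beta(\nu)$, respectively by $\varphi_{2\beta+1}(\nu)^{1/2}$, and check via \eqref{prop5-varphi} that the quotients vanish exactly under [B1]$_{\beta,p}$, respectively [C2]$_{\beta,p}$: the exponents $\tau_{j,x,p}^{(\mrm{space})}$, $\phi_{j,x,p}^{(\mrm{space})}$ (resp.\ $\rho_{j,x,p}^{(\mrm{space})}$, $\psi_{j,x,p}^{(\mrm{space})}$) in those conditions are calibrated precisely for this, the two sub-conditions corresponding to the two admissible ranges of the weight shift $\delta'$, in complete analogy with Steps 3 and 4 of the proof of Proposition \ref{propB1} but with $M_1 \wedge M_2$ in place of $N$ in the spatial part. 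I expect the main obstacle to be the spatial approximation estimates in the weighted norms $\mcl L_{\gamma}^2$ with a sharp $\nu$-dependence — especially for the commutator term and for the high-weight norm $\mcl L_{2\beta+1}^2$ entering part (2), where the smoothing of $S_{\nu t}$ must be balanced against the roughening of $\Psi_M$ so that the surviving powers of $\nu$ are small enough to be absorbed by $\varphi_{2\beta+1}(\nu)^{1/2}$; choosing $p$ large relative to $\beta$ is what ultimately makes this bookkeeping close.
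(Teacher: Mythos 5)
Your overall plan (reduce to $\widetilde\Delta^{(1)}_{\beta,\nu}=|\widetilde J_{\beta,\nu}-\widehat J_{\beta,\nu}|$ and a numerator difference, bound the error of $\Psi_M$ in shifted weighted norms via Sobolev-type estimates with $p$ large, then check powers of $\nu,N,M_1\wedge M_2$ against [B1]$_{\beta,p}$/[C2]$_{\beta,p}$) is the paper's strategy, but your reduction for part (2) has a genuine gap. You invoke $\hat \theta_{0,\beta}^{(\mrm{disc}\text{-}\mrm{cont})}-\theta_0^*=\OO_\PP(\mcl R_{\beta,\nu}^{-1})$ to conclude that $\widetilde\Delta^{(1)}_{\beta,\nu}=\oo_\PP(\varphi_\beta(\nu))$ suffices. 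That rate comes from Proposition \ref{propB1}(2) and needs [C1]$_\beta$, which is \emph{not} assumed in Proposition \ref{propB2}(2): only [A1]$_{-1,p}$ and [C2]$_{\beta,p}$ are, and [C2]$_{\beta,p}$ does not imply [C1]$_\beta$ (it constrains $\nu$ jointly against $M_1\wedge M_2$ and $N$, and holds even when $\nu N^q\to\infty$ fails for every $q<\rho^{(\mrm{time})}_{\alpha(\beta+1)}$, e.g.\ if $M_1\wedge M_2$ grows fast enough relative to $N$). Without [C1]$_\beta$ you only have $\hat \theta_{0,\beta}^{(\mrm{disc}\text{-}\mrm{cont})}-\theta_0^*=\oo_\PP(1)$, and then the first term of your bound, multiplied by $\mcl R_{\beta,\nu}$, forces the strictly stronger requirement $\widetilde\Delta^{(1)}_{\beta,\nu}=\OO_\PP(\varphi_{2\beta+1}(\nu)^{1/2})$; note $\varphi_{2\beta+1}(\nu)^{1/2}/\varphi_\beta(\nu)\sim\mcl R_{\beta,\nu}^{-1}\to0$, so this is not implied by $\oo_\PP(\varphi_\beta(\nu))$. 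The paper avoids the issue by using only $|\hat \theta_{0,\beta}^{(\mrm{disc}\text{-}\mrm{cont})}|=\OO_\PP(1)$ in the elementary bound and proving that \emph{both} $\widetilde\Delta^{(1)}_{\beta,\nu}$ and $\widetilde\Delta^{(2)}_{\beta,\nu}$ are $\oo_\PP(\varphi_{2\beta+1}(\nu)^{1/2})$ under [C2]$_{\beta,p}$; the exponents in [C2]$_{\beta,p}$ are calibrated precisely for that stronger bound, so the fix is available within your scheme, but your stated reduction for (2) does not close as written.

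Two secondary remarks. First, the paper never passes through an identity $\widetilde K_{\beta,\nu}=\theta_0^*\widetilde J_{\beta,\nu}+\sigma^*\widetilde M_{\beta,\nu}$ nor isolates the commutator $(\Psi_M S_{\nu(t_i-t_{i-1})}-S_{\nu(t_i-t_{i-1})}\Psi_M)X_{t_{i-1}}$: it compares $\widetilde K_{\beta,\nu}$ with $\widehat K_{\beta,\nu}$ directly, splitting $\langle \Psi_M X_{t_{i-1}},Y_i^{\Psi_M}\rangle_{\mcl L_\beta^2}-\langle X_{t_{i-1}},Y_i\rangle_{\mcl L_\beta^2}$ into three terms by the weighted Schwarz inequality and controlling $\|\Psi_M X_t-X_t\|$ and $\|Y_i^{\Psi_M}-Y_i\|$ through Lemmas \ref{lemB3}, \ref{lemB5} and \ref{lemB6}; your semimartingale/commutator route can be made to work but adds bookkeeping (the commutator is exactly what makes $Y_i^{\Psi_M}-Y_i$ differ from $(I-\Psi_M)Y_i$) without buying anything. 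Second, the two sub-conditions inside [B1]$_{\beta,p}$ and [C2]$_{\beta,p}$ correspond to the term carrying two factors of $\Psi_M-I$ versus the terms carrying one such factor, rather than to two ranges of the weight shift $\delta'$.
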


\begin{proof}
Recall
$\hat \theta_{0,\beta}^{(\mrm{disc}\text{-}\mrm{cont})} 
= \widehat K_{\beta,\nu}/\widehat J_{\beta,\nu}$.
Setting
$\widetilde \Delta_{\beta,\nu}^{(1)} 
= |\widetilde J_{\beta,\nu} - \widehat J_{\beta,\nu}|$
and
$\widetilde \Delta_{\beta,\nu}^{(2)} 
= |\widetilde K_{\beta,\nu} - \widehat K_{\beta,\nu}|$,
we have
\begin{align*}
|\hat \theta_{0,\beta}^{(\mrm{disc}\text{-}\mrm{cont})} - \hat \theta_{0,\beta}^{(\mrm{disc})}|
&=
\biggl|
\hat \theta_{0,\beta}^{(\mrm{disc}\text{-}\mrm{cont})} - 
\frac{\widehat K_{\beta,\nu}}{\widetilde J_{\beta,\nu}}
-\frac{\widetilde K_{\beta,\nu} - \widehat K_{\beta,\nu}}{\widetilde J_{\beta,\nu}}
\biggr|
\le 
|\hat \theta_{0,\beta}^{(\mrm{disc}\text{-}\mrm{cont})}|
\biggl| 1 - \frac{\widehat J_{\beta,\nu}}{\widetilde J_{\beta,\nu}} \biggr|
+\frac{\widetilde \Delta_{\beta,\nu}^{(2)}}{\widetilde J_{\beta,\nu}}
\\
&\le
\frac{|\hat \theta_{0,\beta}^{(\mrm{disc}\text{-}\mrm{cont})}| \widetilde \Delta_{\beta,\nu}^{(1)}
+\widetilde \Delta_{\beta,\nu}^{(2)}}
{\widehat J_{\beta,\nu} -\widetilde \Delta_{\beta,\nu}^{(1)}}.
\end{align*}
Because it follows from Proposition \ref{propB1} and \eqref{Pf_prop7-02} that
\begin{equation}\label{Pf_prop8-01}
\hat \theta_{0,\beta}^{(\mrm{disc}\text{-}\mrm{cont})} = \OO_{\PP}(1), 
\quad
\frac{\varphi_{\beta}(\nu)}{\widehat J_{\beta,\nu}} = \OO_\PP(1),
\end{equation}
we show that
\begin{equation}\label{Pf_prop8-02}
\widetilde \Delta_{\beta,\nu}^{(1)}, \widetilde \Delta_{\beta,\nu}^{(2)}
= \oo_{\PP}(\varphi_{\beta}(\nu))
\end{equation}
under [B1]$_{\beta,p}$ for the proof of (1), and
\begin{equation*}
\widetilde \Delta_{\beta,\nu}^{(1)}, \widetilde \Delta_{\beta,\nu}^{(2)}
= \oo_{\PP}(\varphi_{2\beta+1}(\nu)^{1/2}) 
\end{equation*}
under [C2]$_{\beta,p}$ for the proof of (2).

\textbf{Step 1:} 
We estimate $\widetilde \Delta_{\beta,\nu}^{(1)}$ and
$\widetilde \Delta_{\beta,\nu}^{(2)}$.
Note that for any $\delta_1, \delta_2, \delta_3 \in \mbb R$,
\begin{align*}
| \langle \Psi_M f, \Psi_M g \rangle_{\mcl L_{\beta}^2} 
-\langle f, g \rangle_{\mcl L_{\beta}^2} |
&\le 
\| \Psi_M f -f \|_{\mcl L_{\beta -\delta_1}^2} 
\| \Psi_M g -g \|_{\mcl L_{\beta +\delta_1}^2} 
+ \| f \|_{\mcl L_{\beta -\delta_2}^2} 
\| \Psi_M g -g \|_{\mcl L_{\beta +\delta_2}^2} 
\\
&\qquad
+ \| \Psi_M f -f \|_{\mcl L_{\beta -\delta_3}^2} 
\| g \|_{\mcl L_{\beta +\delta_3}^2}.
\end{align*}
It holds from Lemmas \ref{lemB3}, \ref{lemB5} and \ref{lemB6}
that for any $0 \le \delta_1 < \beta +1 -\frac{4}{\alpha p}$, 
$0 \le \delta_2, \delta_3 < \beta +1$, 
$\frac{1}{p} < \gamma_{\delta_j} < (\frac{\alpha(\beta +\delta_j +1)}{2} -\frac{1}{p}) 
\land \frac{1}{2}$ and 
$\frac{1}{p} < \gamma_{-\delta_1} < \frac{\alpha(\beta -\delta_1 +1)}{2} -\frac{1}{p}$,
\begin{align*}
&\EE\Bigl[ \bigl| \| X_{t_{i-1}} \|_{\mcl L_{\beta}^2}^2 
-\| \Psi_M X_{t_{i-1}} \|_{\mcl L_{\beta}^2}^2 \bigr| \Bigr]
\\
&\le
\EE\Bigl[ \| \Psi_M X_{t_{i-1}} -X_{t_{i-1}} 
\|_{\mcl L_{\beta -\delta_1}^2}^2 \Bigr]^{1/2}
\EE\Bigl[ \| \Psi_M X_{t_{i-1}} -X_{t_{i-1}} 
\|_{\mcl L_{\beta +\delta_1}^2}^2 \Bigr]^{1/2}
\\
&\qquad+ 2 \EE\Bigl[\| X_{t_{i-1}} \|_{\mcl L_{\beta -\delta_2}^2}^2 \Bigr]^{1/2} 
\EE\Bigl[\| \Psi_M X_{t_{i-1}} -X_{t_{i-1}} 
\|_{\mcl L_{\beta +\delta_2}^2}^2\Bigr]^{1/2}
\\
&\lesssim
\frac{\varphi_{\beta -\delta_1 -2\gamma_{-\delta_1}/\alpha}(\nu)^{1/2}
\varphi_{\beta +\delta_1 -2\gamma_{\delta_1}/\alpha}(\nu)^{1/2}}
{M^{\gamma_{\delta_1} +\gamma_{-\delta_1} -2/p}}
\\
&\qquad+ 
\frac{\nu^{(\alpha(\beta -\delta_2 +1)-1)/2}
\varphi_{\beta +\delta_2 -2\gamma_{\delta_2}/\alpha}(\nu)^{1/2}}
{M^{\gamma_{\delta_2} -1/p}}
\\
&=
\frac{\nu^{(\alpha(\beta -\delta_1 +1)-2\gamma_{-\delta_1}-1)/2}
\varphi_{\beta +\delta_1 -2\gamma_{\delta_1}/\alpha}(\nu)^{1/2}}
{M^{\gamma_{\delta_1} +\gamma_{-\delta_1} -2/p}}
\\
&\qquad+ 
\frac{\nu^{(\alpha(\beta -\delta_2 +1)-1)/2}
\varphi_{\beta +\delta_2 -2\gamma_{\delta_2}/\alpha}(\nu)^{1/2}}
{M^{\gamma_{\delta_2} -1/p}},
\end{align*}
\begin{align*}
&\EE\Bigl[ \bigl| \langle \Psi_M X_{t_{i-1}}, 
Y_i^{\Psi_M} \rangle_{\mcl L_{\beta}^2}
- \langle X_{t_{i-1}}, Y_i \rangle_{\mcl L_{\beta}^2} \bigr| \Bigr]
\\
&\le 
\EE \Bigl[\| \Psi_M X_{t_{i-1}} -X_{t_{i-1}} 
\|_{\mcl L_{\beta -\delta_1}^2}^2 \Bigr]^{1/2} 
\EE \Bigl[\| Y_i^{\Psi_M} -Y_i \|_{\mcl L_{\beta +\delta_1}^2}^2 \Bigr]^{1/2} 
\\
&\qquad+ \EE \Bigl[\| X_{t_{i-1}} \|_{\mcl L_{\beta-\delta_2}^2}^2 \Bigr]^{1/2} 
\EE \Bigl[\| Y_i^{\Psi_M} -Y_i \|_{\mcl L_{\beta +\delta_2}^2}^2 \Bigr]^{1/2} 
\\
&\qquad+ \EE \Bigl[\| \Psi_M X_{t_{i-1}} -X_{t_{i-1}} 
\|_{\mcl L_{\beta +\delta_3}^2}^2 \Bigr]^{1/2} 
\EE \Bigl[\| Y_i \|_{\mcl L_{\beta -\delta_3}^2}^2 \Bigr]^{1/2} 
\\
&\lesssim
\frac{\nu^{(\alpha(\beta -\delta_1 +1)-2\gamma_{-\delta_1}-1)/2}
\varphi_{\beta +\delta_1 -2\gamma_{\delta_1}/\alpha}(\nu)^{1/2}}
{M^{\gamma_{\delta_1} +\gamma_{-\delta_1} -2/p} 
N^{\alpha(\beta +\delta_1 +1)/2 -\gamma_{\delta_1}}}
\\
&\qquad+ 
\frac{\nu^{(\alpha(\beta -\delta_2 +1)-1)/2}
\varphi_{\beta +\delta_2 -2\gamma_{\delta_2}/\alpha}(\nu)^{1/2}}
{M^{\gamma_{\delta_2} -1/p} N^{\alpha(\beta +\delta_2 +1)/2 -\gamma_{\delta_2}}}
\\
&\qquad+ \frac{\varphi_{\beta +\delta_3 -2\gamma_{\delta_3}/\alpha}(\nu)^{1/2} 
\nu^{(\alpha(\beta -\delta_3 +1) -1)/2}}
{M^{\gamma_{\delta_3} -1/p} N^{\alpha(\beta -\delta_3 +1)/2}}.
\end{align*}
Therefore, for $i=1,2$, we have
\begin{align*}
\EE \Bigl[ \widetilde \Delta_{\beta,\nu}^{(i)} \Bigr]
&\lesssim 
\frac{\nu^{(\alpha(\beta -\delta_1 +1)-2\gamma_{-\delta_1}-1)/2}
\varphi_{\beta +\delta_1 -2\gamma_{\delta_1}/\alpha}(\nu)^{1/2}}
{M^{\gamma_{\delta_1} +\gamma_{-\delta_1} -2/p} 
N^{\alpha(\beta +\delta_1 +1)/2 -\gamma_{\delta_1} -1}}
\\
&\qquad+ 
\frac{\nu^{(\alpha(\beta -\delta_2 +1)-1)/2}
\varphi_{\beta +\delta_2 -2\gamma_{\delta_2}/\alpha}(\nu)^{1/2}}
{M^{\gamma_{\delta_2} -1/p} N^{\alpha(\beta +\delta_2 +1)/2 -\gamma_{\delta_2} -1}}
\\
&\qquad+ \frac{\varphi_{\beta +\delta_3 -2\gamma_{\delta_3}/\alpha}(\nu)^{1/2} 
\nu^{(\alpha(\beta -\delta_3 +1) -1)/2}}
{M^{\gamma_{\delta_3} -1/p} N^{\alpha(\beta -\delta_3 +1)/2 -1}}
\\
&=: 
\mcl U_{\beta,\delta_1,\delta_2,\delta_3,
\gamma_{\delta_1},\gamma_{-\delta_1},\gamma_{\delta_2}, \gamma_{\delta_3}}
(\nu,N,M).
\end{align*}

\textbf{Step 2:}
In order to conclude this proof, we show that 
for some $\delta_j$, $\gamma_{\delta_j}$ ($j=1,2,3$) 
and $\gamma_{-\delta_1}$,
\begin{equation*}
\varphi_\beta(\nu)^{-1}
\mcl U_{\beta,\delta_1,\delta_2,\delta_3,
\gamma_{\delta_1},\gamma_{-\delta_1},\gamma_{\delta_2}, \gamma_{\delta_3}}
(\nu,N,M)
\to 0
\end{equation*}
under [B1]$_{\beta,p}$ and 
\begin{equation*}
\varphi_{2\beta+1}(\nu)^{-1/2}
\mcl U_{\beta,\delta_1,\delta_2,\delta_3,
\gamma_{\delta_1},\gamma_{-\delta_1},\gamma_{\delta_2}, \gamma_{\delta_3}}
(\nu,N,M)
\to 0
\end{equation*}
under [C2]$_{\beta,p}$, respectively.

Recall 
\begin{equation*}
0 \le \delta_1 < \beta +1 -\frac{4}{\alpha p},
\quad 
0 \le \delta_2, \delta_3 < \beta +1,
\end{equation*}
\begin{equation*}
\frac{1}{p} < \gamma_{-\delta_1} 
< \frac{\alpha(\beta -\delta_1 +1)}{2} -\frac{1}{p}, 
\quad
\frac{1}{p} < \gamma_{\delta_j}
< \biggl( \frac{\alpha(\beta +\delta_j +1)}{2} -\frac{1}{p} \biggr) \land \frac{1}{2} 
\quad (j=1,2,3).
\end{equation*}

(i) Consider the case that $\alpha(\beta +1) \le 1/2$.
Since
$\varphi_{\beta +\delta_j -2\gamma_{\delta_j}/\alpha}(\nu) \sim 
\nu^{\alpha(\beta +\delta_j +1)-2\gamma_{\delta_j} -1}$
($j=1,2,3$) and 
\begin{align*}
&\mcl U_{\beta,\delta_1,\delta_2,\delta_3,
\gamma_{\delta_1},\gamma_{-\delta_1},\gamma_{\delta_2}, \gamma_{\delta_3}}
(\nu,N,M)
\\
&\sim
\nu^{\alpha(\beta +1)-1}
\biggl(
\frac{1}{\nu^{\gamma_{\delta_1} +\gamma_{-\delta_1}}
M^{\gamma_{\delta_1} +\gamma_{-\delta_1} -2/p} 
N^{\alpha(\beta +\delta_1 +1)/2 -\gamma_{\delta_1} -1}}
\\
&\qquad +\frac{1}
{\nu^{\gamma_{\delta_2}}
M^{\gamma_{\delta_2} -1/p} N^{\alpha(\beta +\delta_2 +1)/2 -\gamma_{\delta_2} -1}}
+\frac{1}
{\nu^{\gamma_{\delta_3}}M^{\gamma_{\delta_3} -1/p} N^{\alpha(\beta -\delta_3 +1)/2 -1}}
\biggr),
\end{align*}
it suffices to verify that for $c =0, 1/2$,
there exist $\delta_j$, $\gamma_{\delta_j}$ ($j=1,2,3$) 
and $\gamma_{-\delta_1}$ such that
\begin{equation}\label{Pf_prop8-04}
\begin{cases}
\nu^{\gamma_{\delta_1} +\gamma_{-\delta_1} +c}
M^{\gamma_{\delta_1} +\gamma_{-\delta_1} -2/p} 
N^{\alpha(\beta +\delta_1 +1)/2 -\gamma_{\delta_1} -1} 
\to \infty,
\\
\nu^{\gamma_{\delta_2} +c}
M^{\gamma_{\delta_2} -1/p} N^{\alpha(\beta +\delta_2 +1)/2 -\gamma_{\delta_2} -1}
\to \infty,
\\
\nu^{\gamma_{\delta_3} +c}
M^{\gamma_{\delta_3} -1/p} N^{\alpha(\beta -\delta_3 +1)/2 -1}
\to \infty
\end{cases}
\end{equation}
under [B1]$_{\beta,p}$ and [C2]$_{\beta,p}$, respectively.

Let $M = N^b$, $b \ge 1$.
Notice that 
$\frac{1}{p} < \gamma_{\delta_j} < \frac{\alpha(\beta +\delta_j +1)}{2} -\frac{1}{p}$ 
($j=1,2,3$).
Since
\begin{align*}
&\sup_{\delta_1} \sup_{\gamma_{\delta_1}} \sup_{\gamma_{-\delta_1}} 
\frac{b(\gamma_{\delta_1} +\gamma_{-\delta_1} -2/p)
+\alpha(\beta +\delta_1 +1)/2 -\gamma_{\delta_1} -1}
{\gamma_{\delta_1} +\gamma_{-\delta_1} +c}
\\
&=\sup_{\delta_1} \sup_{\gamma_{\delta_1}} 
\biggl(b +
\frac{-b(c +2/p)+\alpha(\beta +\delta_1 +1)/2 -\gamma_{\delta_1} -1}
{\gamma_{\delta_1} +\alpha(\beta -\delta_1 +1)/2 -1/p +c}
\biggr)
\\
&= b \biggl(1 -\frac{c +2/p}{\alpha(\beta +1) -2/p +c} \biggr)
-\frac{1 -1/p}{\alpha(\beta +1) -2/p +c}
=:\Gamma_{\alpha(\beta +1),p,c}^{(1)},
\end{align*}
\begin{align*}
&\sup_{\delta_2} \sup_{\gamma_{\delta_2}} 
\frac{b(\gamma_{\delta_2} -1/p)
+\alpha(\beta +\delta_2 +1)/2 -\gamma_{\delta_2} -1}
{\gamma_{\delta_2} +c}
\\
&=\sup_{\delta_2}
\biggl(b -1
+\frac{-b(c +1/p) +c +\alpha(\beta +\delta_2 +1)/2 -1}
{\alpha(\beta +\delta_2 +1)/2 -1/p +c}
\biggr)
\\
&= b \biggl(1 -\frac{c +1/p}{\alpha(\beta +1) -1/p +c} \biggr)
-\frac{1 -1/p}{\alpha(\beta +1) -1/p +c}
=: \widetilde \Gamma_{\alpha(\beta +1),p,c}^{(2)},
\end{align*}
and
\begin{align*}
&\sup_{\delta_3} \sup_{\gamma_{\delta_3}} 
\frac{b(\gamma_{\delta_3} -1/p)
+\alpha(\beta -\delta_3 +1)/2 -1}
{\gamma_{\delta_3} +c}
\\
&=\sup_{\delta_3}
\biggl(b +\frac{-b(c +1/p) +\alpha(\beta -\delta_3 +1)/2 -1}
{\alpha(\beta +\delta_3 +1)/2 -1/p +c}
\biggr)
\\
&= b \biggl(1 -\frac{c +1/p}{\alpha(\beta +1) -1/p +c} \biggr)
-\frac{1}{\alpha(\beta +1) -1/p +c}
=: \Gamma_{\alpha(\beta +1),p,c}^{(2)}
\le \widetilde \Gamma_{\alpha(\beta +1),p,c}^{(2)},
\end{align*}
we have \eqref{Pf_prop8-04} under $\nu N^{u} \to \infty$ for some 
\begin{equation*}
u < \Gamma_{x,p,c}^{(1)} \land \Gamma_{x,p,c}^{(2)} =
\begin{cases}
(b \tau_{1,x,p}^{(\mrm{space})} -\phi_{1,x,p}^{(\mrm{space})})
\land 
(b \tau_{2,x,p}^{(\mrm{space})} -\phi_{2,x,p}^{(\mrm{space})}),
& c=0,
\\
(b \rho_{1,x,p}^{(\mrm{space})} -\psi_{1,x,p}^{(\mrm{space})})
\land 
(b \rho_{2,x,p}^{(\mrm{space})} -\psi_{2,x,p}^{(\mrm{space})}),
& c=1/2,
\end{cases}
\end{equation*}
where $x =\alpha(\beta +1) \in (0,1/2]$.

(ii) Consider the case that $1/2 < \alpha(\beta +1) \le 1$. 
By choosing $\gamma_{\delta_j} > \frac{\alpha(\beta +\delta_j +1)-1}{2}$ ($j=1,2,3$), 
we have $\varphi_{\beta +\delta_j -2\gamma_{\delta_j}/\alpha}(\nu) \sim 
\nu^{\alpha(\beta +\delta_j +1)-2\gamma_{\delta_j} -1}$.
Hence, it is enough to show that for $c =0, 1/2$,
there exist $\delta_j$, $\gamma_{\delta_j}$ ($j=1,2,3$) 
and $\gamma_{-\delta_1}$ such that \eqref{Pf_prop8-04}
under [B1]$_{\beta,p}$ and [C2]$_{\beta,p}$, respectively.

Let $M = N^b$, $b \ge 1$ and 
$E = \{ \delta | \delta > \frac{1}{\alpha}(1 +\frac{2}{p}) -(\beta +1) \}$.
Note that
\begin{equation*}
\frac{1}{p} <\gamma_\delta < 
\begin{cases}
\frac{\alpha(\beta +\delta +1)}{2} -\frac{1}{p}, & \delta \in E^{\mrm c},
\\
\frac{1}{2}, & \delta \in E.
\end{cases}
\end{equation*}
It then follows 
from $\delta < \frac{2\gamma_\delta +1}{\alpha} -(\beta +1)$ ($\delta \in E$) that
\begin{align*}
&\sup_{\delta_1} \sup_{\gamma_{\delta_1}} \sup_{\gamma_{-\delta_1}} 
\frac{b(\gamma_{\delta_1} +\gamma_{-\delta_1} -2/p)
+\alpha(\beta +\delta_1 +1)/2 -\gamma_{\delta_1} -1}
{\gamma_{\delta_1} +\gamma_{-\delta_1} +c}
\\
&= \Gamma_{\alpha(\beta +1),p,c}^{(1)}
\lor \sup_{\delta_1 \in E}
\biggl(
b -1
+\frac{-b(c +2/p)+\alpha(\beta +1) -1/p +c -1}
{1/2 +\alpha(\beta -\delta_1 +1)/2 -1/p +c}
\biggr)
= \Gamma_{\alpha(\beta +1),p,c}^{(1)},
\end{align*}
\begin{align*}
&\sup_{\delta_2} \sup_{\gamma_{\delta_2}} 
\frac{b(\gamma_{\delta_2} -1/p)
+\alpha(\beta +\delta_2 +1)/2 -\gamma_{\delta_2} -1}
{\gamma_{\delta_2} +c}
\\
&= \sup_{\delta_2 \in E^{\mrm c}}
\biggl(b -1
+\frac{-b(c +1/p) +c +\alpha(\beta +\delta_2 +1)/2 -1}
{\alpha(\beta +\delta_2 +1)/2 -1/p +c}
\biggr)
\\
&\qquad \lor 
\sup_{\gamma_{\delta_2} \in (1/p,1/2)}
\biggl(b 
+\frac{-b(c +1/p) -1/2}{\gamma_{\delta_2} +c}
\biggr)
\\
&= b\biggl(1 -\frac{c +1/p}{1/2 +c} \biggr)
-\frac{(1 -1/p) \land 1/2}{1/2 +c}
\\
&= b\biggl(1 -\frac{c +1/p}{1/2 +c} \biggr)
-\frac{1/2}{1/2 +c}
=: \widetilde \Gamma_{\alpha(\beta +1),p,c}^{(3)},
\end{align*}
and
\begin{align*}
&\sup_{\delta_3} \sup_{\gamma_{\delta_3}} 
\frac{b(\gamma_{\delta_3} -1/p)
+\alpha(\beta -\delta_3 +1)/2 -1}
{\gamma_{\delta_3} +c}
\\
&=\sup_{\delta_3 \in E^{\mrm c}}
\biggl(b +\frac{-b(c +1/p) +\alpha(\beta -\delta_3 +1)/2 -1}
{\alpha(\beta +\delta_3 +1)/2 -1/p +c}
\biggr)
\\
&\qquad \lor
\sup_{\delta_3 \in E}
\biggl(b +\frac{-b(c +1/p) +\alpha(\beta -\delta_3 +1)/2 -1}{1/2 +c}
\biggr)
\\
&= b \biggl(1 -\frac{c +1/p}{1/2 +c} \biggr)
-\frac{3/2 -\alpha(\beta +1) +1/p}{1/2 +c}
=: \Gamma_{\alpha(\beta +1),p,c}^{(3)}
\le \widetilde \Gamma_{\alpha(\beta +1),p,c}^{(3)}.
\end{align*}
Therefore, we get \eqref{Pf_prop8-04} under $\nu N^u \to \infty$ for some
\begin{equation*}
u < \Gamma_{x,p,c}^{(1)} \land \Gamma_{x,p,c}^{(3)} =
\begin{cases}
(b \tau_{1,x,p}^{(\mrm{space})} -\phi_{1,x,p}^{(\mrm{space})})
\land 
(b \tau_{2,x,p}^{(\mrm{space})} -\phi_{2,x,p}^{(\mrm{space})}),
& c=0,
\\
(b \rho_{1,x,p}^{(\mrm{space})} -\psi_{1,x,p}^{(\mrm{space})})
\land 
(b \rho_{2,x,p}^{(\mrm{space})} -\psi_{2,x,p}^{(\mrm{space})}),
& c=1/2,
\end{cases}
\end{equation*}
where $x =\alpha(\beta +1) \in (1/2,1]$. 
\end{proof}

\subsubsection{Control of truncation error and derivation of $\hat \theta_{0,\beta}$}
\label{sec4.2.3}
Because 
\begin{equation*}
\langle u, v \rangle_{\mcl L_{\beta}^2} 
= \sum_{l_1,l_2 \ge 1} 
\mu_{l_1, l_2}^{-\alpha \beta} 
\langle u, e_{l_1,l_2} \rangle \langle v, e_{l_1,l_2} \rangle,
\quad u, v \in \mcl L_{\beta}^2,
\end{equation*}
the estimator $\hat \theta_{0,\beta}^{(\mrm{disc})}$ 
given in \eqref{est_disc} can be represented as
\begin{align*}
\hat \theta_{0,\beta}^{(\mrm{disc})} &= 
\frac{\displaystyle 
\sum_{i=1}^N \sum_{l_1,l_2 \ge 1} \mu_{l_1, l_2}^{-\alpha \beta}
\langle \Psi_M X_{t_{i-1}}, e_{l_1,l_2} \rangle 
\langle Y_i^{\Psi_M}, e_{l_1,l_2} \rangle 
}{\displaystyle \frac{1}{N}
\sum_{i=1}^N \sum_{l_1,l_2 \ge 1} \mu_{l_1, l_2}^{-\alpha \beta}
\langle \Psi_M X_{t_{i-1}}, e_{l_1,l_2} \rangle^2}.
\end{align*}
In practice, we need to truncate the infinite sums in the estimator 
$\hat \theta_{0,\beta}^{(\mrm{disc})}$.
We thus consider the estimator
\begin{equation}\label{est_ver2}
\hat \theta_{0,\beta} = 
\hat \theta_{0,\beta,\nu,N,M,L} = 
\frac{\displaystyle 
\sum_{i=1}^N \sum_{l_1=1}^L \sum_{l_2=1}^L \mu_{l_1, l_2}^{-\alpha \beta}
\langle \Psi_M X_{t_{i-1}}, e_{l_1,l_2} \rangle 
\langle Y_i^{\Psi_M}, e_{l_1,l_2} \rangle 
}{\displaystyle \frac{1}{N}
\sum_{i=1}^N \sum_{l_1=1}^L \sum_{l_2=1}^L \mu_{l_1, l_2}^{-\alpha \beta}
\langle \Psi_M X_{t_{i-1}}, e_{l_1,l_2} \rangle^2}. 
\end{equation}
For $\beta >-1$, define
\begin{align*}
\overline J_{\beta,\nu} &=
\overline J_{\beta,\nu,N,M,L} =
\frac{1}{N}
\sum_{i=1}^N \sum_{l_1=1}^L \sum_{l_2=1}^L \mu_{l_1, l_2}^{-\alpha \beta}
\langle \Psi_M X_{t_{i-1}}, e_{l_1,l_2} \rangle^2,
\\
\overline K_{\beta,\nu} &=
\overline K_{\beta,\nu,N,M,L} =
\sum_{i=1}^N 
\sum_{l_1=1}^L \sum_{l_2=1}^L \mu_{l_1, l_2}^{-\alpha \beta}
\langle \Psi_M X_{t_{i-1}}, e_{l_1,l_2} \rangle 
\langle Y_i^{\Psi_M}, e_{l_1,l_2} \rangle.
\end{align*}

The following proposition shows the sufficient conditions 
for the asymptotic equivalence of the estimators
$\hat \theta_{0,\beta}^{(\mrm{disc})}$ and $\hat \theta_{0,\beta}$.
\begin{prop}\label{propB3}
Let $\alpha >0$, $-1 < \beta \le \frac{1}{\alpha} -1$ 
and $p > \frac{4}{\alpha(\beta +1)}$. 
Assume that [A1]$_{-1,p}$ and [B1]$_{\beta,p}$ hold.

\begin{enumerate}
\item[(1)]
Under [B2]$_{\beta,p}$, it holds that 
$\hat \theta_{0,\beta}^{(\mrm{disc})} - \hat \theta_{0,\beta} = \oo_{\PP}(1)$
as $\nu \to 0$, $N \to \infty$, $M \to \infty$ and $L \to \infty$.

\item[(2)]
Under [C3]$_{\beta,p}$, it holds that 
$\mcl R_{\beta,\nu} 
(\hat \theta_{0,\beta}^{(\mrm{disc})} - \hat \theta_{0,\beta} ) = \oo_{\PP}(1)$
as $\nu \to 0$, $N \to \infty$, $M \to \infty$ and $L \to \infty$.
\end{enumerate}
\end{prop}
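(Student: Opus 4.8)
The plan is to follow the template already used for Propositions \ref{propB1} and \ref{propB2}, now isolating the \emph{truncation} error, i.e.\ the error produced by replacing the sums over all $l_1,l_2 \ge 1$ in $\hat \theta_{0,\beta}^{(\mrm{disc})}$ by the partial sums over $1 \le l_1,l_2 \le L$. Recall $\hat \theta_{0,\beta}^{(\mrm{disc})} = \widetilde K_{\beta,\nu}/\widetilde J_{\beta,\nu}$ and $\hat \theta_{0,\beta} = \overline K_{\beta,\nu}/\overline J_{\beta,\nu}$. Setting $\overline \Delta_{\beta,\nu}^{(1)} = \widetilde J_{\beta,\nu} - \overline J_{\beta,\nu} \ge 0$ (the truncation only discards non-negative terms) and $\overline \Delta_{\beta,\nu}^{(2)} = |\widetilde K_{\beta,\nu} - \overline K_{\beta,\nu}|$, we have $\overline J_{\beta,\nu} = \widetilde J_{\beta,\nu} - \overline \Delta_{\beta,\nu}^{(1)}$ and, exactly as in the proof of Proposition \ref{propB2},
\begin{equation*}
|\hat \theta_{0,\beta}^{(\mrm{disc})} - \hat \theta_{0,\beta}|
\le \frac{|\hat \theta_{0,\beta}^{(\mrm{disc})}|\, \overline \Delta_{\beta,\nu}^{(1)} + \overline \Delta_{\beta,\nu}^{(2)}}{\widetilde J_{\beta,\nu} - \overline \Delta_{\beta,\nu}^{(1)}}.
\end{equation*}
By Proposition \ref{propB2} together with \eqref{Pf_prop8-01} and \eqref{Pf_prop8-02} (all available under [A1]$_{-1,p}$ and [B1]$_{\beta,p}$) we already have $\hat \theta_{0,\beta}^{(\mrm{disc})} = \OO_\PP(1)$ and $\varphi_\beta(\nu)/\widetilde J_{\beta,\nu} = \OO_\PP(1)$. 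Hence, arguing as for \eqref{Pf_prop7-03} and \eqref{Pf_prop7-04}, the statement reduces to proving $\overline \Delta_{\beta,\nu}^{(1)}, \overline \Delta_{\beta,\nu}^{(2)} = \oo_\PP(\varphi_\beta(\nu))$ under [B2]$_{\beta,p}$ for part (1), and $\overline \Delta_{\beta,\nu}^{(1)}, \overline \Delta_{\beta,\nu}^{(2)} = \oo_\PP(\varphi_{2\beta+1}(\nu)^{1/2})$ under [C3]$_{\beta,p}$ for part (2), using in the latter case $\mcl R_{\beta,\nu}\varphi_\beta(\nu)^{-1} \sim \varphi_{2\beta+1}(\nu)^{-1/2}$.

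For the core estimates I would exploit that $\overline \Delta_{\beta,\nu}^{(1)}$ and $\overline \Delta_{\beta,\nu}^{(2)}$ involve only the modes with $l_1 > L$ or $l_2 > L$, for which $\mu_{l_1,l_2} = \pi^2(l_1^2+l_2^2) \gtrsim L^2$, so that powers of $\mu_{l_1,l_2}$ can be traded for powers of $L$: for any $\gamma > 0$ with $\beta -\gamma/\alpha > -1$,
\begin{equation*}
\overline \Delta_{\beta,\nu}^{(1)}
= \frac{1}{N}\sum_{i=1}^N \sum_{l_1 \lor l_2 > L} \mu_{l_1,l_2}^{-\alpha\beta} \langle \Psi_M X_{t_{i-1}}, e_{l_1,l_2} \rangle^2
\lesssim L^{-2\gamma}\, \frac{1}{N}\sum_{i=1}^N \| \Psi_M X_{t_{i-1}} \|_{\mcl L_{\beta -\gamma/\alpha}^2}^2 ,
\end{equation*}
and similarly, via the weighted Schwarz inequality used for Proposition \ref{propB2}, $\overline \Delta_{\beta,\nu}^{(2)}$ is dominated by $L^{-2\gamma}$ times a sum of products of $\| \Psi_M X_{t_{i-1}} \|_{\mcl L_{\beta -\delta -\gamma/\alpha}^2}$ and $\| Y_i^{\Psi_M} \|_{\mcl L_{\beta +\delta -\gamma/\alpha}^2}$ with $\delta$ free. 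Taking expectations and invoking the auxiliary lemmas (Lemmas \ref{lemB3}, \ref{lemB5}, \ref{lemB6} and the increment bounds for $\sum_{i=1}^N \EE[\| Y_i^{\Psi_M} \|_{\mcl L_{\cdot}^2}^2]$ already used in Proposition \ref{propB2}), which control the projected norms by $\varphi_{\cdot}(\nu)$ and the usual mixture of $N^{-1}$- and $(\nu/N)^{\cdot}$-type factors, gives upper bounds whose comparison with $\varphi_\beta(\nu)$ (resp.\ $\varphi_{2\beta+1}(\nu)^{1/2}$) reduces, after splitting into the cases $\alpha(\beta+1) \le 1/2$ and $1/2 < \alpha(\beta+1) \le 1$ and after optimizing over $\gamma$ (equivalently over the exponent $s$) and over $\delta$, exactly to the requirement $\nu L^s N^{-\phi_{\alpha(\beta+1),\alpha,p}^{(\mrm{trunc})}} \to \infty$ for some $s < \tau_{\alpha(\beta+1),p}^{(\mrm{trunc})}$, i.e.\ [B2]$_{\beta,p}$ (resp.\ $\nu L^s N^{-\psi_{\alpha(\beta+1),\alpha,p}^{(\mrm{trunc})}} \to \infty$ for some $s < \rho_{\alpha(\beta+1),p}^{(\mrm{trunc})}$, i.e.\ [C3]$_{\beta,p}$). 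Markov's inequality then gives the required $\oo_\PP$ bounds, and the proof is concluded as above.

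The main obstacle is twofold. First, the sharp exponent bookkeeping: matching the tail estimates against the precise forms of $\phi_{x,\alpha,p}^{(\mrm{trunc})}$, $\psi_{x,\alpha,p}^{(\mrm{trunc})}$, $\tau_{x,p}^{(\mrm{trunc})}$, $\rho_{x,p}^{(\mrm{trunc})}$ requires, as in Propositions \ref{propB1}--\ref{propB2}, a careful choice of the free parameters $\gamma$ and $\delta$ in each of the two regimes, together with the fact that the $\Psi_M$-projection forces the auxiliary bounds to be stated in the weighted $\mcl L_{\cdot}^p$ norms (hence the $1/p$ corrections and the use of [A1]$_{-1,p}$); the $M$-dependence has to be kept inert, as it has already been absorbed in [B1]$_{\beta,p}$. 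Second, and more substantially, the $K$-type truncation error $\overline \Delta_{\beta,\nu}^{(2)}$ cannot be controlled by naive absolute-value bounds without producing a spurious growth in $N$: one must split $Y_i^{\Psi_M}$ into its drift part (handled by Schwarz) and its stochastic-convolution part, and use the martingale structure in $i$ (It\^o isometry / second-moment computation), just as $\widehat \Delta_{\beta,\nu}^{(2,2)}$ is treated in the proof of Proposition \ref{propB1}. Apart from this, no new phenomenon is expected; the truncation error is in fact the mildest of the three discretization errors, since the spectral decay $\mu_{l_1,l_2}^{-\alpha}$ of the $Q$-Wiener process makes the high-frequency tail summable.
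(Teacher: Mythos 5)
Your proposal follows the paper's proof essentially verbatim in structure: the same decomposition via $\overline \Delta_{\beta,\nu}^{(1)}$, $\overline \Delta_{\beta,\nu}^{(2)}$, the same reduction using $\hat \theta_{0,\beta}^{(\mrm{disc})} = \OO_\PP(1)$ and $\varphi_\beta(\nu)/\widetilde J_{\beta,\nu} = \OO_\PP(1)$ from Proposition \ref{propB2}, the same trade-off of $\mu_{l_1,l_2}^{-\delta}$ against $L^{-2\delta}$ on the discarded modes, the same appeal to Lemmas \ref{lemB3}, \ref{lemB5}, \ref{lemB6}, and the same case split and optimization over the free exponents to match [B2]$_{\beta,p}$ and [C3]$_{\beta,p}$. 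The one point where you depart from (and misjudge) the actual argument is your claimed second obstacle: the paper controls $\overline \Delta_{\beta,\nu}^{(2)}$ precisely by the ``naive'' first-moment bound, i.e.\ $\EE[\overline \Delta_{\beta,\nu}^{(2)}] \le \sum_{i=1}^N L^{-2\delta}\, \EE[\| \Psi_M X_{t_{i-1}} \|_{\mcl L_{\beta-\delta/\alpha}^2}^2]^{1/2} \EE[\| Y_i^{\Psi_M} \|_{\mcl L_{\beta-\delta/\alpha}^2}^2]^{1/2}$, with no martingale structure or It\^o isometry. The resulting factor of $N$ is not spurious: it appears explicitly as the ``$-1$'' in the $N$-exponent of the bound $\mcl V_{\beta,\delta,\gamma_{-\delta}}(\nu,N,L)$, and the exponents $\phi_{x,\alpha,p}^{(\mrm{trunc})}$ and $\psi_{x,\alpha,p}^{(\mrm{trunc})}$ in [B2]$_{\beta,p}$ and [C3]$_{\beta,p}$ are defined exactly so that this cruder bound suffices. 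A splitting of $Y_i^{\Psi_M}$ into drift and stochastic-convolution parts with a second-moment/martingale computation (as for $\widehat \Delta_{\beta,\nu}^{(2,2)}$ in Proposition \ref{propB1}) would give a sharper estimate and hence also work, but it is unnecessary here, and with it the conditions would no longer be the ``exact'' outcome of the optimization as you assert. Apart from this mischaracterization the plan is sound and matches the paper.
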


\begin{proof}
Recall $\hat \theta_{0,\beta}^{(\mrm{disc})} = \widetilde K_{\beta,\nu}/\widetilde J_{\beta,\nu}$.
Let $\overline \Delta_{\beta,\nu}^{(1)} 
= |\overline J_{\beta,\nu} - \widetilde J_{\beta,\nu}|$
and $\overline \Delta_{\beta,\nu}^{(2)} 
= |\overline K_{\beta,\nu} - \widetilde K_{\beta,\nu}|$.
Note that
\begin{equation*}
|\hat \theta_{0,\beta}^{(\mrm{disc})} - \hat \theta_{0,\beta}|
\le
\frac{|\hat \theta_{0,\beta}^{(\mrm{disc})}| \overline \Delta_{\beta,\nu}^{(1)}
+\overline \Delta_{\beta,\nu}^{(2)}}
{\widetilde J_{\beta,\nu} -\overline \Delta_{\beta,\nu}^{(1)}}.
\end{equation*}
Let $i=1,2$.
Since it follows from Proposition \ref{propB2}, 
\eqref{Pf_prop8-01} and \eqref{Pf_prop8-02} 
that $\hat \theta_{0,\beta}^{(\mrm{disc})} = \OO_{\PP}(1)$ and
$\varphi_\beta(\nu)/\widetilde J_{\beta,\nu} = \OO_\PP(1)$
under [B1]$_{\beta,p}$, it is enough to show that
\begin{equation}\label{Pf_prop9-01}
\overline \Delta_{\beta,\nu}^{(i)}
= \oo_{\PP}(\varphi_{\beta}(\nu))
\end{equation}
under [B2]$_{\beta,p}$, and 
\begin{equation}\label{Pf_prop9-02}
\overline \Delta_{\beta,\nu}^{(i)}
= \oo_{\PP}(\varphi_{2\beta +1}(\nu)^{1/2})
\end{equation}
under [C3]$_{\beta,p}$.

\textbf{Step 1:} 
We estimate $\overline \Delta_{\beta,\nu}^{(1)}$ 
and $\overline \Delta_{\beta,\nu}^{(2)}$.
It holds that for $0 < \delta < \alpha(\beta +1) -\frac{4}{p}$,
\begin{equation*}
\sum_{l_1^2+l_2^2 > L^2} \mu_{l_1,l_2}^{-\alpha \beta} 
\langle \Psi_M X_t, e_{l_1,l_2} \rangle^2
\lesssim 
\frac{1}{L^{2\delta}}
\sum_{l_1,l_2 \ge 1} \mu_{l_1,l_2}^{\delta -\alpha \beta} 
\langle \Psi_M X_t, e_{l_1,l_2} \rangle^2
= \frac{\| \Psi_M X_t \|_{\mcl L_{\beta -\delta/\alpha}^2}^2}{L^{2\delta}},
\end{equation*}
\begin{equation*}
\sum_{l_1^2+l_2^2 > L^2} \mu_{l_1,l_2}^{-\alpha \beta} 
|\langle \Psi_M X_t, e_{l_1,l_2} \rangle| |\langle Y_i^{\Psi_M}, e_{l_1,l_2} \rangle|
\lesssim 
\frac{\| \Psi_M X_t \|_{\mcl L_{\beta -\delta/\alpha}^2} 
\| Y_i^{\Psi_M} \|_{\mcl L_{\beta -\delta/\alpha}^2} }{L^{2\delta}}.
\end{equation*}
Using Lemmas \ref{lemB3}, \ref{lemB5} and \ref{lemB6}, 
we see that for
$\frac{1}{p} < \gamma_{-\delta} < \frac{\alpha(\beta -\delta +1)}{2} -\frac{1}{p}$,
\begin{align*}
\EE \Bigl[\| \Psi_M X_t \|_{\mcl L_{\beta -\delta/\alpha}^2}^2 \Bigr] 
&\lesssim 
\EE \Bigl[\| \Psi_M X_t -X_t \|_{\mcl L_{\beta -\delta/\alpha}^2}^2 \Bigr] 
+ \EE \Bigl[\| X_t \|_{\mcl L_{\beta -\delta/\alpha}^2}^2 \Bigr]
\\
&\lesssim
\frac{\varphi_{\beta -\delta -2\gamma_{-\delta}/\alpha}(\nu)}
{M^{2(\gamma_{-\delta} -1/p)}} 
+\nu^{\alpha(\beta +1) -\delta -1}
\\
&\lesssim
\nu^{\alpha(\beta +1) -\delta -2\gamma_{\delta} -1},
\\
\EE \Bigl[\| Y_i^{\Psi_M} \|_{\mcl L_{\beta -\delta/\alpha}^2}^2 \Bigr] 
&\lesssim
\frac{\nu^{\alpha(\beta +1) -\delta -2\gamma_{-\delta} -1}}
{N^{\alpha(\beta +1) -\delta -2\gamma_{-\delta}}}
\end{align*}
and thus for $i=1,2$,
\begin{equation*}
\EE \Bigl[\overline \Delta_{\beta,\nu}^{(i)} \Bigr]
\lesssim
\frac{\nu^{\alpha(\beta +1) -\delta -2\gamma_{-\delta} -1}}
{L^{2\delta} N^{(\alpha(\beta +1) -\delta -2\gamma_{-\delta})/2-1}}
=: \mcl V_{\beta,\delta,\gamma_{-\delta}}(\nu, N, L).
\end{equation*}

\textbf{Step 2:} 
In order to show \eqref{Pf_prop9-01} and \eqref{Pf_prop9-02},
we verify that there exist $\delta$ and $\gamma_{-\delta}$ such that
\begin{equation*}
(\nu^{1-c -\alpha(\beta +1)} \mcl V_{\beta,\delta,\gamma_{-\delta}}(\nu, N, L))^{-1}
= \nu^{\delta +2\gamma_{-\delta} +c}
L^{2\delta} N^{\alpha(\beta -\delta +1)/2 -\gamma_{-\delta}-1}
\to \infty
\end{equation*}
for $c=0,1/2$, respectively.
Let $L = N^b$, $b > \frac{\alpha -1}{4}$. 
Recall $0 < \delta < \alpha(\beta +1) -\frac{4}{p}$
and $\frac{1}{p} < \gamma_{-\delta} < \frac{\alpha(\beta -\delta +1)}{2} -\frac{1}{p}$.
Since
\begin{align*}
&\sup_{\delta} \sup_{\gamma_{-\delta}}
\frac{2b \delta +\alpha(\beta -\delta +1)/2 -\gamma_{-\delta} -1}
{\delta +2\gamma_{-\delta} +c}
\\
&= \sup_{\delta}
\biggl(
-\frac{1}{2}
+\frac{(2b +(1-\alpha)/2)\delta +\alpha(\beta +1)/2 +c/2 -1}{\delta +2/p +c}
\biggr)
\\
&= 2b\biggl(1 -\frac{2/p +c}{\alpha(\beta +1) -2/p +c} \biggr)
-\frac{(\alpha -1)\alpha(\beta +1)/2 +(1 -2\alpha)/p +1}{\alpha(\beta +1) -2/p +c}
\\
&=
\begin{cases}
b \tau_{x,p}^{(\mrm{trunc})} -\phi_{x,\alpha,p}^{(\mrm{trunc})}, & c = 0,
\\
b \rho_{x,p}^{(\mrm{trunc})} -\psi_{x,\alpha,p}^{(\mrm{trunc})}, & c = 1/2,
\end{cases}
\end{align*}
we find that \eqref{Pf_prop9-01} and \eqref{Pf_prop9-02} hold
under [B2]$_{\beta,p}$ and [C3]$_{\beta,p}$, respectively.
\end{proof}

From Propositions \ref{propB1}-\ref{propB3}, 
we can prove that $\hat \theta_{0,\beta}$ in \eqref{est_ver2} and 
$\hat \theta_{0,\beta}^{(\mrm{cont})}$ are asymptotically equivalent.
Finally, we conclude the proof of Theorem \ref{th2} by 
showing that $\hat \theta_{0,\beta}$ in \eqref{est_ver2} equals 
that in \eqref{est}.
Since 
\begin{equation*}
\int_b^c \sqrt{2} \sin(\pi l x) \ee^{a x/2} \dd x = h_l(c:a) - h_l(b:a),
\end{equation*}
we have 
\begin{align*}
\langle \Psi_M X_t, e_{l_1,l_2} \rangle
&= \sum_{j=1}^{M_1} \sum_{k=1}^{M_2}
\int_{z_{k-1}}^{z_k} \int_{y_{j-1}}^{y_j}
2 X_t(y_{j-1},z_{k-1}) \sin(\pi l_1 y) \sin(\pi l_2 z) \ee^{(\kappa y + \eta z)/2}
\dd y \dd z
\\
&= \sum_{j=1}^{M_1} \sum_{k=1}^{M_2}
X_t(y_{j-1},z_{k-1}) 
\delta_j^{[y]} h_{l_1}(\kappa) \delta_k^{[z]} h_{l_2}(\eta)
\\
&= [ X_t ]_{M,l_1,l_2}
\end{align*}
and
\begin{align*}
\langle Y_i^{\Psi_M}, e_{l_1,l_2} \rangle
&= \langle \Psi_M X_{t_i}, e_{l_1,l_2} \rangle
- \ee^{\nu \lambda_{l_1,l_2}/N} \langle \Psi_M X_{t_{i-1}}, e_{l_1,l_2} \rangle 
\\
&= [ X_{t_{i}} ]_{M,l_1,l_2} 
- \ee^{\nu \lambda_{l_1,l_2}/N} [ X_{t_{i-1}} ]_{M,l_1,l_2}.
\end{align*}
Therefore, $\hat \theta_{0,\beta}$ in \eqref{est_ver2} 
can be expressed in the form of \eqref{est}.

\subsubsection{Auxiliary results for control of discretization errors}
Here, we provide some lemmas which are useful in the proof of Theorem \ref{th2}.
\begin{lem}\label{lemB1}
Let $\alpha >0$, $q > 2$ and $\beta > \frac{2}{\alpha q}-1$. 
Then, for any $0 \le u \le t \le 1$,
\begin{equation*}
\EE\Bigl[ \| \overline X_{t,u} \|_{\mcl L_{\beta}^q}^2 \Bigr] 
\lesssim (t-u)^{\alpha(\beta+1) \land 1} \varphi_{\beta}(\nu).
\end{equation*}
\end{lem}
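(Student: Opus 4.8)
The plan is to use the Gaussianity of the stochastic convolution $\overline X_{t,u}=\sigma^*\int_u^t S_{\nu(t-s)}\,\dd W_s^Q$ together with the explicit spectral formula for $\varphi_\beta$. By Lemma \ref{lemA5} (applied with $\delta=0$, which is admissible since $\beta>-1$), for $\bs x\in D$ the quantity $\widetilde Q^{\beta/2}\overline X_{t,u}(\bs x)$ is the centred real Gaussian given by
\[
\widetilde Q^{\beta/2}\overline X_{t,u}(\bs x)
=\sigma^*\sum_{l_1,l_2\ge1}\mu_{l_1,l_2}^{-\alpha(\beta+1)/2}\,e_{l_1,l_2}(\bs x)\int_u^t\ee^{-\nu(t-s)\lambda_{l_1,l_2}}\,\dd w_{l_1,l_2}(s),
\]
the series converging in $L^2(\Omega)$. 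The four steps are: (i) move $\EE$ inside the spatial integral defining $\|\cdot\|_{\mcl L_\beta^q}^2$; (ii) bound a Gaussian $q$-th moment by the $q/2$-th power of its variance; (iii) compute that variance pointwise and dominate it by $\varphi_\beta(\nu,t-u)$; (iv) compare $\varphi_\beta(\nu,t-u)$ with $(t-u)^{\alpha(\beta+1)\wedge1}\varphi_\beta(\nu)$.

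For (i), since $q>2$ the function $s\mapsto s^{2/q}$ is concave, so by Jensen's inequality and Tonelli's theorem
\[
\EE\Bigl[\|\overline X_{t,u}\|_{\mcl L_\beta^q}^2\Bigr]
=\EE\Biggl[\Bigl(\iint_D|\widetilde Q^{\beta/2}\overline X_{t,u}(\bs x)|^q\bar v(\bs x)\,\dd\bs x\Bigr)^{2/q}\Biggr]
\le\Bigl(\iint_D\EE\bigl[|\widetilde Q^{\beta/2}\overline X_{t,u}(\bs x)|^q\bigr]\bar v(\bs x)\,\dd\bs x\Bigr)^{2/q}.
\]
For (ii), equivalence of moments of a real Gaussian gives $\EE[|\widetilde Q^{\beta/2}\overline X_{t,u}(\bs x)|^q]\lesssim\bigl(\EE[|\widetilde Q^{\beta/2}\overline X_{t,u}(\bs x)|^2]\bigr)^{q/2}$ with a $q$-dependent constant. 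For (iii), using independence of the $w_{l_1,l_2}$, the identity $\int_u^t\ee^{-2\nu(t-s)\lambda_{l_1,l_2}}\,\dd s=\frac{1-\ee^{-2\nu(t-u)\lambda_{l_1,l_2}}}{2\nu\lambda_{l_1,l_2}}$, and the crude eigenfunction bound $e_{l_1,l_2}(\bs x)^2\le4v(\bs x)\le4\|v\|_\infty$,
\[
\EE\bigl[|\widetilde Q^{\beta/2}\overline X_{t,u}(\bs x)|^2\bigr]
=(\sigma^*)^2\sum_{l_1,l_2\ge1}\mu_{l_1,l_2}^{-\alpha(\beta+1)}\,\frac{1-\ee^{-2\nu(t-u)\lambda_{l_1,l_2}}}{2\nu\lambda_{l_1,l_2}}\,e_{l_1,l_2}(\bs x)^2
\lesssim\varphi_\beta(\nu,t-u),
\]
since, as computed in Step 1 of the proof of Proposition \ref{propA1}, $\varphi_\beta(\nu,\tau)=\int_0^\tau\|S_{\nu s}\|_{\mrm{HS}(\mcl U_\beta)}^2\,\dd s=\sum_{l_1,l_2\ge1}\mu_{l_1,l_2}^{-\alpha(\beta+1)}\frac{1-\ee^{-2\nu\tau\lambda_{l_1,l_2}}}{2\nu\lambda_{l_1,l_2}}$. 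Feeding this back into (i)--(ii) and using $\iint_D\bar v(\bs x)\,\dd\bs x<\infty$ yields $\EE[\|\overline X_{t,u}\|_{\mcl L_\beta^q}^2]\lesssim\varphi_\beta(\nu,t-u)$; the hypothesis $\beta>\frac{2}{\alpha q}-1$ is used to guarantee, in particular, that the left-hand side is finite, i.e.\ $\overline X_{t,u}\in\mcl L_\beta^q$ a.s.

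It then remains to show $\varphi_\beta(\nu,t-u)\lesssim(t-u)^{\alpha(\beta+1)\wedge1}\varphi_\beta(\nu)$ uniformly in $0\le u\le t\le1$ and $\nu\in(0,1)$. This is extracted from the same computation in Step 1 of the proof of Proposition \ref{propA1}, carried out with upper limit $\tau=t-u$ in place of $1$: for $\alpha(\beta+1)<1$ one has $\varphi_\beta(\nu,\tau)\sim\nu^{\alpha(\beta+1)-1}\tau^{\alpha(\beta+1)}$ and $\varphi_\beta(\nu)\sim\nu^{\alpha(\beta+1)-1}$; for $\alpha(\beta+1)>1$ one has $\varphi_\beta(\nu,\tau)\sim\tau$ and $\varphi_\beta(\nu)\sim1$ (the sum $\sum_{l_1,l_2}\mu_{l_1,l_2}^{-\alpha(\beta+1)}$ being finite); and the borderline $\alpha(\beta+1)=1$ gives $\varphi_\beta(\nu,\tau)\sim-\tau\log(\nu\tau)$, which is absorbed into $-\tau\log\nu\sim(t-u)\varphi_\beta(\nu)$ in the small-$\nu$ regime in which the lemma is applied. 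The monotonicity of $\tau\mapsto\varphi_\beta(\nu,\tau)$ and the scaling $\varphi_\beta(\nu,t_2)\le\frac{t_2}{t_1}\varphi_\beta(\nu,t_1)$ from \eqref{Pf_prop5-03} are the natural tools for making this precise. I expect step (iv) — pinning down the joint dependence on the two small parameters $t-u$ and $\nu$ so as to recover exactly the exponent $\alpha(\beta+1)\wedge1$, including the logarithmic borderline — to be the only delicate point; steps (i)--(iii) amount to a routine combination of Gaussian moment comparison, Jensen's inequality and Fubini--Tonelli.
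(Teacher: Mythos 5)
Your argument is correct (to the same standard of rigor as the paper) but follows a genuinely different route. The paper proves Lemma \ref{lemB1} via the factorization method: it introduces $Z_{\nu,\gamma}(t,u,\cdot)=\sigma\int_u^t(t-s)^{-\gamma}S_{\nu(t-s)}\dd W_s^Q$ with $\frac1q<\gamma<\frac{\alpha(\beta+1)\land1}{2}$, bounds the kernel norm $\|G_{\nu(t-s)}(\bs x,\cdot)\|_{\mcl L_{\beta+1}^2}$, uses Gaussianity to control $\EE[\|Z_{\nu,\gamma}\|_{\mcl L_\beta^q}^q]$, and then recovers $\overline X_{t,u}$ through the representation $\overline X_{t,u}=\frac{\sigma\sin(\gamma\pi)}{\pi}\int_u^t(t-s)^{\gamma-1}S_{\nu(t-s)}Z_{\nu,\gamma}(s,u)\dd s$ (Theorem 5.10 in \cite{DaPrato_Zabczyk2014}) together with \eqref{Bochner-ineq} and H\"older; the hypothesis $\beta>\frac{2}{\alpha q}-1$ is exactly what makes the window for $\gamma$ nonempty. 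You bypass the factorization entirely: the spectral representation from Lemma \ref{lemA5}, Gaussian moment equivalence pointwise in $\bs x$, the uniform bound $e_{l_1,l_2}^2\lesssim1$ (in the weighted integral), Tonelli and Jensen for the concave map $s\mapsto s^{2/q}$ give $\EE[\|\overline X_{t,u}\|_{\mcl L_\beta^q}^2]\lesssim\varphi_\beta(\nu,t-u)$ directly, and then only the comparison $\varphi_\beta(\nu,t-u)\lesssim(t-u)^{\alpha(\beta+1)\land1}\varphi_\beta(\nu)$ is needed. This is simpler, and notably your route never actually uses $\beta>\frac{2}{\alpha q}-1$ (only $\beta>-1$ and $q$-dependent Gaussian constants), so the stated hypothesis is an artifact of the paper's method rather than a genuine requirement in your proof; your remark that it ensures finiteness of the left-hand side is therefore not where it enters. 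Two small caveats. First, your appeal to \eqref{Pf_prop5-03} for step (iv) points the wrong way: that inequality yields $\varphi_\beta(\nu,\tau)\ge\tau\varphi_\beta(\nu)$, not the upper bound you need; what actually does the job is, as you also say, the Step 1 computation of Proposition \ref{propA1} carried out with upper limit $\tau=t-u$, which gives the required bound uniformly in $\tau\in(0,1]$ and $\nu\in(0,1)$ when $\alpha(\beta+1)\neq1$. Second, in the borderline case $\alpha(\beta+1)=1$ your bound is $-(t-u)\log(\nu(t-u))$, and replacing $-\log(\nu(t-u))$ by $-\log\nu$ is not uniform in $t-u$; this is precisely the same gloss made in the last display of the paper's own proof, so it is a shared imprecision rather than a defect of your approach, and you flag it explicitly.
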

\begin{proof}
Choose $\frac{1}{q} < \gamma < \frac{\alpha(\beta+1) \land 1}{2}$ 
and define 
\begin{equation*}
Z_{\nu,\gamma}(t,u,\cdot) 
= \sigma \int_u^t (t-s)^{-\gamma} S_{\nu(t-s)} \dd W_s^Q(\cdot).
\end{equation*}
Since
\begin{align*}
\| G_u(\bs x,\cdot) \|_{\mcl L_{\beta+1}^2}^2
&= \sum_{l_1,l_2 \ge 1}
\ee^{-2\lambda_{l_1,l_2}u} \mu_{l_1,l_2}^{-\alpha(\beta+1)} e_{l_1,l_2}^2(\bs x)
\nonumber
\\
&\lesssim 
\sum_{l_1,l_2 \ge 1} 
\frac{\ee^{-u(l_1^2+l_2^2)}}{(l_1^2+l_2^2)^{\alpha(\beta+1)}}
\sim 
\iint_{x^2+y^2 \ge 1}
\frac{\ee^{-u(x^2+y^2)}}{(x^2+y^2)^{\alpha(\beta+1)}} \dd x \dd y
\\
&\lesssim
\begin{cases}
u^{\alpha(\beta+1)-1}, & \alpha(\beta+1) < 1,
\\
-\log u, & \alpha(\beta+1) =1,
\\
1, & \alpha(\beta+1) > 1
\end{cases}
\end{align*}
for $u \in (0,1]$, we find from \eqref{closed-stoch-integral} that
\begin{align*}
\EE\Bigl[ \bigl(\widetilde Q^{\beta/2} Z_{\nu,\gamma}(t,u, \bs x)\bigr)^2 \Bigr] 
&= \sigma^2
\int_u^t (t-s)^{-2\gamma} 
\| G_{\nu(t-s)}(\bs x,\cdot) \|_{\mcl L_{\beta+1}^2}^2 \dd s
\nonumber
\\
&\lesssim 
\begin{cases}
\displaystyle
\nu^{\alpha(\beta +1) -1}
\int_u^t (t-s)^{\alpha(\beta+1) -1 -2\gamma} \dd s, 
& \alpha(\beta+1) <1,
\\
\displaystyle
-\int_u^t (t-s)^{-2\gamma} \log(\nu(t-s)) \dd s, 
& \alpha(\beta+1) = 1,
\\
\displaystyle
\int_u^t (t-s)^{-2\gamma} \dd s,
& \alpha(\beta+1) >1
\end{cases}
\nonumber
\\
&\sim (t-u)^{\alpha(\beta +1) \land 1 -2\gamma} \times
\begin{cases}
\nu^{\alpha(\beta+1) -1}, & \alpha(\beta+1) < 1,  
\\
-\log (\nu (t-u)), & \alpha(\beta+1) = 1,
\\
1, & \alpha(\beta+1) > 1,
\end{cases}
\end{align*}
where the last estimation follows from
\begin{equation*}
-\int_u^t (t-s)^{-2\gamma} \log(t-s) \dd s 
= \frac{(t-u)^{1 -2\gamma}}{(1 -2\gamma)^2}
(-(1 -2\gamma)\log (t-u) + 1).
\end{equation*}
Therefore, the Gaussianity of 
$\widetilde Q^{\beta/2} Z_{\nu,\gamma}(t,u,\bs x)$ implies that
\begin{align*}
\EE\Bigl[\| Z_{\nu,\gamma}(t,u) \|_{\mcl L_{\beta}^q}^q \Bigr]
& \lesssim 
\int_D \EE \Bigl[ \bigl| \widetilde Q^{\beta/2}
Z_{\nu,\gamma}(t,u,\bs x) \bigr|^q \Bigr] \dd \bs x
\nonumber
\\
&\sim
\int_D \EE\Bigl[ \bigl(\widetilde Q^{\beta/2}
Z_{\nu,\gamma}(t,u,\bs x)\bigr)^2 \Bigr]^{q/2} 
\dd \bs x
\nonumber
\\
&\lesssim
(t-u)^{(\alpha(\beta+1) \land 1 -2\gamma)q/2} \times
\begin{cases}
\nu^{(\alpha(\beta+1)-1)q/2}, & \alpha(\beta+1) < 1,
\\
(-\log (\nu (t-u)))^{q/2}, & \alpha(\beta+1) = 1,
\\
1, & \alpha(\beta+1) > 1.
\end{cases}
\end{align*}
Since it follows from Theorem 5.10 in \cite{DaPrato_Zabczyk2014} that
\begin{equation*}
\overline X_{t,u} = \frac{\sigma \sin(\gamma \pi)}{\pi} 
\int_u^t (t-s)^{\gamma -1} S_{\nu(t-s)} Z_{\nu,\gamma}(s,u) \dd s,
\end{equation*}
we see from \eqref{Bochner-ineq}, the H\"{o}lder inequality and
\begin{equation*}
\int_0^t (-s^a \log (\nu s))^b \dd s \lesssim t^{1+ab} (-\log(\nu t))^b
\end{equation*}
that for $1/q+1/q^*=1$, 
\begin{align*}
\EE \Bigl[ \| \overline X_{t,u} \|_{\mcl L_{\beta}^q}^2 \Bigr]
&\sim
\EE \Biggl[
\biggl\|
\int_u^t (t-s)^{\gamma -1} S_{\nu(t-s)} Z_{\nu,\gamma}(s,u) \dd s
\biggr\|_{\mcl L_{\beta}^q}^2
\Biggr]
\\
&\le
\EE \Biggl[
\biggl(
\int_u^t (t-s)^{\gamma -1} \|Z_{\nu,\gamma}(s,u) \|_{\mcl L_{\beta}^q} \dd s
\biggr)^2
\Biggr]
\\
&\le
(t-u)^{2(q^*(\gamma -1) +1)/q^*}
\EE \Biggl[
\int_u^t \|Z_{\nu,\gamma}(s,u) \|_{\mcl L_{\beta}^q}^q \dd s
\Biggr]^{2/q}
\\
&\lesssim
(t-u)^{2(\gamma -1/q)} (t-u)^{\alpha(\beta+1) \land 1 -2\gamma +2/q}
\\
&\qquad\times
\begin{cases}
\nu^{\alpha(\beta+1)-1}, & \alpha(\beta+1) < 1,
\\
-\log (\nu(t-u)), & \alpha(\beta+1) = 1,
\\
1, & \alpha(\beta+1) > 1
\end{cases}
\\
&\sim (t-u)^{\alpha(\beta+1) \land 1} \varphi_{\beta}(\nu).
\end{align*}
\end{proof}

\begin{lem}\label{lemB2}
Let $\alpha >0$ and $\beta > -1$. Then, for any $0 \le u \le t \le 1$,
\begin{equation*}
\EE\Bigl[ \| \overline X_{t,u} \|_{\mcl L_{\beta}^2}^2 \Bigr] 
\sim \nu^{\alpha(\beta+1)-1} (t-u)^{\alpha(\beta+1)}.
\end{equation*}
\end{lem}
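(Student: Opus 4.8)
The plan is to reduce the statement to a purely deterministic estimate for $\varphi_\beta$ via the It\^o isometry, and then to read off the asymptotics from the explicit spectral representation of $\varphi_\beta$, carefully tracking the joint dependence on $\nu$ and on the time increment $t-u$.

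First I would apply the It\^o isometry \eqref{stoch-conv-isometry} of Lemma~\ref{lemA5} (with $\delta=0$, admissible since $\beta>-1$) over the interval $[u,t]$ and substitute $r=t-s$, exactly as in the derivation of \eqref{Pf_prop5-04}, to get
\[
\EE\Bigl[\|\overline X_{t,u}\|_{\mcl L_\beta^2}^2\Bigr]
=(\sigma^*)^2\int_u^t\|S_{\nu(t-s)}\|_{\mrm{HS}(\mcl U_\beta)}^2\,\dd s
=(\sigma^*)^2\int_0^{t-u}\|S_{\nu r}\|_{\mrm{HS}(\mcl U_\beta)}^2\,\dd r
=(\sigma^*)^2\,\varphi_\beta(\nu,t-u).
\]
Thus the lemma is equivalent to the two-sided bound $\varphi_\beta(\nu,\tau)\sim\nu^{\alpha(\beta+1)-1}\tau^{\alpha(\beta+1)}$, uniformly over $\nu\in(0,1)$ and $\tau=t-u\in[0,1]$; the case $\tau=1$ recovers \eqref{prop5-varphi}.

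Next I would expand $\varphi_\beta(\nu,\tau)$ in the eigenbasis, just as in Step~1 of the proof of Proposition~\ref{propA1}: since $\{\widetilde Q^{(\beta+1)/2}e_{l_1,l_2}\}$ is a complete orthonormal system of $\mcl U_\beta$,
\[
\varphi_\beta(\nu,\tau)
=\sum_{l_1,l_2\ge1}\mu_{l_1,l_2}^{-\alpha(\beta+1)}\int_0^\tau\ee^{-2\nu s\lambda_{l_1,l_2}}\,\dd s
=\sum_{l_1,l_2\ge1}\mu_{l_1,l_2}^{-\alpha(\beta+1)}\,
\frac{1-\ee^{-2\nu\tau\lambda_{l_1,l_2}}}{2\nu\lambda_{l_1,l_2}}.
\]
Using $\tfrac{1-\ee^{-ax}}{x}\sim\tfrac1x\wedge a$ and $\lambda_{l_1,l_2}\sim\mu_{l_1,l_2}\sim l_1^2+l_2^2$, this is comparable to $\sum_{l_1,l_2\ge1}\bigl(\tfrac{1}{\nu(l_1^2+l_2^2)}\wedge\tau\bigr)(l_1^2+l_2^2)^{-\alpha(\beta+1)}$. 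Since the summand is nonincreasing in $l_1^2+l_2^2$, hence in each of $l_1$ and $l_2$, the sum is comparable to $\iint_{\{x,y\ge1\}}\bigl(\tfrac{1}{\nu(x^2+y^2)}\wedge\tau\bigr)(x^2+y^2)^{-\alpha(\beta+1)}\,\dd x\,\dd y$; passing to polar coordinates and splitting the radial integral at the crossover radius $r\sim(\nu\tau)^{-1/2}$ shows that, in the range $\alpha(\beta+1)<1$ relevant to the applications, the low-frequency block and the tail are \emph{both} of order $\nu^{\alpha(\beta+1)-1}\tau^{\alpha(\beta+1)}$, which gives the claim. This is literally the computation of Step~1 of Proposition~\ref{propA1}, now with $\tau$ kept free rather than set to $1$. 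The upper bound may alternatively be deduced from Lemma~\ref{lemB1} with $q>2$ chosen close to $2$ together with $\|\cdot\|_{\mcl L_\beta^2}\lesssim\|\cdot\|_{\mcl L_\beta^q}$ on the bounded domain $D$, the matching lower bound then coming from retaining only the modes below the crossover frequency in the series above.

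The step I expect to be the main obstacle is establishing the two-sided comparison \emph{uniformly} in the pair $(\nu,\tau)$, so that the implied constants do not degenerate as $\nu\to0$, $\tau\to0$, or $\nu\tau\to0$. Concretely one treats two regimes. When $\nu\tau$ is small the crossover frequency $(\nu\tau)^{-1}$ lies well above the smallest available mode $l_1^2+l_2^2=2$, the sum--to--integral comparison is governed by the tail, the corner contribution from $l_1=l_2=1$ (of size $\sim\tau$) is dominated because $(\nu\tau)^{1-\alpha(\beta+1)}\le1$, and the low-frequency block $\sum_{2\le l_1^2+l_2^2\le(\nu\tau)^{-1}}\tau\,(l_1^2+l_2^2)^{-\alpha(\beta+1)}\sim\tau(\nu\tau)^{\alpha(\beta+1)-1}$ produces the stated order. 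When $\nu\tau$ is bounded away from $0$, both $\nu$ and $\tau$ are themselves bounded away from $0$ (as $\nu\tau<1$ and $\tau\le1$), the whole series collapses to $(2\nu)^{-1}$ times the convergent constant $\sum_{l_1,l_2\ge1}(l_1^2+l_2^2)^{-\alpha(\beta+1)-1}$, and one checks directly that this is $\sim\nu^{\alpha(\beta+1)-1}\tau^{\alpha(\beta+1)}$ on that range. Once this uniform comparison is in hand, the remainder is routine.
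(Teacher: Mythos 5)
Your proposal is correct and takes essentially the same route as the paper's own proof: the It\^{o} isometry reduces the claim to $\varphi_\beta(\nu,t-u)$, which is expanded spectrally, estimated via $\frac{1-\ee^{-ax}}{x} \sim \frac{1}{x} \land a$, and evaluated by a sum-to-integral comparison split at the crossover frequency $(\nu(t-u))^{-1}$. Your explicit attention to uniformity (the corner mode, the regime where $\nu(t-u)$ is bounded away from $0$) and your restriction of the two-sided bound to $\alpha(\beta+1)<1$ are, if anything, more careful than the paper, whose displayed chain asserts the final equivalence without noting that the low-frequency block is only of the claimed order in that range.
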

\begin{proof}
The desired result follows from
\begin{align*}
\EE\Bigl[ \| \overline X_{t,u} \|_{\mcl L_{\beta}^2}^2 \Bigr]
&= \sigma^2 
\int_u^t \| S_{\nu(t-s)} \|_{\mrm{HS}(\mcl U_\beta)}^2 \dd s  
\\
&= \sigma^2 \sum_{l_1,l_2 \ge 1} 
\mu_{l_1,l_2}^{-\alpha(\beta+1)}
\times \frac{1-\ee^{-2\lambda_{l_1,l_2} \nu (t-u)}}{2\lambda_{l_1,l_2}\nu} 
\\
&\sim \sum_{l_1,l_2 \ge 1} (l_1^2+l_2^2)^{-\alpha(\beta+1)} 
\biggl\{ \frac{1}{(l_1^2+l_2^2)\nu} \land (t-u) \biggr\}
\\
&= 
\nu^{-1}
\sum_{l_1^2+l_2^2 > (\nu (t-u))^{-1}} 
\frac{1}{(l_1^2+l_2^2)^{1+\alpha(\beta+1)}}
\\
&\qquad 
+ (t-u) \sum_{l_1^2+l_2^2 \le (\nu (t-u))^{-1}} 
(l_1^2+l_2^2)^{-\alpha(\beta+1)}
\\
&\sim
\nu^{-1} \iint_{x^2+y^2 >(\nu (t-u))^{-1}} 
\frac{\dd x \dd y}{(x^2+y^2)^{1+\alpha(\beta+1)}}
\\
&\qquad 
+ (t-u) 
\iint_{1 \le x^2+y^2\le (\nu (t-u))^{-1}} (x^2+y^2)^{-\alpha(\beta+1)} \dd x \dd y
\\
&\sim \nu^{\alpha(\beta+1)-1} (t-u)^{\alpha(\beta+1)}.
\end{align*}
\end{proof}

For $\alpha >0$, $\beta > -1$, define
\begin{equation*}
\widetilde \varphi_{\beta,q}(\nu) = 
\begin{cases}
\nu^{\alpha(\beta+1) \land 1 -1}, & q = 2,
\\
\varphi_{\beta}(\nu), & q > 2.
\end{cases}
\end{equation*}

\begin{lem}\label{lemB3}
Let $\alpha >0$. 
Assume that $\beta$ and $q$ satisfy any one of the following (i) and (ii).
\begin{itemize}
\item[(i)]
$q = 2$, $\beta > -1$,

\item[(ii)]
$q > 2$, $\beta > \frac{2}{\alpha q} -1$.
\end{itemize}
If [A1]$_{\beta,q}$ holds, then
\begin{equation*}
\EE\Bigl[ \| X_t \|_{\mcl L_{\beta}^q}^2 \Bigr] 
\lesssim 1 + \widetilde \varphi_{\beta,q}(\nu),
\quad t \in \mbb T.
\end{equation*}
\end{lem}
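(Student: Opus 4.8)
The plan is to start from the variation of constants formula \eqref{vcf}, pass to the $\mcl L_\beta^q$-norm, and close the resulting integral inequality by Gronwall's lemma, with the stochastic convolution term handled by Lemmas \ref{lemB1} and \ref{lemB2}. First I would recall that under $\PP_{\theta_0^*,\sigma^*}$,
\[
X_t = S_{\nu t}X_0 + \theta_0^*\int_0^t S_{\nu(t-s)}X_s\,\dd s + \overline X_{t,0},
\qquad t\in\mbb T .
\]
Taking the $\mcl L_\beta^q$-norm, using Minkowski's integral inequality for the Bochner integral, and using that $\{S_r\}_{r\ge 0}$ is uniformly bounded on $\mcl L_\beta^q$ (a genuine contraction when $q=2$, since $S_r e_{l_1,l_2}=\ee^{-\lambda_{l_1,l_2}r}e_{l_1,l_2}$; for $q>2$ one uses that $S_r$ commutes with $\widetilde Q^{\beta/2}$ and that, on the bounded domain $D$, the weight in $\|\cdot\|_{\mcl L^q}$ is bounded above and below, so $\|\cdot\|_{\mcl L^q}$ is equivalent to the unweighted $L^q(D)$-norm on which the Dirichlet heat semigroup is contractive), I would get
\[
\|X_t\|_{\mcl L_\beta^q}\lesssim\|X_0\|_{\mcl L_\beta^q}+\int_0^t\|X_s\|_{\mcl L_\beta^q}\,\dd s+\|\overline X_{t,0}\|_{\mcl L_\beta^q}.
\]
Squaring, taking expectations, and applying the Cauchy--Schwarz inequality to the time integral over the finite interval $[0,t]$ yields
\[
\EE\bigl[\|X_t\|_{\mcl L_\beta^q}^2\bigr]\lesssim\EE\bigl[\|X_0\|_{\mcl L_\beta^q}^2\bigr]+\int_0^t\EE\bigl[\|X_s\|_{\mcl L_\beta^q}^2\bigr]\,\dd s+\EE\bigl[\|\overline X_{t,0}\|_{\mcl L_\beta^q}^2\bigr].
\]

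Next I would bound the stochastic convolution term uniformly in $t\in\mbb T$, which is precisely where the case split defining $\widetilde\varphi_{\beta,q}$ enters. In case (ii), $q>2$ and $\beta>\frac{2}{\alpha q}-1$, so Lemma \ref{lemB1} with $u=0$ gives $\EE[\|\overline X_{t,0}\|_{\mcl L_\beta^q}^2]\lesssim t^{\alpha(\beta+1)\land 1}\varphi_\beta(\nu)\le\varphi_\beta(\nu)=\widetilde\varphi_{\beta,q}(\nu)$. In case (i), $q=2$ and $\beta>-1$, Lemma \ref{lemB2} gives $\EE[\|\overline X_{t,0}\|_{\mcl L_\beta^2}^2]\sim\nu^{\alpha(\beta+1)-1}t^{\alpha(\beta+1)}$, which is $\lesssim\nu^{\alpha(\beta+1)\land 1-1}=\widetilde\varphi_{\beta,2}(\nu)$ because $\nu\in(0,1)$ and $t\le 1$ let one absorb the excess power of $\nu$ (when $\alpha(\beta+1)>1$) into $\nu^{\alpha(\beta+1)\land 1-1}=1$. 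Combining this with [A1]$_{\beta,q}$ turns the displayed estimate into
\[
\EE\bigl[\|X_t\|_{\mcl L_\beta^q}^2\bigr]\lesssim 1+\widetilde\varphi_{\beta,q}(\nu)+\int_0^t\EE\bigl[\|X_s\|_{\mcl L_\beta^q}^2\bigr]\,\dd s,\qquad t\in\mbb T,
\]
where the first two terms on the right are constant in $t$. Since $s\mapsto\EE[\|X_s\|_{\mcl L_\beta^q}^2]$ is finite and bounded on $\mbb T$ (this follows from the construction of the mild solution, or, if one wishes to be fully careful, by running the same estimate on the Picard iterates so that the bound is uniform from the start), Gronwall's inequality on $[0,1]$ gives $\EE[\|X_t\|_{\mcl L_\beta^q}^2]\lesssim(1+\widetilde\varphi_{\beta,q}(\nu))\ee^{C}$ uniformly in $t\in\mbb T$, which is the claim.

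I expect the Gronwall step itself to be routine; the two points requiring care are (a) the uniform boundedness of $S_r$ on $\mcl L_\beta^q$ for $q>2$, handled as above via commutation with $\widetilde Q^{\beta/2}$ and the boundedness of the weight on $D$, and (b) the a priori finiteness of $\EE[\|X_s\|_{\mcl L_\beta^q}^2]$ required to apply Gronwall, which one can guarantee by localization or by working with the Picard approximation of \eqref{vcf}. The matching of the two regimes of $\widetilde\varphi_{\beta,q}(\nu)$ with the conclusions of Lemmas \ref{lemB1} and \ref{lemB2} is exactly what forces the dichotomy $q=2$ versus $q>2$ in the statement.
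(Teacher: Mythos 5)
Your proposal is correct and follows essentially the same route as the paper's proof: the variation of constants formula, uniform boundedness of the semigroup on $\mcl L_{\beta}^q$ together with the Bochner/Minkowski-type bound for the drift term, Lemmas \ref{lemB1} and \ref{lemB2} for the stochastic convolution (which is exactly where the dichotomy defining $\widetilde\varphi_{\beta,q}$ enters), and Gronwall's inequality. Your additional remarks on the $q>2$ semigroup bound and on the a priori finiteness needed for Gronwall are sound refinements of steps the paper leaves implicit.
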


\begin{proof}
It follows from \eqref{vcf}, \eqref{Bochner-ineq}, [A1]$_{\beta,q}$ and 
Lemmas \ref{lemB1} and \ref{lemB2} that 
\begin{align*}
\EE \Bigl[\| X_t \|_{\mcl L_{\beta}^q}^2 \Bigr]
&\lesssim
\EE \Bigl[\| S_{\nu t} X_0 \|_{\mcl L_{\beta}^q}^2 \Bigr]
+ \EE\Biggl[
\biggl\|
\int_0^t S_{\nu(t-s)} X_s \dd s
\biggr\|_{\mcl L_{\beta}^q}^2
\Biggr]
+ \EE\Bigl[\| \overline X_t \|_{\mcl L_{\beta}^q}^2 \Bigr]
\\
&\lesssim
\EE \Bigl[\| X_0 \|_{\mcl L_{\beta}^q}^2 \Bigr]
+ \EE\biggl[
\int_0^t  \| S_{\nu (t-s)} X_s \|_{\mcl L_{\beta}^q}^2 \dd s
\biggr]
+ \widetilde \varphi_{\beta,q}(\nu)
\\
&\lesssim
1 + \widetilde \varphi_{\beta,q}(\nu) + \int_0^t 
\EE \Bigl[\| X_s \|_{\mcl L_{\beta}^q}^2 \Bigr] \dd s.
\end{align*}
Using Gronwall's inequality, we have
$\EE [\| X_t \|_{\mcl L_{\beta}^q}^2 ]
\lesssim 1 + \widetilde \varphi_{\beta,q}(\nu)$.
\end{proof}

\begin{lem}\label{lemB4}
Let $\alpha >0$ and $\beta > -1$. 
Assume that [A1]$_{\beta,2}$ holds.
It holds that for $0 \le \gamma < \frac{\alpha(\beta+1)}{2}$ and
$t \in [t_{i-1}, t_i]$, $i=1,\ldots, N$, 
\begin{equation*}
\EE \Bigl[\| X_t-X_{t_{i-1}} \|_{\mcl L_{\beta}^2}^2 \Bigr] 
\lesssim 
\frac{\nu^{\alpha(\beta+1)-1}}{N^{2\gamma \land 1}} + 
\Bigl(\frac{\nu}{N}\Bigr)^{2\gamma} + \frac{1}{N}.
\end{equation*}
\end{lem}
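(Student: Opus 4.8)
The plan is to work on the single subinterval $[t_{i-1},t]$ rather than on $[0,t]$. By the variation-of-constants formula applied on $[t_{i-1},t]$ (as in \eqref{Pf_prop7-01}), for $t\in[t_{i-1},t_i]$,
\[
X_t-X_{t_{i-1}}=(S_{\nu\Delta_i}-I)X_{t_{i-1}}+\theta_0^*\int_{t_{i-1}}^tS_{\nu(t-s)}X_s\,\dd s+\sigma^*\int_{t_{i-1}}^tS_{\nu(t-s)}\,\dd W_s^Q,\qquad \Delta_i:=t-t_{i-1}\le 1/N,
\]
so it suffices to estimate the three summands in $L^2(\Omega;\mcl L_\beta^2)$ separately. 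I would dispose of the stochastic convolution first: it equals $\overline X_{t,t_{i-1}}$, so Lemma \ref{lemB2} gives $\EE[\|\overline X_{t,t_{i-1}}\|_{\mcl L_\beta^2}^2]\sim\nu^{\alpha(\beta+1)-1}\Delta_i^{\alpha(\beta+1)}$, and since $2\gamma\land1\le2\gamma<\alpha(\beta+1)$ together with $\Delta_i\le1/N$ this is $\lesssim\nu^{\alpha(\beta+1)-1}N^{-(2\gamma\land1)}$, the first term. For the drift term, the Cauchy--Schwarz inequality (in the form \eqref{Bochner-ineq}), the contractivity of $S_r$ on $\mcl L_\beta^2$, and Lemma \ref{lemB3} yield $\EE[\|\theta_0^*\int_{t_{i-1}}^tS_{\nu(t-s)}X_s\,\dd s\|_{\mcl L_\beta^2}^2]\lesssim\Delta_i^2\sup_{s\in\mbb T}\EE[\|X_s\|_{\mcl L_\beta^2}^2]\lesssim N^{-2}\bigl(1+\nu^{\alpha(\beta+1)\land1-1}\bigr)$; distinguishing whether $\alpha(\beta+1)\ge1$ or not and using $N^{-2}\le N^{-1}$ and $N^{-2}\le N^{-(2\gamma\land1)}$, this is absorbed into the first and third terms.

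The real work is the ``semigroup defect'' $R_1:=(S_{\nu\Delta_i}-I)X_{t_{i-1}}$. Here I would set $\gamma'=\gamma\land1\in[0,1]$ (note $\gamma'<\alpha(\beta+1)/2$ still, hence $\beta':=\beta-2\gamma'/\alpha>-1$) and use the elementary bound $(1-\ee^{-x})^2\le x^{2\gamma'}$ ($x\ge0$) together with $\lambda_{l_1,l_2}\sim\mu_{l_1,l_2}$ to obtain the smoothing estimate
\[
\|(S_h-I)u\|_{\mcl L_\beta^2}^2=\sum_{l_1,l_2\ge1}\mu_{l_1,l_2}^{-\alpha\beta}(1-\ee^{-\lambda_{l_1,l_2}h})^2\langle u,e_{l_1,l_2}\rangle^2\lesssim h^{2\gamma'}\sum_{l_1,l_2\ge1}\mu_{l_1,l_2}^{-\alpha\beta+2\gamma'}\langle u,e_{l_1,l_2}\rangle^2=h^{2\gamma'}\|u\|_{\mcl L_{\beta'}^2}^2,
\]
so that $\EE[\|R_1\|_{\mcl L_\beta^2}^2]\lesssim(\nu\Delta_i)^{2\gamma'}\EE[\|X_{t_{i-1}}\|_{\mcl L_{\beta'}^2}^2]\lesssim(\nu/N)^{2\gamma'}\bigl(1+\nu^{(\alpha(\beta+1)-2\gamma')\land1-1}\bigr)$ by Lemma \ref{lemB3} invoked at the lower index $\beta'$ (this is where the regularity of the initial value, carried along by the semigroup, is consumed). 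If $\gamma\le1$ then $\gamma'=\gamma$: when $\alpha(\beta+1)-2\gamma\ge1$ the second factor is $\OO(1)$ and $(\nu/N)^{2\gamma}$ remains (the second term); when $\alpha(\beta+1)-2\gamma<1$ the cross term equals $\nu^{\alpha(\beta+1)-1}N^{-2\gamma}\le\nu^{\alpha(\beta+1)-1}N^{-(2\gamma\land1)}$ (the first term). If $\gamma>1$ then $\gamma'=1$ and $2\gamma\land1=1$, and the same computation, using $(\nu/N)^2\le N^{-2}\le N^{-1}$, gives a bound of the form $\nu^{\alpha(\beta+1)-1}N^{-1}+N^{-1}$, again within the asserted right-hand side. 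Collecting the three contributions finishes the proof.

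The step I expect to be the main obstacle is $R_1$: one must choose the fractional exponent $\gamma'=\gamma\land1$ correctly, check that the shifted weight index $\beta-2\gamma'/\alpha$ stays above $-1$ so that Lemma \ref{lemB3} applies there, and then carry out the somewhat delicate case-by-case bookkeeping in the two small parameters $\nu$ and $N$ — in particular verifying that the factors $\nu^{(\alpha(\beta+1)-2\gamma')\land1-1}$, which diverge as $\nu\to0$, are in every regime killed by a compensating power of $N^{-1}$, and that the case $\gamma>1$ genuinely collapses to the case $\gamma=1$.
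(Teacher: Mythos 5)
Your proof is correct and follows essentially the same route as the paper's: the same variation-of-constants decomposition on $[t_{i-1},t]$ into semigroup defect, drift term and stochastic convolution, with the latter two handled by Lemmas \ref{lemB2} and \ref{lemB3} exactly as in the text. The only difference is that you derive the smoothing bound $\|(S_h-I)u\|_{\mcl L_{\beta}^2}\lesssim h^{\gamma\land 1}\|u\|_{\mcl L_{\beta-2(\gamma\land1)/\alpha}^2}$ by hand with the explicit cap $\gamma'=\gamma\land1$, whereas the paper invokes the analytic-semigroup estimate \eqref{Pf_lem7-01} with the uncapped $\gamma$; your version is actually the safer formulation in the (unused) regime $\alpha(\beta+1)>2$, since $1-\ee^{-x}\lesssim x^{\gamma}$ fails near $x=0$ for $\gamma>1$.
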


\begin{proof}
Noting that 
\begin{equation*}
\| \mcl A^\gamma u \|^2
= \sum_{l_1,l_2 \ge1} \lambda_{l_1,l_2}^{2\gamma} 
\langle u, e_{l_1,l_2} \rangle^2 
\sim \sum_{l_1,l_2 \ge1} \mu_{l_1,l_2}^{-\alpha(-2\gamma/\alpha)} 
\langle u, e_{l_1,l_2} \rangle^2
= \| \widetilde Q^{-\gamma/\alpha} u \|^2
= \| u \|_{\mcl L_{-2\gamma/\alpha}^2}^2,
\end{equation*}
we see from Proposition 4.41 in \cite{Hairer2009}, 
\eqref{Pf_prop7-01}, \eqref{Bochner-ineq}, Lemmas \ref{lemB2} and \ref{lemB3} that
for $0 \le \gamma < \frac{\alpha(\beta+1)}{2}$,
\begin{equation}\label{Pf_lem7-01}
\| S_t u- u \|_{\mcl L_{\beta}^2} 
\lesssim t^\gamma \| \mcl A^\gamma u \|_{\mcl L_{\beta}^2}
\sim t^\gamma \| u \|_{\mcl L_{\beta-2\gamma/\alpha}^2},
\quad t \in \mbb  T
\end{equation}
and
\begin{align*}
\EE \Bigl[\| X_t-X_{t_{i-1}} \|_{\mcl L_{\beta}^2}^2 \Bigr] 
&\lesssim
\EE \Bigl[
\| S_{\nu(t-t_{i-1})}X_{t_{i-1}} - X_{t_{i-1}} \|_{\mcl L_{\beta}^2}^2
\Bigr]
\\
&\qquad+ \int_{t_{i-1}}^t \EE \Bigl[ 
\| S_{\nu(t-s)} X_s \|_{\mcl L_{\beta}^2}^2 
\Bigr] \dd s
+ \EE \Bigl[\| \overline X_{t,t_{i-1}} \|_{\mcl L_{\beta}^2}^2 \Bigr]
\\
&\lesssim
\Bigl(\frac{\nu}{N}\Bigr)^{2\gamma} 
\EE \Bigl[ \| X_{t_{i-1}} \|_{\mcl L_{\beta-2\gamma/\alpha}^2}^2 \Bigr]
+ \int_{t_{i-1}}^t \EE\Bigl[\| X_s \|_{\mcl L_{\beta}^2}^2 \Bigr] \dd s
+ \EE \Bigl[\| \overline X_{t,t_{i-1}} \|_{\mcl L_{\beta}^2}^2 \Bigr]
\\
&\lesssim
\frac{\nu^{\alpha(\beta+1)-1}+ \nu^{2\gamma}}{N^{2\gamma}}
+ \frac{\nu^{\alpha(\beta+1)-1}+1}{N} 
+ \frac{\nu^{\alpha(\beta+1)-1}}{N^{\alpha(\beta+1)}}
\\
&\lesssim
\frac{\nu^{\alpha(\beta+1)-1}}{N^{2\gamma \land 1}} + 
\Bigl(\frac{\nu}{N}\Bigr)^{2\gamma} + \frac{1}{N}.
\end{align*}
\end{proof}

Let $M = (M_1 \land M_2)^2$. 
\begin{lem}\label{lemB5}
Let $\alpha >0$, $q >2$ and $\beta > \frac{4}{\alpha q} -1$. 
Assume that [A1]$_{\beta,q}$ holds.
It holds that for
$\frac{1}{q} < \gamma < (\frac{\alpha(\beta +1)}{2} - \frac{1}{q}) \land \frac{1}{2}$ ,
\begin{equation*}
\EE \Bigl[\| X_t - \Psi_M X_t \|_{\mcl L_{\beta}^2}^2 \Bigr]
\lesssim
\frac{\varphi_{\beta -2\gamma/\alpha}(\nu)}{M^{2(\gamma -1/q)}}, 
\quad t \in \mbb T.
\end{equation*}
\end{lem}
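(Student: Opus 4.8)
The plan is to view $X_t-\Psi_M X_t$ as a quantitative spatial modulus of continuity of $X_t$, and to split the argument into a purely deterministic interpolation inequality and a moment bound that is already available from the earlier lemmas. Concretely, the statement reduces to the deterministic estimate
\[
\|w-\Psi_M w\|_{\mcl L_{\beta}^2}
\lesssim (M_1\land M_2)^{-2(\gamma-1/q)}\,\|w\|_{\mcl L_{\beta-2\gamma/\alpha}^q},
\qquad w\in\mcl L_{\beta-2\gamma/\alpha}^q .
\]
Granting this, one squares it, takes expectations, and invokes Lemma \ref{lemB3}(ii) with the weight index $\beta$ replaced by $\beta-2\gamma/\alpha$. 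Its hypotheses hold exactly under the stated assumptions: $q>2$; the requirement $\beta-2\gamma/\alpha>\tfrac{2}{\alpha q}-1$ is equivalent to $\gamma<\tfrac{\alpha(\beta+1)}{2}-\tfrac1q$; and [A1]$_{\beta-2\gamma/\alpha,q}$ follows from [A1]$_{\beta,q}$ since $\beta-2\gamma/\alpha<\beta$ gives $\mcl L_{\beta}^q\subset\mcl L_{\beta-2\gamma/\alpha}^q$. This yields $\EE[\|X_t\|_{\mcl L_{\beta-2\gamma/\alpha}^q}^2]\lesssim 1+\varphi_{\beta-2\gamma/\alpha}(\nu)$, and since $\varphi_{\beta-2\gamma/\alpha}(\nu)\gtrsim 1$ for $\nu\in(0,1)$ by \eqref{prop5-varphi}, the additive constant is absorbed; together with $(M_1\land M_2)^{-4(\gamma-1/q)}=M^{-2(\gamma-1/q)}$ this is the claimed bound.

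For the deterministic inequality, the mechanism is that passing from the $\mcl L_\beta^2$-scale to the $\mcl L_{\beta-2\gamma/\alpha}^q$-scale buys a factor $\mcl A^{\gamma}$, i.e.\ $2\gamma$ fractional derivatives, because $\widetilde Q^{\delta}=\mcl A^{-\alpha\delta}$ up to the equivalence $\lambda_{l_1,l_2}\sim\mu_{l_1,l_2}$ and $\|u\|_{\mcl L_{\beta'}^q}\sim\|\mcl A^{-\alpha\beta'/2}u\|_{\mcl L^q}$; since $\gamma<\tfrac12$ this places $w$ in a genuine fractional Sobolev space of order $2\gamma\in(0,1)$, i.e.\ an $\mcl W^{2\gamma,q}$-type space. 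On $D\subset\mbb R^2$ one then has the embedding $\mcl W^{2\gamma,q}\hookrightarrow\mcl C^{2\gamma-2/q}$, legitimate precisely because $0<2\gamma-2/q<1$, which is exactly what the constraints $\tfrac1q<\gamma<\tfrac1q+\tfrac12$ (implied by $\tfrac1q<\gamma<\tfrac12$) encode, and the elementary cell-wise Hölder bound
\[
\|v-\Psi_M v\|_{\mcl L^2(D)}^2
=\sum_{j,k}\int_{Q_{jk}}|v(\bs x)-v(\bs x_{jk})|^2\bar v(\bs x)\,\dd\bs x
\lesssim (M_1\land M_2)^{-2(2\gamma-2/q)}[v]_{\mcl C^{2\gamma-2/q}}^2
\]
closes the chain (here $Q_{jk}=[y_{j-1},y_j)\times[z_{k-1},z_k)$ with corner $\bs x_{jk}=(y_{j-1},z_{k-1})$); the hypothesis $\beta>\tfrac{4}{\alpha q}-1$ is just the condition that the admissible interval $(\tfrac1q,\,\tfrac{\alpha(\beta+1)}{2}-\tfrac1q)\cap(0,\tfrac12)$ for $\gamma$ be nonempty. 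In carrying this out I expect to use \eqref{vcf} to peel off $\widetilde X_t=S_{\nu t}X_0+\theta_0\int_0^t S_{\nu(t-s)}X_s\,\dd s$ from the stochastic convolution $\overline X_t$: the former is smoothed by the semigroup (the negative $\nu$-powers lost to smoothing are harmless because $\nu<1$ and $\varphi_{\beta-2\gamma/\alpha}(\nu)\gtrsim1$), while for $\overline X_t$ one passes to $X_{\beta,t}=\widetilde Q^{\beta/2}X_t$ via \eqref{QX2} and uses Gaussianity together with Lemma \ref{lemB1} applied with the shifted index $\beta-2\gamma/\alpha$.

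The main obstacle is the non-commutativity of the local sampling operator $\Psi_M$ with the nonlocal weight $\widetilde Q^{\beta/2}$: one cannot simply identify $\|w-\Psi_M w\|_{\mcl L_\beta^2}$ with the unweighted interpolation error of $\widetilde Q^{\beta/2}w$. The resolution, and the real crux, is to work in the eigenbasis, using the identity $\langle\Psi_M X_t,e_{l_1,l_2}\rangle=[X_t]_{M,l_1,l_2}$, so that $\langle X_t-\Psi_M X_t,e_{l_1,l_2}\rangle=\sum_{j,k}\int_{Q_{jk}}(X_t(\bs x)-X_t(\bs x_{jk}))\,e_{l_1,l_2}(\bs x)\bar v(\bs x)\,\dd\bs x$, and then to estimate each quadrature error through the fractional Hölder regularity of $X_t$ while performing summation by parts against the oscillatory factors $\sin(\pi l_1 y)\sin(\pi l_2 z)$ to gain enough decay in $(l_1,l_2)$ that the weighted series $\sum_{l_1,l_2}\mu_{l_1,l_2}^{-\alpha\beta}\langle X_t-\Psi_M X_t,e_{l_1,l_2}\rangle^2$ converges — a naive pointwise bound on $e_{l_1,l_2}$ would only converge for $\beta>1/\alpha$, far short of the required range $\beta\le\tfrac1\alpha-1$. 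Matching the recovered decay exponent against the weight so that the series converges is exactly where the constraint $\gamma<\tfrac{\alpha(\beta+1)}{2}-\tfrac1q$ (equivalently, admissibility of Lemma \ref{lemB3} at the shifted index) is consumed, and tracking this bookkeeping is the bulk of the work.
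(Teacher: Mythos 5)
Your first two paragraphs are, step for step, the paper's own proof: the paper reduces to a deterministic cell-wise bound, uses $\| u \|_{\mcl C^{2(\gamma-1/q)}} \lesssim \| \mcl A^\gamma u\|_{\mcl L^q} \sim \| u \|_{\mcl L^q_{-2\gamma/\alpha}}$ (via $\mcl L^q_{-2\gamma/\alpha}\hookrightarrow\mcl W^{2\gamma,q}$ and the fractional Sobolev embedding), bounds the error on each cell $[y_{j-1},y_j)\times[z_{k-1},z_k)$ by the H\"{o}lder seminorm of $\widetilde Q^{\beta/2}X_t$ times $(M_1\land M_2)^{-2(\gamma-1/q)}$, and concludes with Lemma \ref{lemB3} at the shifted index $\beta-2\gamma/\alpha$. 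Had you executed that plan you would have matched the paper. Instead, your third paragraph declares this route inadmissible because $\Psi_M$ does not commute with $\widetilde Q^{\beta/2}$, and substitutes an eigenbasis argument --- estimate each $\langle X_t-\Psi_M X_t,e_{l_1,l_2}\rangle$ as a quadrature error, sum by parts against $\sin(\pi l_1 y)\sin(\pi l_2 z)$ to gain decay in $(l_1,l_2)$, and resum against the weight $\mu_{l_1,l_2}^{-\alpha\beta}$ --- which you explicitly leave undone (``tracking this bookkeeping is the bulk of the work''). That is the gap: no bound on the individual coefficients is actually produced, and it is nowhere verified that the cancellation you invoke delivers simultaneously the factor $M^{-2(\gamma-1/q)}$ and the factor $\varphi_{\beta-2\gamma/\alpha}(\nu)$ over the whole range $\beta>\frac{4}{\alpha q}-1$. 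A proof that announces its crux and then defers it is not a proof.

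Two further remarks. First, your commutation worry does put a finger on the delicate point of the paper's own display: the paper writes $\|X_t-\Psi_M X_t\|^2_{\mcl L_\beta^2}$ as the sum over cells of $(\widetilde Q^{\beta/2}X_t(\bs x)-\widetilde Q^{\beta/2}X_t(\bs x_{jk}))^2\,\bar v(\bs x)$, i.e.\ it tacitly interchanges $\Psi_M$ with the nonlocal operator $\widetilde Q^{\beta/2}$. Raising this is fair, but then the burden is on you either to justify the interchange (or control the commutator) or to complete the alternative, and you do neither. Second, your claim that [A1]$_{\beta-2\gamma/\alpha,q}$ follows from [A1]$_{\beta,q}$ via ``$\mcl L_{\beta}^q\subset\mcl L_{\beta-2\gamma/\alpha}^q$'' has the inclusion backwards: by the paper's own convention $\mcl L^2_{\beta_1}\subset\mcl L^2_{\beta_2}$ for $\beta_1\le\beta_2$, the norm at the smaller index $\beta-2\gamma/\alpha$ is the \emph{stronger} one, so it is not controlled by [A1]$_{\beta,q}$. (This is harmless in the paper's applications, where the strongest condition [A1]$_{-1,p}$ is always assumed, but your stated justification is false as written.)
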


\begin{proof}
Since $\| u \|_{\mcl L_{-2\gamma/\alpha}^q} 
\sim \| \mcl A^{\gamma} u \|_{\mcl L^q}$ and Theorem 16.15 in \cite{Yagi2010} 
yield $\mcl L_{-2\gamma/\alpha}^q \hookrightarrow \mcl W^{2\gamma,q}$,
we find from the Sobolev embedding theorem (Theorem 8.2 in \cite{Nezza_etal2012}) that
\begin{equation*}
\| u \|_{\mcl C^{2(\gamma -1/q)}}
\lesssim \| \mcl A^{\gamma} u \|_{\mcl L^q} 
\sim \| u \|_{\mcl L_{-2\gamma/\alpha}^q}.
\end{equation*}
Therefore, it holds 
from $\| \widetilde Q^{\beta/2} X_t \|_{\mcl C^{2(\gamma -1/q)}}
\lesssim \| X_t \|_{\mcl L_{\beta -2\gamma/\alpha}^q}$
and Lemma \ref{lemB3} that for $t \in \mbb T$, 
\begin{align*}
\| X_t - \Psi_M X_t \|_{\mcl L_{\beta}^2}^2
&= \sum_{j=1}^{M_1} \sum_{k=1}^{M_2} 
\int_{z_{k-1}}^{z_k} \int_{y_{j-1}}^{y_j}  
(\widetilde Q^{\beta/2} X_t(y,z) -\widetilde Q^{\beta/2} X_t(y_{j-1},z_{k-1}))^2 
\bar v(y,z) \dd y \dd z
\\
&\le \sum_{j=1}^{M_1} \sum_{k=1}^{M_2} 
\int_{z_{k-1}}^{z_k} \int_{y_{j-1}}^{y_j}  
\| \widetilde Q^{\beta/2} X_t \|_{\mcl C^{2(\gamma -1/q)}}^2
\Biggl|
\begin{pmatrix}
y - y_{j-1}
\\
z - z_{k-1}
\end{pmatrix}
\Biggr|^{4(\gamma -1/q)} 
\bar v(y,z) \dd y \dd z
\\
&\lesssim
\Biggl(
\frac{\| X_t \|_{\mcl L_{\beta -2\gamma/\alpha}^q}}{M^{\gamma -1/q}}
\Biggr)^2
\end{align*}
and
\begin{equation*}
\EE \Bigl[\| X_t - \Psi_M X_t \|_{\mcl L_{\beta}^2}^2 \Bigr]
\lesssim
\frac{
\EE\Bigl[\| X_t \|_{\mcl L_{\beta -2\gamma/\alpha}^q}^2\Bigr]}{M^{2(\gamma -1/q)}}
\lesssim
\frac{\varphi_{\beta -2\gamma/\alpha}(\nu)}{M^{2(\gamma -1/q)}}.
\end{equation*}
\end{proof}

\begin{lem}\label{lemB6}
Let $\alpha >0$, $Y_i = X_{t_i} -S_{\nu(t_i-t_{i-1})} X_{t_{i-1}}$
and $Y_i^{\Psi_M} = \Psi_M X_{t_i} - S_{\nu (t_i-t_{i-1})} \Psi_M X_{t_{i-1}}$.

\begin{itemize}
\item[(1)]
For $q$ and $\beta$ satisfying either (i) or (ii) of Lemma \ref{lemB3},
assume that [A1]$_{\beta,q}$ holds. Then,
\begin{equation*}
\EE \Bigl[\| Y_i \|_{\mcl L_{\beta}^q}^2 \Bigr] 
\lesssim \frac{\widetilde \varphi_{\beta,q}(\nu)}{N^{\alpha(\beta +1)}},
\quad i =1,\ldots, N.
\end{equation*}

\item[(2)]
For $q >2$ and $\beta > \frac{4}{\alpha q} -1$, 
assume that [A1]$_{\beta,q}$ holds.
It holds that for
$\frac{1}{q} < \gamma < (\frac{\alpha(\beta +1)}{2} - \frac{1}{q}) \land \frac{1}{2}$,
\begin{equation*}
\EE \Bigl[\| Y_i - Y_i^{\Psi_M} \|_{\mcl L_{\beta}^2}^2 \Bigr]
\lesssim
\frac{\varphi_{\beta -2\gamma/\alpha}(\nu)}
{M^{2(\gamma -1/q)} N^{\alpha(\beta +1) -2\gamma}},
\quad i =1,\ldots, N.
\end{equation*}
\end{itemize}
\end{lem}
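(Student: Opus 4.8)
The plan is to reduce both parts to the variation-of-constants identity \eqref{Pf_prop7-01} on the subinterval $[t_{i-1},t_i]$, which gives, under $\PP_{\theta_0^*,\sigma^*}$,
\[
Y_i = \theta_0^* \int_{t_{i-1}}^{t_i} S_{\nu(t_i-s)} X_s \, \dd s + \overline X_{t_i,t_{i-1}},
\]
so that $Y_i$ is, up to a lower-order drift term, the stochastic convolution $\overline X_{t_i,t_{i-1}}$ over a window of length $t_i-t_{i-1}=1/N$; accordingly I write $S_{\nu/N}$ for $S_{\nu(t_i-t_{i-1})}$.

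For part (1), I would bound $\EE[\|Y_i\|_{\mcl L_{\beta}^q}^2]$ by a constant multiple of $\EE\bigl[\|\int_{t_{i-1}}^{t_i} S_{\nu(t_i-s)} X_s \, \dd s\|_{\mcl L_{\beta}^q}^2\bigr] + \EE[\|\overline X_{t_i,t_{i-1}}\|_{\mcl L_{\beta}^q}^2]$. The second summand is exactly Lemma \ref{lemB1} (if $q>2$) or Lemma \ref{lemB2} (if $q=2$) applied with $t-u=1/N$, giving the main contribution, of order $\widetilde\varphi_{\beta,q}(\nu)\, N^{-\alpha(\beta+1)}$. For the first summand, the Cauchy--Schwarz form of \eqref{Bochner-ineq} in the time variable, together with the uniform boundedness of $S_r$ on $\mcl L_{\beta}^q$ (obtained by conjugating $S_r$ to the Dirichlet heat semigroup on $(0,1)^2$, for which the weight $\bar v$ is comparable to $1$ on $D$) and Lemma \ref{lemB3}, gives a bound of order $N^{-2}\bigl(1+\widetilde\varphi_{\beta,q}(\nu)\bigr)$, dominated by the stochastic-convolution term.

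For part (2), writing $Y_i - Y_i^{\Psi_M} = (X_{t_i} - \Psi_M X_{t_i}) - S_{\nu/N}(X_{t_{i-1}} - \Psi_M X_{t_{i-1}})$ and inserting \eqref{Pf_prop7-01} and its image under $\Psi_M$, I obtain
\[
Y_i - Y_i^{\Psi_M} = -[\Psi_M, S_{\nu/N}] X_{t_{i-1}} + \theta_0^* (I-\Psi_M)\int_{t_{i-1}}^{t_i} S_{\nu(t_i-s)} X_s \, \dd s + (I-\Psi_M) \overline X_{t_i,t_{i-1}}.
\]
The last term I would treat with the Sobolev--H\"older inequality underlying the proof of Lemma \ref{lemB5}, $\|(I-\Psi_M) g\|_{\mcl L_{\beta}^2} \lesssim M^{-(\gamma-1/q)} \|g\|_{\mcl L_{\beta-2\gamma/\alpha}^q}$, followed by Lemma \ref{lemB1} for $\overline X_{t_i,t_{i-1}}$ in $\mcl L_{\beta-2\gamma/\alpha}^q$; the constraint $\gamma<\frac{\alpha(\beta+1)}{2}-\frac{1}{q}$ keeps this index above $\frac{2}{\alpha q}-1$, and since $\beta\le\frac{1}{\alpha}-1$ gives $\alpha(\beta+1)-2\gamma\le 1$, this yields precisely the asserted order $M^{-2(\gamma-1/q)} N^{-(\alpha(\beta+1)-2\gamma)} \varphi_{\beta-2\gamma/\alpha}(\nu)$. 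The drift term is handled as in part (1) composed with the same Sobolev--H\"older bound and is of smaller order.

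The main obstacle is the commutator $[\Psi_M, S_{\nu/N}] X_{t_{i-1}}$, which I would rewrite as $(I-\Psi_M)(I-S_{\nu/N}) X_{t_{i-1}} - (I-S_{\nu/N})(I-\Psi_M) X_{t_{i-1}}$. Each piece then carries one factor from $I-\Psi_M$ (controlled by the Sobolev--H\"older estimate, contributing a power of $M$) and one factor from $I-S_{\nu/N}$ (controlled by \eqref{Pf_lem7-01}, contributing a power of $\nu/N$), the leftover $\mcl L^q$-norms of $X_{t_{i-1}}$ being absorbed by Lemma \ref{lemB3}. The delicate point is to split the available Sobolev-regularity budget between the two operators so that the resulting spatial, temporal and small-diffusivity exponents match (and do not exceed) the order $M^{-2(\gamma-1/q)} N^{-(\alpha(\beta+1)-2\gamma)} \varphi_{\beta-2\gamma/\alpha}(\nu)$ already produced by the stochastic-convolution term; this is exactly where the hypotheses $\beta>\frac{4}{\alpha q}-1$ and $\frac{1}{q}<\gamma<\bigl(\frac{\alpha(\beta+1)}{2}-\frac1q\bigr)\land\frac12$ enter, and this bookkeeping is the crux of the argument.
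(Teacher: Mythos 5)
Part (1) of your proposal is correct and is essentially the paper's own argument: split $Y_i$ via \eqref{Pf_prop7-01} into the drift integral and $\overline X_{t_i,t_{i-1}}$, bound the former with \eqref{Bochner-ineq} and Lemma \ref{lemB3} (the boundedness of $S_r$ on $\mcl L_\beta^q$ that you justify by conjugation is used implicitly there too), and the latter with Lemma \ref{lemB1} or \ref{lemB2}.

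For part (2) there is a genuine gap. The paper's proof is a single step: it applies the H\"older-sampling argument of Lemma \ref{lemB5} to $Y_i$ itself, i.e.\ it bounds $\EE[\|Y_i-Y_i^{\Psi_M}\|_{\mcl L_\beta^2}^2]\lesssim M^{-2(\gamma-1/q)}\EE[\|Y_i\|_{\mcl L_{\beta-2\gamma/\alpha}^q}^2]$ and then invokes part (1) with $\beta$ replaced by $\beta-2\gamma/\alpha$; no commutator ever appears in that route. Your decomposition instead produces the extra term $[\Psi_M,S_{\nu/N}]X_{t_{i-1}}$ (correctly, since $Y_i^{\Psi_M}\neq\Psi_M Y_i$ — in this sense you expose a subtlety the paper's one-liner glosses over), but you then leave precisely this term unestimated, calling the bookkeeping "the crux". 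That deferred step is not routine, and the splitting you sketch does not deliver the stated rate. Indeed, for the piece $(I-\Psi_M)(I-S_{\nu/N})X_{t_{i-1}}$, spending regularity $\gamma_1$ on the sampling error and $\gamma_2$ on $(I-S_{\nu/N})$ yields at best $M^{-2(\gamma_1-1/q)}(\nu/N)^{2\gamma_2}\,\varphi_{\beta-2(\gamma_1+\gamma_2)/\alpha}(\nu)\sim M^{-2(\gamma_1-1/q)}N^{-2\gamma_2}\nu^{\alpha(\beta+1)-2\gamma_1-1}$, and Lemma \ref{lemB3} forces $\gamma_1+\gamma_2<\frac{\alpha(\beta+1)}{2}-\frac{1}{q}$; with $\gamma_1=\gamma$ this caps $2\gamma_2<\alpha(\beta+1)-2\gamma-\frac{2}{q}$, so the $N$-exponent falls short of the required $\alpha(\beta+1)-2\gamma$ by at least $2/q$, and trading $\gamma_1$ against $\gamma_2$ only converts that deficit into positive powers of $M\nu$, so the target bound is not reached uniformly in $(\nu,N,M)$ by this chain. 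The other piece $(I-S_{\nu/N})(I-\Psi_M)X_{t_{i-1}}$ is worse: $(I-\Psi_M)X_{t_{i-1}}$ has only limited Sobolev regularity uniformly in $M$, and \eqref{Pf_lem7-01} (whose $\mcl L^q$ analogue, which your other steps also require, is never established in the paper) cannot be applied to it with a useful exponent without its norm degrading in $M$. So to complete the proof you must either argue as the paper does — apply the Lemma \ref{lemB5} sampling bound directly to $Y_i$ and justify why $Y_i^{\Psi_M}$ may be treated as the cell-wise sampling of $Y_i$ there — or supply a genuinely different estimate for $[\Psi_M,S_{\nu/N}]X_{t_{i-1}}$; as written, the proposal does not prove the lemma.
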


\begin{proof}
(1) Using \eqref{Pf_prop7-01}, \eqref{Bochner-ineq} 
and Lemmas \ref{lemB1}-\ref{lemB3}, we get
\begin{align*}
\EE \Bigl[\| Y_i \|_{\mcl L_{\beta}^q}^2 \Bigr]
&\lesssim
\EE \Biggl[ \biggl\|
\int_{t_{i-1}}^{t_i} S_{\nu(t_i-s)} X_s \dd s
\biggr\|_{\mcl L_{\beta}^q}^2 \Biggr]
+ \EE\Bigl[\| \overline X_{t_i,t_{i-1}} \|_{\mcl L_{\beta}^q}^2 \Bigr]
\nonumber
\\
&\lesssim
\EE \biggl[
\int_{t_{i-1}}^{t_i} \| S_{\nu(t_i -s)} X_s \|_{\mcl L_{\beta}^q}^2 \dd s
\biggr]
+ \EE \Bigl[\| \overline X_{t_i,t_{i-1}} \|_{\mcl L_{\beta}^q}^2 \Bigr]
\nonumber
\\
&\lesssim
\frac{1}{N} \int_{t_{i-1}}^{t_i} 
\EE \Bigl[ \| X_s \|_{\mcl L_{\beta}^q}^2 \Bigr] \dd s
+ \EE \Bigl[\| \overline X_{t_i,t_{i-1}} \|_{\mcl L_{\beta}^q}^2 \Bigr]
\nonumber
\\
&\lesssim
\frac{1 + \widetilde \varphi_{\beta,q}(\nu)}{N^2}
+ \frac{\widetilde \varphi_{\beta,q}(\nu)}{N^{\alpha(\beta +1)}}
\nonumber
\\
&\lesssim
\frac{\widetilde \varphi_{\beta,q}(\nu)}{N^{\alpha(\beta +1)}}.
\end{align*}

(2) In the same way as Lemma \ref{lemB5}, we see from (1) that
\begin{equation*}
\EE \Bigl[\| Y_i - Y_i^{\Psi_M} \|_{\mcl L_{\beta}^2}^2 \Bigr]
\lesssim
\frac{\EE 
\Bigl[\| Y_i \|_{\mcl L_{\beta -2\gamma/\alpha}^q}^2 \Bigr]}{M^{2(\gamma -1/q)}}
\lesssim
\frac{\varphi_{\beta -2\gamma/\alpha}(\nu)}
{M^{2(\gamma -1/q)} N^{\alpha(\beta +1) -2\gamma}}.
\end{equation*}
\end{proof}

\subsection{Proofs of Proposition \ref{prop1} and Corollary \ref{cor1}}\label{sec4.3}
In this subsection, we provide proofs of the results in Subsection \ref{sec2.4}.
\begin{proof}[\bf{Proof of Proposition \ref{prop1}}]
Let $\mcl A_{\theta_0,\nu} = \nu \mcl A -\theta_0$,
$T_{\nu,t} = \ee^{-t \mcl A_{\theta_0,\nu}}$
and $\lambda_{l_1,l_2}^{(\theta_0)} = \nu \lambda_{l_1,l_2} -\theta_0$.
Note that $\mcl A_{\theta_0,\nu} e_{l_1,l_2} 
= \lambda_{l_1,l_2}^{(\theta_0)} e_{l_1,l_2}$.
Moreover, let
\begin{equation*}
f_1(x) = 
\begin{cases}
\frac{1 -\ee^{-x}}{x}, & x \neq 0,
\\
1, & x = 0, 
\end{cases}
\quad
f_2(x) = 
\begin{cases}
\frac{x -1 +\ee^{-x}}{x^2}, & x \neq 0,
\\
\frac{1}{2}, & x = 0.
\end{cases}
\end{equation*}
It follows from $X_t = T_{\nu,t}X_0 +\sigma \int_0^t T_{\nu,t-s} \dd W_s^Q$
and the independence of $X_0$ and $W_t^Q$ that
\begin{align*}
\EE[J_{\beta,\nu}] 
&= \int_0^1 \EE\Bigl[ \| T_{\nu,t} X_0 \|_{\mcl L_{\beta}^2}^2 \Bigr] \dd t
+ \sigma^2 \int_0^1 \int_0^t 
\| T_{\nu,t-s} \|_{\mrm{HS}(\mcl U_\beta)}^2 \dd s \dd t
\nonumber
\\
&= \int_0^1 
\sum_{l_1,l_2 \ge 1}
\ee^{-2t \lambda_{l_1,l_2}^{(\theta_0)}} \mu_{l_1,l_2}^{-\alpha \beta}
\EE[\langle X_0, e_{l_1,l_2} \rangle^2] \dd t
\\
&\qquad+\sigma^2 \int_0^1 \int_0^t 
\sum_{l_1,l_2 \ge 1}
\ee^{-2(t-s) \lambda_{l_1,l_2}^{(\theta_0)}} \mu_{l_1,l_2}^{-\alpha(\beta +1)}
\dd s \dd t
\nonumber
\\
&= \sum_{l_1,l_2 \ge 1} 
f_1(2 \lambda_{l_1,l_2}^{(\theta_0)}) \mu_{l_1,l_2}^{-\alpha \beta}
\EE[\langle X_0, e_{l_1,l_2} \rangle^2]
+ \sigma^2 \sum_{l_1,l_2 \ge 1} 
f_2(2 \lambda_{l_1,l_2}^{(\theta_0)}) \mu_{l_1,l_2}^{-\alpha(\beta +1)}
\\
&= \sum_{l_1,l_2 \ge 1} 
f_{l_1,l_2}(2 \lambda_{l_1,l_2}^{(\theta_0)}) \mu_{l_1,l_2}^{-\alpha(\beta +1)},
\end{align*}
where $f_{l_1,l_2}(x) = f_1(x) \mu_{l_1,l_2}^\alpha 
\EE[\langle X_0, e_{l_1,l_2} \rangle^2]+ \sigma^2 f_2(x) > 0$, $x \in \mbb R$. 
Note that $f_{l_1,l_2}(x) \lesssim 1/x$ under [A1]$_{-1,2}$.
Therefore, we obtain
\begin{equation*}
\mcl R_{\beta,\nu} = 
\frac{\EE[J_{\beta,\nu}]}{\sqrt{\EE[J_{2\beta +1,\nu}]}}
= \frac{\displaystyle
\sum_{l_1,l_2 \ge 1} 
f_{l_1,l_2}(2 \lambda_{l_1,l_2}^{(\theta_0)}) \mu_{l_1,l_2}^{-\alpha(\beta +1)}
}{\displaystyle
\sqrt{\sum_{l_1,l_2 \ge 1} 
f_{l_1,l_2}(2 \lambda_{l_1,l_2}^{(\theta_0)}) \mu_{l_1,l_2}^{-2\alpha(\beta +1)}}}.
\end{equation*}
For $x > 0$, let
\begin{equation*}
F(x) = 
\sum_{l_1,l_2 \ge 1} 
f_{l_1,l_2}(2 \lambda_{l_1,l_2}^{(\theta_0)}) \mu_{l_1,l_2}^{-x}, 
\quad 
G(x) = \frac{F(x)^2}{F(2x)}.
\end{equation*}
Since $F(x)$ is differentiable and 
\begin{equation*}
F'(x) = 
-\sum_{l_1,l_2 \ge 1} 
f_{l_1,l_2}(2 \lambda_{l_1,l_2}^{(\theta_0)}) \mu_{l_1,l_2}^{-x} \log \mu_{l_1,l_2},
\end{equation*}
we have
\begin{align*}
&F'(x) F(2x) -F(x) F'(2x) 
\\
&=
\sum_{l_1, \ldots,l_4 \ge 1}
f_{l_1,l_2}(2 \lambda_{l_1,l_2}^{(\theta_0)}) 
f_{l_3,l_4}(2 \lambda_{l_3,l_4}^{(\theta_0)})
\mu_{l_1,l_2}^{-2x} \mu_{l_3,l_4}^{-2x} 
(\mu_{l_3,l_4}^{x} -\mu_{l_1,l_2}^{x}) \log \mu_{l_1,l_2} 
\\
&=
\sum_{\substack{l_1,\ldots,l_4 \ge 1,\\ 
l_1^2 + l_2^2 < l_3^2 +l_4^2}}
f_{l_1,l_2}(2 \lambda_{l_1,l_2}^{(\theta_0)}) 
f_{l_3,l_4}(2 \lambda_{l_3,l_4}^{(\theta_0)})
\mu_{l_1,l_2}^{-2x} \mu_{l_3,l_4}^{-2x} 
(\mu_{l_3,l_4}^{x} -\mu_{l_1,l_2}^{x}) \log \mu_{l_1,l_2} 
\\
&\qquad +
\sum_{\substack{l_1,\ldots,l_4 \ge 1,\\ 
l_1^2 + l_2^2 > l_3^2 +l_4^2}}
f_{l_1,l_2}(2 \lambda_{l_1,l_2}^{(\theta_0)}) 
f_{l_3,l_4}(2 \lambda_{l_3,l_4}^{(\theta_0)})
\mu_{l_1,l_2}^{-2x} \mu_{l_3,l_4}^{-2x} 
(\mu_{l_3,l_4}^{x} -\mu_{l_1,l_2}^{x}) \log \mu_{l_1,l_2} 
\\
&=
\sum_{\substack{l_1,\ldots,l_4 \ge 1,\\ 
l_1^2 + l_2^2 < l_3^2 +l_4^2}}
f_{l_1,l_2}(2 \lambda_{l_1,l_2}^{(\theta_0)}) 
f_{l_3,l_4}(2 \lambda_{l_3,l_4}^{(\theta_0)})
\mu_{l_1,l_2}^{-2x} \mu_{l_3,l_4}^{-2x} 
(\mu_{l_3,l_4}^{x} -\mu_{l_1,l_2}^{x}) 
\log \frac{\mu_{l_1,l_2}}{\mu_{l_3,l_4}}
\\
& < 0 
\end{align*}
and hence
\begin{equation*}
G'(x) = \frac{2F(x)}{F(2x)^2}(F'(x) F(2x) -F(x) F'(2x)) < 0,
\end{equation*}
which implies that the function $\beta \mapsto \mcl R_{\beta,\nu}$ 
is decreasing for $\beta \in (-1,\frac{1}{\alpha} -1]$. 
\end{proof}

\begin{proof}[\bf{Proof of Corollary \ref{cor1}}]
Fix ${\rm m}> {\rm n} >0$.

(1) Since
$\nu (M_1 \land M_2)^{2r} N^{-\phi_{x}^{(\mrm{space})}}
= \nu^{1 -{\rm m}r +{\rm n}\phi_{x}^{(\mrm{space})}}$,
it follows from [B1]$_{\beta,\infty}$ that 
\begin{equation}\label{Pf_prop2-01}
{\rm m} > \frac{1 +{\rm n}\phi_{\alpha(\beta +1)}^{(\mrm{space})}}{r}
> 1 +{\rm n}\phi_{\alpha(\beta +1)}^{(\mrm{space})}.
\end{equation}
Because $\phi_{x}^{(\mrm{space})} \ge \phi_{1}^{(\mrm{space})} = 1$,
there exists $\beta$ such that \eqref{Pf_prop2-01} if ${\rm m} > {\rm n} +1$.
Solving the inequality \eqref{Pf_prop2-01} for $\beta$,
we have $\beta > \beta_{\alpha,{\rm n},{\rm m}}^{(\mrm{cons})}$.
Noting that
$\nu L^s N^{-\phi_{x,\alpha}^{(\mrm{trunc})}}
= \nu^{1 -\ell s +{\rm n}\phi_{x,\alpha}^{(\mrm{trunc})}}$,
we find from [B2]$_{\beta,\infty}$ that
for $\beta > \beta_{\alpha,{\rm n},{\rm m}}^{(\mrm{cons})}$,
\begin{equation*}
\ell > 
\frac{1 +{\rm n}\phi_{\alpha(\beta +1),\alpha}^{(\mrm{trunc})}}{s}
> \frac{1 +{\rm n}\phi_{\alpha(\beta +1),\alpha}^{(\mrm{trunc})}}{2}.
\end{equation*}

(2) Notice that $\nu N^q = \nu^{1 -{\rm n} q}$ 
and $\nu (M_1 \land M_2)^{2r_j} N^{-\psi_{j,x}^{(\mrm{space})}}
= \nu^{1 -{\rm m}r_j +{\rm n}\psi_{j,x}^{(\mrm{space})}}$.
Hence, we see from [C1]$_\beta$ and [C2]$_{\beta,\infty}$ that
\begin{equation*}
{\rm n} > \frac{1}{q} > \frac{1}{\rho_{\alpha(\beta +1)}^{(\mrm{time})}} = 
\begin{cases}
\frac{1}{2 \alpha(\beta +1)}, & \alpha(\beta +1) \le \frac{1}{2},
\\
2(1-\alpha(\beta +1)), & \alpha(\beta +1) > \frac{1}{2},
\end{cases}
\end{equation*}
\begin{equation*}
{\rm m} > \frac{1 +{\rm n}\psi_{1,\alpha(\beta +1)}^{(\mrm{space})}}{r_1}
\lor \frac{1 +{\rm n}\psi_{2,\alpha(\beta +1)}^{(\mrm{space})}}{r_2}
> \frac{1 +{\rm n}\psi_{1,\alpha(\beta +1)}^{(\mrm{space})}}
{\rho_{\alpha(\beta +1)}^{(\mrm{space})}}
\lor 2(1 +{\rm n}\psi_{2,\alpha(\beta +1)}^{(\mrm{space})}),
\end{equation*}
which together with 
$\psi_{1,x}^{(\mrm{space})} \ge \psi_{1,1}^{(\mrm{space})} =2/3$, 
$\psi_{2,x}^{(\mrm{space})} \ge \psi_{2,1}^{(\mrm{space})} =1/2$
and $\rho_{x}^{(\mrm{space})} \le \rho_{1}^{(\mrm{space})} =2/3$ 
imply that
$\beta > \beta_{\alpha,{\rm n},{\rm m}}^{(\mrm{asym})}$
for ${\rm m} > {\rm n} +2$. 
Since $\nu L^s N^{-\psi_{x,\alpha}^{(\mrm{trunc})}}
= \nu^{1 -\ell s +{\rm n}\psi_{x,\alpha}^{(\mrm{trunc})}}$
and [C3]$_{\beta,\infty}$, we find that for 
$\beta > \beta_{\alpha,{\rm n},{\rm m}}^{(\mrm{asym})}$,
\begin{equation*}
\ell > 
\frac{1 +{\rm n}\psi_{\alpha(\beta +1),\alpha}^{(\mrm{trunc})}}{s}
> \frac{1 +{\rm n}\psi_{\alpha(\beta +1),\alpha}^{(\mrm{trunc})}}
{\rho_{\alpha(\beta +1)}^{(\mrm{trunc})}}.
\end{equation*}
\end{proof}

\subsection{Proofs of Propositions \ref{prop3} and \ref{prop4}}\label{sec4.4}
This subsection gives proofs of Propositions \ref{prop3} and \ref{prop4}. 
Since the proofs are analogous to those of Proposition 2.1 and Theorem 2.2 
in \cite{TKU2023arXiv}, we shall focus on the different parts.
For the detailed proofs, refer to \cite{TKU2023arXiv}.

Let $e_l^{(1)}(x) = \sqrt{2}\sin(\pi l x)\ee^{-\kappa x/2}$ and 
$e_l^{(2)}(x) = \sqrt{2}\sin(\pi l x)\ee^{-\eta x/2}$.
The second order moment of the triple increment $T_{i,j,k} X$ is given by
\begin{equation*}
\EE[(T_{i,j,k} X)^2] = \sigma^2 F_{j,k}^{j,k} + R_{i,j,k},
\end{equation*}
where 
\begin{align*}
F_{j',k'}^{j,k} &= 
\sum_{l_1,l_2 \ge 1} \frac{1 -\ee^{\lambda_{l_1,l_2}^{(\theta_0)}\Delta}}
{\lambda_{l_1,l_2}^{(\theta_0)} \mu_{l_1,l_2}^\alpha}
(e_{l_1}^{(1)}(\widetilde y_j) -e_{l_1}^{(1)}(\widetilde y_{j-1}))
(e_{l_2}^{(2)}(\widetilde z_k) -e_{l_2}^{(2)}(\widetilde z_{k-1}))
\\
&\qquad \times
(e_{l_1}^{(1)}(\widetilde y_{j'}) -e_{l_1}^{(1)}(\widetilde y_{j'-1}))
(e_{l_2}^{(2)}(\widetilde z_{k'}) -e_{l_2}^{(2)}(\widetilde z_{k'-1}))
\end{align*}
and $R_{i,j,k}$ satisfies $\sum_{i=1}^N R_{i,j,k} \lesssim F_{j,k}^{j,k}$.

For a function $f:\mbb R^2 \to \mbb R$ and a positive number $\epsilon$, define
$D_{1,\epsilon} f(x,y) = f(x +\epsilon, y) -f(x,y)$
and $D_{2,\epsilon} f(x,y) = f(x, y +\epsilon) -f(x,y)$.
For $\alpha \in (0,2)$ and $y,z \in [0,2)$, let
\begin{align*}
F_{\alpha}(y,z) &= 
\sum_{l_1,l_2 \ge 1} \frac{1 -\ee^{-\lambda_{l_1,l_2}^{(\theta_0)} \Delta}}
{\lambda_{l_1,l_2}^{(\theta_0)} \mu_{l_1,l_2}^\alpha}
\cos(\pi l_1 y) \cos(\pi l_2 z)
\\
&= \nu^\alpha \Delta^{1+\alpha} 
\sum_{l_1,l_2 \ge 1} 
\frac{1 -\ee^{-\lambda_{l_1,l_2}^{(\theta_0)} \Delta}}
{\lambda_{l_1,l_2}^{(\theta_0)} \Delta}
\cdot \frac{1}{(\mu_{l_1,l_2} \nu \Delta)^\alpha}
\cos(\pi l_1 y) \cos(\pi l_2 z).
\end{align*}
Under $\nu \to 0$, the following lemma holds 
instead of Lemmas 4.10 and 4.11 in \cite{TKU2023arXiv}.

\begin{lem}\label{lemD1}
For $\alpha \in (0,2)$ and $\delta = r \sqrt{\nu \Delta}$, it holds that
\begin{align*}
F_{j',k'}^{j,k} &= 
\ee^{(-\kappa(\widetilde y_{j-1} +\widetilde y_{j'})
-\eta(\widetilde z_{k-1} +\widetilde z_{k'}))/2} 
D_{1,\delta}^2 D_{2,\delta}^2 
F_\alpha(\widetilde y_{j'-1} -\widetilde y_{j}, 
\widetilde z_{k'-1} -\widetilde z_{k})
\\
&\qquad +\OO(\nu^{\alpha} \Delta^{1+\alpha})
+\OO\Biggl( \nu^{\alpha -1/2} \Delta^{1/2 +\alpha}
\biggl(
\frac{\ind_{\{ j \neq j'\}}}{|j-j'|+1} +\frac{\ind_{\{ k \neq k'\}}}{|k-k'|+1}
\biggr) \Biggr),
\end{align*}
\begin{equation*}
D_{1,\delta}^2 D_{2,\delta}^2 
F_\alpha(\widetilde y_{j'-1} -\widetilde y_{j}, 
\widetilde z_{k'-1} -\widetilde z_{k}) =
\begin{cases}
\nu^{\alpha -1} \Delta^\alpha \psi_{r,\alpha} 
+ \OO(\nu^{\alpha-1} \Delta^{1+\alpha}),
& (j',k') = (j,k),
\\
\OO\Bigl(
\nu^{\alpha -1} \Delta^\alpha
\Bigl( \nu \Delta + \frac{1}{(|j-j'| +1)(|k-k'| +1)} \Bigr) 
\Bigr),
& (j',k') \neq (j,k).
\end{cases}
\end{equation*}
Furthermore, it follows that under [A2], 
\begin{align*}
\cov[T_{i,j,k} X, T_{i',j',k'} X] &= 
\OO\Biggl(
\frac{\nu^{\alpha -1} \Delta^\alpha}{|i-i'| +1}
\biggl( \nu \Delta + \frac{1}{(|j-j'| +1)(|k-k'| +1)} \biggr) 
\Biggr)
\\
&\qquad
+\OO\Biggl( \frac{\nu^{\alpha -1/2} \Delta^{1/2 +\alpha}}{|i-i'|+1}
\biggl(
\frac{\ind_{\{ j \neq j'\}}}{|j-j'|+1} +\frac{\ind_{\{ k \neq k'\}}}{|k-k'|+1}
\biggr) \Biggr),
\end{align*}
\begin{equation*}
\sum_{i,i'=1}^N \cov[T_{i,j,k}X, T_{i',j,k}X]^2
=\OO (\nu^{2\alpha-2} N \Delta^{2\alpha})
\quad\text{uniformly in } j, k,
\end{equation*}
\begin{equation*}
\sum_{k,k'=1}^{m_2} \sum_{j,j'=1}^{m_1} \sum_{i,i'=1}^N 
\cov[T_{i,j,k}X, T_{i',j',k'}X]^2
=\OO(\nu^{2\alpha-2} m_1 m_2 N \Delta^{2\alpha}),
\end{equation*}
\begin{equation*}
\sum_{i,i'=1}^N \cov[(T_{i,j,k}X)^2, (T_{i',j,k}X)^2 ]
=\OO (\nu^{2\alpha-2} N \Delta^{2\alpha})
\quad \text{uniformly in } j, k,
\end{equation*}
\begin{equation*}
\sum_{k,k'=1}^{m_2} \sum_{j,j'=1}^{m_1} \sum_{i,i'=1}^N 
\cov[(T_{i,j,k}X)^2, (T_{i',j',k'}X)^2]
=\OO (\nu^{2\alpha-2} m_1 m_2 \sqrt{m_1 m_2 N} \Delta^{2\alpha}).
\end{equation*}
\end{lem}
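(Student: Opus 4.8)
The plan is to follow the strategy of Lemmas 4.10 and 4.11 in \cite{TKU2023arXiv}, tracking the modifications forced by the presence of the reaction parameter $\theta_0$ (so that $\lambda_{l_1,l_2}^{(\theta_0)} = \nu\lambda_{l_1,l_2}-\theta_0$ replaces $\nu\lambda_{l_1,l_2}$) and by the regime $\nu \to 0$. First I would expand $F_{j',k'}^{j,k}$ in terms of the product trigonometric identities $2\sin a\sin b = \cos(a-b)-\cos(a+b)$ to separate the ``diagonal'' cosine contribution $\cos(\pi l_1(\widetilde y_{j'-1}-\widetilde y_j))\cos(\pi l_2(\widetilde z_{k'-1}-\widetilde z_k))$ from the ``reflected'' terms involving sums of arguments. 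Pulling out the exponential weights $e^{-\kappa(\cdot)/2}e^{-\eta(\cdot)/2}$ exactly as in \cite{TKU2023arXiv} and recognizing the second-difference structure gives the leading expression $e^{(-\kappa(\widetilde y_{j-1}+\widetilde y_{j'})-\eta(\widetilde z_{k-1}+\widetilde z_{k'}))/2} D_{1,\delta}^2 D_{2,\delta}^2 F_\alpha(\widetilde y_{j'-1}-\widetilde y_j,\widetilde z_{k'-1}-\widetilde z_k)$; the reflected terms, after applying the mean value theorem to the smooth weights and summing the resulting Fourier-type series, produce the stated $\OO(\nu^\alpha\Delta^{1+\alpha})$ and the off-diagonal $\OO(\nu^{\alpha-1/2}\Delta^{1/2+\alpha}(\cdots))$ corrections. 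The key point here is that the denominators $\lambda_{l_1,l_2}^{(\theta_0)}\mu_{l_1,l_2}^\alpha$ behave, for $\nu$ small, like $\nu\lambda_{l_1,l_2}\mu_{l_1,l_2}^\alpha$ up to a relative error $\OO(|\theta_0|/(\nu\lambda_{l_1,l_2}))$, which is summable against $\mu_{l_1,l_2}^{-\alpha}$ for $\alpha\in(0,2)$ and contributes only to the lower-order remainders.

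Next I would analyze $D_{1,\delta}^2 D_{2,\delta}^2 F_\alpha$ directly. Writing $F_\alpha(y,z) = \nu^\alpha\Delta^{1+\alpha}\sum_{l_1,l_2}\frac{1-e^{-\lambda^{(\theta_0)}_{l_1,l_2}\Delta}}{\lambda^{(\theta_0)}_{l_1,l_2}\Delta}\cdot\frac{1}{(\mu_{l_1,l_2}\nu\Delta)^\alpha}\cos(\pi l_1 y)\cos(\pi l_2 z)$, the fourth finite difference at scale $\delta = r\sqrt{\nu\Delta}$ converts the discrete double sum into (after a Riemann-sum / Poisson-summation comparison) an integral that, in the limit, yields the constant $\psi_{r,\alpha} = \frac{2}{\pi}\int_0^\infty \frac{1-e^{-x^2}}{x^{1+2\alpha}}(J_0(\sqrt2 rx)-2J_0(rx)+1)\,dx$ for $(j',k')=(j,k)$; the Bessel function $J_0$ arises from the radial average of $\cos$ over the two-dimensional frequency lattice, exactly as in the two-dimensional computation of \cite{TKU2023arXiv}. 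For $(j',k')\neq(j,k)$ the oscillation of $\cos(\pi l_1(\widetilde y_{j'-1}-\widetilde y_j))$ gives the decay factor $\frac{1}{(|j-j'|+1)(|k-k'|+1)}$, plus a uniform $\nu\Delta$ term from the Taylor expansion of $\frac{1-e^{-\lambda^{(\theta_0)}\Delta}}{\lambda^{(\theta_0)}\Delta} = 1 + \OO(\lambda^{(\theta_0)}\Delta)$, where $\lambda^{(\theta_0)}_{l_1,l_2}\Delta \lesssim \mu_{l_1,l_2}\nu\Delta$ on the relevant frequency range. I would then substitute $\theta_0 = \theta_0^*$ and use [A2] to get the remainder $R_{i,j,k}$ for the covariance statement, via the variation-of-constants formula $X_t = T_{\nu,t}X_0+\sigma\int_0^t T_{\nu,t-s}\,dW_s^Q$ and the bound $\sum_i R_{i,j,k}\lesssim F_{j,k}^{j,k}$.

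The covariance estimate $\cov[T_{i,j,k}X,T_{i',j',k'}X]$ comes from combining the spatial analysis above with the temporal Ornstein--Uhlenbeck structure: the time-correlation between increments at $t_i$ and $t_{i'}$ contributes the factor $\frac{1}{|i-i'|+1}$ through the standard estimate $\sum_{l}\frac{(1-e^{-\lambda^{(\theta_0)}\Delta})^2}{\lambda^{(\theta_0)}\mu^\alpha}e^{-\lambda^{(\theta_0)}|i-i'|\Delta}\Delta \lesssim \frac{\nu^{\alpha-1}\Delta^\alpha}{|i-i'|+1}$, together with the drift correction controlled by [A2]. The four summed bounds then follow by elementary summation: $\sum_{i,i'}\frac{1}{|i-i'|+1} \lesssim N\log N$ is too crude, so I would instead use $\sum_{i,i'}\frac{1}{(|i-i'|+1)^2}\lesssim N$ after squaring, and similarly for the joint $j,j',k,k'$ sums, splitting the $\nu\Delta$ term (which sums to $\OO(N^2(\nu\Delta)^2)$, absorbed since $N\Delta\sim 1$ and $m_1m_2\sim\nu^{-1}N$) from the decaying term. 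The fourth-moment bound $\sum\cov[(T_{i,j,k}X)^2,(T_{i',j',k'}X)^2]$ uses Gaussianity (Wick's formula: $\cov[(T)^2,(T')^2]=2\cov[T,T']^2$) up to the [A2]-controlled non-Gaussian contribution of the initial condition, and the extra $\sqrt{m_1m_2N}$ in the last display reflects that the double-index decay is only square-summable, not absolutely summable, so one gains only a square root over the trivial count. The main obstacle I anticipate is bookkeeping the interaction between the spatial decay rates and the factor-of-$\delta$ scalings uniformly in all four index pairs while keeping every error term at the claimed order under $\nu\to0$; the structural ideas are all present in \cite{TKU2023arXiv}, but verifying that the $\theta_0$-shift and the vanishing diffusivity do not degrade any exponent requires careful, if routine, estimation of the shifted OU kernels.
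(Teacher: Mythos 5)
Your route is the same one the paper takes: the paper gives no self-contained proof of Lemma \ref{lemD1}, asserting instead that it is the $\nu\to0$ analogue of Lemmas 4.10 and 4.11 in \cite{TKU2023arXiv}, obtained by rerunning those arguments with $\nu\lambda_{l_1,l_2}$ replaced by $\lambda_{l_1,l_2}^{(\theta_0)}=\nu\lambda_{l_1,l_2}-\theta_0$ and tracking the powers of $\nu$. Your outline (trigonometric splitting of $F_{j',k'}^{j,k}$ with the exponential weights pulled out, the Riemann-sum/Bessel limit for $D_{1,\delta}^2D_{2,\delta}^2F_\alpha$ at scale $\delta=r\sqrt{\nu\Delta}$, the OU-type temporal decay in $|i-i'|$, and elementary summation of the squared covariances) is exactly that adaptation, and treating the $\theta_0$-shift as a lower-order correction is consistent with the paper's remark that the main term of $\EE[(T_{i,j,k}X)^2]$ is independent of $\theta_0$. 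One bookkeeping caution: for low frequencies $\nu\lambda_{l_1,l_2}\ll|\theta_0|$ the ``relative error'' $\OO(|\theta_0|/(\nu\lambda_{l_1,l_2}))$ is not small, so the comparison of the shifted and unshifted kernels should be done in absolute terms (using $|\theta_0|\min(\Delta^2,(\nu\lambda_{l_1,l_2})^{-2})$-type bounds together with the second-difference factors), which indeed lands in the stated $\OO(\nu^{\alpha-1}\Delta^{1+\alpha})$ remainder.

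The one concrete point to repair is your treatment of the last display. Under [A2] the initial value is deterministic, so the $T_{i,j,k}X$ are Gaussian but not centered; there is no non-Gaussian contribution, and instead $\cov[(T_{i,j,k}X)^2,(T_{i',j',k'}X)^2]=2\cov[T_{i,j,k}X,T_{i',j',k'}X]^2+4\,\EE[T_{i,j,k}X]\,\EE[T_{i',j',k'}X]\,\cov[T_{i,j,k}X,T_{i',j',k'}X]$. The purely quadratic (Wick) part sums to $\OO(\nu^{2\alpha-2}m_1m_2N\Delta^{2\alpha})$ by the fourth display, which is smaller than the stated $\OO(\nu^{2\alpha-2}m_1m_2\sqrt{m_1m_2N}\Delta^{2\alpha})$ (since $m_1m_2\sim\nu^{-1}N$ gives $\sqrt{m_1m_2N}\sim N\nu^{-1/2}\gg N$). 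So the $\sqrt{m_1m_2N}$ factor does not come from a square-summability loss in the covariance decay, as your sketch suggests; it is produced by the mean cross terms generated by the deterministic part $T_{\nu,t}X_0$, which have to be bounded via [A2] and Cauchy--Schwarz exactly as in the corresponding step of Lemma 4.11 in \cite{TKU2023arXiv}. Your plan does flag the initial-condition contribution, but you should carry these mean terms explicitly rather than rely on the centered Wick identity plus a square-root heuristic.
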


Proposition \ref{prop3} is immediately obtained from Lemma \ref{lemD1}.
By the Taylor expansion, we have
\begin{equation*}
-N K_{\nu,N,m}'((\sigma^*)^2)
= \int_0^1 K_{\nu,N,m}''((\sigma^*)^2 + u(\hat \sigma^2 -(\sigma^*)^2)) \dd u
N(\hat \sigma^2 -(\sigma^*)^2).
\end{equation*}
Since Proposition \ref{prop3} and Lemma \ref{lemD1} imply that
$N K_{\nu,N,m}'((\sigma^*)^2) = \OO_\PP(1)$ and
there exists $V >0$ such that 
$\int_0^1 K_{\nu,N,m}''((\sigma^*)^2 + u(\hat \sigma^2 -(\sigma^*)^2)) \dd u \pto V$,
we get Proposition \ref{prop4}.

\end{document}